\documentclass[11pt,letterpaper,reqno]{amsart}
\usepackage{url}
\usepackage{amsthm}
\usepackage{amsmath}
\usepackage{amssymb}
\usepackage{amsfonts}

\usepackage{enumitem}
\usepackage{mathscinet}
\usepackage{lipsum}
\usepackage[english]{babel}
\usepackage[autostyle]{csquotes}
\usepackage{bbold}

\usepackage[english]{babel}

\usepackage[colorlinks=true,
    linkcolor=red,
    citecolor=blue]{hyperref}

\usepackage{graphicx}

\usepackage[utf8]{inputenc}
\usepackage[english]{babel}

\usepackage{color}
\usepackage{comment}

\newtheorem{thm}{Theorem}[section]
\newtheorem{definition}[thm]{Definition}
\newtheorem{theorem}[thm]{Theorem}
\newtheorem*{oseledec*}{Oseledec Theorem}
\newtheorem{cor}[thm]{Corollary}

\newtheorem{claim}[thm]{Claim}
\newtheorem{lemma}[thm]{Lemma}
\newtheorem{prop}[thm]{Proposition}
\theoremstyle{definition}
\newtheorem{remark}[thm]{Remark}

\makeatletter
\def\moverlay{\mathpalette\mov@rlay}
\def\mov@rlay#1#2{\leavevmode\vtop{%
   \baselineskip\z@skip \lineskiplimit-\maxdimen
   \ialign{\hfil$\m@th#1##$\hfil\cr#2\crcr}}}
\newcommand{\charfusion}[3][\mathord]{
    #1{\ifx#1\mathop\vphantom{#2}\fi
        \mathpalette\mov@rlay{#2\cr#3}
      }
    \ifx#1\mathop\expandafter\displaylimits\fi}
\makeatother

\let\ul\underline
\let\ol\overline
\let\wh\widehat
\let\wt\widetilde

\newcommand{\nocontentsline}[3]{}
\newcommand{\tocless}[2]{\bgroup\let\addcontentsline=\nocontentsline#1{#2}\egroup}

\usepackage{microtype}

\usepackage{wasysym}

\def\Jac{\ensuremath{\mathrm{Jac}}}

\def\Vol{\ensuremath{\mathrm{Vol}}}

\def\H{\ensuremath{\mathrm{H}}}

\def\L{\ensuremath{\widehat{\mathcal{L}}}}

\def\U{\ensuremath{\widehat{\mathcal{U}}}}

\def\Uu{\ensuremath{{\mathcal{U}}}}

\def\Ll{\ensuremath{{\mathcal{L}}}}

\def\B{\ensuremath{\widehat{\mathfrak{B}}}}

\def\HH{\ensuremath{\widehat{\mathfrak{H}}}}

\def\Hh{\ensuremath{\mathfrak{H}}}

\title[Thermodynamic Formalism Out of Equilibrium]{Thermodynamic Formalism Out of Equilibrium Part II: Semi-Ruelle Operator, Conformal Measures, and Effective Expansion and Quasi-Compactness}

\begin{document}

\author{S. Ben Ovadia}
\date{}

\begin{abstract}
We introduce a general machinery to study {\em thermodynamic formalism out of equilibrium}: The thermodynamics of a topological Markov shift (denoted by $\Sigma^-$) where the potential is given by a random walk on a compact metric space $X$ (and the randomness is driven by a Gibbs process). We introduce the {\em semi-Ruelle operator}, which acts on $C(\Sigma^-\times X)$. We construct conformal measures and harmonic functions for the semi-Ruelle operator. We present a few applications: (1) We provide a new proof to the POE variational principle (POE stands for the {\em pressure out of equilibrium} which is associated with the process), and we show that maximizing measures in the POE variational principle admit positive {\em entropy out of equilibrium}, and satisfy {\em semi-Gibbs estimates}. (2) In the setting where the random walk on the fiber $X$ is given by $C^{1+}$ diffeomorphisms (which are allowed to be very dissipative), and it satisfies the open condition of {\em effective expansion on average}, we show that the {\em averaged semi-Ruelle operator} is  quasi-compact when acting on a Sobolev function space. An application includes proving a spectral gap when assuming volume decay of correlations, and proving bounds on the dimension of stationary measure in terms of similarity dimension.
\end{abstract}

\newcommand{\Addresses}{{% additional braces for segregating \footnotesize
  \bigskip
  \footnotesize

  S.~Ben Ovadia, \textsc{Einstein Institute of Mathematics, The Hebrew University of Jerusalem, 91904 Jerusalem, Israel}. \\ \textit{E-mail address}: \texttt{Snir.BenOvadia@mail.huji.ac.il}
  
}}

\maketitle

\tableofcontents

\section{Introduction}

Classical thermodynamic formalism, pioneered by Ruelle and Bowen (see \cite{Ruelle67,B4,RuelleBookThermoDynFor}), provides a powerful framework for singling out invariant measures of importance for chaotic dynamical systems. Thermodynamic formalism allows studying the statistical properties of the system via methods involving transfer operator (called the Ruelle operator in certain contexts), variational principle, and entropy. These techniques have been extended to non-compact systems (see \cite{SarigTDFSymposium,SarigPR,SarigNR}), and to non-uniformly hyperbolic systems (see \cite{Y,Y99,CPZ,IT}).

In \cite{TDFOE_I}, we introduce a framework to study the {\em pressure out of equilibrium} (POE for short) which is associated with a random family of potentials a compact topological Markov shift, where the randomness is driven by a Gibbs process random walk on a compact metric space. While a variational principle for the POE was successfully established in that setting, a complete operator-theoretic approach analogous to Ruelle's classical transfer operator theory has remained elusive. 

The primary objective of this paper is to bridge this gap by developing a complementary theory of adapted operators for these models of random dynamics which lead to the study of thermodynamic formalism out of equilibrium. To this end, we introduce the {\em semi-Ruelle operator}, $\L_{\wh\phi}$, an operator-theoretic analogue designed for compact topological Markov shifts (TMSs) coupled with random fiber dynamics. Unlike the standard Ruelle operator, which quotients out symbolic stable leaves to study dynamics on the space of symbolic local unstable leaves, the semi-Ruelle operator only quotients out the stable leaves of the base dynamics (the TMS $\Sigma$) while leaving the fiber dynamics ($X$) unquotiented.

This structural asymmetry introduces a major technical hurdle: the semi-Ruelle operator does not generally contract a cone in standard H\"older spaces ($\mathrm{H\ddot{o}l}(\Sigma \times X)$) due to the lack of a uniform modulus of continuity in the fiber coordinate. This makes the construction of eigen-measures and eigen-functions non-trivial. In fact, in \textsection \ref{AppCount}, we describe an example where an eigen-function does not exist in any reasonable space! Even when we forfeit any regularity requirements for the eigen-function.

Nonetheless, we begin by constructing {\em conformal measures} (eigen-measures of the dual operator), which require fewer topological prerequisites. We continue to present necessary and sufficient conditions for the existence of {\em harmonic function} (eigen-functions corresponding to the maximal eigen-value), with mild regularity requirements. .

Beyond providing a functional-analytic framework, this operator-theoretic approach yields several significant applications to the thermodynamic formalism out of equilibrium:

\begin{enumerate}
	\item New proof of the POE variational principle (independent of the variational principle established in \cite{TDFOE_I}).
	\item The maximizing measures of the POE variational principle which we construct satisfy strong thermodynamic properties (such as positive {\em entropy out of equilibrium}, thus positive metric entropy, and semi-Gibbs estimates, which are analogous the Gibbs estimates for Gibbs states).
\end{enumerate} 

We then go beyond the topological setting, and continue to study the action of the semi-Ruelle operator when fibers are equipped with a {\em smooth measure}, $m$ (namely, a measure whose Jacobian under the fiber maps is log-H\"older continuous). In that setting (\textsection \ref{GibbsProcess}) we introduce an adapted Hilbert space, $\HH_\kappa(\wh m)$. While in general the semi-Ruelle operator does not preserve the regularity of functions in the fiber direction, $\HH_\kappa(\wh m)$ is a space of function which admit some bounded variation in average, in the fiber direction. We study a potential $\wh \phi$, associated with a random walk on $X$ driven by the Gibbs measure on $\Sigma$ of a H\"older potential $\psi$. We show that the associated semi-Ruelle operator $\L_{\wh \phi}$ acts on $\HH_\kappa(\wh m)$ with a bounded norm, and that it is the dual of the Koopman operator for the skew-product dynamics on $L^2(\wh m)$. We then prove an inequality for the POE of $\wh \phi$, and provide conditions for the existence of a smooth measure which is invariant to the skew-product dynamics.

Finally, an additional application of the tools which we introduce in this paper is in the setting of smooth random dynamics. When the fiber $X$ is a closed Riemannian manifold $M$ of dimension $d\geq 2$, with a normalized Riemannian volume measure $m$, and the fiber maps are $C^{1+}$ diffeomorphisms. We let $\mu$ be the Gibbs state of $\psi$, and study the random dynamics on $M$ driven by $\mu$. We emphasize that we do not make additional assumption, and the diffeomorphisms are allowed to be arbitrarily dissipative. In \textsection \ref{UEASect2} we introduce and study a condition called {\em effective expansion on average} (a $C^1$-open condition, weaker than co-expansion on average for conservative systems, see \textsection \ref{coExImpEffEx}). We show that under effective expansion on average, the {\em averaged semi-Ruelle operator} $\mathcal L=\int \L_{\wh\phi} d\mu$ (see \textsection \ref{ASROSect}) is quasi-compact when acting on a Sobolev space. In particular, this extends the results of \cite{DeWittDolgopyat2} to the dissipative setting using effective expansion on average (rather than coexpansion). In \cite{DeWittDolgopyat3} the authors study i.i.d. (near) conservative random dynamics, under an assumption of co-expansion on average, study statistical properties such as quenched and annealed exponential mixing  (relying on the quasi-compactness properties). In \textsection \ref{appliC2} we show that decay of correlations for the volume implies a spectral gap for the volume averaged semi-Ruelle operator, and prove bounds for the dimension of the stationary measure in terms of the {\em similarity dimension} (see \textsection \ref{UEASect2}). In an upcoming work, using the spectral gap of the averaged semi-Ruelle operator, we study further statistical properties in the dissipative setting (e.g. CLT, LLT).

\subsection*{Acknowledgements}
The author would like to thank Aaron Brown, Dmitry Dolgopyat, and Federico Rodriguez-Hertz for many helpful and patient discussions from which the author had learned a lot, and which have helped improve this manuscript.

\section{Basic Definitions}\label{POE}

\noindent\textbf{Setup:}
\begin{enumerate}
	\item Let $X$ be a compact metric space.
	\item Let $\Sigma$ be a two-sided compact topological Markov shift, endowed with the left-shift $T:\Sigma\to \Sigma$.
	\item Let $F:\Sigma\to C_\alpha(X,X)$ be a H\"older continuous map, denoted by $\omega\mapsto F_\omega$, where $F_\omega$ is a $\alpha$-H\"older homeomorphism of $X$.
	\item  Given $\widehat{\phi}\in \mathrm{H\ddot{o}l}(\Sigma\times X)$, we denote $\phi_t(\omega)=\widehat{\phi}(\omega,t)$. We view the family $\{\phi_t\}_{t\in X}\subseteq \mathrm{H\ddot{o}l}(\Sigma)$ as a equi-H\"older family of potentials:\\ $\sup_{t\in X}\|\phi_t\|_{\mathrm{H\ddot{o}l}}<\infty$, and $\phi_t$ depends in a H\"older continuous manner on $t\in X$.
 	\item We denote $F_\omega^k:=F_{T^{k-1}\omega}\circ \cdots \circ F_\omega$ for $k\geq1$, and $F_\omega^0=\mathrm{Id}$.
\end{enumerate}

\subsection{Pressure Out of Equilibrium}

\begin{definition}[Pressure out of equilibrium {\cite[Definition~2.1]{TDFOE_I}}]\label{DefOfPOE}
	We define the {\em pressure out of equilibrium}, or {\em POE} for short, as $$\widehat{P}(\widehat{\phi}):=\limsup_{n\to\infty}\frac{1}{n}\log\sup_{t\in X}\widehat{Z}_n(\widehat{\phi},t,a),$$ 
	where
	$$\widehat{Z}_n(\widehat{\phi},t,a):=\sum_{|\underline{w}|=n,w_{n-1}=a}e^{\sum_{k=0}^{n-1}\phi_{F^k_{\theta_{\underline{w}}}(t)}(T^k(\theta_{\underline{w}}))},$$
	$[a]\subset \Sigma$, and $\theta_{\underline{w}}\in[\underline{w}]$ maximize $\omega\mapsto \sum_{k=0}^{n-1}\phi_{F^k_{\omega}(t)}(T^k(\omega))$.
\end{definition}

\begin{remark}\label{PBounded}
	Note, $\wh P(\wh\phi)\leq h_\mathrm{top}(T)+\|\wh\phi\|_\infty$.
\end{remark}

\begin{definition}[$\phi$-holonomies {\cite[Definition~6.1]{TDFOE_I}}]\label{phiHolonomies}
    We say that $(\Sigma\times X,\widehat{F})$ admits {\em $\widehat{\phi}$-holonomies}, if $\exists C>0$ s.t. $$\sup_{n\geq0,t\in X}\sup_{\omega,\omega'\in \Sigma: \omega_i=\omega_i', i\in [0,n]}|\widehat{\phi}^{(n)}(\omega,t)-\widehat{\phi}^{(n)}(\omega',t)|\leq C.$$
\end{definition}

\begin{remark}\label{phiHolonomiesRmk}\text{ }
\begin{enumerate}

\item Note, since $\Sigma$ is compact and topologically transitive, the definition of the POE does not depend on $a$. 
Under an assumption of $\widehat{\phi}$-holonomies (see Definition \ref{phiHolonomies}), the definition of the POE also does not depend on the choice of $\theta_{\underline{w}}\in[\underline{w}]$. The condition of $\widehat{\phi}$-holonomies can be satisfied via an open condition of bunching for many interesting examples.
\item Note, $\widehat{Z}_n(\widehat{\phi},t,a)$ is not of the composite form $L_{\phi_n}\circ \cdots \circ L_{\phi_1}\mathbb{1}_{[a]}$! For each word of length $n$, the sequence of potentials in its corresponding weight depends on the word.	
    \item Stable and unstable holonomies (see \cite[Definition~8.8]{TDFOE_I}) together imply $\widehat{\phi}$-holonomies. One can verify this by adding and subtracting the Birkhoff average over the word $[\omega,\omega']$ (i.e., the Smale bracket). In particular, stable and unstable holonomies are an open condition for a large class of partially hyperbolic systems (see \\ \cite[Remark~8.9]{TDFOE_I}).
\end{enumerate}
\end{remark}

\begin{lemma}[{\cite[Lemma2.3]{TDFOE_I}}]\label{limsupIsLim} The following limit exists:
	$$\widehat{P}(\widehat{\phi})=\lim_{n\to\infty}\frac{1}{n}\log\sup_{t\in X}\widehat{Z}_n(\widehat{\phi},t,a)= \lim_{n\to\infty}\frac{1}{n}\log\sum_{a} \widehat{Z}_n(\widehat{\phi},t,a).$$
\end{lemma}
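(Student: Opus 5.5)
The plan is to show that the sequence $a_n:=\log\sup_{t\in X}\sum_b\widehat{Z}_n(\widehat{\phi},t,b)$ is, up to a bounded additive error, subadditive, and then apply Fekete's lemma. The two equalities in the statement would follow by comparing the different normalizations up to sub-exponential factors.

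\textbf{Step 1: comparing normalizations.}
\begin{itemize}
\item The sum over $a$ differs from the single-$a$ term by at most a factor equal to the number of states.
\item To compare different terminal symbols $a,a'$, I would use compactness and transitivity of $\Sigma$: there is $L$ such that every symbol connects to every other within $L$ steps.
\item Extending a word ending in $a$ by a connecting word to $a'$ changes the length by at most $L$ and the weight by at most $e^{\pm L\|\widehat{\phi}\|_\infty}$.
\item The extension map is at most $|S|^L$-to-one, where $S$ is the alphabet.
\end{itemize}
Hence $\widehat{Z}_n(\cdot,t,a)$ and $\widehat{Z}_{n+\ell}(\cdot,t,a')$, for a suitable $\ell\le L$, are comparable up to constant factors. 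So all these quantities share the same exponential growth rate, once the limit along the sup version exists.

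\textbf{Step 2: quasi-submultiplicativity.}
Fix $t$ and split each word $\underline{w}$ of length $n+m$ as $\underline{u}\underline{v}$, with $|\underline{u}|=n$ and $|\underline{v}|=m$. Write $\theta:=\theta_{\underline{w}}$. Then the Birkhoff sum splits as
$$\sum_{k=0}^{n+m-1}\phi_{F^k_\theta(t)}(T^k\theta)=\sum_{k=0}^{n-1}\phi_{F^k_\theta(t)}(T^k\theta)+\sum_{j=0}^{m-1}\phi_{F^j_{T^n\theta}(s)}(T^{j}T^n\theta),$$
where $s:=F^n_\theta(t)$, using the cocycle identity $F^{n+j}_\theta=F^j_{T^n\theta}\circ F^n_\theta$.
\begin{itemize}
\item The first sum is at most the maximizing weight of $\underline{u}$ at $t$, since $\theta\in[\underline{u}]$.
\item The second sum is at most the maximizing weight of $\underline{v}$ at the point $s$.
\item The weight of $\underline{v}$ at $s$ is bounded by the sum over all words $\underline{v}$ of length $m$, and that sum is at most $\sup_{s'}\sum_b\widehat{Z}_m(\cdot,s',b)$ (summing over terminal symbols $b$), uniformly in $\underline{u}$.
\end{itemize}
Summing over $\underline{u}$ then gives
$$\sum_a\widehat{Z}_{n+m}(t,a)\le\Big(\sum_a\widehat{Z}_n(t,a)\Big)\sup_{s}\sum_b\widehat{Z}_m(s,b).$$
Note that summing over $\underline{u}$ absorbs the dependence of $s$ on $\theta$: the bound for $\underline{v}$ is uniform in $s$, so it factors out.

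Taking $\sup_t$ yields $a_{n+m}\le a_n+a_m$. Fekete's lemma then gives $\lim a_n/n=\inf_n a_n/n$, which is finite by Remark \ref{PBounded} and the trivial lower bound $-\|\widehat{\phi}\|_\infty$. Combined with Step 1, this shows the $\limsup$ in Definition \ref{DefOfPOE} is a limit.

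\textbf{Step 3: removing the supremum over $t$ (the main obstacle).}
The last expression in the statement has no $\sup_t$, so I need a lower bound of $\sum_a\widehat{Z}_n(t,a)$ by $\sup_{t'}$ of the same quantity, up to sub-exponential factors. Since $t$ enters through the fiber dynamics, no uniform continuity is available; the fiber maps may be non-expanding or dissipative.

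The approach I would try is a reverse Step 2: a supermultiplicative bound in which the first block is short.
\begin{itemize}
\item Note that $\sum_a\widehat{Z}_{n+m}(t,a)\ge e^{-m\|\widehat{\phi}\|_\infty}$ times the sum restricted to the tail of length $n$, evaluated at points $s=F^m_\theta(t)$.
\item Avoiding this would require choosing $t$ to realize the supremum, which is not allowed.
\end{itemize}
Alternatively, I would use that $\widehat{Z}_n(t)$ depends continuously on $t$ at each fixed $n$, and establish the lower growth rate through a $\liminf$ argument. I expect this to be the delicate point. If it resists, I would rely on $\widehat{\phi}$-holonomies (Remark \ref{phiHolonomiesRmk}) to make the choice of $\theta_{\underline{w}}$ irrelevant. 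I would also interpret the final expression as holding with $\sup_t$ implicitly, matching the cited source \cite{TDFOE_I}.
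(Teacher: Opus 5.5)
The paper does not prove this lemma; it imports it from \cite[Lemma~2.3]{TDFOE_I}. So your argument has to stand on its own, and as written Step 2 has a genuine gap.

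For $\ul w=\ul u\ul v$, the intermediate fiber point $s=F^n_{\theta_{\ul w}}(t)$ depends on $\ul v$. There are two reasons. The maximizer $\theta_{\ul w}$ is chosen inside $[\ul u\ul v]$. And $F^n_\theta=F_{T^{n-1}\theta}\circ\cdots\circ F_\theta$ depends on coordinates of $\theta$ outside $[0,n-1]$: its past, and, since $\omega\mapsto F_\omega$ is merely H\"older, also $\ul v$ and beyond. Hence, for fixed $\ul u$, the quantity you must control is $\sum_{\ul v}e^{W_m(\ul v,s_{\ul u\ul v})}$, where $W_m(\ul v,s):=\max_{\omega\in[\ul v]}\wh\phi^{(m)}(\omega,s)$.

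Your text bounds each term separately by $\sup_{s'}\sum_b\wh Z_m(\wh\phi,s',b)$. That loses a factor $\#\{\ul v\}\approx e^{m h_{\mathrm{top}}(T)}$ and destroys subadditivity. Bounding the whole sum by it instead interchanges a sum and a supremum, which is legitimate only if $s$ does not depend on $\ul v$. Your remark that ``summing over $\ul u$ absorbs the dependence'' has it backwards: it is the $\ul v$-dependence that must be removed. Since the fiber maps may expand, there is no continuity in $s$, uniform in $m$, that could do this.

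The missing ingredient is a mechanism that makes the intermediate point a function of $\ul u$ alone. If $F_\omega=F_{\omega_0}$, your Step 2 works verbatim. In this paper's framework one uses $F_\omega=F_{(\omega_i)_{i\le 0}}$ together with $\wh\phi$-holonomies, as follows.
\begin{enumerate}
\item Replace $\theta_{\ul w}$ by the point $\omega'$ with the same coordinates on $[0,\infty)$ and a fixed admissible past $\omega^-$ chosen according to $u_0$.
\item Holonomies give $|\wh\phi^{(n+m)}(\theta_{\ul w},t)-\wh\phi^{(n+m)}(\omega',t)|\le C$.
\item Now $s_{\ul u}:=F^n_{\omega'}(t)$ depends only on $\ul u$, so $\sum_{\ul v}e^{W_m(\ul v,s_{\ul u})}\le\sum_b\wh Z_m(\wh\phi,s_{\ul u},b)$.
\item Hence $a_n+C$ is subadditive.
\end{enumerate}

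The same bounded-distortion input is silently needed in Step 1. After appending a connecting word, the maximum is taken over the smaller cylinder $[\ul w\ul c]$. That maximum can be far below the maximum over $[\ul w]$ unless Birkhoff sums from a fixed $t$ are comparable across a cylinder.

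Finally, abandon Step 3 rather than try to rescue it: with $t$ free, the last equality is false. Take $F_\omega=\mathrm{Id}$ and $\wh\phi(\omega,t)=\psi(t)$, for which holonomies hold trivially. Then $\frac1n\log\sum_a\wh Z_n(\wh\phi,t,a)\to h_{\mathrm{top}}(T)+\psi(t)$, whereas $\wh P(\wh\phi)=h_{\mathrm{top}}(T)+\max\psi$. So the last term must be read with $\sup_t$ as well, as you suspected, and that version follows from the repaired Steps 1--2.
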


\section{Semi-Ruelle Operator}\label{EntOutOfEq}

\begin{definition}
 Set    $\Sigma^-:=\{(\omega_i)_{i\leq-1}:\omega\in\Sigma\}$. Elements of $\Sigma^-$ are often denoted by $\omega^-$. %Given $\omega^-\in \Sigma$, its associated {\em unstable leaf} is $$V^u_{\leq-1}(\omega^-):=\{\omega\in \Sigma: \forall i\leq -1, \ \omega_i=\omega^-_i\}.$$
     Let $T_R:\Sigma^-\to \Sigma^-$ be the right-shift.
\end{definition}

\begin{remark}\label{afterSinaiRuelle}
	In \cite[Lemma~B.1]{TDFOE_I} we show that if $(\Sigma\times X,\widehat{F})$ admits unstable holonoimes (\cite[Definition~8.8]{TDFOE_I}) and $\widehat{\phi}$-holonomies, then there exists a bi-H\"older map $\widehat{C}:\Sigma\times X\to\Sigma\times X$ which preserves fibers s.t. $\widetilde{F}:=\widehat{C}^{-1}\circ \widehat{F}\circ \widehat{C}$ satisfies $\widetilde{F}_\omega=\widetilde{F}_{(\omega_i)_{i\leq0}}$ for all $\omega\in \Sigma$. Note, $\wh F$ maps the $X$-fiber over $\omega$ to the $X$-fiber over $T \omega$. Then, $\wh F^{-1}$ maps the $X$-fiber over $\omega$ to the $X$-fiber over $T^{-1} \omega$, that is it is constant on element of the form of $V^u_{\leq -1}$, and the map over $\{\omega\}\times X$ is in fact $T^{-1}\times F_{T^{-1}\omega}^{-1}:\{\omega\}\times X\to \{T^{-1}\omega\}\times X$. 
Lemma \ref{SinaiB} below expands on this notion, and demonstrates how to reduce to the case where $\widehat\phi(\omega,t)= \widehat\phi((\omega_i)_{i\leq 0},t) $.
\end{remark}

\begin{lemma}\label{SinaiB} Let $\widetilde{\phi}:=\widehat{\phi}\circ \widehat{C}\in \mathrm{H\ddot{o}l}(\Sigma\times X)$. Then there exists $\widehat{A}\in  \mathrm{H\ddot{o}l}(\Sigma\times X)$ s.t. $\widetilde{\phi}^-:=\widetilde{\phi}-\widehat{A}\circ\widetilde{F}-\widehat{A}$ satisfies $\widetilde{\phi}^-(\omega,t)=\widetilde{\phi}^-((\omega_i)_{i\leq0},t)$ for all $\omega\in \Sigma$ and $t\in X$.
\end{lemma}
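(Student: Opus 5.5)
This is a Sinai-type lemma: we build a fiberwise coboundary for the skew product $\widetilde F$ that removes the dependence of $\widetilde\phi$ on the future coordinates $(\omega_i)_{i\geq1}$. (The statement writes $\widetilde\phi-\widehat A\circ\widetilde F-\widehat A$; we will produce a coboundary $\widehat A\circ\widetilde F-\widehat A$ entering with the appropriate sign, so that the conclusion holds with the sign convention of the statement.)

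\textbf{Reference point.} For each symbol $a$, fix a past sequence $z^{(a)}=(z_i)_{i\leq -1}$ such that the concatenation with $a$ at position $0$ is admissible. For $\omega\in\Sigma$ define $r(\omega)\in\Sigma$ by
\[
r(\omega)_i=\omega_i \ (i\geq 0),\qquad r(\omega)_i=z^{(\omega_0)}_i \ (i\leq -1).
\]
By Remark \ref{afterSinaiRuelle}, $\widetilde F_\omega$ depends only on $(\omega_i)_{i\leq 0}$. We will instead work with the inverse dynamics $\widetilde F^{-1}$, whose fiber maps over $\omega$ depend only on the past. Define
\[
\widehat A(\omega,t):=\sum_{n\geq 0}\Big[\widetilde\phi\big(\widetilde F^{-n}(\omega,t)\big)-\widetilde\phi\big(\widetilde F^{-n}(\bar r(\omega,t))\big)\Big],
\]
where $\bar r$ is the map that replaces the \emph{future} of $\omega$ by a fixed reference future depending on $\omega_0$, keeping $t$ fixed. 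This is the standard Sinai construction adapted to one-sided-past dependence. The point of using backward iterates is that $\omega$ and its modification agree on all coordinates $i\leq 0$. Hence $\widetilde F^{-n}$ acts on both points by the same fiber map, since $\widetilde F^{-1}$ over $\omega$ is $F^{-1}_{T^{-1}\omega}$, which depends only on $(\omega_i)_{i\leq -1}$. So the $t$-coordinates of the two orbits coincide, while the base points $T^{-n}\omega$ and $T^{-n}\bar r(\omega)$ agree on the coordinates $(-\infty,n]$.

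\textbf{Convergence.} By the Hölder continuity of $\widehat\phi$, hence of $\widetilde\phi$ via the bi-Hölder $\widehat C$, the $n$-th term is bounded by $C\theta^{n}$. This is because both orbits share the same $t$ and their base points are $\theta^n$-close. The series therefore converges uniformly.

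\textbf{Coboundary computation.} A telescoping computation gives
\[
\widehat A-\widehat A\circ\widetilde F^{-1}=\widetilde\phi-\widetilde\phi\circ\bar r+(\text{terms comparing } \bar r\circ\widetilde F^{-1} \text{ with } \widetilde F^{-1}\circ\bar r).
\]
After reindexing, the remaining expression depends only on $(\omega_i)_{i\leq 0}$ and $t$. Precomposing with $\widetilde F$ converts this into the stated form: $\widetilde\phi^-$ equals $\widetilde\phi\circ\bar r$ plus a telescoped sum evaluated along points whose futures are the reference ones, so it is a function of $((\omega_i)_{i\leq0},t)$.

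\textbf{Hölder regularity of $\widehat A$ (main obstacle).} Two points $(\omega,t),(\omega',t')$ must be compared. In the base direction we use the standard splitting of the series at $n\approx N/2$ when $\omega,\omega'$ agree on $[-N,N]$. The fiber direction is the difficulty: the backward fiber compositions $F^{-n}$ may expand distances in $t$, so $d(\widetilde F^{-n}(\omega,t),\widetilde F^{-n}(\omega,t'))$ can grow like $L^n$. We plan to split the series at $n_0\sim c\log(1/d(t,t'))$, with $c$ small relative to $\log L$ and $\log(1/\theta)$:
\begin{itemize}
\item for $n\leq n_0$, use the Lipschitz/Hölder growth bound $L^{\alpha n}d(t,t')^{\alpha}$;
\item for $n>n_0$, use the tail bound $\theta^{n}$.
\end{itemize}
Balancing the two gives a (smaller) Hölder exponent for $\widehat A$. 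This matches the fact that the statement only asserts $\widehat A\in\mathrm{H\ddot{o}l}$ with an unspecified exponent, and it is the step where care with the exponents is needed.
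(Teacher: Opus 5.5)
Your proposal follows the paper's proof. It uses the same backward Sinai sum comparing $(\omega,t)$ with $(\omega^*,t)$, where $\omega^*=[\omega,\omega_{\omega_0}]$ keeps the past of $\omega$ and takes a reference future. Convergence holds for the same reason: the two backward fiber orbits coincide, because the inverse fiber maps only see the strict past. H\"older continuity is obtained the same way, by cutting the series at an index logarithmic in the distance and playing the growth of the head terms against the exponential tail.

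There are three minor points of bookkeeping.

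\begin{enumerate}
\item Your telescoping identity shows that $\widetilde\phi-(\widehat A-\widehat A\circ\widetilde F^{-1})$ depends only on $((\omega_i)_{i\le 0},t)$. So the transfer function in the stated form is $\widehat A\circ\widetilde F^{-1}$. Precomposing the identity with $\widetilde F$ would instead give a statement about $\widetilde\phi\circ\widetilde F$.
\item In the base direction the cutoff must be $cN$ with $c$ small, not $N/2$. The fiber orbits over $\omega$ and $\omega'$ can separate at rate $L^k$, so you need the same cutoff argument you use in the fiber direction.
\item That argument requires Lipschitz-type control of the backward fiber compositions. This goes beyond the stated hypothesis that the $F_\omega$ are H\"older homeomorphisms; the paper's argument makes the same assumption implicitly.
\end{enumerate}
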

\begin{proof}
    For all $[a]\subseteq \Sigma $ fix $\omega_a\in[a]$. Given $\omega\in \Sigma$ set $\omega^*:=[\omega,\omega_{\omega_0}]$, (i.e. the Smale brackets). Define
$$\widehat{A}:=\sum_{k\geq0}\widetilde{\phi}\circ \widetilde{F}^{-k}(\omega,t)-\widetilde{\phi}\circ \widetilde{F}^{-k}(\omega^*,t).$$
We show that $\widehat{A}$ is well-defined and bounded. Indeed, since $(\omega_i)_{i\leq0}=(\omega^*_i)_{i\leq0}$, $s_k:= \widetilde{F}^{-k}(\omega,t)= \widetilde{F}^{-k}(\omega^*,t)$, and so $\widetilde{\phi}\circ \widetilde{F}^{-k}(\omega,t)-\widetilde{\phi}\circ \widetilde{F}^{-k}(\omega^*,t)=\widetilde{\phi}(T^{-k}\omega,s_k)-\widetilde{\phi}(T^{-k}\omega^*,s_k)$ is exponentially small uniformly in $\omega$ and $t$, as $\widetilde{\phi}$ is H\"older continuous. 

One can now check that $\widetilde{\phi}^-:=\widetilde{\phi}+\widehat{A}\circ\widetilde{F}-\widehat{A}$ satisfies $\widetilde{\phi}^-(\omega,t)=\widetilde{\phi}^-((\omega_i)_{i\leq0},t)$ for all $\omega\in \Sigma$ and $t\in X$ as desired. To see that $\widehat{A}$ is H\"older continuous, let $(\omega,t)$ and $(\omega',t')$ with $d(\omega,\omega'),d(t,t')\leq e^{-n}$. Decompose the series which defines $\widehat{A}$ into a sum from $k=0$ to $k=\epsilon n$, and the tail series. By choosing a respective $\epsilon>0$ (uniform), since the tail series is exponentially small and the sum is composed of H\"older continuous functions, we get that $|\widehat{A}(\omega,t)-\widehat{A}(\omega',t')|\leq C_\epsilon \theta_\epsilon^n$.
\end{proof}

\begin{remark}
	Given $\widetilde\phi^-$ from Lemma \ref{SinaiB}, assuming that $F_\omega=F_{(\omega_i)_{i\leq0}}$ (recall Remark \ref{afterSinaiRuelle}), $\wh\phi^-:=\widetilde\phi^- \circ \wh F^{-1}$ satisfies $\wh\phi^-(\omega,t)= \wh\phi^-((\omega_i)_{i\leq-1},t) $. 
\end{remark}

\begin{definition}
In the setting where $F_{\omega}= F_{(\omega_i)_{i\leq0}} $, the {\em inverse-cocycle} $(\omega,t)\mapsto (T^{-1}\omega, F_{T^{-1}\omega}^{-1}(t))$ is costant on elements of the form
$$V^u _{\leq-1}(\omega^-):=\Big\{\omega\in \Sigma: \omega_i=\omega^-_i, \ \forall i\leq-1\Big\},$$
where $\omega^-\in \Sigma^-$. We therefore can define 
$$F_{\omega^-}^{-1}:=\text{the common map }F_{T^{-1}\omega}^{-1}\text{ on } V^u _{\leq-1}(\omega^-)_{\leq-1}.$$
\end{definition}

\begin{definition}[Non-invertible dynamics]
	In the setting where $F_{\omega}= F_{(\omega_i)_{i\leq0}} $, the cocycle $\wh F:\Sigma\times X\to \Sigma\times X$ induces a non-invertible cocycle on $\Sigma^-\times X$. We denote the non-invertible cocycle by $\wh F_R:\Sigma^-\times X\to\Sigma^-\times X$,  which is given by $$\wh F_R(\omega^-,t):=(T_R\omega^-,F_{\omega^-}^{-1}(t)).$$
\end{definition}

\begin{definition}[Semi-Ruelle Operator]\label{defOfRuelle}
Given $\wh\phi\in C(\Sigma^-)$, and $\wh F:\Sigma\times X \to \Sigma \times X$ s.t. $F_\omega=F_{(\omega_i)_{i\leq0}}$,  we define the associated {\em semi-Ruelle operator}, $\L_{\wh\phi}:C(\Sigma^-\times X)\to C(\Sigma^-\times X) $ by
\begin{equation}\label{defOfRuelleEq}
	(\L_{\wh\phi}g)(\omega^-,t):=\sum_{\wh{F}_R(\wt\omega^-,\wt t)=(\omega^-,t)}e^{\wh\phi(\wt\omega^-,\wt t)}g(\wt\omega^-,\wt t).
\end{equation}
\end{definition}

\begin{remark}\text{ }
\begin{enumerate}
	\item Note, the semi-Ruelle operator is a positive, bounded, linear operator on $C(\Sigma^-\times X)$.
	\item The standard Ruelle operator considers the left-shift on the two-sided shift, and quotients by the (symbolic) stable leaves, in order to study the dynamics on the space of local unstable leaves. In our setting, we quotient by the stable leaves of only the base dynamics ($\Sigma$), while leaving the fiber dynamics unquotiented (which may admit stable and unstable leaves nonetheless). Thus, we consider this operator as ``semi" Ruelle.
\end{enumerate}	
\end{remark}

\begin{definition}
	Given $n\geq 1$, write $$\wh\phi^{(-n)}(\omega^-,t):=\sum_{k=0}^{n-1}\wh\phi\circ \wh F^{k}_R(\omega^-,t).$$
\end{definition}

\begin{lemma}[POE is log-inverse-spectral radius]\label{specRad}
	Assume that $(\Sigma\times X,\wh F)$ admits $\wh\phi$-holonomies, then
	$$\lim_{n\to\infty}\frac{1}{n}\log\|\L_{\wh\phi}1\|_\infty=\wh P(\wh\phi).$$
\end{lemma}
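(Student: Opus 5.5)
The statement as written has $\|\L_{\wh\phi}1\|_\infty$; I read it as $\|\L^n_{\wh\phi}1\|_\infty$, since otherwise the left side tends to zero. The plan is to expand $\L^n_{\wh\phi}1$ as a sum over words of length $n$ and compare it term by term with $\wh Z_n(\wh\phi,t,a)$ from Definition \ref{DefOfPOE}. Then we invoke the $\wh\phi$-holonomies to remove the dependence on the choice of representative point in each cylinder. The final step uses Lemma \ref{limsupIsLim} to pass from $\limsup$ to $\lim$.

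\textbf{Step 1 (expansion).} Iterating Definition \ref{defOfRuelle} gives
$$(\L^n_{\wh\phi}1)(\omega^-,t)=\sum_{\wh F_R^n(\wt\omega^-,\wt t)=(\omega^-,t)}e^{\wh\phi^{(-n)}(\wt\omega^-,\wt t)}.$$
The preimages under $\wh F_R^n$ correspond to admissible words $\ul w$ of length $n$ that can be appended to $\omega^-$. Concretely, $\wt\omega^-=\omega^-\ul w$ after reindexing, and $\wt t$ is determined by $t$ through the fiber maps. Specifically, $\wt t=F^{n}_{\xi}(t)$, where $\xi\in\Sigma$ is any point whose coordinates in $[-n,-1]$ spell $\ul w$ and whose past is $\omega^-$; this is well defined because $F_\omega=F_{(\omega_i)_{i\le 0}}$. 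Unwinding $\wh F_R^k(\wt\omega^-,\wt t)$, the exponent $\wh\phi^{(-n)}(\wt\omega^-,\wt t)$ becomes a forward Birkhoff sum $\sum_{k=0}^{n-1}\phi_{F^k_{\eta}(s)}(T^k\eta)$. Here $\eta=T^{-n}\xi\in[\ul w]$ and $s=\wt t$ is a fiber point, and we use that $\wh\phi$ depends only on the past coordinates $\le -1$ (after the reduction in Lemma \ref{SinaiB}). So each term has exactly the form of a weight in $\wh Z_n(\wh\phi,s,a)$, with $a$ the symbol compatible with $\omega^-_{-1}$. The one difference is that the representative $\eta\in[\ul w]$ is not the maximizer $\theta_{\ul w}$, and the fiber starting point $s=s(\ul w)$ depends on the word.

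\textbf{Step 2 (comparison).} We fix the representative issue with $\wh\phi$-holonomies (Definition \ref{phiHolonomies}). Two points agreeing on the coordinates $[0,n]$ have Birkhoff sums differing by at most $C$. Points of $[\ul w]$ agree only on $[0,n-1]$, but bounded alphabet and H\"older continuity cost only an extra constant. Hence each term lies within a factor $e^{\pm C'}$ of the corresponding maximal weight at starting point $s(\ul w)$.

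The main obstacle is that the starting point $s(\ul w)$ varies with $\ul w$, whereas $\wh Z_n$ uses a single $t$. We bound this in both directions.

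\emph{Upper bound.} Each term is at most $e^{C'}$ times the maximal weight of $\ul w$ at $s(\ul w)$. I would compare this with $\sup_{s}$ of the same word's weight, but summing word-wise suprema is not obviously controlled by $\sup_s\wh Z_n(s)$. Instead, I would use submultiplicativity. Split $n=m\cdot \ell$ into blocks of length $m$. On each block, bound the contribution by $\sup_s\sum_{|\ul v|=m}(\cdot)$, using that after fixing the later blocks the starting fiber point of an earlier block is determined. This should give
$$\|\L^n1\|_\infty\le \big(e^{C'}\sup_s\textstyle\sum_a\wh Z_m(s,a)\big)^{n/m},$$
which yields $\limsup\le \wh P$ after letting $m\to\infty$. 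This matches the proof of Lemma \ref{limsupIsLim} in \cite{TDFOE_I}.

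\emph{Lower bound.} Given $t_0$ nearly maximizing $\wh Z_n(t_0,a)$, choose $(\omega^-,t)$ for the operator so that the preimage fiber points coincide with $t_0$. Since all $F$ are homeomorphisms, we can pick $t=F^{-n}$ applied to $t_0$ along a chosen branch. The branches differ, however, and I would handle this by again blocking with the word-independent bound of Lemma \ref{limsupIsLim}, namely $\lim\frac1n\log\sum_a\wh Z_n(t,a)$ for every fixed $t$. With that formulation the starting point is irrelevant in the limit, and we get $\liminf\ge\wh P$.
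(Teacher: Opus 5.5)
Your reading of the statement as $\|\L^n_{\wh\phi}1\|_\infty$ is correct, and your Steps 1--2 follow the paper's proof in outline. However, the identification at the end of Step 1 is wrong, and it is what creates your ``main obstacle.'' Since $\wh F_R$ is the inverse dynamics, the points $\wh F_R^k(\wt\omega^-,\wt t)$, $k=0,\dots,n-1$, run backward from $\wt t$ (the $n$-th forward image of $t$ along $\ul w$) to the first forward image of $t$. Read in forward time, $\wh\phi^{(-n)}(\wt\omega^-,\wt t)$ is therefore the Birkhoff sum along the forward orbit of the \emph{same} point $t$ for every word, up to a bounded number of boundary terms. It is not a forward orbit started at $s=\wt t$; only the endpoint depends on $\ul w$. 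The identity $\L^n_{\wh\phi}1=\L^m_{\wh\phi}(\L^{n-m}_{\wh\phi}1)$ makes this visible: the first block fixes the base point at which the remaining block is evaluated. With this corrected, the holonomies give directly
\[
(\L^n_{\wh\phi}1)(\omega^-,t)=e^{\pm C'}\sum_{|\ul w|=n,\ \omega^-_{-1}\to w_0}e^{\sum_{k=0}^{n-1}\phi_{F^k_{\theta_{\ul w}}(t)}(T^k\theta_{\ul w})}.
\]
Maximizing over $(\omega^-,t)$ then puts $\|\L^n_{\wh\phi}1\|_\infty$ within a factor $e^{\pm C'}|\mathcal{A}|^{\pm 1}$ of $\sup_t\sum_a\wh Z_n(\wh\phi,t,a)$, where $|\mathcal{A}|$ is the number of symbols. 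The fact that every symbol has a predecessor takes care of the first-symbol versus last-symbol constraint. Lemma \ref{limsupIsLim} then finishes; this is exactly the paper's argument, and no blocking is needed.

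As written, however, your workarounds do not close the proof. For the lower bound, Lemma \ref{limsupIsLim} controls the total weight of all words at one fixed starting point. It says nothing about a sum in which each word carries its own starting point $s(\ul w)$, so ``the starting point is irrelevant in the limit'' does not yield $\liminf\ge\wh P(\wh\phi)$ in the framework you set up. For the upper bound, your claim that fixing the later blocks determines the starting fiber point of an earlier block is false in your model, where $s(\ul w)$ depends on the whole word. In the true picture the dependence runs the other way: fixing the earlier blocks determines the starting point of the later ones. The blocking estimate is valid only because of this, i.e.\ because within each block all words share a starting point. That is precisely the fact your Step 1 missed, and it already makes the direct term-by-term comparison work at level $n$.
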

\begin{proof}
	First, by iterating \eqref{defOfRuelleEq}, for all $n\geq1$, for all $(\omega^-,t)\in \Sigma^-\times X$,
	$$(\L_{\wh\phi}^ng)(\omega^-,t):=\sum_{\wh{F}^{n}_R(\wt\omega^-,\wt t)=(\omega^-,t)}e^{\wh\phi^{(-n)}(\wt\omega^-,\wt t)}g(\wt\omega^-,\wt t).$$
In particular,
	\begin{equation*}\label{LphinIsZn}
		(\L_{\wh\phi}^n1)(\omega^-,t)=\sum_{\wh{F}^{n}_R(\wt\omega^-,\wt t)=(\omega^-,t)}e^{\wh\phi^{(-n)}(\wt\omega^-,\wt t)}.
	\end{equation*}
	Moreover, since $(\Sigma\times X,\wh F)$ admits $\wh\phi$-holonomies, we have 
	\begin{equation*}\label{LphinIsZn2}
		\|\L_{\wh\phi}^n1\|_\infty=\max_{\omega^-}\max_t\sum_{\wh{F}^{n}_R(\wt\omega^-,\wt t)=(\omega^-,t)}e^{\wh\phi^{(-n)}(\wt\omega^-,\wt t)}=\max_{\omega^-}C^{\pm 1}\wh Z_n(\wh\phi, \omega^-_{-1}).
	\end{equation*}
	By Lemma \ref{limsupIsLim}, we are done.
\end{proof}

\begin{remark}\label{forNiceSpaces}
From Definition \ref{defOfRuelle}, a natural question arises: On what ``nice" space does $\L_{\wh\phi}$ act? While the standard Ruelle operator of a H\"older continuous potential $L_\psi$ contracts a cone in $\mathrm{H\ddot{o}l}(\Sigma^-)$, $\L_{\wh\phi}$ does not necessarily contract a cone in $\mathrm{H\ddot{o}l}(\Sigma^-\times X)$.
\end{remark}

\section{Conformal Measures}\label{ConfExistSect}

Once defining the semi-Ruelle operator, we are interested in construction self-measures and self-functions which correspond to its spectral radius. To construct self-functions, one has to first define a suitable function space with sufficiently ``nice" properties (see \textsection \ref{harmFuncsExistSect}). A preliminary step in accomplishing that, is constructing first the self-measures of the semi-Ruelle operator, as we show here in \textsection \ref{ConfExistSect}.

\begin{definition}[Dual operator]
	We define $\L_{\wh\phi}^*:\mathbb{P}(\Sigma^-\times X)\to \mathbb{P}(\Sigma^-\times X) $ by
	$$(\L_{\wh\phi}^*\wh p)(\wh g):=\wh p(\L_{\wh\phi}\wh g),$$
	for all $\wh g\in C(\Sigma^-\times X)$.
\end{definition}

\begin{remark}\label{rmkSpecRad}
	Note, by Lemma \ref{specRad}, the spectral radius of $\L_{\wh\phi}$ on $(C(\Sigma^-\times X),\|\cdot\|_\infty)$ is $e^{-\wh  P(\wh \phi)}$. Indeed, by the Kakutani-Markov-Riesz representation theorem, $\mathbb{P}(\Sigma^-\times X)$ is the dual space of $C(\Sigma^-\times X)$.  
\end{remark}

\begin{lemma}\label{VPOutside}
Assume that $(\Sigma^-\times X,\wh F_R)$ admit $\wh\phi$-holonomies, then the semi-Ruelle operator diverges at its radius of convergence:
$$\|\sum_{k\geq0} \lambda^{-k}\L_{\wh\phi}^k1\|_\infty
\xrightarrow[\lambda\downarrow e^{\wh P(\wh\phi)}]{}\infty.$$
\end{lemma}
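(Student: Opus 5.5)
The plan is to show that the partition sums $\|\L^n_{\wh\phi}1\|_\infty$ are bounded below by $e^{n\wh P(\wh\phi)}$ up to a uniform multiplicative constant. Divergence of the series at $\lambda=e^{\wh P(\wh\phi)}$ then follows, and monotone convergence gives the limit as $\lambda\downarrow e^{\wh P(\wh\phi)}$. Set $a_n:=\|\L_{\wh\phi}^n1\|_\infty$. Since $\L_{\wh\phi}$ is a positive operator, $\L_{\wh\phi}^k1\geq0$. Hence for every $\lambda>e^{\wh P(\wh\phi)}$ and every fixed $N$,
$$\Big\|\sum_{k\geq0}\lambda^{-k}\L_{\wh\phi}^k1\Big\|_\infty\geq \max_{k\leq N}\lambda^{-k}a_k .$$
So it suffices to prove $\sum_k e^{-k\wh P(\wh\phi)}a_k=\infty$ in a form that survives the limit. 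Concretely, I aim to show that for every $M$ there are $N$ and a point $(\omega^-,t)$ with $\sum_{k\le N}e^{-k\wh P}(\L^k_{\wh\phi}1)(\omega^-,t)\ge M$. Letting $\lambda\downarrow e^{\wh P}$ in the finite sum then gives the claim.

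The key step is supermultiplicativity up to a constant, i.e. $a_{n+m}\geq C^{-1}a_na_m$, which yields $a_n\geq C^{-1}e^{n\wh P(\wh\phi)}$ by Fekete's lemma (using Lemma \ref{specRad} for the identification of the limit). To get it, I write
$$\L^{n+m}_{\wh\phi}1=\L^n_{\wh\phi}(\L^m_{\wh\phi}1).$$
It then suffices to show that $\L^m_{\wh\phi}1$ is comparable to its sup norm at every point, up to a transition of bounded length $\ell$ coming from mixing/transitivity of the compact TMS. The argument has three parts:
\begin{enumerate}
\item By $\wh\phi$-holonomies, as in the proof of Lemma \ref{specRad}, $(\L^m 1)(\omega^-,t)$ equals $C^{\pm1}\wh Z_m(\wh\phi,t',\omega^-_{-1})$ for a suitable fiber point $t'$, so it depends on $\omega^-$ essentially only through the symbol $\omega^-_{-1}$.
\item Inserting a connecting word of length $\le\ell$ (costing at most $e^{\ell\|\wh\phi\|_\infty}$) lets me pass between symbols.
\item The remaining issue is the fiber variable: the sup over $t$ in $\wh Z_m$ versus its value at a given $t$. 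Here I would use the second equality of Lemma \ref{limsupIsLim}, which states that the growth rate is the same for every $t$. This is the piece that needs upgrading from exponential-rate equality to a uniform constant.
\end{enumerate}

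I expect step 3, the fiber-uniform comparison, to be the main obstacle. Without a uniform modulus of continuity in $t$ (cf. Remark \ref{forNiceSpaces}), $\L^m1(\cdot,t)$ could vary subexponentially but unboundedly in $t$. If it cannot be obtained, I would fall back on a weaker supermultiplicativity along a pointwise path, following the maximizer. Choose $(\omega^-_n,t_n)$ attaining $a_n$. Then
$$(\L^{n+m}1)(\omega^-,t)\geq \sum_{\text{preimages}} e^{\wh\phi^{(-n)}}\,(\L^m1)(\text{preimage}),$$
and I would pick the $m$-block so that it lands on the maximizing fiber point. This works because the inverse branches $F^{-1}_{\omega^-}$ are homeomorphisms, so $t$ can be pulled back along any chosen word.

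An alternative route avoids lower bounds altogether. Suppose the series stayed bounded by some $B$ as $\lambda\downarrow e^{\wh P}$. Monotone convergence would give $\sum_k e^{-k\wh P}\L^k1\le B$ pointwise. I would then run the argument of Lemma \ref{specRad} and Lemma \ref{limsupIsLim} via the subadditive/superadditive structure of $\log\wh Z_n$ from \cite{TDFOE_I}, which yields $\sup_t\wh Z_n\ge C^{-1}e^{n\wh P}$ along a sequence. That sequence-wise bound already makes infinitely many terms of size $\ge C^{-2}$ at the supremum, contradicting boundedness by $B$. I would present this version, citing the superadditivity established in the proof of Lemma \ref{limsupIsLim} together with the $\wh\phi$-holonomy constant.
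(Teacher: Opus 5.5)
Your proposal has a genuine gap. First, the Fekete step runs the wrong way. If $a_{n+m}\ge C^{-1}a_na_m$, then $\log a_n-\log C$ is superadditive, and Fekete gives the \emph{upper} bound $a_n\le Ce^{n\wh P(\wh\phi)}$, not a lower bound. The lower bound you want follows from positivity alone: $\L^m_{\wh\phi}1\le a_m$ pointwise gives $a_{n+m}\le a_na_m$, hence $a_n\ge e^{n\wh P(\wh\phi)}$ for all $n$ by Lemma~\ref{specRad}. Your steps 1--3 are not needed for this. Worse, the supermultiplicativity you aim for is false in general, for exactly the fiber reason you flag in step 3. In the example of \textsection\ref{AppCount} one has $e^{-n\wh P(\wh\phi)}a_n=\sup_t e^{S_n\psi(t)}$, with Birkhoff sums over $R_\alpha$. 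A bound $\sup_t S_n\psi\le K$ for all $n$ would make $u:=\sup_{n\ge0}S_n\psi$ bounded with $\psi\le u-u\circ R_\alpha$. Integrating against $\lambda$ then forces $\psi=u-u\circ R_\alpha$ a.e., which contradicts the choice of $\psi$.

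More fundamentally, no lower bound on the individual sup norms $a_k$ yields the conclusion, because a priori the suprema sit at different points. So ``infinitely many terms of size $\ge C^{-2}$ at the supremum'' does not contradict $\sum_k e^{-k\wh P(\wh\phi)}\L^k_{\wh\phi}1\le B$ pointwise. Likewise, combining your bound $\max_{k\le N}\lambda^{-k}a_k$ with $a_k\ge C^{-1}e^{k\wh P(\wh\phi)}$ only gives the constant $C^{-1}$.

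The missing idea is to feed the bounded sum back into the operator. Your alternative route starts correctly. Set $\wh h:=\sum_{k\ge0}e^{-k\wh P(\wh\phi)}\L^k_{\wh\phi}1\le B$; then $1\le\wh h\le B$ and $e^{-\wh P(\wh\phi)}\L_{\wh\phi}\wh h=\wh h-1\le(1-\tfrac1B)\wh h$. Iterating by positivity gives $e^{-N\wh P(\wh\phi)}\L^N_{\wh\phi}1\le e^{-N\wh P(\wh\phi)}\L^N_{\wh\phi}\wh h\le B(1-\tfrac1B)^N$. Hence $\limsup_N\frac1N\log\|\L^N_{\wh\phi}1\|_\infty\le\wh P(\wh\phi)+\log(1-\tfrac1B)<\wh P(\wh\phi)$, contradicting Lemma~\ref{specRad}.

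This is the paper's argument, and it needs no fiber-uniform comparison at all. Your monotone-convergence remark then correctly transfers the divergence at $\lambda=e^{\wh P(\wh\phi)}$ to the limit $\lambda\downarrow e^{\wh P(\wh\phi)}$.
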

\begin{proof}
Assume for contradiction that $$\wh S_n:=\sum_{k=0}^n \lambda^{-\wh P(\wh\phi)k}\L_{\wh\phi}^k1$$ is an equi-bounded sequence of functions, say by a constant $M\geq0$. Since $\L_{\wh\phi}$ is a positive operator, then the sequence of functions is increasing. Therefore, the sequence converges point-wise to a function $\wh h$. Then we have for all $n\geq0$,
$$1\leq \wh S_n\leq \wh h\leq M.$$ 
By monotonicity and the positivity of $\wh\L_{\wh\phi}$, we have similarly
 $$\wh S_{n+1}-1=e^{-\wh P(\wh\phi)}\L_{\wh\phi}\wh S_n\xrightarrow[n\to\infty] {\text{point-wise}} e^{-\wh P(\wh\phi)}\L_{\wh\phi}\wh h.$$
Then it follows that 
$$ e^{-\wh P(\wh\phi)}\L_{\wh\phi}\wh h =\wh h -1\leq \wh h -\frac{\wh h}{M}=\wh h\cdot  (1-\frac{1}{M}),$$
and consequently, for all $N\in \mathbb{N}$,
$$ e^{-\wh P(\wh\phi)N}\L_{\wh\phi}^N 1\leq e^{-\wh P(\wh\phi)N}\L_{\wh\phi}^N\wh h \leq \wh h \cdot (1-\frac{1}{M})^N\leq M\cdot (1-\frac{1}{M})^N.$$
Therefore, $ \|e^{-\wh P(\wh\phi)N}\L_{\wh\phi}^N 1 \|_\infty\leq M\cdot (1-\frac{1}{M})^N $ for all $N\in \mathbb{N}$, which is a contradiction to the fact that $e^{-\wh P(\wh\phi)}$ is the spectral radius of $\L_{\wh\phi}$ (recall Lemma \ref{specRad}). Therefore, 
\begin{equation}\label{SnEqNow}
	\|\wh S_n\|_\infty\to\infty.
\end{equation}

Finally, if $\sup_{\lambda>e^{\wh P(\wh\phi)}}\|\sum_{k\geq0} \lambda^{-k}\L_{\wh\phi}^k1\|_\infty<\infty$, and so
\begin{align*}
	\sup_N \|\wh S_N\|_\infty=&\sup_N\|\sum_{k=0}^N e^{-k\wh P(\wh\phi)}\L_{\wh\phi}^k1\|_\infty=\sup_N\sup_{\lambda>e^{\wh P(\wh\phi)}}\|\sum_{k=0}^N \lambda^{-k}\L_{\wh\phi}^k1\|_\infty\\
	\leq &\sup_{\lambda>e^{\wh P(\wh\phi)}}\|\sum_{k\geq0} \lambda^{-k}\L_{\wh\phi}^k1\|_\infty <\infty,
\end{align*}
which is a contradiction to \eqref{SnEqNow}.
\end{proof}

\begin{remark}\label{naiveProof} Our goal next is to construct $\wh p\in \mathbb{P}(\Sigma^-\times X)$ s.t. $e^{-\wh P(\wh\phi)}\L_{\wh\phi}^*\wh p=\wh p$, and s.t. $\wh p(1)=1$. Naively, we may try the following sequence of measures: 
	$$\wh p_n:=\frac{\sum_{k=0}^{n-1}e^{-k\wh P(\wh\phi)}(\L_{\wh\phi}^*)^k}{\|\sum_{k=0}^{n-1}e^{-k\wh P(\wh\phi)}\L_{\wh\phi}^k1\|_\infty}\delta_{(\omega^{-,n},t_n)},$$
	where $(\omega^{-,n},t_n) \in \mathrm{argmax}\Big\{\sum_{k=0}^{n-1}e^{-k\wh P(\wh\phi) }\L_{\wh\phi}^k1\Big\}$. This is indeed a normalized sequence of measures, where the unit ball of $\mathbb{P}(\Sigma^-\times X)$ is compact in the weak-* topology, and one could hope to choose a limit point which is $e^{-\wh P(\wh\phi)}\L_{\wh\phi}^*$-invariant. However, there is a subtlety here:
	$$\| e^{-\wh P(\wh\phi)}\L_{\wh\phi}^*\wh p_n-\wh p_n\|_\mathrm{W}\leq \frac{1+ e^{-n\wh P(\wh\phi)}\|\L_{\wh\phi}1\|_\infty}{\|\sum_{k=0}^{n-1}e^{-k\wh P(\wh\phi)}\L_{\wh\phi}^k1\|_\infty},$$
	
where $\|\wh \nu\|_\mathrm{W}:=\sup\{\wh \nu(\wh g):\|\wh g\|_\mathrm{Lip}\leq 1\}$ is the Wasserstein norm (on the space of signed measures whose total mass is $0$).

	Even knowing that $\|\sum_{k=0}^{n-1}e^{-k\wh P(\wh\phi)}\L_{\wh\phi}^k1\|_\infty \to\infty$, it is not sufficient in order to conclude that $\frac{e^{-n\wh P(\wh\phi)}\|\L_{\wh\phi}1\|_\infty}{\|\sum_{k=0}^{n-1}e^{-k\wh P(\wh\phi)}\L_{\wh\phi}^k1\|_\infty}\to0$. We bypass this hurdle in Theorem \ref{ConfExist} below.
\end{remark}

\begin{definition}
	Given $\wh p\in \mathbb{P}(\Sigma^-\times X)$, we say that $\wh p$ is {\em $\wh\phi$-conformal} if $$e^{-\wh P(\wh\phi)}\L_{\wh\phi}^*\wh p=\wh p.$$
\end{definition}

\begin{remark}
	In more general settings, such as when allowing $\Sigma$ to be non-compact, one can also study $\wh\phi$-conformal measures which are allowed to be infinite, but still Radon measures. In the setting of topological Markov shifts (without a fiber), see for example \cite{SarigNR, SarigTDF}.
\end{remark}

\begin{theorem}[Existence of a conformal measure]\label{ConfExist}
	Assume that $(\Sigma\times X,\wh F)$ admits $\wh\phi$-holonomies, then there exists $\wh p\in \mathbb{P}(\Sigma^-\times X)$ s.t. 
	$$ e^{-\wh P(\wh\phi)}\L_{\wh\phi}^*\wh p=\wh p.$$
\end{theorem}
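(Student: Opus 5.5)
The plan is the Patterson–Sullivan / Denker–Urbański approach. Instead of the naive Cesàro averages of Remark \ref{naiveProof}, we use the resolvent at parameters $\lambda\downarrow e^{\wh P(\wh\phi)}$, where Lemma \ref{VPOutside} supplies divergence. As a first attempt, for $\lambda>e^{\wh P(\wh\phi)}$ set
$$\wh R_\lambda:=\sum_{k\geq0}\lambda^{-k}\L_{\wh\phi}^k1 .$$
This series converges in $C(\Sigma^-\times X)$: by Lemma \ref{specRad}, the spectral radius of $\L_{\wh\phi}$ on $C(\Sigma^-\times X)$ equals $e^{\wh P(\wh\phi)}$, so the root test applies. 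Choose $x_\lambda\in\mathrm{argmax}\,\wh R_\lambda$, which exists by continuity and compactness, and define the probability measures
$$\wh p_\lambda:=\frac{1}{\|\wh R_\lambda\|_\infty}\sum_{k\geq0}\lambda^{-k}(\L_{\wh\phi}^*)^k\delta_{x_\lambda}.$$
Since $\wh p_\lambda(1)=\wh R_\lambda(x_\lambda)/\|\wh R_\lambda\|_\infty=1$, these are indeed probability measures. Shifting the series gives the identity
$$\lambda^{-1}\L_{\wh\phi}^*\wh p_\lambda=\wh p_\lambda-\frac{\delta_{x_\lambda}}{\|\wh R_\lambda\|_\infty}.$$
This is exact: in contrast with the Cesàro averages, there is no boundary term at the far end of the sum. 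By Lemma \ref{VPOutside}, $\|\wh R_\lambda\|_\infty\to\infty$ as $\lambda\downarrow e^{\wh P(\wh\phi)}$, so the error term tends to $0$ in total variation.

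Next take $\lambda_j\downarrow e^{\wh P(\wh\phi)}$ with $\wh p_{\lambda_j}\to\wh p$ weak-$*$, which is possible by compactness of $\mathbb{P}(\Sigma^-\times X)$. For $\wh g\in C(\Sigma^-\times X)$, the function $\L_{\wh\phi}\wh g$ is continuous, so $\wh p_{\lambda_j}(\L_{\wh\phi}\wh g)\to\wh p(\L_{\wh\phi}\wh g)$. Also $\lambda_j^{-1}\to e^{-\wh P(\wh\phi)}$, and the error term satisfies $|\wh g(x_{\lambda_j})|/\|\wh R_{\lambda_j}\|_\infty\le\|\wh g\|_\infty/\|\wh R_{\lambda_j}\|_\infty\to0$. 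Passing to the limit in the identity above therefore yields $e^{-\wh P(\wh\phi)}\L_{\wh\phi}^*\wh p=\wh p$, and $\wh p(1)=1$ is preserved under weak-$*$ limits.

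The main obstacle is the divergence at the critical parameter, and this is exactly what Lemma \ref{VPOutside} provides, using the $\wh\phi$-holonomies through Lemma \ref{specRad}. If I did not want to rely on the second half of that lemma, the fallback is Patterson's trick. One would insert a slowly varying weight $a_k$, with $a_{k+1}/a_k\to1$, chosen so that $\sum_k a_k e^{-k\wh P(\wh\phi)}\|\L_{\wh\phi}^k1\|_\infty=\infty$. The same computation then goes through, with the extra error controlled by $\sup_{k\geq K}|a_{k+1}/a_k-1|$ together with the divergence of the weighted series.

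A minor point remains: the normalization uses $\|\cdot\|_\infty$, while the measure is built from a single point mass. This is consistent because $x_\lambda$ realizes the maximum of $\wh R_\lambda$, so $\wh p_\lambda$ has total mass exactly one.
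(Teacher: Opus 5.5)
Your proposal is correct and is essentially the paper's proof. The paper also normalizes the resolvent sums $\sum_{k\geq0}\lambda^{-k}(\L_{\wh\phi}^*)^k\delta_{x_\lambda}$ at a maximizing point $x_\lambda$, uses Lemma \ref{VPOutside} to make the boundary term vanish as $\lambda\downarrow e^{\wh P(\wh\phi)}$, and passes to a weak-$*$ limit. The only difference is cosmetic: the paper controls the error in the Wasserstein norm against Lipschitz functions and checks the total mass separately, whereas your exact total-variation identity tested against all continuous $\wh g$ is slightly cleaner, and the Patterson-weight fallback is not needed.
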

\begin{proof}
	First, by Lemma \ref{specRad}, the spectral radius of $\L_{\wh\phi}^*$ is $e^{-\wh P(\wh\phi)}$ (see Remark \ref{rmkSpecRad}). In addition, $\L_{\wh\phi}$ is a positive operator%, and so also is $\L_{\wh\phi}^*$
	. Note, as $\|\wh\phi\|_\infty<\infty$, $\wh P(\wh\phi)>-\infty$, and so $e^{-\wh P(\wh\phi)}>0$ (recall Remark \ref{PBounded}).

Then, given $\lambda>e^{\wh P(\wh\phi)}$ the resolvent
\begin{equation*}\label{formOfResolvent0}
R(\lambda,\L_{\wh\phi}):=\Big(\lambda\cdot \mathrm{Id}-\L_{\wh\phi}\Big)^{-1},
\end{equation*}
	is well-defined and is given by
\begin{equation}\label{formOfResolvent}
R(\lambda,\L_{\wh\phi})= \frac{1}{\lambda}\sum_{k\geq0}(\lambda^{-1}\L_{\wh\phi})^k.
\end{equation}

In addition, %for every fixed $(\omega^-,t)\in \Sigma^-\times X$, $$\sum_{k\geq0}\lambda^{-k}\L_{\wh\phi}^k1(\omega^-,t)$$ is analytic in $\lambda\notin \sigma(\L_{\wh\phi})$ where $\sigma(\L_{\wh\phi})$ denotes the spectrum of $\L_{\wh\phi} $. Then by the Vivanti-Pringsheim theorem (in the context of positive operators, \cite{VP}),
by Lemma \ref{VPOutside} and by \eqref{formOfResolvent},
\begin{align}\label{eqVPlambda}
\lambda\cdot\|R(\lambda,\L_{\wh\phi})1\|_\infty%=&\|R(\lambda,\L_{\wh\phi})\|_\mathrm{Op(C(\Sigma^-\times X),\|\cdot\|_\infty)}\nonumber\\
\geq  %\frac{1}{\lambda}
\|\sum_{k\geq0} \lambda^{-k}(\L_{\wh\phi}^k1)\|_\infty
\xrightarrow[\lambda\downarrow e^{\wh P(\wh\phi)}]{}\infty.
\end{align}
This allows us to bypass the obstacle of Remark \ref{naiveProof} by defining the following convergent infinite sum without a tail term,
\begin{equation}\label{thePLambdaDef}
    \wh p_\lambda:= \frac{\sum_{k\geq0}\lambda^{-k}(\L_{\wh\phi}^*)^k}{\|\sum_{k\geq0}\lambda^{-k}\L_{\wh\phi}^k1\|_\infty}\delta_{(\omega^{-,\lambda},t_\lambda)}\in \mathbb{
P}(\Sigma^-\times X),
\end{equation}
where  $(\omega^{-,\lambda},t_\lambda) \in \mathrm{argmax}\Big\{\sum_{k\geq0}\lambda^{-k}\L_{\wh\phi}^k1\Big\}$. It follows that,
 	\begin{equation}\label{eqVPlambda2}
\| \wh p_\lambda-\lambda\L_{\wh\phi}^*\wh p_\lambda\|_\mathrm{W}\leq \frac{1}{\|\sum_{k\geq0}\lambda^{-k}\L_{\wh\phi}^k1\|_\infty},
\end{equation}

where $\|\wh \nu\|_\mathrm{W}:=\sup\{\wh \nu(\wh g):\|\wh g\|_\mathrm{Lip}\leq 1\}$ is the Wasserstein norm (on the space of signed measures whose total mass is $0$). So by \eqref{eqVPlambda} and \eqref{eqVPlambda2},
\begin{equation}\label{eqVPlambda3}
	\| \wh p_\lambda-\lambda\L_{\wh\phi}^*\wh p_\lambda\|_\mathrm{W}\leq \frac{1}{\lambda\| R(\lambda,\L_{\wh\phi}) 1\|_\infty}\xrightarrow[\lambda\downarrow e^{\wh P(\wh \phi)}]{}0.
\end{equation}
 
Then, by choosing a sequence $\lambda_j\downarrow e^{\wh P(\wh \phi)} $ s.t. 
\begin{equation}\label{thePSeq}
 \wh p_{\lambda_j}\to \wh p\in \mathbb{P}(\Sigma^-\times X),   
\end{equation}
 we get for all $\wh g \in C(\Sigma^-\times X)$ with $\|\wh g\|_\mathrm{Lip}\leq 1$, by \eqref{eqVPlambda3},
\begin{align}\label{eqVPlambda4}
\Big( \wh p-e^{-\wh P(\wh\phi)}\L_{\wh\phi}^*\wh p\Big)(\wh g)=&\lim_j\Big( \lambda_j^{-1}\L_{\wh\phi}^*\wh p_{\lambda_j}-\wh p _{\lambda_j}\Big)(\wh g)\nonumber\\
\leq& \frac{1}{\lambda_j\| R(\lambda_j,\L_{\wh\phi}) 1\|_\infty}\xrightarrow[j\to\infty]{}0.
\end{align}
By applying \eqref{eqVPlambda4} to the constant functions $1$ and $-1$, we get that $e^{-\wh P(\wh\phi)}\L_{\wh\phi}^*\wh p$ is a probability measure. Thus, as the Wasserstein norm is a norm for signed measures whose total mass is $0$, $\wh p -e^{-\wh P(\wh\phi)}\L_{\wh\phi}^*\wh p $ is the zero measure, and so $e^{-\wh P(\wh\phi)}\L_{\wh\phi}^*\wh p=\wh p $, and we are done.
\end{proof}

\begin{prop}\label{keyPosProp}
Assume that $(\Sigma,T)$ is topologically transitive, then there exists $N\geq0$ s.t. for all $a$, for all $k\geq 0$,
 $$\wh p(\mathbb{1}_{[a]\times X}\circ \wh F_R^k)\geq e^{-N\|\wh \phi\|_\infty-N\wh P(\wh\phi)}.$$
\end{prop}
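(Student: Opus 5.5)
The plan is to turn the statement into one about the functions $\L_{\wh\phi}^k1$ and then use transitivity to move mass between symbols. The basic identity is
$$\L_{\wh\phi}^k(g\circ\wh F_R^k\cdot f)=g\cdot\L_{\wh\phi}^kf,$$
which holds because every $F_R^k$-preimage of $(\omega^-,t)$ is mapped to $(\omega^-,t)$. Combined with conformality (Theorem \ref{ConfExist}) it gives
$$\wh p(\mathbb{1}_{[a]\times X}\circ\wh F_R^k)=e^{-k\wh P(\wh\phi)}\,\wh p(\mathbb{1}_{[a]\times X}\,\L_{\wh\phi}^k1).$$
So it suffices to show $\wh p(\mathbb{1}_{[a]\times X}\L_{\wh\phi}^k1)\ge e^{-N\|\wh\phi\|_\infty-N\wh P(\wh\phi)}e^{k\wh P(\wh\phi)}$, uniformly in $a$ and $k$.

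\emph{Step 1 (pigeonhole).} Summing over the finitely many symbols $b$ and using conformality once more gives
$$\sum_b \wh p(\mathbb{1}_{[b]\times X}\L_{\wh\phi}^k1)=\wh p(\L_{\wh\phi}^k1)=e^{k\wh P(\wh\phi)}.$$
Hence for each $k$ there is a symbol $b=b(k)$ with $\wh p(\mathbb{1}_{[b]\times X}\L_{\wh\phi}^k1)\ge e^{k\wh P(\wh\phi)}/|\mathcal A|$, where $\mathcal A$ is the alphabet.

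\emph{Step 2 (moving from $b$ to $a$).} By topological transitivity there is $N$ such that every ordered pair of symbols is joined by an admissible word of length $j\le N$. For the pair $(b,a)$ I take such a word $u$ of length $j$, chosen so that for every $(\omega^-,t)$ with $\omega^-_{-1}=b$, appending $u$ produces an $\wh F_R^j$-preimage $y_u=(\omega^-u,t_u)$ lying in $[a]\times X$. If all lengths cannot be made equal to $N$, I pad the path inside a fixed loop, or sum over $j\le N$ and absorb the factor $N$ into the constant. Conformality at time $j$, together with keeping only the single preimage $y_u$ in the sum defining $\L_{\wh\phi}^j$, gives
$$\wh p(\mathbb{1}_{[a]\times X}\L_{\wh\phi}^k1)=e^{-j\wh P}\,\wh p\big(\L_{\wh\phi}^j(\mathbb{1}_{[a]\times X}\L_{\wh\phi}^k1)\big)\ \ge\ e^{-j\wh P-j\|\wh\phi\|_\infty}\int \mathbb{1}_{[b]\times X}(x)\,(\L_{\wh\phi}^k1)(y_u(x))\,d\wh p(x).$$

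\emph{Step 3 (the obstacle).} To close the argument I must compare $(\L_{\wh\phi}^k1)(y_u(x))$ with $(\L_{\wh\phi}^k1)(x)$, up to a multiplicative constant independent of $k$. The two points differ in the symbolic coordinate, which is harmless: by $\wh\phi$-holonomies, as in the proof of Lemma \ref{specRad}, $\L_{\wh\phi}^k1(\omega^-,t)$ is $C^{\pm1}$-comparable to $\wh Z_k(\wh\phi,t,\omega^-_{-1})$. They also differ in the fiber coordinate, $t_u=F^{-1}_{\cdot}\cdots(t)$ versus $t$, and this is where I expect the difficulty.

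To handle the fiber coordinate, the first thing I will try is to avoid pointwise comparison in $t$. I would rewrite $\wh p(\mathbb{1}_{[b]\times X}\L_{\wh\phi}^{k}1)$ at time $k+j$ as $e^{-(k+j)\wh P}\wh p(\mathbb{1}_{[b]\times X}\L_{\wh\phi}^{k+j}1)$, and compare it with the restricted sum over words whose last $j$ appended symbols form $u$. Since the fiber maps are homeomorphisms, both are sums over the same family of base words of length $k$, evaluated along the same fiber orbits and reindexed by the initial segment. The weights then differ only in the $j$ extra symbols, at most a factor $e^{2j\|\wh\phi\|_\infty}$, and in the holonomy constant $C$. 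There are at most $|\mathcal A|^j$ tails, so the restricted sum captures at least a fraction $|\mathcal A|^{-N}e^{-2N\|\wh\phi\|_\infty}C^{-2}$ of the total.

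Combining this with Step 1 yields a bound of the stated form after enlarging $N$ so that $|\mathcal A|^{-N-1}C^{-2}\ge e^{-N\|\wh\phi\|_\infty}$, or by absorbing these constants into the exponent. This reindexing step is the one I regard as the main obstacle. If it fails, the fallback is to restrict to points where $\L_{\wh\phi}^k1$ is near its sup, using that $\|\L_{\wh\phi}^k1\|_\infty\approx e^{k\wh P}$ by Lemma \ref{specRad}.
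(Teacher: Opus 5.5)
Your Steps 1 and 2 are correct. That covers the identity $\L_{\wh\phi}^k(g\circ\wh F_R^k\cdot f)=g\,\L_{\wh\phi}^kf$, the reduction to $\wh p(\mathbb{1}_{[a]\times X}\L_{\wh\phi}^k1)$, the pigeonhole choice of $b(k)$, and the lower bound from keeping the single preimage $y_u$.

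The gap is Step 3, and the proposed reindexing does not close it. The condition $\mathbb{1}_{[a]\times X}\circ\wh F_R^k$ sits exactly at the junction between the connecting word and the $k$-block. So for a word $uv$, the $k$ Birkhoff terms along $v$ are computed along the fiber orbit issuing from $t_u$, not from $t$. Two words with the same $k$-block but different initial segments therefore differ by $\wh\phi^{(-k)}$ taken over two different fiber points. This is not a boundary term, and $\wh\phi$-holonomies (Definition \ref{phiHolonomies}) only compare different symbolic pasts over the \emph{same} fiber point. Hence the claim that the weights differ only in the $j$ extra symbols is false.

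The failure is real, not technical. Take the full $2$-shift, $X=[0,1]$, $\wh\phi(\omega^-,t)=2t$, inverse branch $t\mapsto t$ for symbol $0$ and $t\mapsto G(t)$ for symbol $1$, where $G$ is a homeomorphism fixing $0,1$ with $G(t)<t$ on $(0,1)$. Then $\L_{\wh\phi}^k1(\omega^-,t)\geq e^{2tk}$ (all-zero words). At the $1$-preimage, however, the growth rate of $\L_{\wh\phi}^k1$ is $\max(2G(t),\log 2)<2t$ whenever $2t>\log 2$.

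The alternatives you mention do not help either:
\begin{enumerate}
\item If you append the $j$ symbols at the far end instead, the $k$-block keeps its fiber orbit. But then $a$ is the deepest symbol, and conformality turns your restricted sum into a bound on $\wh p([a]\times X)$, not on $\wh p(\wh F_R^{-k}([a]\times X))$.
\item Your rewriting is not an identity. By conformality, $\wh p(\mathbb{1}_{[b]\times X}\L_{\wh\phi}^k1)=e^{k\wh P(\wh\phi)}\wh p(\wh F_R^{-k}([b]\times X))$, while $e^{-(k+j)\wh P(\wh\phi)}\wh p(\mathbb{1}_{[b]\times X}\L_{\wh\phi}^{k+j}1)=\wh p(\wh F_R^{-k-j}([b]\times X))$.
\item The fallback gives only $\|\L_{\wh\phi}^k1\|_\infty=e^{k\wh P(\wh\phi)+o(k)}$ from Lemma \ref{specRad}, and you would still have to compare $\L_{\wh\phi}^k1$ at $y_u(x)$ and at $x$.
\end{enumerate}

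The paper takes a different route. It uses the particular conformal measure of Theorem \ref{ConfExist}, namely the limit \eqref{thePSeq} of the resolvent measures \eqref{thePLambdaDef} based at argmax points. It shifts the resolvent index by $N$, at cost $e^{-N\wh P(\wh\phi)}$ in the limit since the denominators diverge, and then applies the pointwise bound \eqref{posLogEq-1}. Your pigeonhole argument, if completed, would apply to every conformal measure rather than only this one.

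However, the passage in \eqref{posLogEq-1} from $(n+N)$-preimages to $n$-preimages replaces $\L_{\wh\phi}^k1$ at the $N$-preimage through the connecting word by $\L_{\wh\phi}^k1$ at its image. That is exactly your Step 3 comparison. In the example above, \eqref{posLogEq-1} already fails for $n=0$, $a=1$, large $k$, and any $t$ with $2t>\log 2$. The proposition's conclusion does hold in that example, but only because $\wh p$ is carried by $\Sigma^-\times\{1\}$, a fiber fixed by both branches.

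So the paper does not supply the missing idea. Your diagnosis of where the difficulty lies is accurate, but neither argument proves the statement as written.

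What would close Step 3 is $k$-uniform control of $\L_{\wh\phi}^k1$ across fiber points. One example is the hypothesis $\sup_k\|\log\L_{\wt\phi}^k1\|_{L^\infty(\wh p)}<\infty$ of Theorem \ref{harmExist}, with $\wt\phi=\wh\phi-\wh P(\wh\phi)$. Under it, $\wh p(\mathbb{1}_{[a]\times X}\circ\wh F_R^k)=\int_{[a]\times X}\L_{\wt\phi}^k1\,d\wh p$ is comparable to $\wh p([a]\times X)$, which is bounded below by conformality and transitivity.
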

\begin{proof}
 Let $N$ be such that for any symbol $b$ there exists an admissible word $\ul u^b$ of length $N$ s.t. $u_{N-1}^b=a$ and $u_{0}^b=b$. Then, for all $(\omega^-,t)$ for all $k\geq 0$ for all $n\geq 0$,
\begin{align}\label{posLogEq-1}
    \L_{\wh \phi}^{n+N}(\mathbb{1}_{[a]\times X}\L_{\wh\phi}^k1)(\omega^-,t)=& \sum_{\overset{\wh F^{n+N}_R(\sigma^-,s)}{=(\omega^-,t)}}e^{\wh \phi^{(-n-N)}(\sigma^-,t)}(\mathbb{1}_{[a]\times X}\L_{\wh\phi}^k1)(\sigma^-,s)\\
    \geq&e^{-N\|\wh\phi\|_\infty}\sum_{\wh F^{n}_R(\sigma^-,s)=(\omega^-,t)}e^{\wh \phi^{(-n)}(\sigma^-,t)}\L_{\wh\phi}^k1(\sigma^-,s)\nonumber\\
    =& e^{-N\|\wh\phi\|_\infty}\L_{\wh \phi}^{n}(\L_{\wh\phi}^k1)(\omega^-,t)=e^{-N\|\wh\phi\|_\infty}\L_{\wh \phi}^{n+k}1(\omega^-,t).\nonumber
\end{align}
Next, we note that by $\wh\phi$-conformality,
$$\wh p(\mathbb{1}_{[a]\times X}\circ \wh F_R^k)=\int \mathbb{1}_{[a]\times X}\circ \wh F_R^kd\wh p=\int \mathbb{1}_{[a]\times X}e^{-k\wh P(\wh\phi)}\L_{\wh\phi}^k1d\wh p.$$
Then, by \eqref{thePSeq} and \eqref{thePLambdaDef}, together with \eqref{posLogEq-1},
\begin{align*}
  &\int \mathbb{1}_{[a]\times X}e^{-k\wh P(\wh\phi)}\L_{\wh\phi}^k1d\wh p\\
  =&\lim_j\frac{\sum_{n\geq0}\lambda_j^{-n}\L_{\wh \phi}^n( \mathbb{1}_{[a]\times X}e^{-k\wh P(\wh\phi)}\L_{\wh\phi}^k1)(\omega^-_{\lambda_j},t_{\lambda_j})}{\sum_{n\geq0}\lambda_j^{-n}\L_{\wh \phi}^n1(\omega^-_{\lambda_j},t_{\lambda_j})}\\
  =&e^{-N\wh P(\wh\phi)}\lim_j\frac{\sum_{n\geq0}\lambda_j^{-n}\L_{\wh \phi}^{n+N}( \mathbb{1}_{[a]\times X}e^{-k\wh P(\wh\phi)}\L_{\wh\phi}^k1)(\omega^-_{\lambda_j},t_{\lambda_j})}{\sum_{n\geq0}\lambda_j^{-n}\L_{\wh \phi}^n1(\omega^-_{\lambda_j},t_{\lambda_j})}\\
  \geq&e^{-N\wh P(\wh\phi)}\lim_j\frac{\sum_{n\geq0}\lambda_j^{-n-k}\L_{\wh \phi}^n( \mathbb{1}_{[a]\times X}\L_{\wh\phi}^k1)(\omega^-_{\lambda_j},t_{\lambda_j})}{\sum_{n\geq0}\lambda_j^{-n}\L_{\wh \phi}^n1(\omega^-_{\lambda_j},t_{\lambda_j})} \\
  \geq&e^{-N\wh P(\wh\phi)} e^{-N\|\wh \phi\|_\infty}\lim_j\frac{\sum_{n\geq0}\lambda_j^{-n-k}\L_{\wh \phi}^{n+k}1(\omega^-_{\lambda_j},t_{\lambda_j})}{\sum_{n\geq0}\lambda_j^{-n}\L_{\wh \phi}^n1(\omega^-_{\lambda_j},t_{\lambda_j})}=e^{-N\|\wh \phi\|_\infty-N\wh P(\wh\phi)}.
\end{align*}
\end{proof}

\begin{lemma}[Ergodic conformal measure]\label{ConfOfErg}
	Let $\wh p$ be a conformal measure, then almost every ergodic component of $\wh p$ is $\wh\phi$-conformal.
\end{lemma}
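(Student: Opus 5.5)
The plan is to identify the ergodic components of $\wh p$ with the conditional measures of $\wh p$ on the $\sigma$-algebra $\mathcal I$ of $\wh F_R$-invariant Borel sets. Before doing so I note that $\wh p$ is non-singular for $\wh F_R$. By $\wh\phi$-conformality, for every Borel $A$ and $k\geq0$,
$$\wh p(\wh F_R^{-k}A)=\int \mathbb{1}_A\, e^{-k\wh P(\wh\phi)}\L_{\wh\phi}^k 1\, d\wh p .$$
This is the identity already used in the proof of Proposition \ref{keyPosProp}; it follows from $\L_{\wh\phi}^k(\wh g\circ \wh F_R^k)=\wh g\cdot \L_{\wh\phi}^k1$. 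Hence $\wh p\circ \wh F_R^{-1}\ll \wh p$. So the ergodic decomposition makes sense in the non-singular category: disintegrate $\wh p=\int \wh p_x\, d\wh p(x)$ along $\mathcal I$.

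\textbf{Key identity.} For $B\in\mathcal I$ we have $\mathbb{1}_B=\mathbb{1}_B\circ \wh F_R$. The same push-pull formula $\L_{\wh\phi}(\wh g\cdot(\wh h\circ \wh F_R))=\wh h\cdot\L_{\wh\phi}\wh g$ then gives $\L_{\wh\phi}(\wh g\,\mathbb{1}_B)=\mathbb{1}_B\,\L_{\wh\phi}\wh g$. This extends from continuous $\wh g$ to bounded Borel $\wh g$, with $\L_{\wh\phi}$ defined by the same finite sum. Conformality of $\wh p$, applied to bounded Borel functions by a monotone class argument, yields
$$\int_B \L_{\wh\phi}\wh g\, d\wh p=\wh p(\L_{\wh\phi}(\wh g\mathbb{1}_B))=e^{\wh P(\wh\phi)}\int_B \wh g\, d\wh p\quad\text{for all } B\in\mathcal I .$$
Equivalently, $\mathbb{E}_{\wh p}[\L_{\wh\phi}\wh g\mid \mathcal I]=e^{\wh P(\wh\phi)}\,\mathbb{E}_{\wh p}[\wh g\mid\mathcal I]$ $\wh p$-a.e. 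In particular each normalized restriction $\wh p|_B/\wh p(B)$ with $\wh p(B)>0$ is already $\wh\phi$-conformal.

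\textbf{Passing to components.} Since $\mathbb{E}_{\wh p}[\wh g\mid\mathcal I](x)=\wh p_x(\wh g)$ a.e., the identity gives $\wh p_x(\L_{\wh\phi}\wh g)=e^{\wh P(\wh\phi)}\wh p_x(\wh g)$ for a.e.\ $x$, for each fixed $\wh g$. I will take a countable $\|\cdot\|_\infty$-dense set $\{\wh g_j\}\subset C(\Sigma^-\times X)$, which exists because $\Sigma^-\times X$ is compact metric. Then there is one full-measure set of $x$ on which the identity holds for all $j$. Since $\L_{\wh\phi}$ is bounded on $C(\Sigma^-\times X)$ and both sides are continuous in $\wh g$, it holds for all $\wh g$. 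Thus a.e.\ $\wh p_x$ is a probability measure with $e^{-\wh P(\wh\phi)}\L_{\wh\phi}^*\wh p_x=\wh p_x$.

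\textbf{Main obstacle.} The delicate point is the measure-theoretic one: realizing the disintegration so that a.e.\ $\wh p_x$ is genuinely ergodic, i.e.\ gives each invariant set measure $0$ or $1$. The difficulty is that $\mathcal I$ need not be countably generated, and $\wh p$ is only quasi-invariant, not invariant. I would first try to invoke the ergodic decomposition theorem for non-singular maps on standard Borel spaces (Greschonig--Schmidt type). This furnishes a countably generated sub-$\sigma$-algebra $\mathcal I_0\subseteq\mathcal I$ that coincides with $\mathcal I$ mod $\wh p$, together with a.e.\ ergodic (and non-singular) conditional measures. The argument above uses only that the components are conditional measures with respect to ($\wh p$-completions of) invariant sets, so it applies verbatim with $\mathcal I_0$.
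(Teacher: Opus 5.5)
Your proposal is correct and follows the same route as the paper: disintegrate $\wh p$ along the $\sigma$-algebra $\mathcal I$ of $\wh F_R$-invariant sets, then transfer the conformality identity to the conditional measures via uniqueness of conditional expectations. The paper's proof writes the identity only integrated against all of $\wh p$ before invoking uniqueness of conditional measures, so your extra steps supply what it leaves implicit: the localization $\L_{\wh\phi}(\wh g\,\mathbb{1}_B)=\mathbb{1}_B\,\L_{\wh\phi}\wh g$ for $B\in\mathcal I$ (giving $\mathbb{E}_{\wh p}[\L_{\wh\phi}\wh g\mid\mathcal I]=e^{\wh P(\wh\phi)}\mathbb{E}_{\wh p}[\wh g\mid\mathcal I]$), the countable dense family that yields a single full-measure set, and the appeal to the non-singular ergodic decomposition theorem for genuine ergodicity of a.e.\ component.
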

\begin{proof}
Let $\mathcal{I} = \Big\{ B \in \mathcal{B}( \Sigma^-\times X ): \wh F_R^{-1}B = B \Big\}$ be the $\sigma$-algebra of $\wh F_R$-invariant sets. Since the conformal measure $\wh p$ is generally not $\wh F_R$-invariant but is quasi-invariant, we decompose $\wh p$ w.r.t. $\mathcal{I}$. We the existence of conditional probabilities theorem, we can write
$$\wh p = \int  \mathbb{E}(\cdot |\mathcal{I})(\omega^-,t) d\wh p(\omega^-,t),$$
where $\mathbb{E}(\cdot |\mathcal{I})(\omega^-,t) :L^\infty(\wh p)\to \mathbb{R}$ is a well-defined bounded linear functional $\wh p$-a.e., hence it defines a measure $\wh p_{(\omega^-,t)}$ on $\Sigma^-\times X$. This defines the ergodic decomposition of $\wh p$, as for every invariant set $E\in\mathcal{I}$, $\mathbb{1}_E$ is $\mathcal{I}$-measurable, and so for $\wh p$-a.e. $(\omega^-,t)$,
$$\wh p_{(\omega^-,t)}(\mathbb{1}_{E} ):=\mathbb{E}(\mathbb{1}_{E} |\mathcal{I})(\omega^-,t)= \mathbb{1}_{E} (\omega^-,t)\in \{0,1\}.$$

Recall that $\wh p$ being a conformal measure means that for any $\wh g %,\wh h
\in \mathrm{Lip}(\Sigma^-\times X)$ we have,
$$\int %\wh h
 e^{-\wh P(\wh\phi)}\L_{\wh\phi}\wh g d\wh p= \int %\wh h\circ \wh F_R \cdot 
 \wh gd\wh p.$$
And so,
\begin{align*}
\int \wh p_{(\omega^-,t)}(%\wh h 
e^{-\wh p(\wh\phi)}\L_{\wh\phi}\wh g )d\wh p =&	\int %\wh h 
e^{-\wh p(\wh\phi)}\L_{\wh\phi}\wh g d\wh p=  \int %\wh h\circ \wh F_R \cdot 
\wh gd\wh p\\
=&\int \wh p_{(\omega^-,t)}( %\wh h\circ \wh F_R \cdot 
\wh g)d\wh p.
\end{align*}
Indeed,
$$\int \wh g d(e^{-\wh P(\wh \phi)}\L_{\wh\phi}^*\wh p _{(\omega^-,t)})=\int %\wh h e^{-\wh P(\wh\phi)}
e^{-\wh P(\wh \phi)}\L_{\wh\phi}\wh g d\wh p _{(\omega^-,t)}.$$
This holds for every $\wh g%,\wh h
\in \mathrm{Lip}(\Sigma^-\times X)$, and so by the uniqueness of the conditional measures, we get that for $\wh p$-a.e. $(\omega^-,t)$,
 $$e^{-\wh P(\wh \phi)}\L_{\wh\phi}^*\wh p _{(\omega^-,t)} = \wh p _{(\omega^-,t)}.$$
\end{proof}

\section{Two-Sided Extension, and Semi-Gibbs Estimates}

\begin{lemma}\label{forHar}
	Assume that $(\Sigma\times X,\wh F)$ admits $\wh\phi$-holonomies. Then there exists $C>0$ s.t. for all $t\in X$ for all $d(\omega^-,\wt\omega^-)<1$ for all $k\geq0$,
$$\log \frac{(\L_{\wh \phi}^k1)(\omega^-,t)}{(\L_{\wh \phi}^k1)(\wt\omega^-,t)}\leq C.$$
\end{lemma}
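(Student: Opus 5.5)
The plan is to compare the two sums defining $(\L_{\wh\phi}^k1)(\omega^-,t)$ and $(\L_{\wh\phi}^k1)(\wt\omega^-,t)$ term by term, using a natural bijection between their sets of preimages. By the iterated formula in the proof of Lemma \ref{specRad},
$$(\L_{\wh\phi}^k1)(\omega^-,t)=\sum_{\wh F_R^k(\sigma^-,s)=(\omega^-,t)}e^{\wh\phi^{(-k)}(\sigma^-,s)}.$$
A $k$-fold preimage of $(\omega^-,t)$ under $\wh F_R$ is obtained by prepending an admissible word $\ul w$ of length $k$ to $\omega^-$, giving $\sigma^-=\omega^-\ul w$. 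The fiber coordinate is then $s=F^{k}_{\ul w\,\omega^-}(t)$, the forward composition along the concatenated sequence. Admissibility of the concatenation depends only on $\ul w$ and the symbol $\omega^-_{-1}$.

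The condition $d(\omega^-,\wt\omega^-)<1$ gives $\omega^-_{-1}=\wt\omega^-_{-1}$. So the same words $\ul w$ are admissible for both points, and $\sigma^-=\omega^-\ul w\mapsto\wt\sigma^-=\wt\omega^-\ul w$ is a bijection of the two preimage sets.

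The key step is to interpret each exponent as a Birkhoff sum of $\wh\phi$ for the two-sided cocycle $\wh F$. Choose $\omega,\wt\omega\in\Sigma$ extending $\omega^-\ul w$ and $\wt\omega^-\ul w$ with the same forward coordinates. Then $\wh\phi^{(-k)}(\sigma^-,s)$ equals, up to the index bookkeeping from Remark \ref{afterSinaiRuelle} and the reduction $\wh\phi=\wh\phi^-$, the forward Birkhoff sum $\wh\phi^{(k)}(\theta,s')$ along $\wh F$. Here $\theta,\wt\theta$ are two-sided sequences that agree on a block of $k+1$ consecutive coordinates (the word $\ul w$ together with the common symbol $\omega^-_{-1}$), and the fiber starting point $s'$ is common to both.

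This common starting point is the crucial observation. In the two-sided picture, pulling $t$ back through $F^{-1}$ along the word and then pushing forward again reaches the same fiber point for both sequences. The fiber maps in the window depend, after the reduction $F_\omega=F_{(\omega_i)_{i\le 0}}$, only on coordinates inside the shared block together with their pasts. The delicate part is that the pasts differ, so I will use the holonomy formulation with care: I will fix the base point so that the fiber trajectories are identical over the shared block.

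After a shift this is exactly the situation of Definition \ref{phiHolonomies}: two points agreeing on $[0,k]$ with the same fiber coordinate. Therefore $|\wh\phi^{(-k)}(\sigma^-,s)-\wh\phi^{(-k)}(\wt\sigma^-,\wt s)|\le C$, with $C$ the holonomy constant, possibly plus $2\|\wh\phi\|_\infty$ to absorb boundary terms. Summing over $\ul w$ gives the ratio bound with constant $e^{C}$, so $\log$ of the ratio is at most $C$.

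The main obstacle is the index and fiber bookkeeping in this second step. I need to verify that the fiber points $s$ and $\wt s$ produced along the two preimage branches coincide, or are related exactly as Definition \ref{phiHolonomies} requires. If they do not coincide because $F^{-1}_{\omega^-}$ depends on the full past, I would instead redefine the comparison via the two-sided lift. Lemma \ref{specRad} already identifies $\L_{\wh\phi}^k1(\omega^-,t)$ with $C^{\pm1}\wh Z_k(\wh\phi,t,\omega^-_{-1})$, a quantity that depends only on $t$ and the symbol $\omega^-_{-1}$. Granting that identification, the lemma follows at once with constant $2\log C$: both values are comparable to the same $\wh Z_k(\wh\phi,t,\omega^-_{-1})$, since $\omega^-_{-1}=\wt\omega^-_{-1}$. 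I expect this second route to be the cleanest.
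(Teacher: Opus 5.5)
Your first route is the paper's proof. The paper writes $(\L_{\wh\phi}^k1)(\omega^-,t)$ and $(\L_{\wh\phi}^k1)(\wt\omega^-,t)$ as sums over the same admissible words, which is possible since $\omega^-_{-1}=\wt\omega^-_{-1}$. It then compares the exponents term by term with the $\wh\phi$-holonomy constant and concludes that the ratio is $e^{\pm C}$. That argument works as you set it up. The doubts in your last paragraph come from two remarks that are wrong and should be deleted rather than carried out.

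First, the fiber trajectories of the two branches are not identical and cannot be made so. Lifted to $\Sigma\times X$, the preimage $(\omega^-\ul w,s)$ corresponds to $s=F^k_\theta(t)$, where $\theta$ has past $\omega^-$ followed by $\ul w$. Already the first map $F_\theta$ depends on coordinates of $\theta_{\le 0}$ where $\omega^-$ and $\wt\omega^-$ differ, so in general $s\neq\wt s$. Definition \ref{phiHolonomies} only requires a common \emph{initial} fiber point, and that point is $t$ itself. Absorbing the discrepancy between the two different fiber orbits is exactly the job of the holonomy hypothesis.

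Second, no shift is needed, and a shift by one would destroy the common fiber point: it replaces $t$ by $F^{-1}_{\omega^-}(t)$ versus $F^{-1}_{\wt\omega^-}(t)$. Since you choose $\theta,\wt\theta$ with the same future, they agree on $[-1,\infty)$. The backward sum $\wh\phi^{(-k)}(\omega^-\ul w,s)$ is the forward Birkhoff sum of (the lift of) $\wh\phi$ along $\wh F$ from $(\theta,t)$, up to one boundary term. So the holonomy bound applies directly, with constant $C+2\|\wh\phi\|_\infty$, which is all the statement needs.

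The route you call the cleanest is not an independent proof and should be dropped. Lemma \ref{specRad} only asserts the growth rate of $\|\L_{\wh\phi}^n1\|_\infty$, and the comparison with $\wh Z_n$ in its proof is made only after maximizing over $(\omega^-,t)$. Pointwise comparability of $(\L_{\wh\phi}^k1)(\omega^-,t)$ with a quantity depending only on $(t,\omega^-_{-1})$ is equivalent to the lemma you are proving. Establishing it needs the same term-by-term holonomy comparison as your first route.

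Moreover, $\wh Z_k(\wh\phi,t,a)$ from Definition \ref{DefOfPOE} is not the right comparison quantity. It sums over words whose \emph{last} symbol is $a$, with free first symbol and maximized past. The preimages of $(\omega^-,t)$, by contrast, are indexed by words that can \emph{follow} $\omega^-_{-1}$, with free last symbol. Different first symbols move $t$ to different fiber points after one step, so there is no reason for these two sums to be uniformly comparable pointwise in $t$. Granting that identification therefore grants something unproved and, as stated, inaccurate; keep the first route.
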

\begin{proof}
	Let $C$ be constant given by the $\wh\phi$-holonomies. Then,  for all $t\in X$ for all $d(\omega^-,\wt\omega^-)<1$ for all $k\geq0$,
\begin{align*}
	\frac{(\L_{\wh \phi}^k1)(\omega^-,t)}{(\L_{\wh \phi}^k1)(\wt\omega^-,t)}=&\frac{\sum_{\overset{|\ul w|=k,}{w_0=a}}e^{\wh\phi^{(k)}(\omega^-\ul w,t)}}{\sum_{\overset{|\ul w|=k,}{w_0=a}}e^{\wh\phi^{(k)}(\wt\omega^-\ul w,t)}}= \frac{\sum_{\overset{|\ul w|=k,}{w_0=a}}e^{\wh\phi^{(k)}(\wt\omega^-\ul w,t)}e^{\wh\phi^{(k)}(\omega^-\ul w,t)-\wh\phi^{(k)}(\wt\omega^-\ul w,t)}}{\sum_{\overset{|\ul w|=k,}{w_0=a}}e^{\wh\phi^{(k)}(\wt\omega^-\ul w,t)}}\\
	= &\frac{\sum_{\overset{|\ul w|=k,}{w_0=a}}e^{\wh\phi^{(k)}(\wt\omega^-\ul w,t)}e^{\pm C}}{\sum_{\overset{|\ul w|=k,}{w_0=a}}e^{\wh\phi^{(k)}(\wt\omega^-\ul w,t)}}=e^{\pm C}.
\end{align*}	
\end{proof}

\begin{lemma}\label{divSumPess} Let $\wh\phi$ be a $\wh\phi$-conformal measure, then
	$$\|\sum_{k\geq0 }\lambda^{-k}\L_{\wh\phi}^k1\|_{L^\infty(\wh p)}\xrightarrow[\lambda\downarrow e^{\wh P(\wh\phi)}]{}\infty.$$
\end{lemma}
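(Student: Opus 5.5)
The plan is to use only the defining identity of conformality, integrated against $\wh p$, and avoid any pointwise control (which is exactly what Lemma \ref{VPOutside} gives for the sup norm, but is not available $\wh p$-a.e.). Read the statement with $\wh p$ a $\wh\phi$-conformal measure, i.e.\ $e^{-\wh P(\wh\phi)}\L_{\wh\phi}^*\wh p=\wh p$.

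\textbf{Step 1.} Iterating conformality gives $\wh p(\L_{\wh\phi}^k\wh g)=e^{k\wh P(\wh\phi)}\wh p(\wh g)$ for all $\wh g\in C(\Sigma^-\times X)$ and $k\geq0$. Taking $\wh g=1$ and using $\wh p(1)=1$, we get
$$\int \L_{\wh\phi}^k1\,d\wh p=e^{k\wh P(\wh\phi)}.$$

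\textbf{Step 2.} Fix $\lambda>e^{\wh P(\wh\phi)}$. Every term $\lambda^{-k}\L_{\wh\phi}^k1$ is nonnegative and continuous, and the series converges uniformly by \eqref{formOfResolvent}. Hence monotone (or uniform) convergence lets us exchange sum and integral:
$$\int\sum_{k\geq0}\lambda^{-k}\L_{\wh\phi}^k1\,d\wh p=\sum_{k\geq0}\big(e^{\wh P(\wh\phi)}/\lambda\big)^k=\frac{1}{1-e^{\wh P(\wh\phi)}\lambda^{-1}}.$$

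\textbf{Step 3.} Since $\wh p$ is a probability measure, the $L^\infty(\wh p)$ norm of a nonnegative function dominates its $\wh p$-integral. Therefore
$$\Big\|\sum_{k\geq0}\lambda^{-k}\L_{\wh\phi}^k1\Big\|_{L^\infty(\wh p)}\geq \frac{1}{1-e^{\wh P(\wh\phi)}\lambda^{-1}}\xrightarrow[\lambda\downarrow e^{\wh P(\wh\phi)}]{}\infty.$$

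There is no real obstacle here. The only point requiring care is justifying the exchange of summation and integration in Step 2, which is immediate from positivity. Note also that we never need the holonomy assumption or Lemma \ref{VPOutside}; conformality alone forces the divergence.
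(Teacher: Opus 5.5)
Your proposal is correct and follows essentially the same argument as the paper: integrate the series against $\wh p$, use conformality to get $\int \L_{\wh\phi}^k1\,d\wh p=e^{k\wh P(\wh\phi)}$, and bound the $L^\infty(\wh p)$ norm from below by the resulting geometric series. The only difference is presentation: the paper argues by contradiction from a uniform bound $M$, while you give the direct lower bound $(1-e^{\wh P(\wh\phi)}\lambda^{-1})^{-1}$; your observation that monotone convergence alone justifies exchanging sum and integral (so neither uniform convergence nor holonomies are needed) is right.
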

\begin{proof}
	If $\sum_{k\geq0 }\lambda^{-k}\L_{\wh\phi}^k1$ were bounded uniformly by $M$, then 
	$$M\geq \int \sum_{k\geq0 }\lambda^{-k}\L_{\wh\phi}^k1d\wh p = \sum_{k\geq0 }(\lambda^{-1}e^{\wh P(\phi)})^{k} \xrightarrow[\lambda\downarrow e^{\wh P(\wh\phi)}]{}\infty ,$$
	a contradiction.
\end{proof}

\begin{theorem}[Two-sided extension]\label{TSExtension}
		Assume that $(\Sigma,T)$ is topologically mixing and that $(\Sigma\times X,\wh F)$ admits $\wh\phi$-holonomies, and let $\wh p$ be a $\wh\phi$-conformal measure. Then, $\wh p$ extends to a probability measures $\wh p^\pm$ on $\Sigma\times X$.
\end{theorem}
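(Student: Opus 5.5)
Here is how I would prove Theorem \ref{TSExtension}. The idea is the classical construction of a two-sided measure from a conformal measure on the one-sided space. The measure $\wh p$ lives on $\Sigma^-\times X$, whose coordinates are the past $(\omega_i)_{i\le -1}$ together with the fiber point. Extending to $\Sigma\times X$ means prescribing, consistently, the joint law of finitely many future coordinates $\omega_0,\dots,\omega_{n-1}$ together with the past and the fiber. I would build this law through the preimages of $\wh F_R^n$: a future word of length $n$ is the same as a choice of preimage branch.

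\textbf{Step 1: finite-dimensional marginals.} For each admissible word $\ul w=(w_0,\dots,w_{n-1})$ and $\wh g\in C(\Sigma^-\times X)$, define a measure on the set of $(\omega,t)$ with prescribed future cylinder $\ul w$ by
$$\wh p^\pm_n(\wh g\cdot \mathbb{1}_{[\ul w]}):=\int e^{-n\wh P(\wh\phi)}\L^n_{\wh\phi}\big(\mathbb{1}_{[\ul w]}\,\wh g\big)\,d\wh p .$$
Here $\wh g\cdot\mathbb{1}_{[\ul w]}$ is interpreted on $\Sigma^-\times X$ after shifting by $n$: the point $(\omega^-\ul w, s)$ is a preimage under $\wh F_R^n$ of $(\omega^-,t)$.

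Equivalently, $\wh p^\pm_n$ is the pullback of $\wh p$ by $\wh F_R^{n}$, spread over preimages with the conformal weights $e^{\wh\phi^{(-n)}-n\wh P(\wh\phi)}$, and then reinterpreted as a measure on $\Sigma_{\le n-1}\times X$ via the identification $(\omega^-\ul w, s)\leftrightarrow(\omega,t)$. Under this identification the fiber coordinate transforms through $F^{n}$ on the preimage side; this is well defined because $F_\omega=F_{(\omega_i)_{i\le0}}$ and $\wh F^{-1}$ is constant on $V^u_{\le -1}$ leaves.

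\textbf{Step 2: consistency.} Summing over the last letter $w_{n}$, I need $\wh p^\pm_{n+1}$ restricted to functions of $(\omega_i)_{i\le n-1}$ and $t$ to equal $\wh p^\pm_n$. This reduces to the identity
$$\int e^{-(n+1)\wh P}\L^{n+1}_{\wh\phi}(\wh g\circ\wh F_R)\,d\wh p=\int e^{-n\wh P}\L^{n}_{\wh\phi}\,\wh g\,d\wh p,$$
which is exactly conformality $e^{-\wh P}\L^*_{\wh\phi}\wh p=\wh p$ applied once more. Positivity holds because $\L_{\wh\phi}$ is a positive operator. Total mass $1$ holds because conformality gives $\int e^{-n\wh P}\L^n 1\,d\wh p=\wh p(1)=1$.

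\textbf{Step 3: Kolmogorov extension.} By Step 2 we have a consistent family of probability measures on the compact spaces $\{(\omega_i)_{i\le n-1}\}\times X$. By Kolmogorov's extension theorem (or Riesz representation on the inverse limit, using compactness), there is a unique probability measure $\wh p^\pm$ on $\Sigma\times X$ whose projection to $\Sigma^-\times X$ (the case $n=0$) is $\wh p$.

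\textbf{Main obstacle.} The delicate point is the bookkeeping of the fiber coordinate in Steps 1 and 2. The future letters act on $t$ through $F_{\omega}^{-1}$, and one must check that ``$\wh g$ viewed at depth $n+1$'' composed with the identification is the same as $\wh g\circ\wh F_R$ at depth $n$, so that the consistency identity really is a single application of conformality. If that check fails in the naive form, I would instead define $\wh p^\pm$ on the natural extension of $\wh F_R$ directly. This means taking the measures $(\wh F_R^n)$-pulled-back-with-weights, i.e. $e^{-n\wh P}\L^{*n}$ preimages, which are consistent by conformality, and then identifying that natural extension with $\Sigma\times X$ via $\wh F$.

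The hypotheses of topological mixing and $\wh\phi$-holonomies should not be needed for existence itself. I expect them only to give the semi-Gibbs bounds on cylinders afterward, via Lemma \ref{forHar} and Proposition \ref{keyPosProp}.
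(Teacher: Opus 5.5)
Your Step~2 identity is false, and the construction fails with it. Since $\L_{\wh\phi}(\wh g\circ \wh F_R)=\wh g\cdot \L_{\wh\phi}1$, applying conformality $n$ times gives
\[
\int e^{-(n+1)\wh P(\wh\phi)}\L_{\wh\phi}^{n+1}(\wh g\circ \wh F_R)\,d\wh p=\int \wh g\cdot e^{-\wh P(\wh\phi)}\L_{\wh\phi}1\,d\wh p ,\qquad \int e^{-n\wh P(\wh\phi)}\L_{\wh\phi}^{n}\wh g\,d\wh p=\int \wh g\,d\wh p .
\]
These agree for all $\wh g$ only if $e^{-\wh P(\wh\phi)}\L_{\wh\phi}1=1$ $\wh p$-a.e., i.e.\ only if $\wh p$ is $\wh F_R$-invariant. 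A conformal measure is in general only quasi-invariant; this already happens for $X$ a point and a locally constant potential with $\L_{\wh\phi}1$ non-constant.

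Put differently, pulling $\wh p$ back by $\wh F_R^n$ with the conformal weights just returns $\wh p$, since that is what conformality says. So your depth-$n$ law is $\wh p$ read at depth $n$, and its time-$0$ marginal is $(e^{-n\wh P(\wh\phi)}\L_{\wh\phi}^n1)\cdot\wh p\neq\wh p$. The family is not projective, so Kolmogorov does not apply. Your natural-extension fallback fails for the same reason, since it needs depth-$n$ laws $\nu_n$ with $(\wh F_R^n)_*\nu_n=\wh p$.

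The obstacle is therefore not the fiber bookkeeping but the lack of invariance. Renormalizing the weights by $\L_{\wh\phi}^n1$ does not restore consistency either. The consistent (Doob-transformed) weights $e^{\wh\phi^{(-n)}-n\wh P(\wh\phi)}\wh\rho(\cdot)/\wh\rho$ would require a harmonic function, and that is not among the hypotheses (cf.\ Theorem \ref{harmExist}).

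The paper supplies invariance on the base only. Write $\wh p=\int \wh p_{\omega^-}\,dp(\omega^-)$. The paper takes a weak-$*$ limit $h$ of $h_n(\omega^-)=\int \frac1n\sum_{k\le n}e^{-k\wh P(\wh\phi)}\L_{\wh\phi}^k1\,d\wh p_{\omega^-}$. The Ces\`aro shift shows $h\cdot p$ is $T_R$-invariant. Lemma \ref{forHar} (from $\wh\phi$-holonomies) together with Proposition \ref{keyPosProp} (from mixing) bound $\log h$. Then $h\cdot p$ extends classically to a $T$-invariant $\nu^\pm$ on $\Sigma$, and $\wh p^\pm:=\int h(\gamma\omega)^{-1}\wh p_{\gamma(\omega)}\,d\nu^\pm(\omega)$ projects to $\wh p$.

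You are right that bare existence of some probability on $\Sigma\times X$ projecting to $\wh p$ needs neither hypothesis. For example, attach to each $\omega^-$ the law of a Markov chain started at $\omega^-_{-1}$, driven by any stochastic matrix supported on admissible transitions; this is automatically consistent. That is a different and essentially trivial argument from the one you gave, though. In the paper, the hypotheses serve to produce an extension built from an invariant base measure with density bounded away from $0$ and $\infty$.
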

\begin{proof}
	Let $\wh p=\int \wh p_{\omega^-} dp(\omega^-)$ be the disintegration of $\wh p$ w.r.t. the partition $\{\{\omega^-\}\times X\}_{\omega^-\in \Sigma^-}$, where $p$ is the projection of $\wh p$ to $\Sigma^-$. Write $\wt \phi:=\wh \phi-\wh P(\wh\phi)$ and note that $\L_{\wt \phi}^*p=p$ and $\wh P(\wt \phi)=0$. %We continue to construct $\wh\rho$ which is $\wt\phi$-harmonic. 
	Recall that 
$$V^u_{\leq -1}(\omega^-):=\{\omega\in \Sigma: \omega^-_i=\omega_i \ \forall i\leq-1\}.$$

\textbf{Step 1:} Finding $h\in L^\infty (p)$ s.t. $h$ is essentially bounded away from zero and infinity, and such that $h\cdot p$ is $T_R$-in Given $n\geq0$, we define
$$h_n(\omega^-):=\int \frac{1}{n}\sum_{k\leq n}\L_{\wt\phi}^k1 d\wh p_{\omega^-}.$$
We wish to prove that $\log h_n\in L^\infty(p)$ for all $n\geq0$. Indeed, for all $a$, when restricted to $[a]$, by Lemma \ref{forHar}, the variation of $h_n$ is bounded by $e^C$. We wish to prove that there exists $C_0>0$ s.t. for all $a$, there exists $\omega^-\in[a]$ s.t. $h_n(\omega^-)=C_0^{\pm1}$. Indeed, by the $\phi$-conformality of $\wh p$,
\begin{align}\label{forLogBound1}
	\frac{1}{ p([a])}\int_{[a]} h_n dp=&\frac{1}{\wh p([a]\times X)}\int_{[a]\times X}  \frac{1}{n}\sum_{k\leq n}\L_{\wt\phi}^k1 d\wh p\\
	=& \frac{1}{\wh p([a]\times X)}\int \frac{1}{n}\sum_{k\leq n}\mathbb{1}_{[a]\times X}\circ \wh F_R^k d\wh p.\nonumber 
\end{align}
By Proposition \ref{keyPosProp}, the r.h.s. of \eqref{forLogBound1} is bounded from below and from above by $e^{\pm C_0}$, respectively, where 
$$C_0:=N\cdot (\|\wh\phi\|_\infty+\wh P(\wh \phi)),$$ for some independent $N\in \mathbb{N}$ give by the topological mixing of $(\Sigma,T)$. Therefore,
$$\mathrm{ess-sup}_{p|_{[a]}} h_n\geq e^{-C_0}\text{ and }\mathrm{ess-inf}_{p|_{[a]}} h_n\leq e^{C_0}.$$
Thus, $\mathrm{ess-sup}_p|\log h_n|\leq C_0+C$. Since $L^\infty(p)=(L^1(p))^*$, its unit ball is compact in the weak-* topology. Therefore there exists a converging sub-sequence $h_{n_j}\xrightarrow[\text{weak-*}]{}h\in $. It is easy to check that $h$ is essentially bounded from above and from below by $e^{C+C_0}$ and  $e^{-(C+C_0)}$ respectively. 

We now verify that $h\cdot p$ is $T_R$-invariant. Let $g\in C(\Sigma^-)$ with $\|g\|_\infty\leq1$. We can also think of $g$ as a function on $\Sigma^-\times X$, in that case $g\circ T_R= g\circ \wh F_R $. Then,
\begin{align*}
	\int g h dp=&\lim_j \int g \int \frac{1}{n_j}\sum_{k\leq n_j}\L_{\wt\phi}^k1 d\wh p_{\omega^-} dp= \lim_j \frac{1}{n_j}\sum_{k\leq n_j}\int \int g \L_{\wt\phi}^k1 d\wh p_{\omega^-} dp\\
	=& \lim_j \frac{1}{n_j}\sum_{k\leq n_j}\int  g \L_{\wt\phi}^k1 d\wh p= \lim_j \frac{1}{n_j}\sum_{k\leq n_j}\int  g\circ \wh F_R^k d\wh p\\
	=& \lim_j \frac{1}{n_j}\sum_{k\leq n_j}\int  g\circ \wh F_R^{k+1} d\wh p= \lim_j \frac{1}{n_j}\sum_{k\leq n_j}\int \int g \circ \wh F_R\L_{\wt\phi}^k1 d\wh p_{\omega^-} dp\\
	=& \lim_j \int g\circ T_R \int \frac{1}{n_j}\sum_{k\leq n_j}\L_{\wt\phi}^k1 d\wh p_{\omega^-} dp= \int g\circ T_R h dp.
\end{align*}

\textbf{Step 2:} We extend $\wh p$ to $\Sigma\times X$% and disintegrate it
.

\textbf{Proof:} Extend $\nu=h\cdot p$ to $\nu^\pm$ which is a $T$-invariant probability on $\Sigma$. Then define,
$$\wh p^\pm =\int\frac{1}{h\circ \gamma(\omega)} \wh p_{\gamma(\omega)}d\nu^\pm(\omega),$$
where $\gamma:\Sigma\to \Sigma^-$ is the projection.% Finally, disintegrate $\wh p^\pm$ by the measurable partition $\{V^u_{\leq -1}(\omega^-)\times\{t\}\}_{(\omega^-,t)\in \Sigma^-\times X}$:
%$$\wh p^\pm=\int \wh p_{(\omega^-,t)}d\wh p.$$
\end{proof}

\begin{prop}[Semi-Gibbs estimates]\label{newSemiGibbsProp}
	In the setting of Theorem \ref{TSExtension}, let $\wh p^\pm$ be the extension of $\wh p$ to $\Sigma\times X$, and disintegrate $\wh p^\pm$ by the measurable partition $\{V^u_{\leq -1}(\omega^-)\times\{t\}\}_{(\omega^-,t)\in \Sigma^-\times X}$, $\wh p^\pm=\int \wh p_{(\omega^-,t)}d\wh p$.
Then, for $\wh p$-a.e. $(\omega^-,t)\in\Sigma^-\times X$, %constructing a measure $\wh p_{(\omega^-,t)}$ on $V^u_{\leq-1}(\omega^-)\times\{t\}$ s.t. 
for all $\ul w=(w_0,\ldots,w_{k-1})$ with $w_0=\omega^-_{-1}$, %for all $\ell\geq k$,
\begin{equation*}\label{theTheTheProperty}
	\wh p_{(\omega^-,t)}([\ul w]\times X)=%C_{\wh p}^{\pm 1}\frac{e^{\wt \phi^{(k)} (\theta_{\ul w},t)}%(\L_{\wt\phi}^{\ell-k}1)(\omega^-,t)
%}{(\L_{\wt\phi}^%\ell
%k1)(\omega^-,t)},
 \frac{e^{\wt \phi^{(-k)} (\omega^-\cdot \ul w,(F_{\omega^-\cdot\ul w}^{-k})^{-1}(t))}}{(\L_{\wt\phi}^k1) (\omega^-,t)}.
\end{equation*} 
\end{prop}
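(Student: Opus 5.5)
The plan is to identify the conditional measures $\wh p_{(\omega^-,t)}$ through the conformality of $\wh p$ and the $T$-invariance of $\nu^\pm$. The conditional measure of $\wh p^\pm$ on $V^u_{\leq-1}(\omega^-)\times\{t\}$ is determined by the masses of future cylinders $[\ul w]$ of length $k$ with $w_0=\omega^-_{-1}$. We compute these masses by pushing the set $\{(\omega,t):\omega_{\leq -1}\in A,\ \omega_{[0,k-1]}=\ul w\}$ forward by $\wh F^k$. Its image lies over $\{\omega^-\cdot\ul w\}$-type past sequences, i.e.\ it is a set of the form $\wh F_R^{-k}$ of a set in $\Sigma^-\times X$. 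Concretely:
\[
\wh p^\pm\big(\{(\omega,s):\omega_{\leq-1}\in A,\ \omega_{[0,k-1]}=\ul w,\ s\in B\}\big)
=\wh p^\pm\big(\wh F^{k}(\cdots)\big).
\]
This is evaluated through the projection to $\Sigma^-\times X$, where the image set consists of points $(\omega^-\cdot\ul w, s')$ with $\wh F_R^k(\omega^-\cdot\ul w,s')\in A\times B$.

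The first step is to check that $\wh p^\pm$ is $\wh F$-quasi-invariant with Radon--Nikodym derivative $e^{\wt\phi}$, after accounting for the factor $h$. Since $\nu^\pm$ is $T$-invariant and $\wh p^\pm=\int h(\gamma\omega)^{-1}\wh p_{\gamma\omega}\,d\nu^\pm$, its projection to $\Sigma^-\times X$ is exactly $\wh p$. The identity $e^{-\wh P}\L^*_{\wh\phi}\wh p=\wh p$ applied to test functions $g\,\mathbb 1_{[\ul w]}$ composed with inverse branches gives
\[
\wh p\big(\{\wh F_R^k\text{-preimages through the branch } \ul w\}\cap E\big)=\int_{\wh F_R^k E} e^{\wt\phi^{(-k)}}\circ(\text{branch}_{\ul w})\,d\wh p .
\]
This is the standard change of variables for conformal measures, using that $\L^k_{\wt\phi}$ sums over branches $\ul w$.

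The second step is to disintegrate both sides along the partition $\{V^u_{\leq-1}(\omega^-)\times\{t\}\}$, which, projected, is the point partition of $\Sigma^-\times X$. The full fiber $V^u_{\leq-1}(\omega^-)\times\{t\}$ decomposes as the disjoint union over $\ul w$ of branch pieces. Summing the conformality identity over all $\ul w$ with $w_0=\omega^-_{-1}$ yields the normaliser $(\L^k_{\wt\phi}1)(\omega^-,t)$. Hence the relative weight of the $\ul w$-piece is $e^{\wt\phi^{(-k)}(\omega^-\cdot\ul w,(F^{-k}_{\omega^-\cdot\ul w})^{-1}(t))}/(\L^k_{\wt\phi}1)(\omega^-,t)$. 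The point $(F^{-k}_{\omega^-\cdot\ul w})^{-1}(t)$ is precisely the fiber preimage under the inverse cocycle along $\ul w$. The factor $1/h$ cancels because $h$ depends only on $\omega^-$, and the invariance of $h\cdot p$ makes the $h$-weights consistent between $\omega^-$ and its extensions.

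Finally, uniqueness of conditional measures gives the formula $\wh p$-a.e., using a countable generating family of cylinders $[\ul w]$ and a countable dense set of test functions in $B$. The main obstacle is the bookkeeping in the second step. The identification of the conditional on $V^u_{\leq-1}(\omega^-)\times\{t\}$ requires consistency across all $k$, that is, a Kolmogorov-type compatibility. This follows from $\sum_{w_k}e^{\wt\phi(\cdot)}\L^{0}\ldots$ via the cocycle identity $\L^{k+1}_{\wt\phi}1=\L^k_{\wt\phi}(\L_{\wt\phi}1)$. I would verify that identity first. I would also handle the fact that the disintegration of $\wh p^\pm$ is indexed by $\wh p$ (its projection) and not by $\wh p^\pm$.
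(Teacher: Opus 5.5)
There is a genuine gap, and it is exactly at the step you flag as the main obstacle.

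\textbf{What your computation actually gives.} Your conformality identity is fine, but it computes the conditional of $\wh p$ on the finite fibres $\wh F_R^{-k}(\omega^-,t)=\{(\omega^-\cdot\ul w,s_{\ul w}(t))\}_{\ul w}$, where $s_{\ul w}(t):=(F^{-k}_{\omega^-\cdot\ul w})^{-1}(t)$. For each fixed $k$, that conditional does give the $\ul w$-branch mass $e^{\wt\phi^{(-k)}(\omega^-\cdot\ul w,s_{\ul w}(t))}/(\L^k_{\wt\phi}1)(\omega^-,t)$.

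\textbf{Why this is not the conditional in the statement.} The conditional of $\wh p^\pm$ on $V^u_{\leq-1}(\omega^-)\times\{t\}$ is a different object, and $\wh p$ does not determine it. To match future cylinders with branches you must transport by $\wh F^k$, and $\wh p^\pm$ is not $\wh F$-invariant. Its Jacobian carries a factor $h(\omega^-\cdot a)/h(\omega^-)$ and a fibre normalisation, not just $e^{\wt\phi}$. The $h$-weights do not cancel, because they are evaluated at two different pasts. Unwinding the construction in Theorem \ref{TSExtension} gives $\wh p^\pm=\int(\nu^\pm_{\omega^-}\otimes\delta_t)\,d\wh p(\omega^-,t)$, where $\nu^\pm_{\omega^-}$ is the conditional of $\nu^\pm$ on $V^u_{\leq-1}(\omega^-)$. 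Then $T$-invariance of $\nu^\pm$ and conformality yield
\[
\wh p_{(\omega^-,t)}([\ul w]\times X)=\frac{h(\omega^-\cdot\ul w)}{h(\omega^-)}\int e^{\wt\phi^{(-k)}(\omega^-\cdot\ul w,\,s_{\ul w}(t'))}\,d\wh p_{\omega^-}(t'),
\]
which does not even depend on $t$.

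\textbf{The compatibility fails.} It does not follow from $\L^{k+1}_{\wt\phi}1=\L^k_{\wt\phi}(\L_{\wt\phi}1)$. Summing your level-$(k+1)$ weights over $w_k$ gives $e^{\wt\phi^{(-k)}(\omega^-\cdot\ul w,s_{\ul w}(t))}(\L_{\wt\phi}1)(\omega^-\cdot\ul w,s_{\ul w}(t))/(\L^{k+1}_{\wt\phi}1)(\omega^-,t)$. This equals the level-$k$ weight only if $(\L_{\wt\phi}1)(\omega^-\cdot\ul w,s_{\ul w}(t))=(\L^{k+1}_{\wt\phi}1/\L^k_{\wt\phi}1)(\omega^-,t)$ for every $\ul w$. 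The cocycle identity gives this only for the weighted average over $\ul w$.

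Here is a concrete counterexample, which satisfies all hypotheses of Theorem \ref{TSExtension}. Take $X$ a point, $\Sigma$ the full $2$-shift, and $\wh\phi(\omega^-)=c(\omega_{-2},\omega_{-1})$ with $Z(a):=\sum_b e^{c(a,b)}$ non-constant. The formula gives $e^{c(\omega_{-1},a)}/Z(\omega_{-1})$ at $k=1$. The marginal of the $k=2$ formula is $e^{c(\omega_{-1},a)}Z(a)/\sum_{a'}e^{c(\omega_{-1},a')}Z(a')$. These disagree, so no argument yields the identity as stated.

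\textbf{What can be proved.} The version that keeps the density is true. For $\wh\rho^{-1}\wh\nu^\pm$, with $\wh\rho$ as in Theorem \ref{harmExist}, the conditional is $e^{\wt\phi^{(-k)}(\omega^-\cdot\ul w,s_{\ul w}(t))}\wh\rho(\omega^-\cdot\ul w,s_{\ul w}(t))/\wh\rho(\omega^-,t)$. This is the claimed expression up to a factor $e^{\pm O(C)}$, and that is all Theorems \ref{POEupperBound} and \ref{posEOE} use.

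\textbf{Comparison with the paper.} The paper argues differently. It takes a limit of ratios over $[\omega^-]_{-n}\times B_X(t,\delta_n)$, transports by $\wh F^k$ with a Jacobian $\wh J_k$, and finishes with conformality and continuity of $\L^k_{\wt\phi}\mathbb{1}_{[\ul w^\cdot]\times X}$ and $\L^k_{\wt\phi}1$. However, it drops $\wh J_k$ from numerator and denominator. That is the same unjustified cancellation as yours: the numerator localises on one branch, while the denominator collects all of them.
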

\begin{proof} Given a word $\ul w=(w_0,\ldots w_{k-1})$ with $w_0=\omega^-_{-1}$, for $\wh p$-a.e. $(\omega^-,t)$, 
\begin{align*}
\wh p_{(\omega^-,t)}([^\cdot\ul w] \times X)=&\lim_n\frac{\wh p^\pm(([^\cdot\ul w] \times X)\cap ([\omega^-]_{-n}\times B_X(t,\delta_n)))}{\wh p^\pm([\omega^-]_{-n}\times B_X(t,\delta_n))}\\
=&\lim_n\frac{\int \mathbb{1}_{[\ul w ^\cdot]\times X}\circ \wh F^k\cdot \mathbb{1}_{[\omega^-]_{-n}\times B_X(t,\delta_n)}d\wh p^\pm}{\int \mathbb{1}_{[\omega^-]_{-n}\times B_X(t,\delta_n)}d\wh p^\pm}\\
=& \lim_n\frac{\int \wh J_k \cdot\mathbb{1}_{[\ul w ^\cdot] \times X}\cdot \mathbb{1}_{[\omega^-]_{-n}\times B_X(t,\delta_n)} \circ \wh F^{-k} d\wh p^\pm}{\int \wh J_k \cdot \mathbb{1}_{[\omega^-]_{-n}\times B_X(t,\delta_n)} \circ \wh F^{-k} d\wh p^\pm}\\
=& \lim_n\frac{\int \wh J_k \cdot\mathbb{1}_{[\ul w ^\cdot] \times X}\cdot \mathbb{1}_{[\omega^-]_{-n}\times B_X(t,\delta_n)} \circ \wh F^{k}_R d\wh p}{\int \wh J_k \cdot \mathbb{1}_{[\omega^-]_{-n}\times B_X(t,\delta_n)} \circ \wh F^{k}_R d\wh p}\\
=& \lim_n\frac{\int \mathbb{1}_{[\ul w ^\cdot] \times X}\cdot \mathbb{1}_{[\omega^-]_{-n}\times B_X(t,\delta_n)} \circ \wh F^{k}_R d\wh p}{\int  \mathbb{1}_{[\omega^-]_{-n}\times B_X(t,\delta_n)} \circ \wh F^{k}_R d\wh p}\\
=& \lim_n\frac{\int \L_{\wt\phi}^k\mathbb{1}_{[\ul w ^\cdot] \times X}\cdot \mathbb{1}_{[\omega^-]_{-n}\times B_X(t,\delta_n)}  d\wh p}{\int \L_{\wt\phi}^k1\cdot \mathbb{1}_{[\omega^-]_{-n}\times B_X(t,\delta_n)} d\wh p}\\
=&\frac{\L_{\wt\phi}^k\mathbb{1}_{[\ul w ^\cdot] \times X}}{\L_{\wt\phi}^k1} (\omega^-,t)= \frac{e^{\wt \phi^{(-k)} (\omega^-\cdot \ul w,(F_{\omega^-\cdot\ul w}^{-k})^{-1}(t))}}{(\L_{\wt\phi}^k1) (\omega^-,t)},
\end{align*}
where $[\omega^-]_{-n}=[\omega^-_{-n},\ldots,\omega^-_{-1}]$ and $\delta_n\downarrow 0$. %The statement of this step follows with the constant given by the $\wh\phi$-holonomies.
\end{proof}

\begin{remark}
Note, in the classical case of a Ruelle operator on a compact TMS, if $L_\phi1=1$, $P(\phi)=0$, then the sequence $\{\log L_\phi^k1\}_{k\geq0}$ is equi-bounded.
\end{remark}

\section{Harmonic Functions}\label{harmFuncsExistSect}

In Theorem \ref{ConfExist} we prove the existence of a $\wh\phi$-conformal measure- that is an eigen-measure of the semi-Ruelle operator whose eigen-values is the reciprocal of the spectral radius. We now discuss the more subtle question existence of an eigen-function as well (see Definition \ref{harmFuncDef} below). In general, without knowing that the operator is compact, one cannot expect the point spectrum of the dual to coincide with point spectrum of the operator. %We begin by introducing the notion of {\em mild holonomies} which we additionally require for the existence of a harmonic function in \textsection \ref{harmExistSec}.

We begin by constructing an $\wh F_R$-invariant measure, which is equivalent to $\wh p$. Note, if we consider a limit point $\wh \nu$ of the sequence 
\begin{equation}\label{weakStarLimEq}
	(\frac{1}{n}\sum_{k=0}^{n-1}e^{-k\wh P(\wh\phi)}\L_{\wh\phi}^k1)\cdot \wh p
\end{equation}
where $\wh p$ is a $\wh\phi$-conformal measure, then $\wh\nu$ is an $\wh F_R$-invariant measure. Indeed, for all $\wh g\in C(\Sigma^-\times X)$,
\begin{align}\label{invP}
	\wh \nu(\wh g\circ \wh F_R)=&\lim_j \frac{1}{n_j}\sum_{k=0}^{n-1}\int \wh g\circ \wh F_R e^{-k\wh P(\wh\phi)}\L_{\wh\phi}^k1 d\wh p\\
	=& \lim_j \frac{1}{n_j}\sum_{k=0}^{n-1}\int\wh g e^{-(k-1)\wh P(\wh\phi)}\L_{\wh\phi}^{k-1}1 d\wh p =\wh \nu(\wh g).\nonumber
\end{align}
However, one cannot expect $\frac{1}{n}\sum_{k=0}^{n-1}e^{-k\wh P(\wh\phi)}\L_{\wh\phi}^k1 $ to converge in some strong function norm (e.g. $C^\alpha$ or $C^0$), see for example Remark \ref{noContHarm}. Therefore, in general, it could %naively
 happen that any weak-* limit point of \eqref{weakStarLimEq} is a singular measure to $\wh p$ (see e.g. \textsection \ref{AppCount} for such an example). %Nonetheless, by solving a cohomological equation,  we construct an invariant measure which is equivalent to $\wh p$, and whose Radon-Nikodym derivative is essentially bounded from below and from above. This Radon-Nikodym derivative allows us then to define a $\wh\phi$-harmonic function which is defined $\wh p$-a.e. (recall that Remark \ref{noContHarm} implies that we cannot expect much more).

\begin{definition}[Harmonic functions]\label{harmFuncDef}
	Let $\wh g:\Sigma^-\times X\to \mathbb{R}$ be a non-negative measurable function. We say that $\wh g$ is {\em $\wh\phi$-harmonic} if 
	$$e^{-\wh P(\wh \phi)}\L_{\wh\phi}\wh g=\wh g.$$
\end{definition}

\begin{remark} In more general settings, such as when allowing $\Sigma$ to be non-compact, one can also study $\wh\phi$-harmonic functions which are allowed to be unbounded, but are still bounded on compact sets which are fiber-saturated. In the setting of topological Markov shifts (without a fiber), see for example \cite{SarigNR, SarigTDF}.
\end{remark}

\begin{definition}[Space of measured leaves]\label{MeasuredLeaves}
    We define the following {\em space of measured leaves}:
$$\mathcal{S}:=\Big\{(\omega^-,t,\nu)\in \Sigma^-\times X\times \mathbb{P}(\Sigma\times X):\mathrm{Supp}(\nu)\subseteq V^u_{\leq-1}(\omega^-)\times\{t\}\Big\}.$$
\end{definition}

\begin{remark}\label{MeasuredLeavesRmk}
    Note that $\mathcal{S}$ is compact. This fact becomes important for us in the proof of Theorem \ref{harmExist}. 
\end{remark}

\begin{theorem}[Existence of a harmonic function]\label{harmExist}
	Assume that $(\Sigma,T)$ is topologically mixing and that $(\Sigma\times X,\wh F)$ admits $\wh\phi$-holonomies, and let $\wh p$ be a $\wh\phi$-conformal measure. Then there exists $\wh \rho>0$ s.t. $\log\wh\rho\in L^\infty(\wh p)$ and s.t. 
	$$ e^{-\wh P(\wh\phi)}\L_{\wh\phi}\wh \rho=\wh \rho,$$
if and only if $$\sup_k \|\log\L_{\wt\phi}^k1\|_{L^\infty (\wh p)}<\infty,$$
where $\wt \phi:=\wh \phi-\wh P(\wh \phi)$.
\end{theorem}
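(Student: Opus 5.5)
We prove the two implications separately. Throughout we normalize: replacing $\wh\phi$ by $\wt\phi=\wh\phi-\wh P(\wh\phi)$ gives $\L_{\wt\phi}=e^{-\wh P(\wh\phi)}\L_{\wh\phi}$ and $\L_{\wt\phi}^*\wh p=\wh p$. Conformality will be used in the form $\int \wh g\,\L_{\wt\phi}^k\wh f\,d\wh p=\int \wh g\circ\wh F_R^k\cdot \wh f\,d\wh p$. It also shows that $\wh p(A)=0$ implies $\wh p(\wh F_R^{-1}A)=\int \L_{\wt\phi}\mathbb{1}_{\wh F_R^{-1}A}\,d\wh p=\int\mathbb{1}_A\L_{\wt\phi}1\,d\wh p=0$. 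Hence $\L_{\wt\phi}$ is well defined on $L^\infty(\wh p)$ and positive there.

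\emph{Necessity.} Suppose $\wh\rho$ is $\wh\phi$-harmonic with $e^{-M}\le\wh\rho\le e^{M}$ $\wh p$-a.e. By positivity, $\L_{\wt\phi}^k\wh\rho=\wh\rho$ for all $k$. This gives $e^{-M}\L_{\wt\phi}^k1\le \wh\rho\le e^{M}\L_{\wt\phi}^k1$ $\wh p$-a.e., and therefore $e^{-2M}\le\L_{\wt\phi}^k1\le e^{2M}$. So $\sup_k\|\log\L_{\wt\phi}^k1\|_{L^\infty(\wh p)}\le 2M$.

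\emph{Sufficiency.} Assume $e^{-B}\le\L_{\wt\phi}^k1\le e^{B}$ $\wh p$-a.e. for all $k$. Set $\wh\rho_n:=\frac1n\sum_{k=0}^{n-1}\L_{\wt\phi}^k1$, so each $\wh\rho_n$ lies in $[e^{-B},e^B]$ $\wh p$-a.e. Since $L^\infty(\wh p)=L^1(\wh p)^*$, Banach–Alaoglu gives a weak-* limit point $\wh\rho_{n_j}\to\wh\rho$. The weak-* closed order interval is preserved, so $\log\wh\rho\in L^\infty(\wh p)$. For harmonicity we must show $\L_{\wt\phi}$ is weak-* continuous on $L^\infty(\wh p)$. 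To see this, take $\wh f\in L^1(\wh p)$ and let $\wh f\circ \wh F_R$ denote the pullback, which lies in $L^1(\wh p)$ by the quasi-invariance above. Then
$$\int \wh f\,\L_{\wt\phi}\wh g\,d\wh p=\int (\wh f\circ\wh F_R)\,\wh g\,d\wh p .$$
We check this first for continuous $\wh f,\wh g$ directly from the definition of $\L_{\wt\phi}$ and conformality, and extend by density and monotone class. So $\L_{\wt\phi}$ is the dual of the composition operator $\wh f\mapsto\wh f\circ\wh F_R$ on $L^1(\wh p)$, which is bounded: $\|\wh f\circ\wh F_R\|_{L^1}=\int|\wh f|\L_{\wt\phi}1\,d\wh p\le e^B\|\wh f\|_{L^1}$. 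A dual operator is weak-* continuous, hence $\L_{\wt\phi}\wh\rho_{n_j}\to\L_{\wt\phi}\wh\rho$ weak-*. Finally $\L_{\wt\phi}\wh\rho_n-\wh\rho_n=\frac1n(\L_{\wt\phi}^n1-1)$, whose $L^\infty$ norm is at most $(e^B+1)/n\to0$. Therefore $\L_{\wt\phi}\wh\rho=\wh\rho$ $\wh p$-a.e., i.e. $e^{-\wh P(\wh\phi)}\L_{\wh\phi}\wh\rho=\wh\rho$.

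\emph{Main obstacle.} The delicate step is making sense of $\L_{\wt\phi}$ on merely measurable, $\wh p$-a.e. defined functions, and proving the duality identity for $L^1$ test functions. Two points need care. First, $\L_{\wt\phi}$ must send $\wh p$-null modifications to $\wh p$-null modifications; this is the quasi-invariance shown above. Second, the identity must pass from $C(\Sigma^-\times X)$ to bounded measurable $\wh g$. For that I would use a monotone class argument, applying the identity to indicators of open sets approximated from below by continuous functions, together with the inner regularity of $\wh p$. The hypotheses of topological mixing and $\wh\phi$-holonomies are then only needed to guarantee a conformal $\wh p$ (Theorem \ref{ConfExist}); the argument itself proceeds as in Step 1 of Theorem \ref{TSExtension}, but without integrating over the fibers.
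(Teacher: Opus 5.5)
Your proof is correct and follows the paper's scheme. For sufficiency you take a weak-$*$ limit in $L^\infty(\wh p)=L^1(\wh p)^*$ of averaged iterates $\L_{\wt\phi}^k1$, using Ces\`aro means where the paper uses normalized Abel sums $\sum_k e^{-\epsilon k}\L_{\wt\phi}^k1$; for necessity you sandwich $\L_{\wt\phi}^k1$ by positivity, where the paper uses invariance of $\wh\rho\cdot\wh p$. Your identification of $\L_{\wt\phi}$ on $L^\infty(\wh p)$ with the adjoint of $\wh f\mapsto\wh f\circ\wh F_R$ on $L^1(\wh p)$, hence weak-$*$ continuous, supplies a step the paper's proof leaves implicit: that the weak-$*$ limit is actually harmonic.

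One small correction concerns well-definedness of $\L_{\wt\phi}$ on $\wh p$-classes. It does not follow from $\wh p(A)=0\Rightarrow\wh p(\wh F_R^{-1}A)=0$; that property is what the composition operator needs. It follows instead from $\int\L_{\wt\phi}|\wh g|\,d\wh p=\int|\wh g|\,d\wh p$, which forces $\L_{\wt\phi}\wh g=0$ $\wh p$-a.e. whenever $\wh g=0$ $\wh p$-a.e.
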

\begin{proof} \text{ }

\textbf{$\Rightarrow$:}  Assume that there exists $\wh \rho>0$ s.t. $\|\log\wh\rho\|_{L^\infty(\wh p)}\leq C$ and s.t. $e^{-\wh P(\wh\phi)}\L_{\wh\phi}\wh \rho=\wh \rho$. Let $k\geq 0$, and let $[\ul w]$ be a cylinder. Then, by \eqref{invP},
\begin{align*}
	\int \mathbb{1}_{[\ul w]} \L_{\wt \phi}^k1d\wh p=& \int \mathbb{1}_{[\ul w]}\circ \wh F_R^k d\wh p=e^{\pm C} \int \mathbb{1}_{[\ul w]}\circ \wh F_R^k d(\wh\rho\cdot\wh p)= e^{\pm C} \int \mathbb{1}_{[\ul w]} d(\wh\rho\cdot\wh p)\\
	=& e^{\pm 2C} \int \mathbb{1}_{[\ul w]} d\wh p.
\end{align*}
Therefore, $\sup_k\|\log\L_{\wt \phi}^k1 \|_{L^\infty(\wh p)}\leq 2 C$.

\medskip
\textbf{$\Leftarrow$:} Given $\epsilon>0$, set
$$\wh \rho_\epsilon:=\frac{\sum_{k\geq 0}e^{-\epsilon k}\L_{\wt \phi}^k1}{\|\sum_{k\geq 0}e^{-\epsilon k}\L_{\wt \phi}^k1 \|_{L^\infty(\wh p)}}.$$
Note, 
$$e^{-2C'}\leq\rho_\epsilon \leq e^{ 2C'},$$
where $C':=\sup_k\|\log\L_{\wt \phi}^k1 \|_{L^\infty(\wh p)} $. Therefore, since $L^\infty(\wh p)=(L^1(\wh p))^*$, we get that there exists a subsequence $\epsilon_n\to 0$ s.t. $\wh \rho_{\epsilon_n} \xrightarrow[n\to\infty]{\text{weak-*}}\wh\rho\in L^\infty(\wh p) $, with $\|\log \wh \rho\|_{L^\infty(\wh p)}\leq 2C'$.

\end{proof}

\begin{remark}
	Note, without the assumption of Theorem \ref{harmExist}, it is not even guaranteed to have an harmonic function in $L^1(\wh p)$, see \textsection \ref{AppCount}
\end{remark}

\section{Variational Principle, Semi-Gibbs Estimates, %Equi-Conformality, 
and Invariant Measures}\label{varPrinceSect}

Before describing the variational principle corresponding to the POE, we need to introduce an additional notion of {\em entropy out of equilibrium}.

\subsection{Entropy Out of Equilibrium}

\begin{definition}
   We define the following partitions:
   \begin{enumerate}
    \item $\mathcal{C}:=\{[a]: [a]\subseteq \Sigma\}$,
       \item $\widehat{\mathcal{C}}:=\{[a]\times X: [a]\subseteq \Sigma\}$,
       \item For $q\in \mathbb{N}$, $\mathcal{C}_q:=\bigvee_{i=0}^{q-1}T^{-i}\mathcal{C}$,
        \item For $q\in \mathbb{N}$, $\widehat{\mathcal{C}}_q:=\bigvee_{i=0}^{q-1}\widehat{F}^{-i}\widehat{\mathcal{C}}$.
   \end{enumerate}
\end{definition}

\begin{remark}
    Note, $\widehat{F}$ preserves the fiber structure, and so elements of $\widehat{C}_q$ are saturated by fibers.
\end{remark}

\begin{definition}[Metric entropy out of equilibrium {\cite[Definition~3.4]{TDFOE_I}}]
    Given an $\widehat{F}$-invariant Borel probability $\widehat{\eta}$ on $\Sigma^-\times X$, we define its {\em metric entropy out of equilibrium} by 
    \begin{equation}\label{EOEform}
        \widehat{h}_{\widehat{\eta}}(\widehat{F}):=\limsup_{n\to\infty} \int \frac{1}{n}\H_{\eta_{(\omega^-,t)}}\Big(\widehat{\mathcal{C}}_q\Big)d\tau(\omega^-,t),
    \end{equation}
    where $\widehat{\eta}=\int \eta_{(\omega^-,t)} d\tau(\omega^-, t)$ is the disintegration of $\widehat{\eta}$ into conditional measures by the measurable partition $\{V^u_{\leq -1}(\omega^-)\times\{t\}\}_{\omega^-\in \Sigma^-, t\in X}$.
\end{definition}

\begin{definition}[Metric pressure out of equilibrium {\cite[Definition~3.5]{TDFOE_I}}]
    Given an $\widehat{F}$-invariant Borel probability $\widehat{\eta}$ on $\Sigma\times X$, and a  potential $\widehat{\phi}:\Sigma\times X\to\mathbb{R}$, we define its {\em metric pressure out of equilibrium} by 
    $$\widehat{P}_{\widehat{\eta}}(\widehat{\phi}):=\widehat{h}_{\widehat{\eta}}(\widehat{F})+\int \widehat{\phi}d\widehat{\eta}.$$
\end{definition}

\begin{definition}[Local entropy out of equilibrium {\cite[Definition~3.6]{TDFOE_I}}]
   Given an $\widehat{F}$-invariant Borel probability $\widehat{\eta}$, the {\em local entropy out of equilibrium} of $\widehat{\eta}$ is defined for $\tau$-a.e. $(\omega,t)$ as,
   \begin{equation}\label{locEntForm}\widehat{h}_{\widehat{\eta}}((\omega,t)):=\limsup_{n\to\infty}\frac{-1}{n}\log\eta_{(\omega^-,t)}([\omega]_n),
   \end{equation}
   where $[\omega]_n:=[\omega_0,\ldots,\omega_{n-1}]$,  $\omega^-=(\omega_i)_{i\leq -1}$, and  $\widehat{\eta}=\int \eta_{(\omega^-,t)} d\tau(t)$ is the disintegration of $\widehat{\eta}$ into conditional measures by the measurable partition $\{V^u_{\leq -1}(\omega^-)\times\{t\}\}_{\omega^-\in \Sigma^-, t\in X}$.
\end{definition}

\begin{lemma}[{\cite[Lemma~3.7]{TDFOE_I}}]\label{strongUEnt}
    Assume that $F_\omega=F_{(\omega_i)_{i\leq0}}$, and let $\widehat{\eta}$ be an ergodic $\widehat{F}$-invariant Borel probability. Then the limit superior in \eqref{EOEform} is in fact a limit, and for $\widehat{\eta}$-a.e. $(\omega,t)$,
    $$
        \widehat{h}_{\widehat{\eta}}(\widehat{F})=\lim_{n\to\infty}\frac{-1}{n}\log\eta_{(\omega^-,t)}([\omega]_n),
    $$
     $\widehat{\eta}=\int \eta_{(\omega^-,t)} d\tau(t)$ is the disintegration of $\widehat{\eta}$ into conditional measures by the measurable partition $\{V^u_{\leq -1}(\omega^-)\times\{t\}\}_{\omega^-\in \Sigma^-, t\in X}$.
\end{lemma}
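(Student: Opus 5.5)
We read this as a Shannon--McMillan--Breiman theorem for the family of conditional measures $\eta_{(\omega^-,t)}$ on the local unstable leaves $V^u_{\leq-1}(\omega^-)\times\{t\}$, along the refining sequence of fiber-saturated partitions $\wh{\mathcal C}_n$. The plan is to write $-\log\eta_{(\omega^-,t)}([\omega]_n)$ as a Birkhoff sum of a single information function along the orbit of $\wh F$, and then apply the ergodic theorem.

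\textbf{Step 1 (cocycle decomposition).} Since $F_\omega=F_{(\omega_i)_{i\leq0}}$, the map $\wh F$ sends the partition element $V^u_{\leq-1}(\omega^-)\times\{t\}$ into $V^u_{\leq-1}((T\omega)^-)\times\{F_\omega(t)\}$. So the partition $\xi:=\{V^u_{\leq-1}(\omega^-)\times\{t\}\}$ satisfies $\wh F\xi\leq\xi$, meaning it is increasing, and $\wh F^{-1}\xi$ refines $\xi$ by one symbol $\omega_0$. Invariance of $\wh\eta$ and uniqueness of disintegrations give $\wh F_*\eta_{(\omega^-,t)}|_{[\omega_0]}\propto\eta_{\wh F(\omega,t)}$. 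Iterating, we obtain the chain rule
\[
\eta_{(\omega^-,t)}([\omega]_n)=\prod_{k=0}^{n-1}\eta_{\wh F^k(\omega,t)}([\omega_k]),
\]
where each factor is the conditional mass of the next symbol. Setting $I(\omega,t):=-\log\eta_{(\omega^-,t)}([\omega_0])$, this yields
\[
-\log\eta_{(\omega^-,t)}([\omega]_n)=\sum_{k=0}^{n-1}I\circ\wh F^k(\omega,t).
\]

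\textbf{Step 2 (integrability and ergodic theorem).} $I$ is the conditional information of the finite partition $\wh{\mathcal C}$ given $\xi$. It is nonnegative, and $\int I\,d\wh\eta\leq\log|\mathcal A|$ by the standard bound $\int\sup_n I_n\leq H(\wh{\mathcal C})+1$. Birkhoff's ergodic theorem for the ergodic $\wh\eta$ then gives, a.e.,
\[
\lim_n \frac{-1}{n}\log\eta_{(\omega^-,t)}([\omega]_n)=\int I\,d\wh\eta=H_{\wh\eta}(\wh{\mathcal C}\mid\xi).
\]

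\textbf{Step 3 (identify with the entropy).} Integrating the chain rule identity over the fibers gives $\int\H_{\eta_{(\omega^-,t)}}(\wh{\mathcal C}_n)\,d\tau=\int\sum_{k<n}I\circ\wh F^k\,d\wh\eta=n\int I\,d\wh\eta$, using invariance of $\wh\eta$. Hence the $\limsup$ in \eqref{EOEform} is an actual limit, equal to $\int I\,d\wh\eta$, and this coincides with the a.e. limit obtained in Step 2.

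\textbf{Main obstacle.} The delicate step is Step 1. The conditional measures are defined only almost everywhere, so the identity $\wh F_*(\eta_{(\omega^-,t)}|_{[\omega_0]})=\eta_{(\omega^-,t)}([\omega_0])\,\eta_{\wh F(\omega,t)}$ must be justified through uniqueness of disintegration with respect to the measurable partition $\wh F\xi$. This uses measurability of $\xi$: it is the preimage of the point partition of $\Sigma^-\times X$.
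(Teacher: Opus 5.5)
The paper does not prove this lemma; it quotes it from \cite[Lemma~3.7]{TDFOE_I}, so there is no in-paper argument to compare against. Your proof is correct and self-contained. It is the standard increasing-partition argument that Remark \ref{importRmk} alludes to: there $\wh h_{\wh\eta}(\wh F)$ is identified with the conditional entropy $H_{\wh\eta}(\wh F^{-1}\xi\mid\xi)=H_{\wh\eta}(\wh{\mathcal C}\mid\xi)$ of the strong-unstable partition. Your chain rule is the conditional-information cocycle identity, which turns Shannon--McMillan--Breiman into a plain Birkhoff theorem.

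Two small points of precision.

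First, an atom $V^u_{\leq-1}(\omega^-)\times\{t\}$ is not mapped into a single atom. It is mapped onto a union of atoms, one for each admissible value of $\omega_0$, because both $(T\omega)^-$ and $F_\omega(t)$ depend on $\omega_0$. The statement your chain rule actually uses is your other one, $\wh F^{-1}\xi=\xi\vee\wh{\mathcal C}$, and that is correct.

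Second, the integrability of $I$ needs no maximal inequality. As you cited it, that inequality would only give $\log|\mathcal A|+1$. Directly, $I\geq 0$ and $\int I\,d\wh\eta=\int \H_{\eta_{(\omega^-,t)}}(\wh{\mathcal C})\,d\tau\leq\log|\mathcal A|$.

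Your Step 3 in fact shows that the sequence in \eqref{EOEform} is constant in $n$, using only invariance of $\wh\eta$. Ergodicity is needed only in Step 2, to make the Birkhoff limit a constant.
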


\begin{remark}\label{importRmk} Under the assumptions of Lemma \ref{strongUEnt} (i.e. $F_\omega=F_{(\omega_i)_{i\leq0}}$), the entropy out of equilibrium is in fact the conditional entropy corresponding the increasing measurable partition of strong unstable leaves (i.e. the unstable direction of the symbolic part).
\end{remark}

\begin{definition}[Entropy out of equilibrium {\cite[Definition~3.9]{TDFOE_I}}]
We define the {\em entropy out of equilibrium}:    $$\wh h (\wh F):=\wh P(0).$$
\end{definition}

\subsection{Invariant Measures from Conformal Measures and Harmonic Functions of the Semi-Ruelle Operator}

In this section we describe how the existence of both harmonic functions and conformal measures implies the existence of an associated $\wh F$-invariant measure.

\begin{lemma}[Adapted invariant measure]\label{InvExists}
Assume that $F_\omega=F_{(\omega_i)_{i\leq0}}$ and let $\wh \phi\in\mathrm{H\ddot{o}l}(\Sigma^-\times X)$. Let $\wh p$ be a $\wh \phi$-conformal measure, %which projects to $\tau\in \mathbb{P}(X)$. 
and let $\wh\rho\in L^1(\wh p)$ be a positive $\wh\phi$-harmonic function s.t. %$\log\wh\rho\in L^\infty(\wh p)$ and 
$\int \wh\rho d\wh p=1$. Then
$$\wh\nu^-:=\wh\rho\cdot \wh p$$
defines an $\wh F_R$-invariant probability measure which extends uniquely to an $\wh F$-invariant probability measure on $\Sigma\times X$, $\wh\nu$. That is,
$$\wh\nu^-=\wh\nu\circ \wh\pi^{-1},$$
where $\wh\pi:\Sigma\times X\to \Sigma^-\times X$ is the projection.
\end{lemma}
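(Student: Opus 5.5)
The plan has two parts: first show that $\wh\nu^-=\wh\rho\cdot\wh p$ is $\wh F_R$-invariant, then lift it to an $\wh F$-invariant measure on $\Sigma\times X$ via the natural extension.

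\textbf{Step 1: invariance of $\wh\nu^-$.} The key identity is the transfer-operator relation
$$\L_{\wh\phi}\big((\wh g\circ \wh F_R)\cdot \wh f\big)=\wh g\cdot \L_{\wh\phi}\wh f,$$
which holds because every preimage $(\wt\omega^-,\wt t)$ of $(\omega^-,t)$ under $\wh F_R$ satisfies $\wh g(\wh F_R(\wt\omega^-,\wt t))=\wh g(\omega^-,t)$. Conformality gives $\int e^{-\wh P(\wh\phi)}\L_{\wh\phi}\wh f\,d\wh p=\int \wh f\,d\wh p$. We need this for $\wh f$ that is only measurable and non-negative, whereas the definition gives it for continuous $\wh f$. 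To extend it:
\begin{itemize}
\item By positivity and linearity of $\L_{\wh\phi}$, the identity passes from $C(\Sigma^-\times X)$ to bounded Borel functions through a monotone class argument; equivalently, $e^{-\wh P(\wh\phi)}\L^*_{\wh\phi}\wh p=\wh p$ as measures means it holds for indicators.
\item It then passes to non-negative measurable functions by monotone convergence.
\end{itemize}
With this in hand, for $\wh g\in C(\Sigma^-\times X)$ with $\wh g\ge0$,
$$\int \wh g\circ\wh F_R\, d\wh\nu^-=\int e^{-\wh P(\wh\phi)}\L_{\wh\phi}\big((\wh g\circ\wh F_R)\wh\rho\big)d\wh p=\int \wh g\, e^{-\wh P(\wh\phi)}\L_{\wh\phi}\wh\rho\, d\wh p=\int \wh g\,\wh\rho\, d\wh p.$$
The last equality uses harmonicity, $e^{-\wh P(\wh\phi)}\L_{\wh\phi}\wh\rho=\wh\rho$. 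Here we must read harmonicity as holding $\wh p$-a.e. This is enough: $\wh p$ is quasi-invariant, since $\wh p(\wh F_R^{-1}E)=\int e^{-\wh P(\wh\phi)}\L_{\wh\phi}\mathbb{1}_{\wh F_R^{-1}E}\,d\wh p=\int\mathbb{1}_E\, e^{-\wh P(\wh\phi)}\L_{\wh\phi}1\,d\wh p$, so modifying $\wh\rho$ on a $\wh p$-null set does not affect the integrals. Splitting a general $\wh g$ into positive and negative parts gives $\wh F_R$-invariance, and $\int\wh\rho\,d\wh p=1$ makes $\wh\nu^-$ a probability measure.

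\textbf{Step 2: extension to $\Sigma\times X$.} This is the natural-extension step, and I expect the main care to be needed here. The projection $\wh\pi(\omega,t)=((\omega_i)_{i\le-1},t)$ does not semi-conjugate $\wh F$ to $\wh F_R$. Instead it intertwines $\wh F^{-1}$ with $\wh F_R$: by Remark \ref{afterSinaiRuelle} and the definition of $F^{-1}_{\omega^-}$,
$$\wh\pi\circ\wh F^{-1}=\wh F_R\circ\wh\pi.$$
Moreover, $(\Sigma\times X,\wh F^{-1})$ is measurably isomorphic to the natural extension of $(\Sigma^-\times X,\wh F_R)$ via the map
$$(\omega,t)\mapsto\big(\wh\pi(\wh F^{n}(\omega,t))\big)_{n\ge0}.$$
This map is injective because $\wh\pi\circ\wh F^n$ records the coordinates $\omega_i$ for $i\le n-1$ together with the fiber point. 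It is onto the inverse-limit space because $F^{-1}_{\omega^-}$ is a homeomorphism, so each backward-compatible sequence determines $t$ and all the symbols.

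Define $\wh\nu$ on cylinder sets by
$$\wh\nu\big(\wh F^{-n}\wh\pi^{-1}B\big):=\wh\nu^-(B).$$
This is consistent because $\wh\pi^{-1}B=\wh F^{-n}\wh\pi^{-1}(\wh F_R^{-n}B)$, and $\wh\nu^-(\wh F_R^{-n}B)=\wh\nu^-(B)$ by Step 1. The sets $\wh F^{-n}\wh\pi^{-1}\mathcal B$ form an increasing sequence of $\sigma$-algebras generating the Borel $\sigma$-algebra of $\Sigma\times X$. Kolmogorov consistency, valid here since all spaces are compact metric, then gives a unique Borel probability $\wh\nu$ with $\wh\nu\circ\wh\pi^{-1}=\wh\nu^-$, and this $\wh\nu$ is $\wh F$-invariant. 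Uniqueness holds because any $\wh F$-invariant measure projecting to $\wh\nu^-$ is determined on this generating algebra. The points to verify carefully are the generation of the Borel $\sigma$-algebra and the compatibility relations; beyond that, the result is the standard Rokhlin natural-extension construction.
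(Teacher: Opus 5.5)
Your proposal is correct and takes the paper's approach. Step 1 is the paper's computation
$\int \wh\rho\,(\wh g\circ\wh F_R)\,d\wh p=\int \wh g\, e^{-\wh P(\wh\phi)}\L_{\wh\phi}\wh\rho\,d\wh p=\int\wh g\,\wh\rho\,d\wh p$; you add the monotone-class justification for applying conformality to the non-continuous function $\wh\rho\cdot\wh g\circ\wh F_R$, which the paper skips. Step 2 is the natural-extension construction. The paper only sketches it, setting $\wh\nu(T^\ell[\ul w]\times B):=\wh\nu^-(\wh F^{-\ell}[T^\ell[\ul w]\times B])$ and leaving the checks to the reader, whereas you carry it out via $\wh\pi\circ\wh F^{-1}=\wh F_R\circ\wh\pi$ and the increasing $\sigma$-algebras $\wh F^{-n}\wh\pi^{-1}\mathcal B$.

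One small slip in Step 1: the preimage-direction quasi-invariance you cite is not what you need. Independence of $\wh p$-a.e. harmonicity from the version of $\wh\rho$ requires that images of null sets are null. This follows from $\L_{\wh\phi}\mathbb{1}_N\ge e^{-\|\wh\phi\|_\infty}\mathbb{1}_{\wh F_R N}$ together with conformality, which give $\wh p(\wh F_R N)\le e^{\|\wh\phi\|_\infty+\wh P(\wh\phi)}\wh p(N)$. Your chain of equalities never actually uses this, though, since it works with any fixed version of $\wh\rho$ that is harmonic a.e.
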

\begin{proof}
By definition, $\wh\nu^-$ is indeed a probability measure on $\Sigma^-\times X$. To see that it is $\wh F_R$-invariant, let $\wh g\in C(\Sigma^-\times X)$. Then, by the conformality and harmonicity,
\begin{align*}
	\int \wh g\circ \wh F_R d\wh\nu^-=& \int \wh\rho \wh g\circ \wh F_R d\wh p=\int e^{-\wh P(\wh\phi)}\L_{\wh\phi}(\wh \rho\cdot \wh g\circ \wh F_R)d\wh p\\
	=& \int \wh g\cdot e^{-\wh P(\wh\phi)}\L_{\wh\phi}(\wh \rho)d\wh p= \int \wh g\cdot \wh \rho d\wh p=\int \wh g d\wh \nu^-.
\end{align*} 

To see that $\wh\nu^-$ extends uniquely to an $\wh F$-invariant measure, we define the following unique extension. We consider the following collection of open sets which generate the topology of $\Sigma^-\times X$: $$\{T^\ell[[\underline w]]\times B: |\ul w|=n, \ell\in \mathbb{Z}, B=B(t_B,r_B)\subseteq X\}.$$
Given $B, \ul w$, and $\ell$, set 
$$\wh \nu(T^\ell[[\underline w]]\times B):=\wh \nu^-(\wh F^{-\ell}[T^\ell[[\underline w]]\times B])= \wh \nu^-([\underline w]\times \Pi_X\circ\wh F^{-\ell}[ \Sigma\times B]),$$
where $\Pi_X:\Sigma\times X\to X$ is the projection. Indeed, the projection of an open set is open, and so measurable. We leave it to the reader to verify that this defines the unique extension of $\wh \nu^-$ into an $\wh F$-invariant measure on $\Sigma\times X$. 
\end{proof}

\subsection{Variational Principle}\label{VarPrinceSubSect}

We decompose the proof of the variational principle into the lower bound and the upper bound%, where we require weaker assumptions for the lower bound (as in \textsection \ref{ConfExistSect}), while the upper bound requires the additional assumptions of \textsection \ref{harmExistSec}
.

\begin{prop}[POE lower bound]\label{POElowerBound}
	Assume that $(\Sigma\times X,\wh F)$ admits $\wh\phi$-holonomies, then for any $\wh F$-invariant Borel probability measure $\wh \eta$,
	$$\wh P_{\wh\eta}(\wh\phi)\leq \wh P(\wh \phi).$$
\end{prop}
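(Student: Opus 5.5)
We will prove the inequality $\wh h_{\wh\eta}(\wh F)+\int\wh\phi\,d\wh\eta\leq \wh P(\wh\phi)$ by a fiberwise Gibbs-inequality argument along the strong unstable conditionals, comparing them with the partition functions $\wh Z_n$. By the ergodic decomposition, and since both sides are affine, respectively constant, in $\wh\eta$, it suffices to treat ergodic $\wh\eta$. We first reduce, via Remark \ref{afterSinaiRuelle} and Lemma \ref{SinaiB}, to the case $F_\omega=F_{(\omega_i)_{i\leq0}}$ and $\wh\phi$ depending only on $((\omega_i)_{i\leq0},t)$. This conjugation and cohomology change neither $\wh P(\wh\phi)$, by $\wh\phi$-holonomies and boundedness of the coboundary, nor $\int\wh\phi\,d\wh\eta$ or the entropy out of equilibrium, since $\wh C$ is fiber-preserving and the partitions $\wh{\mathcal C}_q$ are fiber-saturated. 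In this setting Lemma \ref{strongUEnt} and Remark \ref{importRmk} identify $\wh h_{\wh\eta}(\wh F)$ with the limit of $\frac1n\H_{\eta_{(\omega^-,t)}}(\wh{\mathcal C}_n)$, averaged over the disintegration along $\{V^u_{\leq-1}(\omega^-)\times\{t\}\}$.

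Fix $n$ and a.e. $(\omega^-,t)$. The partition $\wh{\mathcal C}_n$ restricted to $V^u_{\leq-1}(\omega^-)\times\{t\}$ is indexed by admissible words $\ul w$ of length $n$ with $w_0$ compatible with $\omega^-_{-1}$. We apply the elementary inequality $\sum_i q_i(-\log q_i+a_i)\leq\log\sum_i e^{a_i}$ with $q_{\ul w}=\eta_{(\omega^-,t)}([\ul w])$ and $a_{\ul w}=\sup_{\omega\in[\ul w]\cap V^u_{\leq-1}(\omega^-)}\wh\phi^{(n)}(\omega,t)$. This gives
$$\H_{\eta_{(\omega^-,t)}}(\wh{\mathcal C}_n)+\int \wh\phi^{(n)}(\cdot,t)\,d\eta_{(\omega^-,t)}\leq \log\sum_{\ul w}e^{a_{\ul w}}.$$
By $\wh\phi$-holonomies, the supremum over $[\ul w]\cap V^u_{\leq-1}(\omega^-)$ differs by at most $C$ from the supremum over $[\ul w]$ defining $\theta_{\ul w}$ in Definition \ref{DefOfPOE}. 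Hence the right-hand side is at most $C+\log\sum_a\wh Z_n(\wh\phi,t,a)\leq C+\log(\#\text{symbols})+\log\sup_t\wh Z_n(\wh\phi,t,a')$, with the choice of $a$ irrelevant by Remark \ref{phiHolonomiesRmk}.

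We then integrate against the transverse measure $\tau$ and divide by $n$. By $\wh F$-invariance, $\int\int\wh\phi^{(n)}(\cdot,t)\,d\eta_{(\omega^-,t)}\,d\tau=\int\wh\phi^{(n)}\,d\wh\eta=n\int\wh\phi\,d\wh\eta$; note that $\wh\phi^{(n)}(\omega,t)=\sum_{k<n}\wh\phi(\wh F^k(\omega,t))$ with fiber point fixed at $t$ on the leaf. Letting $n\to\infty$ and using Lemma \ref{limsupIsLim} together with Lemma \ref{strongUEnt} yields $\wh h_{\wh\eta}(\wh F)+\int\wh\phi\,d\wh\eta\leq\wh P(\wh\phi)$.

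The main obstacle is the bookkeeping in the reduction step, together with making sure that the Birkhoff sums along a leaf $V^u_{\leq-1}(\omega^-)\times\{t\}$ match exactly the weights $\sum_k\phi_{F^k_\omega(t)}(T^k\omega)$ appearing in $\wh Z_n$. Here the fiber point $t$ is common to the whole leaf, since all points of $V^u_{\leq-1}(\omega^-)\times\{t\}$ share the same $t$ and $F^k_\omega$ depends on $\omega_0,\dots,\omega_{k-1}$ and the past. Also, the uniform constant from $\wh\phi$-holonomies must absorb the choice of representative. Measurability of $(\omega^-,t)\mapsto q_{\ul w}$ is standard from Rokhlin's disintegration.
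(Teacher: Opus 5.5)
Your argument is correct and is essentially the paper's proof. Both rest on Jensen's (Gibbs') inequality applied leafwise to the conditionals $\eta_{(\omega^-,t)}$ on the cylinders $[\ul w]$ with Birkhoff weights $e^{\wh\phi^{(n)}}$. The paper routes this through $\int\L_{\wh\phi}^n1\,d\wh\eta\leq\|\L_{\wh\phi}^n1\|_\infty$ and Lemma \ref{specRad}, and identifies the left-hand side via the information function and the ergodic theorem. You instead bound directly by $\wh Z_n$ and use the definition of $\wh h_{\wh\eta}(\wh F)$ as a $\limsup$ of averaged conditional entropies. In your bound, $a_{\ul w}\leq\wh\phi^{(n)}(\theta_{\ul w},t)$ is in fact immediate, since $\theta_{\ul w}$ maximizes over the larger set $[\ul w]$; holonomies are only needed for the independence of $a$.

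Because that definition applies to the original $\wh F$ and to every invariant $\wh\eta$, you can drop both the ergodic decomposition and the reduction step. This matters for the reduction step, whose justification does not hold up: Remark \ref{afterSinaiRuelle} needs unstable holonomies, which are not assumed, and the fiber-preserving map $\wh C$ need not preserve the leaf partition $\{V^u_{\leq-1}(\omega^-)\times\{t\}\}$ on which the entropy out of equilibrium is defined, so your claim that this entropy is unchanged is unsupported.
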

\begin{proof}
	The metric pressure out of equilibrium is affine w.r.t. the ergodic decomposition, hence it is enough to prove the lower bound for ergodic measure. Let $\wh\eta$ be an ergodic $\wh F$-invariant Borel probability measure. Recall the information function
	$$\wh I_{\wh \eta}(\omega,t):=-\log \eta_{(\omega^-,t)}([\omega_0]\times X),$$
	where $\wh\eta=\int \eta_{(\omega^-,t)}d\wh\eta^- $ is the disintegration of $\wh\eta$ by \\ $\{V^u_{\leq -1}(\omega^-)\times \{t\}\}_{(\omega^-,t)\in \Sigma^-\times X}$, $V^u_{\leq -1}(\omega^-):=\{\omega\in \Sigma: \omega_i=\omega^-_i, \forall i\leq-1\}$, and $\wh\eta^-$ is the projection of $\wh\eta$ to $\Sigma^-\times X$. Then, by the point-wise ergodic theorem and by the dominated convergence theorem (as $\wh I_{\wh \eta} \in L^1(\wh\eta)$),
\begin{align*}
	\wh P_{\wh\eta}(\wh\phi)=&\int\lim_{n\to\infty}\frac{1}{n}\sum_{k\leq n}(\wh I_{\wh\eta}+\wh\phi)\circ \wh F^k d\wh\eta\\
	=& \lim_{n\to\infty}\int\frac{1}{n}\sum_{k\leq n}(\wh I_{\wh\eta}+\wh\phi)\circ \wh F^k d\wh\eta.
\end{align*}
Indeed, by the $\wh\phi$-holonomies, 
\begin{align}\label{forLowBd}
\wh P(\wh \phi)\geq& \limsup_n\frac{1}{n}\log \Big((\L_{\wh\phi}^*)^n\wh\eta\Big) 1= \limsup_n\frac{1}{n}\log \int \L_{\wh\phi}^n1 d\wh\eta\nonumber\\
=& \limsup\frac{1}{n}\log\int \Big(\sum_{\overset{|\underline{w}|=n,}{\omega^-_{-1}\to w_0}}\frac{e^{\widehat{\phi}^{(n)}}}{\eta_{(\omega^-,t)}([\underline{w}])}\cdot \eta_{(\omega^-,t)}([\underline{w}]) \Big)d \wh\eta\nonumber\\
%=& \limsup\frac{1}{n}\log\int \Big(\sum_{\overset{|\underline{w}|=n,}{\omega^-_{-1}\to w_0}}\frac{e^{\widehat{\phi}^{(n)}(\omega_{\ul w,\omega^-%,t
%},t)}}{\eta_{(\omega^-,t)}([\underline{w}])}\cdot \eta_{(\omega^-,t)}([\underline{w}]) \Big)d \wh\eta\\
\geq& \limsup\int \Big(\sum_{\overset{|\underline{w}|=n,}{\omega^-_{-1}\to w_0}} \frac{1}{n}\log\frac{e^{\widehat{\phi}^{(n)}(\omega_{\ul w,\omega^-%,t
},t)}}{\eta_{(\omega^-,t)}([\underline{w}])}\cdot \eta_{(\omega^-,t)}([\underline{w}]) \Big)d \wh\eta\nonumber\\
=& \limsup_{n\to\infty}\int\frac{1}{n}\sum_{k\leq n}(\wh I_{\wh\eta}+\wh\phi)\circ \wh F^k d\wh\eta=\wh P_{\wh\eta}(\wh\phi),
\end{align}
where $\omega_{\ul w, \omega^-}$ %maximizes $\wh\phi^{(n)}$ on 
is an element of $(V^u_{\leq -1}(\omega^{-})\cap[\ul w])\times \{t\}$ and the second inequality is due to Jensen. 
\end{proof}

\begin{theorem}[POE upper bound]\label{POEupperBound}
	Assume that $(\Sigma\times X,\wh F)$ admits %mild unstable 
	$\wh\phi$-holonomies and that $F_\omega=F_{(\omega_i)_{i\leq0}}$%, and that $\wh\phi\in \mathrm{H\ddot{o}l}(\Sigma^-\times X)$
	. Let $\wh p$ be a $\wh\phi$-conformal measure, and assume that there exists $\wh\rho\in L^\infty(\wh p)$ with $\|\log \wh\rho\|_{L^\infty(\wh p)}<\infty$. Then, $$\wh P(\wh\phi)=%\max\Big\{
	\wh P_{\wh \rho\cdot \wh p}(\wh\phi)%:\wh\eta\text{ is }\wh F\text{-invariant}\Big\}
	.$$
\end{theorem}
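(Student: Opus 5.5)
One inequality is already available. By Lemma \ref{InvExists}, after normalizing so that $\int\wh\rho\,d\wh p=1$ (which is possible since $\log\wh\rho\in L^\infty(\wh p)$), the measure $\wh\nu^-=\wh\rho\cdot\wh p$ is $\wh F_R$-invariant and extends uniquely to an $\wh F$-invariant probability $\wh\nu$ on $\Sigma\times X$. Proposition \ref{POElowerBound} then gives $\wh P_{\wh\nu}(\wh\phi)\le\wh P(\wh\phi)$. It therefore remains to prove
$$\wh h_{\wh\nu}(\wh F)+\int\wh\phi\,d\wh\nu\ge\wh P(\wh\phi).$$
We may assume $\wh\phi$ depends only on $(\omega^-,t)$ (Lemma \ref{SinaiB}). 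We may also reduce to an ergodic component. By Lemma \ref{ConfOfErg}, almost every ergodic component of $\wh p$ is conformal, and $\wh\rho$ restricts to each component with the same $L^\infty$ bounds on $\log\wh\rho$. Alternatively, we can work directly with the integrated quantity in \eqref{EOEform}, which avoids any ergodicity assumption.

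The key step is to identify the conditional measures of $\wh\nu$ on the partition $\{V^u_{\le-1}(\omega^-)\times\{t\}\}$. The semi-Gibbs computation of Proposition \ref{newSemiGibbsProp} can be repeated with $\wh\nu$ in place of $\wh p^\pm$. The same chain of equalities, using the invariance of $\wh\nu$ and the conformality of $\wh p$, now carries the density $\wh\rho$ along, and gives for a.e. $(\omega^-,t)$ and every admissible word $\ul w$ of length $n$
$$\nu_{(\omega^-,t)}([\ul w])=\frac{\L^n_{\wt\phi}(\mathbb 1_{[\ul w]}\wh\rho)}{\wh\rho}(\omega^-,t)=e^{\pm 2\|\log\wh\rho\|_\infty}\,e^{\wt\phi^{(n)}(\omega^-\ul w,t)}.$$
Here $\wt\phi=\wh\phi-\wh P(\wh\phi)$, and we have used $\wh\phi$-holonomies to replace the point in the cylinder by any representative, at the cost of a further constant $e^{\pm C}$. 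The harmonicity $e^{-\wh P}\L\wh\rho=\wh\rho$ is exactly what makes the denominator the right normalization, so no factor $\L^n_{\wt\phi}1$ appears. That factor is the uncontrolled one in Proposition \ref{newSemiGibbsProp}.

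We then take logarithms and apply Lemma \ref{strongUEnt}. For $\wh\nu$-a.e. $(\omega,t)$,
$$-\frac1n\log\nu_{(\omega^-,t)}([\omega]_n)=\wh P(\wh\phi)-\frac1n\wh\phi^{(n)}(\omega,t)+O(1/n).$$
Integrating, or using the Birkhoff ergodic theorem on each ergodic component, yields $\wh h_{\wh\nu}(\wh F)=\wh P(\wh\phi)-\int\wh\phi\,d\wh\nu$. This proves the equality and hence the theorem, with $\wh P_{\wh\rho\cdot\wh p}$ understood as the pressure of the extension $\wh\nu$.

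The main obstacle is making the conditional-measure identity rigorous. The density-point limit over $[\omega^-]_{-n}\times B_X(t,\delta_n)$ requires a differentiation argument for $\wh p$ on $\Sigma^-\times X$, and $X$ is only a compact metric space, so this needs care. I would first avoid pointwise limits entirely. The plan is to verify that the family $\omega^-\mapsto\L^n_{\wt\phi}(\mathbb 1_{[\ul w]}\wh\rho)/\wh\rho$ satisfies the defining property of conditional expectation: integrate it against $\mathbb 1_A$ for $A$ measurable in $(\omega^-,t)$, and use conformality in the form $\int g\,\L^n_{\wt\phi}f\,d\wh p=\int (g\circ\wh F^n_R)f\,d\wh p$. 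Uniqueness of disintegration then gives the identity a.e. The remaining point is that the partition elements are only tail-measurable with respect to the unstable structure. This should be handled by the invariance of $\wh\nu$, exactly as in the extension in Lemma \ref{InvExists}.
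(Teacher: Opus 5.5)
Your proposal is correct and has the same architecture as the paper's proof. You use the measure $\wh\nu=\wh\rho\cdot\wh p$ of Lemma \ref{InvExists}, reading $\wh\rho$ as $\wh\phi$-harmonic as the paper's proof also does. You reduce to the ergodic case via Lemma \ref{ConfOfErg}, prove a two-sided semi-Gibbs estimate for the conditionals on $V^u_{\le-1}(\omega^-)\times\{t\}$, and conclude with Lemma \ref{strongUEnt} and the ergodic theorem.

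The difference is in how the semi-Gibbs estimate is obtained. The paper passes to $\wh p^\pm=\wh\rho^{-1}\wh\nu$ and quotes Proposition \ref{newSemiGibbsProp}, whose denominator is $\L^n_{\wt\phi}1$. It then removes that denominator with the ``$\Rightarrow$'' half of Theorem \ref{harmExist}. This keeps the semi-Gibbs formula available for an arbitrary conformal measure. However, it goes through the density-point limit over $[\omega^-]_{-n}\times B_X(t,\delta_n)$, which needs a differentiation theorem that is not automatic on a general compact metric $X$.

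You normalize by $\wh\rho$ directly. Since $\wh\rho\circ\wh\pi$ is constant on partition elements, $\wh\nu$ and $\wh\rho^{-1}\wh\nu$ have the same conditionals, and your expression is the exact one. The paper's $\L^n_{\wt\phi}\mathbb{1}_{[\ul w]}/\L^n_{\wt\phi}1$ agrees with yours up to $e^{\pm2\|\log\wh\rho\|}$, which is all the entropy computation uses. In effect you inline the argument behind Theorem \ref{harmExist} and bypass Proposition \ref{newSemiGibbsProp}.

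The step you flag as the main obstacle closes in one line. Let $[\ul w]^-$ be the set of $(\sigma^-,s)$ whose last $n$ symbols are $\ul w$. Because $F_\omega=F_{(\omega_i)_{i\le0}}$, for measurable $A\subseteq\Sigma^-\times X$ one has $\wh F^n\big(([\ul w]\times X)\cap\wh\pi^{-1}A\big)=\wh\pi^{-1}\big([\ul w]^-\cap\wh F_R^{-n}A\big)$. So invariance of $\wh\nu$ and conformality (extended from continuous to bounded measurable functions by a monotone class argument) give
\[
\wh\nu\big(([\ul w]\times X)\cap\wh\pi^{-1}A\big)=\int\mathbb{1}_{[\ul w]^-}\cdot\mathbb{1}_A\circ\wh F_R^n\cdot\wh\rho\,d\wh p=\int_A\L^n_{\wt\phi}\big(\mathbb{1}_{[\ul w]^-}\wh\rho\big)\,d\wh p .
\]
This is your formula, by uniqueness of conditional expectation, and no density points are needed. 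The $e^{\pm2\|\log\wh\rho\|}$ bound at the preimage point uses the fact that conformality makes $\wh F_R$ non-singular for $\wh p$. Finally, the appeal to Proposition \ref{POElowerBound} is redundant, since the two-sided estimate already yields equality. Your integrated variant via \eqref{EOEform} also removes the need for the ergodic decomposition.
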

\begin{proof} %The lower bound is given by Proposition \ref{POElowerBound}. 
We show that the measure $\wh\nu:=\wh\rho\cdot\wh p$ (which is $\wh F_R$-invariant by Lemma \ref{InvExists}) satisfies $\wh P_{\wh\nu}(\wh\phi) = \wh P(\wh\phi) $. In addition, by Lemma \ref{ConfOfErg} we may assume that $\wh\nu$ is ergodic ($\wh p$ being ergodic implies that $\wh \rho\cdot \wh p$ is ergodic).

Let $\wh\nu=\int \wh\nu_{(\omega^-,t)}d\wh\nu^- $ be the disintegration of $\wh\nu$ by \\ $\{V^u_{\leq -1}(\omega^-)\times \{t\}\}_{(\omega^-,t)\in \Sigma^-\times X}$, $V^u_{\leq -1}(\omega^-):=\{\omega\in \Sigma: \omega_i=\omega^-_i, \forall i\leq-1\}$, where $\wh\nu^-$ is the projection of $\wh\eta$ to $\Sigma^-\times X$. Recall that $\wh\nu=\wh\rho\cdot \wh p$, with $\wh p$ being $\phi$-conformal (Lemma \ref{InvExists}). Moreover, by Theorem \ref{harmExist}, $\log \wh\rho\in %\B^{\gamma,\infty}_\tau
L^\infty(\wh p)$%  where $\tau$ is the projection of $\wh\nu$ to $X$
.

Given $\omega\in \Sigma$ and $n\in \mathbb{N}$, write $[\omega]_n:=[\omega_0,\ldots, \omega_{n-1}]$. Let $C_{\wh\rho}=\|\log\wh\rho\|_{%\gamma, \infty, \tau
L^\infty(\wh p)}$. Finally, let $\wh p^\pm := \frac{1}{\wh\rho}\cdot \wh\nu$, and let $\wh p^\pm=\int \wh p_{(\omega^-,t)}d\wh p $ be the disintegration of $\wh p^\pm$ by $\{V^u_{\leq -1}(\omega^-)\times \{t\}\}_{(\omega^-,t)\in \Sigma^-\times X}$.

Then, for $\wh\nu$-a.e. $(\omega,t)$,
\begin{align}\label{newEntLocForm}
\wh \nu_{(\omega^-,t)}([\omega]_n\times X)=e^{\pm C_{\wh\rho}} \wh p_{(\omega^-,t)}([\omega]_n\times X).
\end{align}
By Proposition \ref{newSemiGibbsProp} and Theorem \ref{harmExist}, $C_{\wh\rho}'>0$ s.t. for $\wh\nu$-a.e. $(\omega,t)$,
\begin{comment}
Let $\wh g\in \mathrm{Lip}(\Sigma\times X)$. Write $h_{\wh g}(\omega^-,t):= \wh p_{(\omega^-,t)}(\wh g)$, and so,
$$\int \wh g\circ \wh Fd \wh\nu =\int \wh g\wh \nu =\int h_{\wh g}d\wh \nu .$$
 Then by the $\wh\phi$-conformality of $\wh p$,
\begin{align*}
	\int \wh gd\wh p^\pm =&\int h_{\wh g}d\wh p= \int e^{-\wh P(\wh \phi)} \L_{\wh\phi} h_{\wh g}d\wh p\\
	=&\int e^{-\wh P(\wh \phi)} \sum_{\wh F_R(\wt\omega^-,\wt t)=(\omega^-,t)}e^{\wh \phi(\wt\omega^-,\wt t)} p_{(\omega^-,t)}(\wh g) d\wh p. 
\end{align*}
Therefore, by the uniqueness of conditional measures and the \\ Riesz–Markov–Kakutani representation theorem, 
for $\wh p$-a.e. $(\omega^-,t)$,
$$\wh p_{(\omega^-,t)}\circ \wh F^{-k}= e^{-\wh P(\wh \phi)} \sum_{\wh F_R(\wt\omega^-,\wt t)=(\omega^-,t)}e^{\wh \phi(\wt\omega^-,\wt t)} \wh p_{(\omega^-,t)}.$$
Substituting this back in \eqref{newEntLocForm}, we get 
\end{comment} 
\begin{equation}\label{GibbsEstsEq}
	\wh \nu_{(\omega^-,t)}([\omega]_n\times X)=e^{\pm C_{\wh\rho}'} e^{-n\wh P(\wh \phi)} e^{\wh \phi^{(n)}(\omega,t)}.
\end{equation}
By Lemma \ref{strongUEnt} and the point-wise ergodic theorem, we get that $\wh h_{\wh\nu}(\wh F)= \wh P(\wh \phi)-\int \wh\phi d\wh\nu$.
\end{proof}

\begin{remark}
	Indeed, in the setting of \textsection \ref{AppCount}, we have an example for which the POE variational principle is satisfied (as it satisfies the assumptions of \cite{TDFOE_I}), and indeed there exists a measure which obtains the maximum in the POE variational principle, but there is no harmonic function in the $L^1$ of no conformal measure. This highlights how the assumptions of Theorem \ref{POEupperBound} are necessary, as not always we can find a POE maximizing measure which is obtained as the product of a conformal measure times a harmonic function.
\end{remark}

\begin{cor}\label{rmkEOE}
    In the setting of Theorem \ref{POEupperBound}, we get that 
    \begin{equation*}
        \wh h (\wh F)=\max\Big\{\wh h_{\wh\eta}(\wh F): \wh\eta\text{ is }\wh F\text{-inv.} \Big\}= h_\mathrm{top}(T).
    \end{equation*} 
\end{cor}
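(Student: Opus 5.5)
We will prove the two equalities separately. The plan is to combine the upper bound from Proposition \ref{POElowerBound}, applied with the zero potential, with an explicit maximizer built as in Theorem \ref{POEupperBound}, and then to compute $\wh P(0)$ directly from Definition \ref{DefOfPOE}.

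\textbf{Step 1: computing $\wh P(0)$.} Take $\wh\phi\equiv 0$. Then every weight in $\wh Z_n(0,t,a)$ equals $1$, so $\wh Z_n(0,t,a)$ is the number of admissible words of length $n$ ending in $a$. This count does not depend on $t$. Hence
$$\wh h(\wh F)=\wh P(0)=\lim_n\frac1n\log\#\{\ul w: |\ul w|=n,\ w_{n-1}=a\}=h_\mathrm{top}(T).$$
The limit exists by Lemma \ref{limsupIsLim}, and the last equality is the standard count for compact topologically mixing TMSs. Also, $\wh\phi\equiv0$ trivially admits $0$-holonomies with $C=0$, so all earlier results apply with this potential.

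\textbf{Step 2: upper bound on the entropies.} Proposition \ref{POElowerBound}, applied with $\wh\phi=0$, gives $\wh h_{\wh\eta}(\wh F)=\wh P_{\wh\eta}(0)\le \wh P(0)$ for every $\wh F$-invariant $\wh\eta$. So the supremum of the entropies out of equilibrium is at most $\wh h(\wh F)$.

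\textbf{Step 3: attaining the bound.} Theorem \ref{ConfExist} gives a $0$-conformal measure $\wh p$, that is, $e^{-h_\mathrm{top}(T)}\L_0^*\wh p=\wh p$. Here $\L_0^k1(\omega^-,t)$ counts the admissible words of length $k$ that can be prepended to $\omega^-$. This count depends only on $\omega^-_{-1}$ and not on $t$, so it equals the classical transfer operator of the zero potential on $\Sigma^-$ applied to $1$. By Perron--Frobenius for the transition matrix (which is primitive by topological mixing), $e^{-kh_\mathrm{top}}\L_0^k1$ is bounded above and below uniformly in $k$. Thus $\sup_k\|\log\L_{\wt 0}^k1\|_\infty<\infty$, where $\wt 0=-h_\mathrm{top}(T)$, and Theorem \ref{harmExist} yields $\wh\rho$ with $\log\wh\rho\in L^\infty(\wh p)$. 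In fact, one can take $\wh\rho(\omega^-,t)=v_{\omega^-_{-1}}$, where $v$ is the Perron eigenvector, which makes harmonicity immediate. Theorem \ref{POEupperBound} then gives that $\wh\nu=\wh\rho\cdot\wh p$, extended via Lemma \ref{InvExists}, satisfies $\wh h_{\wh\nu}(\wh F)=\wh P_{\wh\nu}(0)=\wh P(0)$. So the maximum is attained, and both equalities follow.

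\textbf{Main obstacle.} The hardest point is making sure that "in the setting of Theorem \ref{POEupperBound}" really does supply a harmonic function for the zero potential. The hypotheses there are stated for $\wh\phi$, not for $0$. The explicit Perron-vector construction above settles this without appealing to any extra assumption. One should also check that Theorem \ref{POEupperBound}'s reduction to an ergodic $\wh\nu$, via Lemma \ref{ConfOfErg}, is compatible with this choice. If ergodicity fails, we instead apply the argument to ergodic components and use that entropy out of equilibrium is affine.
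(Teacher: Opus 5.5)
Your proposal is correct and follows the paper's route. The first equality is the POE variational principle applied to the zero potential (Proposition \ref{POElowerBound} for the upper bound, Theorem \ref{POEupperBound} for attainment); for $\wh h(\wh F)=h_\mathrm{top}(T)$, where the paper simply cites \cite[Proposition~6.1]{TDFOE_I}, you instead give the direct count $\wh Z_n(0,t,a)=\#\{\ul w:|\ul w|=n,\ w_{n-1}=a\}$, which is independent of $t$.

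Your explicit harmonic function $\wh\rho(\omega^-,t)=v_{\omega^-_{-1}}$ is a worthwhile addition, since the hypotheses of Theorem \ref{POEupperBound} are stated for $\wh\phi$ rather than for $0$ and the paper leaves this check implicit. One correction: $\wh F_R$-preimages \emph{append} a symbol after $\omega^-_{-1}$ rather than prepend one, so $v$ must be the right Perron eigenvector of the transition matrix $A$, i.e. $Av=e^{h_\mathrm{top}(T)}v$.
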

\begin{proof}
	The first equality is due to the POE variational principle (Theorem \ref{POEupperBound}), the second inequality is as in \cite[Proposition~6.1]{TDFOE_I}.
\end{proof}

\section{Smooth Measures, Adapted Hilbert Spaces, Eigen-Measures, and the Semi-Ruelle Operator as the Dual of Random Dynamics}\label{GibbsProcess} 
In this section we wish to study a finer structure for the action of the semi-Ruelle operator, so we are able to apply tools from spectral theory. Namely, we wish to find a new space of functions on which the semi-Ruelle  operator acts, %we no longer wish to immediately construct a harmonic function within our space (as in \textsection \ref{harmExistSec}), without establishing further spectral properties; However, 
%once 
so we can later establish %ing those 
additional spectral properties on %the new space, we are able to construct a harmonic function with additional properties 
it (e.g. uniqueness of the harmonic function within our space, regularity along the $X$ direction, etc.). We require a few preliminary definitions.

\subsection{Adapted Hilbert Space}

\begin{definition}[Smooth measure]\label{defLogJac}
	We say that $(\Sigma\times X,\wh F)$ admits a {\em smooth measure} $m\in\mathbb{P}(X)$ with an exponent $\theta\in (0,1]$ if for every $\omega\in \Sigma$, $$m\circ F_{T^{-1}\omega} \sim m,$$
	and furthermore, there exists a representative $J_{\omega}\in \mathrm{H\ddot{o}l}_\theta(X)$ s.t.  
	$$\frac{d m\circ F_{T^{-1}\omega} }{dm}\circ F_{T^{-1}\omega}^{-1}=J_{\omega} \ m\text{-a.e.},$$
and $(\omega, t)\mapsto J_\omega(t)$ is in $\mathrm{H\ddot{o}l}_\theta(\Sigma\times X)$.
	
Moreover, if $F_\omega=F_{(\omega_i)_{i\leq0}}$, then $\omega\mapsto J_\omega$ is constant on elements of the form $V^u_{\leq-1}(\omega^-)$ as $F_{T^{-1}\omega}=(F_{\omega^-}^{-1})^{-1}$, and so we define $$\wh \varphi(\omega^-, t):= \log J_{\omega^-}(t)\in \mathrm{H\ddot{o}l}_\theta(\Sigma^-\times X).$$ We call $\wh\varphi$ {\em the associated log-Jacobian function}.

\end{definition}

\begin{remark}
	If $m$ is fully supported on $X$, then each element in the family $\{J_\omega\}_{\omega\in \Sigma}$ must be unique.
\end{remark}

\begin{definition}\label{properFormPot}
A potential $\psi\in \mathrm{H\ddot{o}l}(\Sigma^-)$ is called {\em in proper form} if 
$$L_\psi 1=1,$$
where $L_\psi:C(\Sigma^-)\to C(\Sigma^-)$ is the associated Ruelle operator.
\end{definition}

\begin{remark}\text{ }
\begin{enumerate}
	\item Every H\"older continuous potential $\wt\psi$ is cohomologous via a bounded H\"older coboundary to a potential in a proper form. This can be arranged easily as follows: Let $\mu$ be the $\wt\psi$-equilibrium measure, write $\psi(\omega^-):=\log \mu_{T_R\omega^-}(T^{-1}[V^u_{\leq-1}(\omega^-)])$, where $\{\mu_{\wt\omega^-} \}_{\wt\omega^-\in \Sigma^-}$ is the disintegration of $\mu$ by conditionals on symbolic unstable leaves.
	\item In particular, $\psi$ being in proper form implies that
\begin{enumerate}
	\item $\psi<0$, 
	\item $P(\psi)=0$. 
\end{enumerate}
\end{enumerate}
\end{remark}

\begin{lemma}[Bounded operator on a smooth measure $L^2$ space]\label{bddOnL2m}
Assume that $F_\omega=F_{(\omega_i)_{i\leq0}}$. Let $\psi$ be a potential in a proper form, and let $\mu$ be its associated Gibbs state. Let $m$ be a smooth measure on $X$, with its associated log-Jacobian function $\wh\varphi\in \mathrm{H\ddot{o}l}(\Sigma^-\times X)$. Let $\wh\phi:=\psi+\wh\varphi$. Write $$\wh m:=\mu\times m.$$
Then, $\L_{\wh\phi}$ defines a bounded linear operator on $L^2(\wh m)$. In addition, the Koopman operator is also bounded on $L^2(\wh m)$.
\end{lemma}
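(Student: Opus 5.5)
The plan is to identify $\L_{\wh\phi}$ as the transfer operator of $\wh F_R$ with respect to $\wh m$. Then boundedness on $L^2(\wh m)$ reduces to boundedness of the Koopman operator $\wh g\mapsto \wh g\circ\wh F_R$, plus duality. Here $\mu$ is understood as a measure on $\Sigma^-$, namely the projection of the Gibbs state. Since $\psi$ is in proper form, $L_\psi^*\mu=\mu$ and $L_\psi1=1$.

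\textbf{Step 1 (duality identity).} For $\wh g,\wh f\in C(\Sigma^-\times X)$ I will show
\begin{equation*}
\int \wh f\cdot \L_{\wh\phi}\wh g\, d\wh m=\int (\wh f\circ \wh F_R)\,\wh g\, d\wh m .
\end{equation*}
Write the preimages of $(\omega^-,t)$ under $\wh F_R$ as $(\omega^-a,F_{\omega^-a}^{-1}{}^{-1}(t))$, i.e.\ $\wt t=F_{T^{-1}\cdot}(t)$ over the appended symbol. Then
\begin{equation*}
\L_{\wh\phi}\wh g(\omega^-,t)=\sum_{a}e^{\psi(\omega^-a)}J_{\omega^-a}(\wt t_a)\,\wh g(\omega^-a,\wt t_a).
\end{equation*}
First integrate in $t$ against $m$ for fixed $\omega^-$ and fixed $a$. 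The change of variables $t=F_{\omega^-a}^{-1}(\wt t)$, together with the Jacobian relation of Definition \ref{defLogJac}, turns $\int \wh f(\omega^-,t)J(\wt t_a)\wh g(\omega^-a,\wt t_a)\,dm(t)$ into $\int \wh f(\wh F_R(\omega^-a,s))\,\wh g(\omega^-a,s)\,dm(s)$. The main bookkeeping point is to check that the orientation of $J$ (density of $m\circ F$ evaluated after $F^{-1}$) matches $e^{\wh\varphi}$ evaluated at the preimage, and not its reciprocal. If it comes out reversed, the intended meaning of Definition \ref{defLogJac} is the one making the identity hold, and I would adopt that convention. Next I sum over $a$ with weights $e^{\psi(\omega^-a)}$ and integrate in $\mu$. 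The function $G(\sigma^-):=\int \wh f(\wh F_R(\sigma^-,s))\wh g(\sigma^-,s)\,dm(s)$ then appears as $\int L_\psi G\,d\mu=\int G\,d\mu$, by $L_\psi^*\mu=\mu$. This gives the identity.

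\textbf{Step 2 (Koopman bounded).} Take $\wh g=1$ and $\wh f=|\wh h|^2$ in Step 1:
\begin{equation*}
\|\wh h\circ\wh F_R\|_{L^2(\wh m)}^2=\int |\wh h|^2\,\L_{\wh\phi}1\,d\wh m\le \|\L_{\wh\phi}1\|_\infty\|\wh h\|_{L^2(\wh m)}^2 .
\end{equation*}
Here $\|\L_{\wh\phi}1\|_\infty\le \|L_\psi1\|_\infty e^{\|\wh\varphi\|_\infty}=e^{\|\wh\varphi\|_\infty}<\infty$, since $J$ is continuous and positive on a compact space. By density of continuous functions, the Koopman operator $K$ extends to a bounded operator on $L^2(\wh m)$ with $\|K\|\le e^{\|\wh\varphi\|_\infty/2}$.

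\textbf{Step 3 (semi-Ruelle bounded).} Step 1 says $\L_{\wh\phi}=K^*$ on continuous functions, in the sense that $\langle \wh f,\L_{\wh\phi}\wh g\rangle=\langle K\wh f,\wh g\rangle$. Hence
\begin{equation*}
\|\L_{\wh\phi}\wh g\|_{L^2}=\sup_{\|\wh f\|_{L^2}\le1}|\langle K\wh f,\wh g\rangle|\le\|K\|\,\|\wh g\|_{L^2},
\end{equation*}
so $\L_{\wh\phi}$ extends boundedly with the same norm.

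Alternatively, I can get the same bound directly by Cauchy–Schwarz: $|\L_{\wh\phi}\wh g|^2\le \L_{\wh\phi}1\cdot\L_{\wh\phi}|\wh g|^2$, and $\int\L_{\wh\phi}|\wh g|^2\,d\wh m=\int|\wh g|^2\,d\wh m$ by Step 1 with $\wh f=1$.

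The main obstacle I expect is the change of variables in Step 1, i.e.\ getting the Jacobian direction correct relative to Definition \ref{defLogJac}. The rest is routine.
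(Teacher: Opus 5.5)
Your proposal is correct, but it is organized differently from the paper's proof.

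\textbf{The paper's route.} The paper bounds the two operators separately, by direct estimation. For $\L_{\wh\phi}$ it applies Jensen pointwise (using $L_\psi1=1$) and integrates using $\int L_\psi G\,d\mu=\int G\,d\mu$. It then changes variables in the fiber, bounding both the weight $e^{\wh\varphi}$ and the Jacobian crudely by $e^{\pm\|\wh\varphi\|_\infty}$. For the Koopman operator it changes variables in the fiber and uses $T_R$-invariance of $\mu$. The adjoint identity is proved only afterwards, in Theorem \ref{coolThm2}, and that proof invokes this lemma to pass from continuous functions to $L^2(\wh m)$.

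\textbf{Your route.} You invert this order. You prove the adjoint identity for continuous functions first, which needs no a priori boundedness, so there is no circularity. You then read both bounds off it:
\begin{itemize}
\item for the Koopman operator, via $(\wh F_R)_*\wh m=(\L_{\wh\phi}1)\,\wh m$;
\item for $\L_{\wh\phi}$, via duality, or via your Cauchy--Schwarz variant $|\L_{\wh\phi}\wh g|^2\le \L_{\wh\phi}1\cdot\L_{\wh\phi}|\wh g|^2$.
\end{itemize}

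\textbf{What each buys.} Your route gives the lemma and Theorem \ref{coolThm2} from a single computation. It also gives better constants: both norms are at most $\|\L_{\wh\phi}1\|_\infty^{1/2}\le e^{\|\wh\varphi\|_\infty/2}$, against the paper's $e^{3\|\wh\varphi\|_\infty/2}$ for $\L_{\wh\phi}$. The paper's route, in exchange, never uses that the weight $e^{\wh\varphi}$ is exactly the Jacobian, only that both are bounded. So it applies verbatim to $\L_{\psi+\wh u}$ for any bounded $\wh u$, and it also bounds the Koopman operator from below.

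\textbf{Orientation.} Your worry about the Jacobian's orientation resolves in your favor. Set $G_a:=(F^{-1}_{\omega^-a})^{-1}$ and $\wt t_a=G_a(t)$. Definition \ref{defLogJac} then gives
\[
e^{\wh\varphi(\omega^-a,\wt t_a)}=J_{\omega^-a}(G_a(t))=\frac{dm\circ G_a}{dm}(t).
\]
This is exactly the factor in $\int u\circ G_a\,\frac{dm\circ G_a}{dm}\,dm=\int u\,dm$, and the paper records the same identity in the proof of Theorem \ref{coolThm2}. No change of convention is needed.

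\textbf{A small point of rigor.} Your ``extends by density'' should be backed by one remark. The identity with $\wh f=1$ passes by monotone convergence to nonnegative Borel functions. Hence both $\L_{\wh\phi}$ and composition with $\wh F_R$ respect $\wh m$-null sets, and the extended operators agree with the pointwise formulas. Your Cauchy--Schwarz variant essentially does this already.
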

\begin{proof} We begin by showing that $\L_{\wh\phi}$ is bounded. 

\textbf{Step 1:} First, we argue that $L_\psi$ is a bounded linear operator on $L^1(\mu)$. We break the proof down into 3 steps, relying on the positivity of $L_\psi$.
\begin{enumerate}
\item[(a)] Given an open set $U$, $\mathbb{1}_U$ can be approximated from below as the point-wise limit of increasing continuous functions $g_n$. Then, by the monotone convergence theorem (as $L_\psi$ is positive), $\int L_\psi \mathbb{1}_U d\mu=\lim_n \int L_\psi g_n d\mu =\lim_n \int g_n d\mu=\int gd\mu$.
\item[(b)] Indeed, the measure $\mu'(A):=\int L_\psi \mathbb{1}_Ad\mu$ defines for all open sets $\mu'(U)=\mu(U)$. Thus $\mu'=\mu$, and so $\mu(A)=\int L_\psi \mathbb{1}_Ad\mu$ for all measurable sets.
\item[(c)] Lastly, every non-negative function $g\in L^1(\mu)$ can be approximated in $L^1(\mu)$, in an increasing point-wise limit, by simple functions $\{g_n\}_n$. Then, once again by the positivity of $L_\psi$ and the monotone convergence theorem,
$$\int L_\psi g d\mu= \int \lim L_\psi g_n d\mu = \lim \int L_\psi g_n d\mu = \lim \int  g_n d\mu =\int g d\mu.$$ 	
\end{enumerate}

\textbf{Step 2:} Next, notice that since $L_\psi1=1$, and so by Jensen's inequality, for $\wh g\in L^1(\wh m)$, for all $(\omega^-,t)\in \Sigma^-\times X$,
$$(\L_{\wh\phi}|\wh g|)^2(\omega^-,t)\leq e^{2\|\wh\varphi\|_\infty}\sum_{T_R\wt\omega^-=\omega^-}e^{\psi(\wt\omega^-)} |\wh g|^2 (\wt\omega^-,F_{\omega^-}(t)).$$
Then, for $G(\omega^-):=\int |\wh g|^2(\omega^-,F_{\omega^-}(t))dm(t)$, where $G\in L^1(\mu)$ as $G(\omega^-)=e^{\pm \|\wh\varphi\|_\infty}\int |\wh g|^2(\omega^-,t)dm(t)$ and $\int \int |\wh g|^2(\omega^-,t)dm(t) d\mu= \|\wh g\|_{L^2(\wh m)}^2<\infty$, 
\begin{align}\label{alsoCoolEq2}
	\int \int (\L_{\wh\phi}|\wh g|)^2(\omega^-,t)dmd\mu\leq &e^{2\|\wh\varphi\|_\infty}\int \sum_{T_R\wt\omega^-=\omega^-}e^{\psi(\wt\omega^-)} G(\wt\omega^-)d\mu\nonumber\\
	=& e^{2\|\wh\varphi\|_\infty}\int G d\mu= \int\int |\wh g|^2(\omega^-,F_{\omega^-}(t))dm(t)d\mu\nonumber\\
	=& e^{2\|\wh\varphi\|_\infty}\int\int \frac{d m\circ F_{\omega^-}^{-1}}{d m} |\wh g|^2(\omega^-,t)dm(t)d\mu\nonumber\\
	\leq& e^{3\|\wh\varphi\|_\infty}\int |\wh g|^2d\wh m\leq e^{3\|\wh\varphi\|_\infty}\|\wh g\|_{L^2(\wh m)}^2.
\end{align}

\textbf{Step 3:} We continue to show the boundedness of the Koopman operator:
\begin{align*}
	\int |\wh g|^2\circ \wh F_Rd\wh m=&\int \int |\wh g|^2(T_R\omega^-,F_{\omega^-}^{-1}(t))dmd\mu \\
	=&e^{\pm2 \|\wh\varphi\|_\infty}\int \int |\wh g|^2(T_R\omega^-,t)dmd\mu= e^{\pm2 \|\wh\varphi\|_\infty}\int |\wh g|^2d\wh m.
\end{align*} 
\end{proof}

\begin{theorem}[Adjoint operator]\label{coolThm2}
Assume that $F_\omega=F_{(\omega_i)_{i\leq0}}$. Let $\psi$ be a potential in a proper form, and let $\mu$ be its associated Gibbs state. Let $m$ be a smooth measure on $X$, with its associated log-Jacobian function $\wh\varphi\in \mathrm{H\ddot{o}l}(\Sigma^-\times X)$. Let $\wh\phi:=\psi+\wh\varphi$, and set $$\wh m:=\mu\times m.$$  
Then $\L_{\wh\phi}$ is the adjoint of the Koopman operator on $L^2(\wh m)$. Namely, for all $\wh g,\wh h\in L^2(\wh m) $,
	$$\int \wh h \circ \wh F_R\cdot \wh g d\wh m= \int  \wh h \L_{\wh\phi}\wh g d\wh m .$$
\end{theorem}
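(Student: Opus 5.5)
We verify the duality identity by direct change of variables, splitting the skew-product integral into a fiber integral (handled by the Jacobian) and a base integral (handled by $L_\psi^*\mu=\mu$).

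By Lemma \ref{bddOnL2m}, both $\L_{\wh\phi}$ and the Koopman operator $\wh h\mapsto \wh h\circ\wh F_R$ are bounded on $L^2(\wh m)$. Both sides of the identity are therefore continuous bilinear forms on $L^2(\wh m)\times L^2(\wh m)$. It suffices to prove it for $\wh g,\wh h$ in a dense class, say bounded measurable or continuous functions. By splitting into positive and negative parts, and using monotone convergence together with positivity, we may even take $\wh g,\wh h\geq 0$ bounded.

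\textbf{Fiber step.} Fix $\omega^-$ and write $\wt\omega^-$ for a preimage under $T_R$. Since $\wh F_R(\wt\omega^-,s)=(\omega^-,F_{\wt\omega^-}^{-1}(s))$, the preimages of $(\omega^-,t)$ are the points $(\wt\omega^-,F_{\wt\omega^-}(t))$ with $T_R\wt\omega^-=\omega^-$. Hence
$$\L_{\wh\phi}\wh g(\omega^-,t)=\sum_{T_R\wt\omega^-=\omega^-}e^{\psi(\wt\omega^-)}J_{\wt\omega^-}(F_{\wt\omega^-}(t))\,\wh g(\wt\omega^-,F_{\wt\omega^-}(t)).$$
Multiply by $\wh h(\omega^-,t)$ and integrate in $t$ against $m$. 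For each branch, substitute $s=F_{\wt\omega^-}(t)$. By Definition \ref{defLogJac}, the factor $J_{\wt\omega^-}\circ F_{\wt\omega^-}$ is exactly the Radon--Nikodym derivative that turns $dm(t)$ into $dm(s)$. The one point needing care is matching the indexing convention: the notation $F_{T^{-1}\omega}$ versus $F_{\omega^-}^{-1}$ must be checked against the definition of $J_\omega$, so that we get $J$ and not $1/J$. Granting that, we obtain
$$\int \wh h\,\L_{\wh\phi}\wh g\,dm=\sum_{T_R\wt\omega^-=\omega^-}e^{\psi(\wt\omega^-)}\int \wh h(\omega^-,F^{-1}_{\wt\omega^-}(s))\,\wh g(\wt\omega^-,s)\,dm(s).$$
The integrand is $(\wh h\circ\wh F_R)(\wt\omega^-,s)\,\wh g(\wt\omega^-,s)$.

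\textbf{Base step.} Define $G(\wt\omega^-):=\int (\wh h\circ\wh F_R\cdot\wh g)(\wt\omega^-,s)\,dm(s)$. The previous display says that $\int \wh h\,\L_{\wh\phi}\wh g\,dm=(L_\psi G)(\omega^-)$. Integrating over $\mu$ and using $\int L_\psi G\,d\mu=\int G\,d\mu$, which holds for $G\in L^1(\mu)$ by Step 1 of the proof of Lemma \ref{bddOnL2m} since $\psi$ is in proper form, we get
$$\int \wh h\,\L_{\wh\phi}\wh g\,d\wh m=\int G\,d\mu=\int \wh h\circ\wh F_R\cdot\wh g\,d\wh m.$$
Fubini is justified because we reduced to nonnegative bounded functions.

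The only real obstacle is the fiber change of variables: getting the Jacobian convention right, and justifying it when $J$ is defined only $m$-a.e. Everything else is bookkeeping plus the density/continuity extension from Lemma \ref{bddOnL2m}.
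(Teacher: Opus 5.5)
Your proposal is correct and follows the paper's proof. The paper combines a fiber change of variables through the Jacobian of Definition \ref{defLogJac} with $L_\psi^*\mu=\mu$ on the base, and extends to $L^2(\wh m)$ using Lemma \ref{bddOnL2m}. It packages this as first proving $\L_{\wh\phi}^*\wh m=\wh m$ and then applying the pull-out identity $\wh h\cdot\L_{\wh\phi}\wh g=\L_{\wh\phi}(\wh h\circ\wh F_R\cdot\wh g)$, which your combined computation does implicitly.

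The convention you left as ``granted'' does check out. With $\Phi:=(F^{-1}_{\wt\omega^-})^{-1}$, Definition \ref{defLogJac} gives $J_{\wt\omega^-}\circ\Phi=\frac{d(m\circ\Phi)}{dm}$. Since $m\circ\Phi=(\Phi^{-1})_*m$, the weight is $J$ and not $1/J$, exactly as the paper records.
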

\begin{proof} First, we notice that since $C(\Sigma^-\times X:\mathbb{C})$ is dense in $L^2(\wh m)$, it is enough to prove the statement for any two $\wh g,\wh h\in C(\Sigma^-\times X:\mathbb{C})$ (recall Lemma \ref{bddOnL2m}).

Next, by Definition \ref{defLogJac}, given $(\omega^-,t)\in \Sigma^-\times X$, we have
$$e^{\wh\varphi(\omega^-,(F_{\omega^-}^{-1})^{-1}(t))}=\frac{dm\circ (F_{\omega^-}^{-1})^{-1}}{dm}(t).$$
So, for all $ g\in C(X)$,
$$\int e^{\wh\varphi(\omega^-,(F_{\omega^-}^{-1})^{-1}(t))} g(t)dm=\int g\circ F_{\omega^-}^{-1}dm.$$
Then we can continue to show that $\wh m$ is an eigen-measure of $\L_{\wh\phi}^*$. Let $\wh g\in C(\Sigma^-\times X:\mathbb{C})$, then
\begin{align*}
	&\int \L_{\wh\phi}\wh g d\wh m=\int \int  (\L_{\wh\phi}\wh g)(\omega^-,t)dm d\mu\\
	=& \int\sum_{T_R\wt\omega^-=\omega^-}e^{\psi(\wt\omega^-)} \int e^{\wh\varphi(\wt\omega^-,(F_{\wt\omega^-}^{-1})^{-1}(t))}\wh g(\wt\omega^-,(F_{\wt\omega^-}^{-1})^{-1}(t))dmd\mu\\
	=& \int\sum_{T_R\wt\omega^-=\omega^-}e^{\psi(\wt\omega^-)} \int \frac{dm\circ (F_{\wt\omega^-}^{-1})^{-1}}{dm}(t)\wh g(\wt\omega^-,(F_{\wt\omega^-}^{-1})^{-1}(t))dmd\mu\\
	=& \int\sum_{T_R\wt\omega^-=\omega^-}e^{\psi(\wt\omega^-)} \int \wh g(\wt\omega^-,t)dmd\mu=\int\int \sum_{T_R\wt\omega^-=\omega^-}e^{\psi(\wt\omega^-)}\wh g(\wt\omega^-,t)d\mu dm(t)\\
	=&\int \int\wh g(\omega^-,t)d\mu d m(t).
\end{align*}
Finally, given $\wh g,\wh h\in C(\Sigma^-\times X:\mathbb{C}) $, we have by the fact that $\wh m$ is an eiegn-measure of $\L_{\wh\phi}^*$,
$$\int \wh h \L_{\wh\phi}\wh g d\wh m= \int  \L_{\wh\phi}(\wh h\circ \wh F_R \cdot \wh g )d\wh m= \int\wh h\circ \wh F_R\cdot \wh gd\wh m.$$
\end{proof}

\begin{remark}\text{ }
\begin{enumerate}
	\item In Theorem \ref{coolThm2}, while we prove that $\L_{\wh\phi}^*\wh m= \wh m $, we refrained from using the terminology ``$\wh m$ is $\wh\phi$-conformal", as we have not shown that $\wh P(\wh\phi)=0$.
	\item Note that $\wh \phi$ can be read off of $\wh m=\mu\times m$, as the sum of the log-Jacobians of the two measure factors. 
\end{enumerate}
\end{remark}

Corollary \ref{smoothMes} is a direct corollary of Theorem \ref{coolThm2}.
\begin{cor}[Smooth eigen-measure]\label{smoothMes}
     	In the setting of Theorem \ref{coolThm2}, $\wh m$ is an eigen-measure of $\L_{\wh\phi}$: $$\L_{\wh\phi}^*\wh m=\wh m.$$
In particular, for all $\wh g\in L^2(\wh m) $,
	$$\int  \wh g d\wh m= \int  \L_{\wh\phi}\wh g d\wh m .$$
\end{cor}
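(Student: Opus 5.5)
The plan is to obtain Corollary \ref{smoothMes} directly from Theorem \ref{coolThm2}, by applying the adjoint identity with one of the two functions taken to be the constant function $1$. Since $\wh m$ is a probability measure, $1\in L^2(\wh m)$. Moreover, $1\circ \wh F_R=1$. So for every $\wh g\in L^2(\wh m)$, Theorem \ref{coolThm2} with $\wh h:=1$ gives
$$\int \wh g\, d\wh m=\int 1\circ\wh F_R\cdot \wh g\, d\wh m=\int 1\cdot \L_{\wh\phi}\wh g\, d\wh m=\int \L_{\wh\phi}\wh g\, d\wh m .$$
This is the second assertion. Note that $\L_{\wh\phi}\wh g\in L^2(\wh m)\subseteq L^1(\wh m)$ by Lemma \ref{bddOnL2m}, so the right-hand side is well defined.

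For the first assertion, $\L_{\wh\phi}^*\wh m=\wh m$, I would restrict the identity above to $\wh g\in C(\Sigma^-\times X)\subseteq L^2(\wh m)$. This yields $(\L_{\wh\phi}^*\wh m)(\wh g)=\wh m(\L_{\wh\phi}\wh g)=\wh m(\wh g)$ for all continuous $\wh g$. By the Riesz--Markov--Kakutani representation theorem, the two measures $\L_{\wh\phi}^*\wh m$ and $\wh m$ then coincide. Alternatively, this identity already appears as the intermediate computation inside the proof of Theorem \ref{coolThm2}, and one may cite it directly.

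There is no genuine obstacle. The only point needing care is that the adjoint identity is applied to $L^2$ functions rather than only continuous ones. This is legitimate because Theorem \ref{coolThm2} is stated for all of $L^2(\wh m)$, an extension obtained there by density together with the boundedness in Lemma \ref{bddOnL2m}.
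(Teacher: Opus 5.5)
Your proposal is correct and matches the paper, which presents this corollary as a direct consequence of Theorem \ref{coolThm2}. The proof of that theorem in fact first establishes $\int \L_{\wh\phi}\wh g\,d\wh m=\int \wh g\,d\wh m$ for continuous $\wh g$ as an intermediate step. Your specialization $\wh h=1$ of the final $L^2$ identity recovers this, and also yields the $L^2$ version through the density and boundedness argument (Lemma \ref{bddOnL2m}) already built into the theorem.
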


\begin{cor}[Pressure bound]\label{PBound}
	Under the assumptions of Theorem \ref{coolThm2}, assuming also $\wh\phi$-holonomies,
	$$\wh P(\wh\phi)\geq0 .$$
\end{cor}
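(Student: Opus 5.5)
The plan is to combine Corollary \ref{smoothMes} with Lemma \ref{specRad}. By Corollary \ref{smoothMes}, $\L_{\wh\phi}^*\wh m=\wh m$. Iterating, $(\L_{\wh\phi}^*)^n\wh m=\wh m$ for every $n\geq1$. Testing against the constant function $1$ then gives
$$\int \L_{\wh\phi}^n 1\, d\wh m=\int 1\, d\wh m=1 \quad\text{for all } n\geq 1.$$
This identity is justified because $1\in L^2(\wh m)$ and $\L_{\wh\phi}$ is bounded on $L^2(\wh m)$ by Lemma \ref{bddOnL2m}. It also holds directly from the computation in Theorem \ref{coolThm2}, since $1$ is continuous and $\L_{\wh\phi}^{n-1}1$ is continuous.

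Since $\wh m$ is a probability measure and $\L_{\wh\phi}^n1$ is a nonnegative continuous function, we get
$$1=\int \L_{\wh\phi}^n1\,d\wh m\leq \|\L_{\wh\phi}^n1\|_\infty .$$
Hence $\frac1n\log\|\L_{\wh\phi}^n1\|_\infty\geq 0$ for all $n$. By Lemma \ref{specRad}, which uses the $\wh\phi$-holonomies assumption, the left-hand side converges to $\wh P(\wh\phi)$, so $\wh P(\wh\phi)\geq0$.

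The only point needing care is that Lemma \ref{specRad} is stated with $\L_{\wh\phi}1$ where $\L_{\wh\phi}^n1$ is clearly meant; its proof indeed treats $\|\L^n_{\wh\phi}1\|_\infty$. We also need $\wh\phi=\psi+\wh\varphi$ to be a legitimate Hölder potential on $\Sigma^-\times X$, so that the holonomy hypothesis applies. Both hold under the assumptions of Theorem \ref{coolThm2}. No step appears to be a real obstacle; the argument is essentially a one-line consequence of the eigen-measure identity.
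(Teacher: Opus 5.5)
Your proposal is correct and is essentially the paper's own argument. The paper likewise uses Corollary \ref{smoothMes} to get $\frac{1}{n}\log\|\L_{\wh\phi}^n1\|_\infty\geq\frac{1}{n}\log\int\L_{\wh\phi}^n1\,d\wh m=0$, and then passes to the limit via Lemma \ref{specRad} (whose statement indeed should read $\L_{\wh\phi}^n1$, as you note).
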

\begin{proof} Rceall Corollary \ref{smoothMes}. Then for all $n\geq0$,
	$$\frac{1}{n}\log \|\L_{\wh\phi}^n1\|_\infty\geq\frac{1}{n}\log \int \L_{\wh\phi}^n1d\wh m=0.$$
\end{proof}

\subsection{Bounded Operator on Adapted Hilbert Space}

In this section we introduce a Hilbert space whose inner-product as a natural Hilbert space on which the semi-Ruelle operator acts. We show that it preserves the space, and admits a bounded norm. 

\begin{definition}[Adapted Hilbert space]\label{defOfHil2}
 Given %$\wh p\in \mathbb{P}(\Sigma^-\times X)$, and $\theta\in (0,1]$. Then
 a Gibbs state $\mu$ in a proper form on $\Sigma^-$ and a smooth measure $m$ on $M$, the {\em associated inner-product} is formally defined for any two  functions $\wh g,\wh h\in L^2(\wh m)$, $\kappa>0$:
\begin{align*}
\langle \wh g,\wh h\rangle &:= \int \wh g \overline{\wh h}d\wh m\\
&+\int \int\int \frac{(\wh g(\omega^-,t)-\wh g(\omega^-,s))\overline{(\wh h(\omega^-,t)-\wh h(\omega^-,s))}}{d(t,s)^{\kappa}}dm(t)dm(s)d\mu(\omega^-) .
\end{align*}
%where $\wh p=\int \wh p_{\omega^-}dp(\omega^-)$ is the disintegration of $\wh p$ w.r.t. the measurable partition $\{\{\omega^-\}\times X\}_{\omega^-\in \Sigma^-}$.

The {\em associated Hilbert space} $\HH_\kappa(\wh m)$ is defined as the space of all functions in $L^2(\wh m)$ whose $\langle \cdot,\cdot\rangle$-induced norm is finite:
\begin{align*}
&\|\wh g\|_{\HH_\kappa(\wh m)}:=\\
&\sqrt{\int |\wh g|^2d\wh m+\int\int \int \frac{|\wh g(\omega^-,t)-\wh g(\omega^-,s)|^2}{d(t,s)^{\kappa}} dm(t)dm(s)d\mu(\omega^-)}.
\end{align*}
%is defined as the unique (up to an isometry of Hilbert spaces) completion of $\Big(\mathrm{H\ddot{o}l}_\theta(\Sigma^-\times X:\mathbb{C}),\langle\cdot,\cdot\rangle\Big)$, denoted by $\HH_\theta(\wh m)$.
\end{definition}

\begin{claim}[Extension of fiber H\"older functions]\label{HolderExt}
Assume that there exists $\theta>0$ s.t. $\frac{1}{d(t,s)^{\kappa-2\theta}}\in L^1(m\times m)$, then $\mathrm{H\ddot{o}l}_\theta(\Sigma^-\times X)\subseteq \HH_\kappa(\wh m)$.
\end{claim}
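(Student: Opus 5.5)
The plan is to bound the two terms of $\|\wh g\|_{\HH_\kappa(\wh m)}^2$ separately for a fixed $\wh g\in \mathrm{H\ddot{o}l}_\theta(\Sigma^-\times X)$, using only the H\"older seminorm of $\wh g$ and the integrability hypothesis $d(t,s)^{-(\kappa-2\theta)}\in L^1(m\times m)$. Since elements of $\mathrm{H\ddot{o}l}_\theta(\Sigma^-\times X)$ are in particular continuous on a compact space, they are bounded. Also, $\wh m=\mu\times m$ is a probability measure. Hence the $L^2$ part is immediate:
$$\int|\wh g|^2d\wh m\leq \|\wh g\|_\infty^2<\infty,$$
and $\wh g\in L^2(\wh m)$.

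For the second (Gagliardo-type) term, I will use the H\"older estimate in the fiber direction. Let $\|\wh g\|_{\mathrm{H\ddot{o}l}_\theta}$ denote the H\"older constant with respect to the product metric, restricted to pairs of points with the same $\omega^-$. Then for every $\omega^-\in\Sigma^-$ and $t,s\in X$,
$$|\wh g(\omega^-,t)-\wh g(\omega^-,s)|\leq \|\wh g\|_{\mathrm{H\ddot{o}l}_\theta}\, d(t,s)^{\theta}.$$
Squaring and dividing by $d(t,s)^\kappa$ gives, for $t\neq s$,
$$\frac{|\wh g(\omega^-,t)-\wh g(\omega^-,s)|^2}{d(t,s)^{\kappa}}\leq \|\wh g\|_{\mathrm{H\ddot{o}l}_\theta}^2\,\frac{1}{d(t,s)^{\kappa-2\theta}}.$$
The right-hand side is independent of $\omega^-$. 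Integrating against $m\times m$ and then against the probability measure $\mu$ therefore yields
$$\int\!\!\int\!\!\int \frac{|\wh g(\omega^-,t)-\wh g(\omega^-,s)|^2}{d(t,s)^{\kappa}}\,dm\,dm\,d\mu\leq \|\wh g\|_{\mathrm{H\ddot{o}l}_\theta}^2\,\Big\|\frac{1}{d(t,s)^{\kappa-2\theta}}\Big\|_{L^1(m\times m)}<\infty.$$
Measurability of the integrand follows from continuity of $\wh g$ away from the diagonal, and Tonelli applies because the integrand is non-negative.

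The only point needing care is the diagonal $\{t=s\}$, where the integrand is formally $0/0$. Here the hypothesis does the work. Since $d(t,s)^{-(\kappa-2\theta)}$ is in $L^1(m\times m)$, it is finite $m\times m$-a.e. When $\kappa-2\theta>0$, finiteness of $d^{-(\kappa-2\theta)}$ a.e. forces $(m\times m)(\{t=s\})=0$. When $\kappa\leq 2\theta$, the bound on the integrand is at most $\|\wh g\|_{\mathrm{H\ddot{o}l}_\theta}^2\,\mathrm{diam}(X)^{2\theta-\kappa}$, which is bounded; setting the integrand to $0$ on the diagonal then changes nothing. Combining the two bounds gives $\|\wh g\|_{\HH_\kappa(\wh m)}^2\leq \|\wh g\|_\infty^2+C_{\kappa,\theta}\|\wh g\|_{\mathrm{H\ddot{o}l}_\theta}^2$, so the inclusion holds and is moreover continuous.
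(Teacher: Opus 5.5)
Your proposal is correct and follows essentially the same route as the paper. The paper's one-line proof bounds $\|\wh g\|_{\HH_\kappa(\wh m)}^2$ by $\|\wh g\|_\infty^2+\|\wh g\|_{\mathrm{H\ddot{o}l}_\theta}^2\int\int d(t,s)^{-(\kappa-2\theta)}\,dm\,dm$ via the same pointwise fiber-H\"older estimate; your remarks on measurability and on the diagonal (where the integrand is $0/0$ and one adopts the convention that it vanishes) only make explicit what the paper leaves implicit.
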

\begin{proof} 
Let $\wh g\in \mathrm{H\ddot{o}l}_\alpha(\Sigma^-\times X) $, then 
$$\|\wh g\|_{\HH_\kappa(\wh m)}^2\leq \|\wh g\|_\infty^2+\|\wh g\|_{\mathrm{H\ddot{o}l}_\theta}^2\cdot \int \int \frac{1}{d(t,s)^{\kappa-2\theta}} dm dm <\infty.$$
\end{proof}

\begin{prop}[Bounded operator on the adapted Hilbert space]\label{bddOnSmooth}
Assume that $F_\omega=F_{(\omega_i)_{i\leq0}}$%, and that $\{F_\omega\}_{\omega\in \Sigma}$ is an equi-bi-H\"older family
. Let $\wh\phi\in \mathrm{H\ddot{o}l}(\Sigma^-\times X)$, and let $\wh p$ be a $\wh\phi$-conformal measure. Assume that there exists $C>0$ s.t. for $t\in X$, $\int \frac{1}{d(t,s)^{\kappa-2\theta}}dm\leq C$, where $\wh\phi$ is $\theta$-H\"older. Then, $\L_{\wh\phi}$ defines a bounded linear operator on $\HH_\kappa(\wh m)$.
\end{prop}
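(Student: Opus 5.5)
The plan is to bound the two pieces of $\|\L_{\wh\phi}\wh g\|_{\HH_\kappa(\wh m)}^2$ separately. The first piece is the $L^2(\wh m)$ part, which Lemma \ref{bddOnL2m} already controls: $\|\L_{\wh\phi}\wh g\|_{L^2(\wh m)}^2\leq e^{3\|\wh\varphi\|_\infty}\|\wh g\|_{L^2(\wh m)}^2$ in the setting $\wh\phi=\psi+\wh\varphi$. For a general H\"older $\wh\phi$ the same Jensen/transfer argument goes through, with $e^{\psi}$ replaced by the normalized weights $e^{\psi}$ of the Gibbs state $\mu$ and the excess $\wh\phi-\psi$ bounded in sup norm. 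Everything therefore reduces to the fiber-difference seminorm. For fixed $\omega^-$ I will write
\begin{align*}
(\L_{\wh\phi}\wh g)(\omega^-,t)-(\L_{\wh\phi}\wh g)(\omega^-,s)=\sum_{T_R\wt\omega^-=\omega^-}\Big[&e^{\wh\phi(\wt\omega^-,F_{\wt\omega^-}(t))}\big(\wh g(\wt\omega^-,F_{\wt\omega^-}(t))-\wh g(\wt\omega^-,F_{\wt\omega^-}(s))\big)\\
&+\big(e^{\wh\phi(\wt\omega^-,F_{\wt\omega^-}(t))}-e^{\wh\phi(\wt\omega^-,F_{\wt\omega^-}(s))}\big)\wh g(\wt\omega^-,F_{\wt\omega^-}(s))\Big],
\end{align*}
where $F_{\wt\omega^-}$ denotes the inverse of $F^{-1}_{\wt\omega^-}$. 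I will call the two resulting sums (I) and (II).

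For term (I), I apply Cauchy--Schwarz/Jensen against the probability weights $e^{\psi(\wt\omega^-)}$, paying a factor $e^{2\|\wh\phi-\psi\|_\infty}$. I then change variables $t'=F_{\wt\omega^-}(t)$, $s'=F_{\wt\omega^-}(s)$ in the $m\times m$ integral, with Jacobians bounded by $e^{2\|\wh\varphi\|_\infty}$, and use $L_\psi^*\mu=\mu$ (Step 1 of Lemma \ref{bddOnL2m}) to integrate out the sum over preimages. This produces the seminorm of $\wh g$, provided the denominators compare: $d(t,s)^{-\kappa}\leq C\,d(F_{\wt\omega^-}(t),F_{\wt\omega^-}(s))^{-\kappa}$. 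That comparison needs a uniform Lipschitz (or H\"older, with $\kappa$ rescaled) bound on the inverse branches $F_{\wt\omega^-}^{-1}$, which is available by compactness of $\Sigma^-$ and the equi-H\"older/bi-H\"older assumptions on the family $F_\omega$.

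For term (II), I use $|e^{\wh\phi(\cdot,a)}-e^{\wh\phi(\cdot,b)}|\leq e^{\|\wh\phi\|_\infty}\|\wh\phi\|_{\theta}\,d(a,b)^\theta$ together with the H\"older continuity of $F_{\wt\omega^-}$, so that $d(F_{\wt\omega^-}(t),F_{\wt\omega^-}(s))^{2\theta}\lesssim d(t,s)^{2\theta'}$. The contribution of (II) is then bounded by
$$\int\!\!\int\!\!\int \frac{|\wh g(\wt\omega^-,s')|^2}{d(t,s)^{\kappa-2\theta}}\,dm(t)\,dm(s)\,d\mu.$$
Integrating first in $t$, the hypothesis $\sup_s\int d(t,s)^{-(\kappa-2\theta)}dm(t)\leq C$ bounds this by $C\|\wh g\|_{L^2(\wh m)}^2$, after the same change of variables and transfer of $\mu$ as in (I). Here I take $\theta$ to be the effective exponent after composing with $F$, in line with how the statement phrases it.

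I expect the main obstacle to be the denominator comparison in (I). Since $F_{\wt\omega^-}$ is only H\"older, $d(F(t),F(s))\geq c\,d(t,s)^{1/\alpha}$ would give the wrong direction, so one really needs the inverse branches to be uniformly Lipschitz. If this fails, I would instead redefine the seminorm exponent, accepting $\HH_\kappa\to\HH_{\kappa'}$ boundedness. Assuming equi-Lipschitz inverses, summing (I) and (II) gives $\|\L_{\wh\phi}\wh g\|_{\HH_\kappa}\leq C'\|\wh g\|_{\HH_\kappa}$.
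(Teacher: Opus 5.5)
Your argument follows the paper's proof step for step. You split $(\L_{\wh\phi}\wh g)(\omega^-,t)-(\L_{\wh\phi}\wh g)(\omega^-,s)$ into a $\wh g$-difference term and a weight-difference term, apply Jensen against the probability weights $e^{\psi}$, remove the preimage sum via $L_\psi^*\mu=\mu$, change variables in the fibre, and control the weight-difference term by the H\"older bound plus the integrability hypothesis. Allowing a general H\"older $\wh\phi$ at the cost of $e^{2\|\wh\phi-\psi\|_\infty}$ is harmless; the paper tacitly takes $\wh\phi=\psi+\wh\varphi$.

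The obstacle you isolate in (I) is a genuine gap, and it is exactly the step the paper's proof passes over. In \eqref{someEq} the paper substitutes $t'=F(t)$, $s'=F(s)$ but keeps $d(t,s)^{\kappa}$ in the denominator, i.e.\ it silently uses $d(F^{-1}t',F^{-1}s')\gtrsim d(t',s')$. Your third paragraph's claim that this comparison follows from compactness of $\Sigma^-$ and the equi-(bi-)H\"older assumptions is false and should be deleted. As your last paragraph says, bi-H\"older only gives $d(F_{\wt\omega^-}t,F_{\wt\omega^-}s)\geq c\,d(t,s)^{1/\alpha}$, which is the wrong direction. What is needed is $d(F_{\wt\omega^-}t,F_{\wt\omega^-}s)\leq L\,d(t,s)$ uniformly in $\wt\omega^-$. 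Here $F_{\wt\omega^-}=(F^{-1}_{\wt\omega^-})^{-1}$ is the fibre part of the inverse branches of $\wh F_R$; the bound is not on $F^{-1}_{\wt\omega^-}$, as you wrote.

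No better argument can avoid this, because the proposition fails under the stated hypotheses. Take the full $2$-shift with $\psi\equiv-\log 2$, $X=\mathbb{T}^2$ with Lebesgue measure $m$, and the constant family $F_\omega(x,y)=(x,y+h(x))$. Here $h$ is a lacunary Weierstrass function of exact H\"older exponent $\alpha<1$ and sup norm at most $1/8$. Then:
\begin{itemize}
\item $F$ is an $m$-preserving $\alpha$-bi-H\"older homeomorphism, so $\wh\varphi=0$;
\item $\wh\phi\equiv-\log 2$ is Lipschitz, so $\theta=1$ and the integrability hypothesis holds for all $\kappa<4$;
\item $\wh m$ is $\wh\phi$-conformal;
\item $\L_{\wh\phi}\wh g=g\circ F$ whenever $\wh g(\omega^-,t)=g(t)$.
\end{itemize}
For $g(x,y)=e^{2\pi i y}$ one has $g\in\HH_\kappa(\wh m)$ for every $\kappa<4$. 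On the other hand, $\int|h(x)-h(x-v)|^2dx\asymp|v|^{2\alpha}$. Restricting the seminorm of $g\circ F$ to $|y-y'|\leq|x-x'|$ therefore bounds it below by a multiple of $\int_{|v|\leq v_0}|v|^{1+2\alpha-\kappa}dv$, which diverges once $\kappa\geq 2+2\alpha$.

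So the equi-Lipschitz condition must be added as a hypothesis; it is automatic for the $C^{1+}$ diffeomorphisms of \S\ref{FinalLY}. With it, your proof is complete. In (II) you then get $\theta'=\theta$, so no reinterpretation of the exponent in the integrability hypothesis is needed. Your ordering in (II), integrating the singular kernel in $t$ before changing variables in $s$, needs no denominator comparison at all, only H\"older continuity of $F_{\wt\omega^-}$. In (I) the comparison cannot be avoided.
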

\begin{proof}
Let $\wh g\in \HH_\kappa(\wh m)$. Lemma \ref{bddOnL2m} (particularly \eqref{alsoCoolEq2}) bounds the $L^2$-norm term of $\|\L_{\wh\phi}\wh g\|_{\HH_\kappa(\wh m)}^2$ by $e^{3\|\wh\varphi\|_\infty}\|\wh g\|_{L^2(\wh m)}$. We are left to bound the variation term. 

By the Jensen inequality, since $L_\psi 1=1$,
\begin{align*}
&	\sum_{T_R\sigma^-=\omega^-} | e^{\psi(\sigma^-)}(e^{\wh\varphi(\sigma^-,F_{\omega^-}(t))}\wh g(\sigma^-, F_{\omega^-}(t))-e^{\wh\varphi(\sigma^-,F_{\omega^-}(s))}\wh g(\sigma^-, F_{\omega^-}(s))) |^2\\
\leq&	\sum_{T_R\sigma^-=\omega^-} e^{\psi(\sigma^-)}\\
& |(e^{\wh\varphi(\sigma^-,F_{T_R\sigma^-}(t))}\wh g(\sigma^-, F_{T_R\sigma^-}(t))-e^{\wh\varphi(\sigma^-,F_{T_R\sigma^-}(s))}\wh g(\sigma^-, F_{T_R\sigma^-}(s))) |^2.
\end{align*}
Then, using the conformality of $\mu$, and inequality $|a+b|^2\leq 2|a|^2+2|b|^2$, we get,

\begin{align} \label{someEq}
	&\int\int\int \frac{|(\L_{\wh \phi}\wh g)(\omega^-,t)-(\L_{\wh \phi}\wh g)(\omega^-,s)|^2}{d(t,s)^\kappa}d\mu dmdm\nonumber\\
	\leq&2\int \int\int \frac{|e^{\wh\varphi(\omega^-,F_{T_R\omega^-}(t))}-e^{\wh\varphi(\omega^-,F_{T_R\omega^-}(s))}|^2\cdot|\wh g (\omega^-,F_{T_R\omega^-}(t)) |^2}{d(t,s)^\kappa} dmdmd\mu\nonumber\\
	+& 2\int \int\int \frac{e^{2\wh\varphi(\omega^-,F_{T_R\omega^-}(s))}|\wh g (\omega^-,F_{T_R\omega^-}(t))-\wh g (\omega^-,F_{T_R\omega^-}(s)) |^2}{d(t,s)^\kappa}dmdmd\mu\nonumber\\
	\leq &2\int \int\int \frac{|e^{\frac{1}{2}(\wh\varphi(\omega^-,t)-\wh\varphi(\omega^-,s))}-1|^2\cdot|\wh g (\omega^-,t) |^2}{d(t,s)^\kappa} dmdmd\mu\nonumber\\
	+& 2\int \int\int \frac{e^{\wh\varphi(\omega^-,s)-\wh\varphi(\omega^-,t)}|\wh g (\omega^-,t)-\wh g (\omega^-,s) |^2}{d(t,s)^\kappa}dmdmd\mu.
\end{align}

Since $\wh \varpi$ is $\theta$-H\"older continuous, there exists $D>0$ s.t. $\frac{|e^{\frac{1}{2}(\wh\varphi(\omega^-,t)-\wh\varphi(\omega^-,s))}-1|^2}{d(t,s)^{2\theta}}$ is globally bounded by $D$. Then the r.h.s. of \eqref{someEq} is bounded by
$$2DC \|\wh g\|_{L^2(\wh m)}+2e^{2\|\wh\varphi\|_\infty}\|\wh g\|_{\HH_\kappa(\wh m)},$$
where we use $\int   |\wh g(\omega^-,t)|^2\int \frac{1}{d(t,s)^{\kappa-2\theta}}d m(s) d\wh m(\omega^-,t)\leq C\|\wh g\|^2_{L^2(\wh m)}$. Thus, in total,
$$\|\L_{\wh \phi}\wh g\|_{\HH_\kappa(\wh m)}^2\leq (e^{3\|\wh\varphi\|_\infty} +2DC)\|\wh g\|_{L^2(\wh m)}^2+2e^{2\|\wh\varphi\|_\infty}\|\wh g\|^2_{\HH_\kappa(\wh m)}.$$
\end{proof}

\section{Dissipative Gibbs Random Smooth Dynamics: Effective Expansion on Average and Lasota-Yorke Inequality}\label{FinalLY}
In this section we work in the the setting of Smooth Dynamics (i.e. the fiber is $X$ is a smooth manifold, and the randomly composed maps are smooth diffeomorphims), and as in \textsection \ref{GibbsProcess}, we allow the random composition to be driven by a Gibbs measure on $\Sigma^-$ (i.e. a Gibbs process). In particular, the goal of this section is present general machinery which does not assume that the smooth random maps are conservative (i.e. we allow them to be dissipative). This additional structure allows us to study finer spectral properties of the semi-Ruelle operator. In particular, when satisfying the open condition of effective expansion on average, we introduce Lyapunov forms, and a suitably adapted Hilbert space on which the semi-Ruelle operator acts and admits a Lasota-Yorke inequality. We define it formally below.

We draw our motivation for study of Gibbs process, in oppose to i.i.d. random dynamics for example, from the fact that we view the setting of random dynamics as modeling the dependence of our dynamical system on some exterior unknown system. In that case, we may assume that this dependence is governed by a physical measure on the exterior system, e.g. a hyperbolic and ergodic SRB measure. Such measures can be coded by a TMS (of countable states) and lift to a Gibbs measures (see \cite{SBO}). Although, for now, we focus on studying the case where the hyperbolic SRB measure can be coded by a finite state TMS (such as in an Anosov or Axiom A system).

\begin{remark}[Forwards dynamics]\label{forwardsDyns}
	For the sake of compatibility with existing literature, in this section we work the semi Ruelle operator 
	$$\L_{\wh\phi}:C(\Sigma^+\times X)\to C(\Sigma^+\times X),$$
	where $\Sigma^+:=\{(\omega)_{i\geq0}:\omega\in \Sigma\}$. It is defined by
$$(\L_{\wh\phi}\wh g)(\omega^+,t)=\sum_{\wh F_L(\sigma^+,s)=(\omega^+,t)}e^{\wh \phi(\sigma^+,s)} \wh g(\sigma^+,s),$$
where $\wh F_L(\sigma^+,s)=(T_L\sigma^+,F_{\sigma^+}(t))$, and $T_L:\Sigma^+\to \Sigma^+$ is the left-shift. Similarly, $\psi\in \mathrm{H\ddot{o}l}(\Sigma^+)$ is in a proper form if 
$\sum_{T_L\sigma^+=\omega^+}e^{\psi(\sigma^+)}=1$ for all $\omega^+$. 

Similarly, the log-Jacobian of a smooth family for the forwards dynamics is
$$\wh\varphi(\omega^+,x):=-\log \frac{dm\circ f_{\omega^+}}{dm}.$$
\end{remark}

\subsection{Dissipative Gibbs Random Smooth Dynamics}\label{UEASect}

Let $m$ be the normalized Riemannian volume on $M$- a closed Riemannian manifold of dimension $d\geq2$ (i.e. a smooth measure), with its associated log-Jacobian function $\wh\varphi\in \mathrm{H\ddot{o}l}(\Sigma^+\times M)$ (for the forwards dynamics, i.e. $\wh\varphi(\omega^+,x)=-\log \Jac_x(f_{\omega^+})$). Assume that $f_\omega=f_{(\omega_i)_{i\geq0}}$, and that $\{f_\omega\}_{\omega\in \Sigma}\subseteq \mathrm{Diff}^{1+\alpha}(M)$, $\alpha>0$, where $\omega\mapsto f_\omega$ is a H\"older continuous map. Let $\psi$ be a potential in a proper form, and let $\mu$ be its associated Gibbs state. Let $\wh\phi:=\psi+\wh\varphi$. Finally, denote $M_f:=\sup_{x\in M,\omega\in \Sigma}\{\|d_xf_\omega\|,\|d_xf_\omega^{-1}\|\}$.

By Lemma \ref{bddOnL2m}, $\L_{\wh\phi}$ is a bounded linear operator on $L^2(\wh m)$. By Theorem \ref{coolThm2}, it is the adjoint operator of $\wh g\mapsto \wh g\circ \wh f_L$, i.e. the the Koopman operator, on $L^2(\wh m)$.

\subsection{Effective Expansion on Average, and the Similarity Dimension}\label{UEASect2}

\begin{definition}[Derivative cocycle extension]
We denote the {\em derivative cocycle extension} by $\wh F_L:\Sigma^+\times T^1M\to \Sigma^+ \times T^1M$, where
$$\wh F_L(\omega^+, x,\xi):=\Big(T_L\omega^+, f_{\omega^+},\frac{d_xf_{\omega^+}\xi}{|d_xf_{\omega^+}\xi|}\Big).$$
Define the derivative potential
$$\wh D(\omega,x,\xi):=-\log |d_{x} f_{\omega}^{-1}\xi|,$$
as a H\"older potential on the skew-product system.
\end{definition}

\begin{remark}\label{rmk93}\text{ }
\begin{enumerate}
\item 	 Note that
\begin{equation}\label{forConv}
    \wh P(\psi-\kappa\cdot  \wh D)=\limsup
\frac{1}{n}\log \max_{(x,\xi)\in T^1M}\int |d_xf_{\omega}^{n}\xi|^{-\kappa}d\mu.
\end{equation}
\item When $f_\omega=f_{(\omega_i)_{i\leq0}}$, we are in the setting of \cite[Theorem~6.7]{TDFOE_I} (the variational principle), and so it is easy to observe that for all $\kappa\in\mathbb{R}$, $\wh P(\psi-\kappa\cdot  \wh D)$ is continuous in the $C^1$-topology of $\{f_\omega\}_{\omega\in \Sigma}$ and in the $C^0$-topology of $\psi$.
\end{enumerate}

\end{remark}

\begin{definition}[Similarity dimension]
	The {\em similarity dimension} associated with $(\Sigma\times M,\wh f)$ and $\mu$ is
	$$d^*:=\sup\Big\{\beta>0: \wh P(\psi- \beta\cdot \wh D)<0\Big\},$$
	and is defined as $0$ if the supremum is over an empty set.
\end{definition}

\begin{remark}
 By \cite[Theorem~8.12]{TDFOE_I}, $\wh D$-expansion on average (a $C^1$-open condition, see \cite[Definition~8.1]{TDFOE_I}) implies that the supremum is not over an empty set, and in particular $d^*>0$.
\end{remark}

\begin{lemma}[Similarity dimension bound]\label{dStarDBound}
	$$d^* \leq d.$$
\end{lemma}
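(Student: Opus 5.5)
The plan is to show directly that $\wh P(\psi-\beta\cdot\wh D)\geq 0$ for every $\beta\geq d$. Then no $\beta\geq d$ belongs to the set $\{\beta>0:\wh P(\psi-\beta\cdot\wh D)<0\}$, so $d^*\leq d$. I will work from the formula \eqref{forConv} of Remark \ref{rmk93}:
$$\wh P(\psi-\beta\cdot\wh D)=\limsup_n\frac1n\log\max_{(x,\xi)\in T^1M}\int|d_xf^n_\omega\xi|^{-\beta}d\mu(\omega).$$
The idea is to bound the maximum over $(x,\xi)$ from below by an average over $T^1M$. Averaging lets us interchange integrals freely, which is the point of the argument.

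\textbf{Step 1 (linear algebra on the sphere).} Let $A\in GL(d,\mathbb R)$ and let $\sigma$ be the normalized Lebesgue measure on $S^{d-1}$. The map $\xi\mapsto A\xi/|A\xi|$ is a diffeomorphism of $S^{d-1}$ with Jacobian $|\det A|\,|A\xi|^{-d}$. The change of variables formula therefore gives
$$\int_{S^{d-1}}|A\xi|^{-d}d\sigma(\xi)=|\det A|^{-1}.$$
For $\beta\geq d$, Jensen's inequality (the map $u\mapsto u^{\beta/d}$ is convex) then gives
$$\int|A\xi|^{-\beta}d\sigma\geq |\det A|^{-\beta/d}.$$

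\textbf{Step 2 (volume argument).} Fix $\omega$ and $n$. Since $f^n_\omega$ is a diffeomorphism and $m$ is a probability measure, $\int_M\Jac_x(f^n_\omega)\,dm(x)=1$. Applying Jensen to the convex function $u\mapsto u^{-\beta/d}$ yields
$$\int_M\Jac_x(f^n_\omega)^{-\beta/d}dm(x)\geq 1.$$
Combining this with Step 1 for $A=d_xf^n_\omega$ (written in orthonormal frames), I get
$$\int_{T^1M}|d_xf^n_\omega\xi|^{-\beta}\,d(m\times\sigma)(x,\xi)\geq1\quad\text{for every }\omega.$$
Here $\sigma$ is the normalized fiber measure on the unit tangent spheres.

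\textbf{Step 3 (Fubini and the maximum).} Integrate Step 2 over $\mu$ and apply Fubini:
$$\max_{(x,\xi)}\int|d_xf^n_\omega\xi|^{-\beta}d\mu\geq\int_{T^1M}\int|d_xf^n_\omega\xi|^{-\beta}d\mu\, d(m\times\sigma)\geq1.$$
Hence $\frac1n\log\max(\cdot)\geq0$ for all $n$, so $\wh P(\psi-\beta\cdot\wh D)\geq0$ for all $\beta\geq d$, and $d^*\leq d$ follows.

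The main obstacle is Step 3. A naive argument bounds the least contracted direction by $\Jac^{1/d}$, but that direction depends on $\omega$, whereas in \eqref{forConv} the maximum over $(x,\xi)$ sits outside the $\mu$-integral. Averaging over $\xi$ with the exact spherical identity of Step 1 is what removes this dependence.

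The remaining thing to check is that the sign conventions for $\wh D$ and the forward-dynamics formula \eqref{forConv} match as written. This only affects whether the relevant power of the Jacobian is $-\beta/d$ or $+\beta/d$. In the reversed case the same argument applies to $(f^n_\omega)^{-1}$, since $\int_M\Jac_x\big((f^n_\omega)^{-1}\big)dm=1$ as well.
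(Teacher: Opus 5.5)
Your proof is correct, and its skeleton matches the paper's. You bound the maximum over $T^1M$ from below by the $m\times\sigma$-average, convert the spherical average into a Jacobian, and apply Jensen against $\int_M \Jac_x(f^n_\omega)\,dm=1$.

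The differences are in the middle. The paper first reduces to $\kappa=d$ by Jensen in $\mu$. You instead apply the $\beta/d$-power Jensen on the sphere for each fixed $\omega$, which makes no difference.

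For the spherical step at $\kappa=d$, the paper writes $\int|A\xi|^{-d}d\sigma\geq\big(\int|A\xi|^{d}d\sigma\big)^{-1}$. It then identifies $\int|A\xi|^{d}d\sigma$ with a multiple of $\Vol(A[B(1)])\propto|\det A|$. That identification is not valid. The region $\{r\xi:0\leq r\leq|A\xi|\}$ is not $A[B(1)]$, whose radial function is $|A^{-1}\eta|^{-1}$, so in fact $\Vol(A[B(1)])/\Vol(B(1))=\int|A^{-1}\eta|^{-d}d\sigma(\eta)$. Moreover, $\int|A\xi|^{d}d\sigma\geq|\det A|$ always, with no uniform reverse bound: for $A=\mathrm{diag}(\lambda,\lambda^{-1})$ and $\lambda\to\infty$, the integral blows up while $\det A=1$. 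So the paper's Jensen step loses exactly what is needed.

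Your exact change-of-variables identity $\int_{S^{d-1}}|A\xi|^{-d}d\sigma=|\det A|^{-1}$, with Jacobian $|\det A|\,|A\xi|^{-d}$ for $\xi\mapsto A\xi/|A\xi|$, is the correct replacement. It also yields the sharp bound $\max_{(x,\xi)}\int|d_xf^n_\omega\xi|^{-\beta}d\mu\geq 1$ for every $n$, rather than an unspecified constant $c_M$.
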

\begin{proof}
Let $\kappa\geq d$, then by the Jensen inequality, if $d^*$ were to be greater than $d$, then
\begin{equation}\label{forforJens}
	\int |d_xf_{\omega}^{n}\xi|^{-\kappa}d\mu \geq \Big(\int |d_xf_{\omega}^{n}\xi|^{-d}d\mu \Big)^\frac{\kappa}{d}.
\end{equation}

We prove that there exists  a constant $c_M$ depending only on $M$ s.t. for all $n$, $\max_{(x,\xi)\in T^1M}\int |d_xf_{\omega}^{n}\xi|^{-d}d\mu\geq c_M$. Since the r.h.s. is independent of the diffeomorphisms $f_\omega$, it is enough to assume $n=1$. By \eqref{forforJens}, for all $n$, for all $\kappa\geq d$, $\max_{(x,\xi)\in T^1M}\int |d_xf_{\omega}^{n}\xi|^{-\kappa}d\mu\geq c_M^\frac{\kappa}{d}$, and so $\wh P(\psi-\kappa\wh D)\geq0$, hence $d^*\leq d$.

We integrate over $T_xM$, where we denote by $\sigma_{d-1}$ the normalized volume of $\mathbb{S}^{d-1}\subseteq T_xM$. Then again by the Jensen inequality, 
\begin{align}\label{eq1000}
\int\int |d_xf_{\omega}\xi|^{-d}d\mu d\sigma_{d-1}(\xi) = &\int\int |d_xf_{\omega}\xi|^{-d} d\sigma_{d-1}(\xi)d\mu\\
\geq &\int\Big(\int |d_xf_{\omega}\xi|^{d} d\sigma_{d-1}(\xi) \Big)^{-1} d\mu \nonumber\\
=&  \int\Big(\int \int_0^{|d_xf_{\omega}\xi|} d\cdot r^{d-1}dr d\sigma_{d-1}(\xi) \Big)^{-1} d\mu \nonumber\\
=&d \int\Big(\Vol(d_xf_\omega[B_{T_xM}(1)])\Big)^{-1} d\mu \nonumber\\
=& \frac{d}{\Vol(B_{T_xM}(1))}\int\Jac_x(f_\omega)^{-1} d\mu, \nonumber
\end{align}	
where $\Vol$ is the Euclidean volume on $T_xM$, and $B_{T_xM}(1) $ is the unit ball. Then we integrate both sides of \eqref{eq1000} w.r.t. to $m$, and apply Jensen's inequality again, 
\begin{align*}
	& \max_{(x,\xi)\in T^1M}\int |d_xf_{\omega}\xi|^{-d}d\mu \geq \int \int\int |d_xf_{\omega}\xi|^{-d}d\mu d\sigma_{d-1}(\xi)dm(x)\\
	\geq &\frac{d}{\Vol(B_{T_xM}(1))}\int \int\Jac_x(f_\omega)^{-1} d\mu dm(x)\\
\geq& \frac{d}{\Vol(B_{T_xM}(1))}\Big(\int \int\Jac_x(f_\omega) d\mu dm(x) \Big)^{-1}\\
=& \frac{d}{\Vol(B_{T_xM}(1))}\Big(\int \int\Jac_x(f_\omega) dm(x) d\mu  \Big)^{-1}\geq \min_{x\in M}\frac{d}{\Vol(B_{T_xM}(1))}\equiv c_M.
\end{align*}
\end{proof}

\begin{prop}[Convexity, directional derivatives, and vanishing]\label{CDDV}
    The function $\Xi:\kappa\mapsto \wh P(\psi-\kappa \wh D)$ is convex on $\mathbb{R}$ (thus in particular it is continuous), vanishes at $\kappa=d^*$ and at $\kappa=0$. If $d^*>0$, then $\Xi$ admits positive directional derivative at $\kappa=d^*$.
\end{prop}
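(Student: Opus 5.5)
We work with the formula \eqref{forConv}, which writes $\Xi(\kappa)=\wh P(\psi-\kappa\wh D)$ as the limit (by Lemma \ref{limsupIsLim}) of
$$\Xi_n(\kappa):=\frac1n\log\max_{(x,\xi)\in T^1M}\int |d_xf^n_\omega\xi|^{-\kappa}d\mu .$$
\textbf{Convexity.} For fixed $n$ and $(x,\xi)$, the map $\kappa\mapsto\log\int e^{-\kappa\log|d_xf^n_\omega\xi|}d\mu$ is a log-Laplace transform, hence convex by H\"older's inequality. A maximum over $(x,\xi)$ of convex functions is convex, and so is a pointwise limit of convex functions. Therefore $\Xi$ is convex. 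It is finite on all of $\mathbb R$, since $|\Xi(\kappa)|\le h_{\mathrm{top}}+\|\psi\|_\infty+|\kappa|\log M_f$ by Remark \ref{PBounded}. A finite convex function on $\mathbb R$ is continuous.

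\textbf{Vanishing at $0$.} We have $\Xi(0)=\wh P(\psi)$. Since $\psi$ does not depend on the fiber, $\wh Z_n(\psi,t,a)$ is the classical partition function, so $\wh P(\psi)=P(\psi)=0$ because $\psi$ is in proper form. Equivalently, $\int 1\,d\mu=1$ gives $\Xi_n(0)=0$ for every $n$.

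\textbf{Vanishing at $d^*$.} Assume $d^*>0$; otherwise the case is covered by $\kappa=0$. The proof of Lemma \ref{dStarDBound} shows that $\Xi(\kappa)\ge0$ for $\kappa\ge d$. By definition of $d^*$ and convexity, the set $\{\Xi<0\}$ is an interval $I$ with $\sup I=d^*$, and $d^*\le d<\infty$. If $\Xi(d^*)<0$, continuity would give $\Xi<0$ slightly to the right of $d^*$, contradicting the definition of the supremum. Since $d^*$ is a limit of points of $I$, continuity gives $\Xi(d^*)\le0$, hence $\Xi(d^*)=0$.

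\textbf{Positive directional derivative at $d^*$.} The convex function $\Xi$ vanishes at both $0$ and $d^*$, and $\Xi<0$ at some point $\kappa_0\in(0,d^*)$ because $\sup I=d^*>0$. Convexity then forces $\Xi<0$ on all of $(0,d^*)$. The slopes of a convex function are nondecreasing, so
$$\Xi'_-(d^*)\ \ge\ \frac{\Xi(d^*)-\Xi(\kappa_0)}{d^*-\kappa_0}=\frac{-\Xi(\kappa_0)}{d^*-\kappa_0}>0,$$
and therefore $\Xi'_+(d^*)\ge\Xi'_-(d^*)>0$ as well.

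\textbf{Main obstacle.} The delicate step is justifying that $\Xi$ is the limit of $\Xi_n$ in \eqref{forConv} uniformly enough for convexity to pass to the limit. This requires the $\wh\phi$-holonomies (or Lemma \ref{limsupIsLim}) for the derivative cocycle potential $\psi-\kappa\wh D$. If only a $\limsup$ were available, we would instead argue convexity directly from the definition of $\wh Z_n$ through H\"older's inequality on the partition sums. The $\limsup$ of convex functions need not be convex, so this really does need the genuine limit of Lemma \ref{limsupIsLim}.
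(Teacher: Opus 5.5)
Your proof is correct and follows essentially the paper's route: convexity read off from \eqref{forConv}, $\Xi(d^*)=0$ by continuity together with $d^*\le d$ from Lemma \ref{dStarDBound}, and positivity of the one-sided derivatives from the chord slope through a point $\beta\in(0,d^*)$ with $\Xi(\beta)<0$. You also spell out $\Xi(0)=P(\psi)=0$ and the inequality $\Xi(d^*)\le 0$, both of which the paper leaves implicit.

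One correction to your ``main obstacle'': a $\limsup$ of finite convex functions is convex, since $\limsup_n(\lambda a_n+(1-\lambda)b_n)\le\lambda\limsup_n a_n+(1-\lambda)\limsup_n b_n$ (it is the $\liminf$ that can fail). So convexity passes directly through the $\limsup$ in \eqref{forConv}, and you need neither Lemma \ref{limsupIsLim} nor any holonomy hypothesis at that step.
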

\begin{proof}
    Convexity follows from \eqref{forConv}, which implies continuity and existence of directional derivatives. By continuity,
$$\Xi(d^*) =0,$$
as Lemma \ref{dStarDBound} tell us that $d^*$ is bounded. Indeed, $\wh P(\psi-d^*\wh D) \geq0$ as it can be approximated from above with positive values, and it cannot be negative as then by continuity for $d'>d^*$ we would we would get $\wh P(\psi-d'\wh D) <0 $.

To see that $\Xi$ admits positive directional derivatives, let $\beta\in(0,d^*)$ with $\Xi(\beta)<0$ (by the assumption $d^*>0$), and note that for all $\kappa \in (\beta,d^*)$,
$$\Xi(\kappa)\leq \lambda \Xi(\beta)+(1-\lambda)\Xi(d^*)=\lambda \Xi(\beta),$$
where $\lambda \beta+(1-\lambda)d^*=\kappa$, $\lambda>0$.

Next, for $\kappa>d^*$, $$0=\Xi(d^*)\leq \lambda'\Xi(\beta)+(1-\lambda')\Xi(\kappa)\Rightarrow \Xi(\kappa)\geq \frac{\lambda'}{1-\lambda'}\Xi(\beta),$$
where $\lambda' \beta+(1-\lambda')\kappa=d^*$, $\lambda'\in (0,1)$.
\end{proof}

\begin{comment}
\begin{remark}
	In Lemma \ref{ContSimDim} below we show that in particular%, when $f_\omega=f_{(\omega_i)_{i\leq0}}$
	,
	$$\wh P(\psi-d^*\wh D)=0,$$
	as the case where $d^*=0$ is trivial.
\end{remark}
\end{comment}

\begin{lemma}[Continuity of the similarity dimension]\label{ContSimDim}
	When $f_\omega=f_{(\omega_i)_{i\leq0}}$, if $d^*>0$, then it is continuous in the $C^1$-topology of $\{f_\omega\}_{\omega\in \Sigma}$ and in the $C^0$-topology of $\psi$.
\end{lemma}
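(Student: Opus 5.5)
The plan is to combine the continuity of the pressures $\wh P(\psi-\kappa\wh D)$ from Remark \ref{rmk93}(2) with the strict transversality at the root given by Proposition \ref{CDDV}. Write $\Xi_f(\kappa):=\wh P(\psi-\kappa\wh D_f)$, where $f$ denotes the family $\{f_\omega\}$ together with $\psi$.

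Fix a family $f$ with $d^*(f)>0$ and fix $\epsilon>0$ small, with $\epsilon<d^*(f)$. By Proposition \ref{CDDV}, $\Xi_f$ is convex, vanishes at $0$ and at $d^*(f)$, and has positive directional derivative at $d^*(f)$. Two consequences follow:
\begin{itemize}
\item convexity with $\Xi_f(0)=\Xi_f(d^*)=0$ and $\Xi_f\not\equiv 0$ gives $\Xi_f(d^*(f)-\epsilon)<0$;
\item the positive right derivative gives $\Xi_f(d^*(f)+\epsilon)>0$.
\end{itemize}
For the first point, if $\Xi_f$ vanished at some interior point, convexity would force $\Xi_f\equiv0$ on $[0,d^*]$. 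That contradicts the positive derivative at $d^*$ together with the existence of some $\beta<d^*$ with $\Xi_f(\beta)<0$, which comes from the definition of $d^*$ as a supremum.

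Now apply Remark \ref{rmk93}(2) at the two fixed parameters $\kappa_\pm:=d^*(f)\pm\epsilon$. For $g$ close enough to $f$ in the $C^1$-topology, with $\psi'$ $C^0$-close to $\psi$, we get $\Xi_g(\kappa_-)<0$ and $\Xi_g(\kappa_+)>0$. The first inequality gives $d^*(g)\ge\kappa_-$ directly from the definition of the similarity dimension, and in particular $d^*(g)>0$.

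For the upper bound, suppose $d^*(g)>\kappa_+$. Then there is some $\beta>\kappa_+$ with $\Xi_g(\beta)<0$. Also $\Xi_g(0)=P(\psi')=0$, which holds as long as the perturbed potential is still in proper form; if it is not, one works with $P(\psi')$ in place of $0$, which is close to $0$. Since $0<\kappa_+<\beta$, convexity gives $\Xi_g(\kappa_+)\le \max(\Xi_g(0),\Xi_g(\beta))\le 0$, which contradicts $\Xi_g(\kappa_+)>0$. Hence $|d^*(g)-d^*(f)|\le\epsilon$, which is the claimed continuity.

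The main obstacle is justifying the pointwise continuity in Remark \ref{rmk93}(2) uniformly enough at the two fixed values $\kappa_\pm$. I would take it as given from the variational principle. The only delicate bookkeeping is the normalization $\Xi_g(0)=0$ under $C^0$-perturbations of $\psi$. To handle it, I would renormalize $\psi'$ to proper form, which changes $\psi'$ by a coboundary plus the constant $P(\psi')$. The constant $P(\psi')$ is $C^0$-continuous, so the same argument goes through with an arbitrarily small error.
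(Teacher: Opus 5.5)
Your proof is correct and follows essentially the same route as the paper. Both use Proposition \ref{CDDV} and convexity to get $\Xi_f(d^*-\epsilon)<0<\Xi_f(d^*+\epsilon)$, transfer these two strict inequalities to nearby systems via the continuity in Remark \ref{rmk93}, and use convexity to rule out $d^*(g)>d^*+\epsilon$. One small simplification: in that last step, take the chord from $\kappa_-$ rather than from $0$, since $\Xi_g(\kappa_-)<0$ already holds, and then you no longer need to discuss whether the perturbed $\psi$ stays in proper form.
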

\begin{proof}
%	Recall Remark \ref{rmk93}. By continuity,
%$$\wh P(\psi-d^*\wh D) =0,$$
%as Lemma \ref{dStarDBound} tell us that $d^*$ is bounded. Indeed, $\wh P(\psi-d^*\wh D) \geq0$ as it can be approximated from above with positive values, and it cannot be negative as then by continuity for $d'>d^*$ we would we would get $\wh P(\psi-d'\wh D) <0 $.

By Proposition \ref{CDDV}, $\wh P(\psi-d^*\wh D)=0$. Fix $\beta_0<d^*$ s.t. $\wh P(\psi-\beta_0\wh D)<0$, then by convexity, for all $\kappa>d^*$, for all $\epsilon>0$,
$$0= \wh P(\psi-d^*\wh D)\leq \lambda \wh P(\psi-\kappa\wh D)+(1-\lambda) \wh P(\psi-\beta_0\wh D), $$
where $\lambda \kappa +(1-\lambda)\beta_0=d^*$ for some $\lambda>0$. Then, for all $\kappa>d^*$,
$$ \wh P(\psi-\kappa\wh D) >0.$$

Similarly, for any $\beta\in (\beta_0,d^*)$, by convexity, 
	$$\wh P(\psi-\beta\wh D)\leq \lambda' \wh P(\psi-\beta_0\wh D)+(1-\lambda') \wh P(\psi-d^*\wh D)<0,$$
where $\lambda' \beta_0+(1-\lambda')d^*=\beta$ for some $\lambda'>0$.	
	
Then for all $\epsilon>0$ small enough there exists $\delta_\epsilon>0$ s.t.
\begin{equation}\label{openEqForDim}
	\wh P(\psi-(d^*-\epsilon)\wh D)<-\delta_\epsilon\text{ and } \wh P(\psi-(d^*+\epsilon)\wh D)>\delta_\epsilon.
\end{equation}
The property of \eqref{openEqForDim} is open in the $C^1$-topology of $\{f_\omega\}_{\omega\in \Sigma}$ and in the $C^0$-topology of $\psi$. Convexity implies that the pressure function cannot become negative again beyond $d^*+\epsilon$. Thus the new similarity dimension must also lie between $d^*-\epsilon $ and $d^*+\epsilon $  for all small enough perturbations.
\end{proof}

\begin{definition}[Effective expansion on average]\label{UEA2}
     We say that $(\Sigma\times M,\wh f)$ admits {\em effective expansion on average w.r.t. $\mu$} if
    \begin{equation*}\label{UEAEq}
%      \wh P(\psi- d\cdot \wh D)<0.
d^*>d-2.
    \end{equation*}
We may also specify and say that  $(\Sigma\times M,\wh f)$ admits {\em effective expansion on average with a parameter $\chi>0$} if $\inf_{\kappa\in (d-2,d^*)}\wh P(\psi- \kappa\cdot \wh D)<-\chi$.
\end{definition}

\begin{remark}[$C^1$-open]
	 When $f_\omega=f_{(\omega_i)_{i\leq0}}$, we are in the setting of \cite[Theorem~6.7]{TDFOE_I} (the variational principle), and so it is easy to observe that the effective expansion on average is a $C^1$-open condition in $\{f_\omega\}_{\omega\in \Sigma}$ and $C^0$-open in $\psi$ (recall Lemma \ref{ContSimDim}).

\end{remark}

\begin{remark}[Effective expansion on average from expansion on average in small parameter]
Note, when $d=2$, by \cite[Theorem~8.12]{TDFOE_I} every system which admits {\em expansion on average}:
$$\inf_{\omega^-\in \Sigma^-,(x,\xi)\in T^1M}\int\log  |d_xf_\omega\xi| d\mu_{\omega^-}\geq \chi>0$$
admits $\beta>0$ s.t. $\wh P(\psi-\beta\wh D)<0$, and so satisfies effective expansion on average. Such systems have many example (even conservative) and are extensively studied. 
\end{remark}

\begin{remark}[Effective expansion on average with $d^*=d$ from co-expansion on average] In \textsection \ref{coExImpEffEx} we show that every conservative systems which satisfies co-expansion on average ($C^1$-open condition suggested in \cite{DeWittDolgopyat2}) is effectively expanding on average (in fact with an optimal rate $d^*=d$). This immediately provides many families where effective expansion holds.
\end{remark}

\begin{remark}[Effective expansion with $d^*<d$]
	To see examples which are effectively expanding but do not satisfy $d^*=d$, consider a random system on a $2$-dimensional torus which is expanding on average (thus it is effectively expanding on average), and which admits an attractor s.t. the volume contracts exponentially fast to some measure on the attractor almost surely, and such that the stationary measure is of dimension strictly less than $2$. By \textsection \ref{appliC2} (specifically Corollary \ref{CorHausDim}), $d^*<d$. %Thus by \textsection \ref{coExImpEffEx}, the system does not admit co-expansion on average.
\end{remark}

\begin{remark}[Families of effectively expanding systems with very dissipative elements and a Gibbs process]
	One can easily generate a family of concrete examples which are dissipative, in dimension greater than $2$ (including highly dissipative diffeomorphisms which are not perturbations of conservative). Example: Consider any two maps $f_0,f_1$ composed w.r.t. to the $(\frac{1}{2},\frac{1}{2})$-Bernoulli measure on $\{0,1\}^{\mathbb{N}}$ which are effectively expanding on average (for example you may start from conservative maps which are co-expanding). The corresponding potential is $\psi=-\log 2\mathbb{1}_{[0]}+ -\log 2\mathbb{1}_{[1]}$. We may now also view a potential on $\{0,1,2\}^{\mathbb{N}}$ with $f_2$ as dissipative as we like. We still have $\wh P(\psi_N-\kappa \wh D')<0$ for $\kappa\in (d-2,d)$, where $\wh D'(\omega,x,\xi)=\wh D(\omega,x,\xi)$ for $\omega\in [0],[1]$ and otherwise  $\wh D'(\omega,x,\xi)=-\log|d_xf_2^{-1}\xi|$, and $\psi_N= -\log 2\mathbb{1}_{[0]}+ -\log 2\mathbb{1}_{[1]}-N\cdot \mathbb{1}_{[2]}$ with $N$ sufficiently large as $\wh P _{f_0,f_1,f_2}(\psi_N-\kappa \wh D')\xrightarrow[N\to\infty]{} \wh P_{f_0,f_1}(\psi-\kappa \wh D)<0$ (note also $P_{0,1,2}(\psi_N)\xrightarrow[N\to\infty]{}P_{0,1}(\psi)=0$). Finally, after finding sufficiently large $N$, perturb also $\psi_N$ in $C^0$-topology amongst H\"older potentials. This yields an effectively expanding on average system, whose randomness is driven by a Gibbs measure, and which contains very dissipative elements (not perturbations of conservative maps).
\end{remark}

\subsection{Dual Sobolev Space}\label{SoboSect}

We wish to work with a space of functions on $M$ (recall $\mathrm{dim}M=d\geq2)$. In this section we introduce the space of functions and the sequence of operators which act on it.

\begin{definition}[Sobolev space]
	Given $s\in (0,1)$, the inner-product of two function $g,h\in L^2(m)$ is defined by
\begin{align*}
\langle  g, h\rangle_s &:= \int  g \overline{ h}dm+\int \int \frac{(g(x)- g(y))\overline{( h(x)-h(y))}}{d(x,y)^{d+2s}}dm(x)dm(y).
\end{align*}

The {\em Sobolev space of index $s$}, $\Hh_{s}(m)$, is defined as the space of all functions in $L^2(m)$ whose $\langle \cdot,\cdot\rangle_{s}$-induced norm is finite:
\begin{align*}
&\|\wh g\|_{\Hh_s( m)}:=\sqrt{\int | g|^2d m+\int\int  \frac{| g(x)- g(y)|^2}{d(x,y)^{d+2s}} dm(x)dm(y)}.
\end{align*}
\end{definition}

\begin{remark}\label{rmkRangeS}
	The constant $s$ has to be taken within the range $(0,1)$ for the following reason: If $s\geq 1$, it is a classical fact that then $H_s(m)$ contains only the constant functions. In addition, we would also like that the unit ball of $\Hh_s(m)$ embeds compactly into $L^2(m)$ for usefulness of the definition and later on quasi-compactness. This property only holds when $s>0$. These restrictions on the range of $s$ impose restrictions on the range of $-2s$ in the definition of dual Sobolev space which we study later as well. 
\end{remark}

\begin{definition}[Dual Sobolev space]
	Given $s\in (0,1)$, we associate the inner-product for two functions on $M$:
\begin{align*}
\langle  g, h\rangle_{-s} &:= \int \int \frac{g(x)\cdot \overline{h(y)}}{d(x,y)^{d-2s}}dm(x)dm(y).
\end{align*}

For any function $g\in L^2(m)$ we have, 
\begin{equation} \label{eqNice}
	\| g\|_{-s}^2:= \langle  g, g\rangle_{-s} =\int \int \frac{g(x)\cdot \overline{g(y)}}{d(x,y)^{d-2s}}dm(x)dm(y).
\end{equation}

The {\em dual Sobolev space of index $-s$}, $\Hh_{-s}(m)$, is defined as the metric completion of $L^2(m)$ w.r.t. $\|\cdot\|_{-s}$.
\end{definition}

\begin{remark}\text{ }
	\begin{enumerate}
	\item 	Note, whenever $s\in (0,1)$, we have $|g(x)|\cdot|g(y)|\leq \frac{1}{2}|g(x)|^2+ \frac{1}{2}|g(y)|^2$, and so the integral in \eqref{eqNice} is bounded by 
	$$\int |g(x)|^2\int \frac{1}{d(x,y)^{d-2s}}dm(y)dm(x),$$
	where $\int \frac{1}{d(x,y)^{d-2s}}dm(y) $ is bounded by a global constant.
	\item To see that $\| g\|_{-s} $ indeed defines a norm, we need to observe that $\langle  g, h\rangle_{-s} $ defines a positive bi-linear form. We note that $\langle  g, h\rangle_{-s}=\int Ag  \overline{h} dm$ where $(Ag)(y)=\int \frac{g(x)}{d(x,y)^{d-2s}}dm(x)$. This is a compact and self-adjoint kernel-operator, where the kernel is classically known to be the principal singularity of the inverse fractional Laplace-Beltrami operator $(I-\Delta_{\mathfrak g})^{-s}$ (where $\mathfrak g$ is the Riemannian metric of $M$). Since it is a positive operator, we get that all eigen-values of $A$ are positive, and so $\langle  \cdot , \cdot \rangle_{-s} $ is a positive form.
	\item We get the relationship $\Hh_{-s}(m)\subseteq L^2(m)\subseteq \Hh_s(m)$.
	\item Recall Remark \ref{rmkRangeS}. We focus on $\kappa=d-2s\in (d-2,d)$, which is the source for the condition of effective expansion on average (recall Definition \ref{UEA2}) to demand $d^*>d-2$ (where we know that $d^*\leq d$ by Lemma \ref{dStarDBound}).
	\item The Deny inequality says that for any $g,h\in L^2(m)$,
\begin{equation}\label{Deny}
	\Big|\int_M g(x) \overline{h(x)} dm(x) \Big|\leq C_{\mathrm{Deny}}\|g\|_{s} \|h\|_{-s}.
\end{equation}
Indeed, since $L^2(m)$ is dense in $\Hh_{-s}(m)$ (by definition), the inequality extends to elements $h\in \Hh_{-s}(m)$, where the functional action against $\Hh_{s}(m)$ is extended as in \eqref{Deny} via an $L^2(m)$-inner-product. %Note that the assumption $\int h dm=0 $ extends to general elements $h\in\Hh_s(m)$ by $h(1)=0$ (with $1\in \Hh_s(m)$).
\end{enumerate}
\end{remark}

\subsection{Averaged semi-Ruelle Operator and averaged Koopman operator%, and averaged weighted 
}\label{ASROSect}

Up until now we studied the semi-Ruelle operator, which acts on functions on $\Sigma^+\times M$, which was the dual of the Koopman operator $\cdot\circ \wh F_L$. We now study define the corresponding averaged versions, which act on functions on $M$, and act as duals.

\subsubsection{Averaged semi-Ruelle operator}

\begin{definition}[Random sequence of operators on the Sobolev space]
	Given $\omega^+\in \Sigma^+$ the {\em  random sequence of operators} acting on $g\in \Hh_s(m)$, for all $n\in \mathbb{N}$, is defined by 
	$$(\mathcal L_{\omega^+,n}g)(x):=(\L_{\wh \phi}^n \wh g)_{\omega^+}(x),$$
	where $\wh g(\omega^+,x):=g(x)$, and for any $\wh h:\Sigma^+\times M\to \mathbb R$, $(\wh h)_{\omega^+}(x):=\wh h(\omega^+,x)$.
\end{definition}

\begin{remark}
	It is easy to check that $$\int \|\mathcal L_{\omega^+,n} g\|_{\Hh_s(m)}^2 d\mu(\omega^+)= \|\L_{\wh \phi}^n \wh g\|_{\HH_{d+2s}(\wh m)}^2.$$
\end{remark}

In Definition \ref{ASO} we define an averaged version of (powers of) the semi-Ruelle operator, which acts on the Sobolev space.

\begin{definition}[Averaged semi-Ruelle operator]\label{ASO}
Given $n\in\mathbb{N}$	The {\em averaged ($n$-th power of the) semi-Ruelle operator} acting on $g\in L^2(m)$, is defined by 
	$$(\mathcal L_{n}g)(x):=\Big(\int \L_{\wh\phi}^nd\mu\Big)(g)(x)=\int (\mathcal{L}_{\omega^+,n}g)(x)d\mu(\omega^+).$$
	We denote $\mathcal L_{1}$ by simply $\mathcal{L}$ for short.
\end{definition}

\begin{comment}
\begin{lemma}[Averaged operator norm inequality]\label{AONI}
    For all $g\in L^2(m)$, for all $n\geq0$,
    $$\|\mathcal L_{n}g\|_{-s}^2\leq \int \|\mathcal{L}_{\omega^+,n}g\|_{-s}^2 d\mu=\|\|\mathcal{L}_{\omega^+,n}g\|_{-s} \|_{L^2(\mu)}^2.$$
\end{lemma}
\begin{proof} By Jensen's inequality, 
  \begin{align*}
      \|\mathcal L_{n}g\|_{-s}^2=&\int |\int \mathcal{L}_{\omega^+,n}gd\mu|^2  dm\\
      +&\int\int \frac{|\int (\mathcal{L}_{\omega^+,n}g)(x)-(\mathcal{L}_{\omega^+,n}g)(y) d\mu|^2}{d(x,y)^{d+2s}} dm(x)dm(y)\\
      \leq& \int \int |\mathcal{L}_{\omega^+,n}g|^2d\mu  dm\\
       +&\int\int \int \frac{|(\mathcal{L}_{\omega^+,n}g)(x)-(\mathcal{L}_{\omega^+,n}g)(y) |^2}{d(x,y)^{d+2s}}  dm(x)dm(y)d\mu(\omega^+)\\
       =&\int \Big(\int |\mathcal{L}_{\omega^+,n}g|^2d\mu  dm\\
       +&\int\int \int \frac{|(\mathcal{L}_{\omega^+,n}g)(x)-(\mathcal{L}_{\omega^+,n}g)(y) |^2}{d(x,y)^{d+2s}}  dm(x)dm(y)\Big)d\mu(\omega^+)\\
       =&\int \|\mathcal{L}_{\omega^+,n}g\|_{\Hh_s(m)}^2 d\mu.
  \end{align*}  
\end{proof}
\end{comment}

\begin{lemma}[i.i.d. averaged powers]\label{AvgPwr}
	Assume that $\Sigma$ is a full-shift, and that $\mu$ is a Bernoulli measure and $f_\omega=f_{\omega_0}$. Then for all $g\in L^2(m)$, for all $n\geq1$, for all $\omega^+\in \Sigma^+$, for $m$-a.e. $x\in M$,
	$$(\mathcal L_1^ng)(x)=(\mathcal L_ng)(x)=(\L_{\wh\phi}^ng)(\omega^+,x).$$
\end{lemma}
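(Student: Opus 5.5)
The plan is to unwind the definitions in the forward setting of Remark \ref{forwardsDyns} and use the product structure of the Bernoulli measure. First I would compute $\L_{\wh\phi}$ explicitly. Since $\Sigma$ is a full shift, $\mu$ is Bernoulli with weights $\{p_a\}$, and $\psi$ is in proper form, we have $\psi(\sigma^+)=\log p_{\sigma_0}$, up to the normalisation $\sum_a e^{\psi}=1$. Because $f_\sigma=f_{\sigma_0}$, the Jacobian potential $\wh\varphi(\sigma^+,y)=-\log\Jac_y(f_{\sigma_0})$ depends only on $\sigma_0$ and $y$. The preimages of $(\omega^+,x)$ under $\wh F_L$ are the points $(a\omega^+,f_a^{-1}x)$, one for each symbol $a$. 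Hence, for $\wh g(\omega^+,x)=g(x)$,
$$(\L_{\wh\phi}\wh g)(\omega^+,x)=\sum_a p_a\,\Jac_{f_a^{-1}x}(f_a)^{-1}\,g(f_a^{-1}x)=:(Pg)(x).$$
This expression does not depend on $\omega^+$. I would also check this directly against the definition of $\psi$ in proper form, i.e.\ that $e^{\psi(a\omega^+)}=p_a$ exactly rather than merely cohomologously. For a Bernoulli Gibbs state the proper-form potential is the canonical one given by Remark (1) after Definition \ref{properFormPot}, so this holds.

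Second, I would induct on $n$. Because $\L_{\wh\phi}$ sends functions independent of $\omega^+$ to functions independent of $\omega^+$, namely $\wh g\mapsto\wh{Pg}$, we get $\L_{\wh\phi}^n\wh g=\wh{P^ng}$ for every $n$. Consequently $\mathcal L_{\omega^+,n}g=P^ng$ for every $\omega^+$. Integrating over $\mu$ gives $\mathcal L_ng=P^ng$, and for $n=1$ this gives $\mathcal L_1=P$, so $\mathcal L_1^ng=P^ng$. These three identities together are exactly the claimed chain of equalities.

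Third, I would handle the qualifier ``for $m$-a.e.\ $x$''. The formula above holds pointwise for continuous $g$. For general $g\in L^2(m)$ one needs that the composition $g\circ f_a^{-1}$ is well defined $m$-a.e., which is true because the $f_a$ are diffeomorphisms and so preserve the measure class of $m$. Boundedness on $L^2(m)$ then follows as in Lemma \ref{bddOnL2m}, and a density argument from $C(M)$ gives the identities as elements of $L^2(m)$, hence $m$-a.e.

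The only real obstacle is the first step: verifying that the normalised potential $\psi$ and the potential $\wh\varphi$ truly depend only on the zeroth coordinate, so that no $\omega^+$-dependence survives the iteration. Everything after that is formal.
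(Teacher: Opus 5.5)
Your proposal is correct and follows essentially the paper's route: show that $\L_{\wh\phi}^n$ sends a function independent of $\omega^+$ to a function independent of $\omega^+$, then integrate against $\mu$. The only difference is that the paper writes $\L_{\wh\phi}^n$ at once as a sum over words of length $n$ and reduces to continuous $g$, rather than inducting on a single step. The paper uses silently that $\psi$ depends only on $\sigma_0$, and you are right to flag it. The clean justification, however, is not the construction in the remark after Definition \ref{properFormPot} but uniqueness of the normalized potential: $L_\psi 1=1$ gives $L_\psi^*\mu=\mu$, so $e^{\psi}$ is $\mu$-a.e.\ the conditional probability of the cylinder $[\sigma_0]$ given $T_L^{-1}\mathcal{B}$, which equals $p_{\sigma_0}$ for a Bernoulli measure, and continuity of $\psi$ together with full support of $\mu$ upgrades this to equality everywhere.
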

\begin{proof}
	It is enough to prove the right-most equality for all $\omega^+\in \Sigma^+$, for all $g\in C(M)$, for all $x\in M$. Indeed, 
	\begin{align}\label{SatSat}
		(\L_{\wh\phi}^ng)(\omega^+,x)=&\sum_{|\ul w|=n}e^{\psi^{(n)}(\ul w)}\cdot e^{\wh\phi^{(n)}(\ul w, (f_{\ul w}^n)^{-1}(x))}g((f_{\ul w}^n)^{-1}(x)),%%\\
%=&\int e^{\wh\phi^{(n)}(\sigma^+, (f_{\sigma^+}^n)^{-1}(x))}g((f_{\sigma^+}^n)^{-1}(x))d\mu(\sigma^+).
	\end{align}
	where we use the notation $\psi^{(n)}(\ul w)$, $\wh\phi^{(n)}(\ul w,\cdot)$, $f_{\ul w}^n$ to imply that the expression is independent of the choice of the word $\sigma^+\in[\ul w]$. Indeed, the r.h.s. of \eqref{SatSat} is independent of $\omega^+$, and so when integrating both sides w.r.t. $\omega^+$, the r.h.s. does not change while the l.h.s. equals to $(\mathcal L_n g)(x)$ by definition. 
\end{proof}

\begin{remark}\label{RmkAvgPwr}
	In the setting of Lemma \ref{AvgPwr}, the operator $\mathcal L:\Hh_{-s}(m)\to \Hh_{-s}(m) $ is well-defined, where 
	$$\mathcal L:= \mathcal L_1\text{ and } \mathcal L^n= \mathcal L_n.$$
\end{remark}

\begin{comment}
\subsection{Averaged Koopman operator}

\begin{definition}[Koopman operator]
	Given $\wh g\in L^2(\wh m)$, the {\em Koopman operator} acting on it is defined by
	$$(\U \wh g)(\omega^+,x)=\wh g\circ \wh F_L(\omega^+,x).$$
\end{definition}

\begin{definition}[Random sequence of operators on the dual Sobolev space]
	Given $\omega^+\in \Sigma^+$ the {\em  random sequence of operators} acting on $g\in L^2(m)$, for all $n\in \mathbb{N}$, is defined by 
	$$(\mathcal U_{\omega^+,n}g)(x):=(\U^n \wh g)_{\omega^+}(x),$$
	where $\wh g(\omega^+,x):=g(x)$, and for any $\wh h:\Sigma^+\times M\to \mathbb R$, $(\wh h)_{\omega^+}(x):=\wh h(\omega^+,x)$.
\end{definition}

\begin{definition}[Averaged Koopman operator]\label{AKO}
Given $n\in\mathbb{N}$	The {\em averaged ($n$-th power of the) Koopman operator} acting on $g\in L^2(m)$, is defined by 
	$$(\mathcal U_{n}g)(x):=\int (\mathcal{U}_{\omega^+,n}g)(x)d\mu(\omega^+).$$
	We denote $\mathcal U_{1}$ by simply $\mathcal{U}$ for short.
\end{definition}

\begin{remark}
	The action of $\mathcal{U}$ extends to $\Hh_{-s}(m)$, as one can check that $\mathcal U$ is the $L^2(m)$-dual of $\mathcal L$, and $\mathcal L$ is a bounded linear operator on $\Hh_{s}(m)$ (recall \eqref{Deny}).
\end{remark}
\end{comment}

\subsubsection{Averaged %weighted 
Koopman operator}

%While the duality of $\mathcal L$ and $\mathcal U$ holds, we wish to use a weighted Koopman operator whose action we study later on.

\begin{definition}[Koopman operator]
	Given $\wh g\in L^2(\wh m)$, the {\em Koopman operator} acting on it is defined by
	$$(\wh \Uu\wh g)(\omega^+,x)=\wh g\circ \wh F_L(\omega^+,x).$$
\end{definition}

\begin{definition}[Averaged Koopman operator]\label{AWKO}
Given $n\in\mathbb{N}$	The {\em averaged ($n$-th power of the) Koopman operator} acting on $g\in L^2(m)$, is defined by 
	$$(\Uu_{n}g)(x):= \int 
	(\wh \Uu^n g)(\omega^+,x)d\mu(\omega^+)=\int 
	g\circ f_{\omega^+}^n(x)d\mu(\omega^+).$$
	We denote $\Uu_{1}$ by simply $\Uu$ for short.

%Similarly, we define
%$$(\Ll_{n}g)(x):=\int (\L_\psi^n g)(\omega^+,x)d\mu(\omega^+),$$
% and denote $\Uu_{1}$ by simply $\Uu$ for short.
\end{definition}

\begin{remark}\label{WeightAvgPwrs}%\text{ }
%\begin{enumerate}
%	\item 	It is easy to check that $\Uu$ is the $L^2(m)$-dual of $\Ll$, as 
%	\begin{align*}
%		\int g \Uu h  dm=&\int \int g e^{-\wh \varphi} h\circ \wh F_Ld\wh m= \int \int \L_{\wh \phi}(g e^{-\wh \varphi}) hd\wh m\\
%		 =& \int \int \L_{\psi}(g)hd\wh m =\int g \Ll h dm.
%	\end{align*}
%	\item 
As in Lemma \ref{AvgPwr}, if $\Sigma$ is a full-shift, $\mu$ is a Bernoulli measure, and $f_\omega=f_{\omega_0}$, then 
	$$\Uu_n=\Uu^n%\text{ and } \Ll_n=\Ll^n
	.$$
%\end{enumerate}
\end{remark}

\begin{lemma}[Averaged Koopman as dual]
	For all $g,h\in L^2(m)$,
	$$\int \mathcal{L} g h dm=\int g \Uu h dm.$$
\end{lemma}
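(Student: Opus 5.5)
The plan is to reduce the statement to the duality of Theorem \ref{coolThm2}, transported to the forwards setting of Remark \ref{forwardsDyns}, and then integrate out the symbolic coordinate. Given $g,h\in L^2(m)$, lift them to fiber-constant functions $\wh g(\omega^+,x):=g(x)$ and $\wh h(\omega^+,x):=h(x)$ on $\Sigma^+\times M$. Both lie in $L^2(\wh m)$ with $\|\wh g\|_{L^2(\wh m)}=\|g\|_{L^2(m)}$, since $\wh m=\mu\times m$ and $\mu$ is a probability measure. By definition,
$$(\mathcal L g)(x)=\int (\L_{\wh\phi}\wh g)(\omega^+,x)\,d\mu(\omega^+)\quad\text{and}\quad (\Uu h)(x)=\int (\wh\Uu\wh h)(\omega^+,x)\,d\mu(\omega^+).$$

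First, by Fubini (justified because $\L_{\wh\phi}\wh g,\wh h\in L^2(\wh m)$ by Lemma \ref{bddOnL2m}, so their product is in $L^1(\wh m)$),
$$\int \mathcal L g\cdot h\,dm=\int\!\!\int (\L_{\wh\phi}\wh g)(\omega^+,x)\,\wh h(\omega^+,x)\,d\mu\,dm=\int \L_{\wh\phi}\wh g\cdot \wh h\,d\wh m.$$
Second, I will invoke the adjoint relation of Theorem \ref{coolThm2} in its forwards form: $\int \wh h\,\L_{\wh\phi}\wh g\,d\wh m=\int \wh h\circ\wh F_L\cdot\wh g\,d\wh m$. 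Third, applying Fubini again, the right-hand side equals $\int g(x)\int h(f_{\omega^+}(x))\,d\mu\,dm=\int g\,\Uu h\,dm$, which is the claim.

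The only step that needs care is the forwards version of Theorem \ref{coolThm2}, i.e. that $\wh m$ is an eigen-measure of $\L_{\wh\phi}^*$ with the forwards conventions. I would check this directly for continuous $\wh g$. Write
$$\L_{\wh\phi}\wh g(\omega^+,x)=\sum_{T_L\sigma^+=\omega^+}e^{\psi(\sigma^+)}e^{\wh\varphi(\sigma^+,f_{\sigma^+}^{-1}x)}\,\wh g(\sigma^+,f_{\sigma^+}^{-1}x).$$
Since $e^{\wh\varphi(\sigma^+,\cdot)}=\Jac(f_{\sigma^+})^{-1}$, the change of variables $y=f_{\sigma^+}^{-1}x$ gives $\int e^{\wh\varphi(\sigma^+,f_{\sigma^+}^{-1}x)}\wh g(\sigma^+,f_{\sigma^+}^{-1}x)\,dm(x)=\int\wh g(\sigma^+,y)\,dm(y)$. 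Then $L_\psi^*\mu=\mu$, which holds because $\psi$ is in proper form, yields $\int\L_{\wh\phi}\wh g\,d\wh m=\int\wh g\,d\wh m$.

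Combining this with the identity $\L_{\wh\phi}(\wh h\circ\wh F_L\cdot\wh g)=\wh h\cdot\L_{\wh\phi}\wh g$ gives the duality for continuous functions. It then extends to all of $L^2$ by density together with the $L^2(\wh m)$-boundedness of $\L_{\wh\phi}$ and of the Koopman operator (Lemma \ref{bddOnL2m}).
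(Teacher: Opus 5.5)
Your proof is correct and uses the same two ingredients as the paper's: the fiberwise change of variables $y=f_{\sigma^+}^{-1}(x)$ with $e^{\wh\varphi}=\Jac(f_{\sigma^+})^{-1}$, followed by $L_\psi^*\mu=\mu$. The paper carries out this computation directly on $\int \mathcal L g\, h\, dm$, carrying $h(f_{\sigma^+}(y))$ through the change of variables, rather than passing through the lifted adjoint identity of Theorem \ref{coolThm2} and Fubini; your version also keeps the weight $e^{\psi(\sigma^+)}$, which the paper's display omits but which its final conformality step requires.
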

\begin{proof}
Since $\mu$ is $\psi$-conformal,
	\begin{align*}
		\int \Ll g h dm=&\int \sum_{T_L\sigma^+=\omega^+}\int e^{\wh \varphi(\sigma^+,f_{\sigma^+}^{-1}(x))}g(f_{\sigma^+}^{-1}(x))h(x)dm(x)d\mu(\omega^+)\\
		=& \int \sum_{T_L\sigma^+=\omega^+}\int g(x)h(f_{\sigma^+}(x))dm(x)d\mu(\omega^+)\\
		=& \int \int g(x)h(f_{\omega^+}(x))dm(x)d\mu(\omega^+)= \int g \Uu h dm.
	\end{align*}
\end{proof}

\begin{remark}
	For conservative systems, $\Uu_{\wh F}=\Ll_{\wh F^{-1}}$. This property can become useful to study the action of an operator on its dual space, and for example $\Uu$ can admit a spectral gap on both $\Hh_{-s}(m)$ and  $\Hh_{s}(m)$ to deduce an $L^2(m)$-spectral gap.
\end{remark}

\subsection{Lasota-Yorke and Quasi-compactness}

\begin{theorem}[Lasota-Yorke inequality]\label{FinalLYthm}
Let $\mu$ be a Gibbs state in a proper form, and assume that $(\Sigma\times M,\wh f)$ admits effective expansion on average with a parameter $\chi>0$. Denote by $m$ the normalized Riemannian volume of $M$, where $\mathrm{dim}M=d$. Assume that $f_\omega=f_{(\omega_i)_{i\geq0}}$ is a family of $C^{1+\theta}$ diffeomorphisms of $M$. Then there exists $s\in (0,1)$ s.t. for all $\epsilon >0$ small enough, for all $g\in \Hh_{-s}(m)$, for all $n\geq 0$, 
\begin{align*}
\|\Ll_n g\|_{-s}\leq C_\epsilon D_f^n\|g\|_{-s-\epsilon}+ Ce^{-\frac{\chi}{2} n}\|g\|_{-s},
\end{align*}
where $D_f>0$ is a constant depending only on $f$, $C_\epsilon>0$ is a constant depending on $\epsilon>0$, and $C>0$ is a constant depending on $s$ (and $f$). 
\end{theorem}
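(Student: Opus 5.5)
The plan is to prove a one-block estimate for a fixed large $n_0$ and then iterate it. Write $\kappa:=d-2s$ and choose $s\in(0,1)$ so that $\kappa\in(d-2,d^*)$ and $\wh P(\psi-\kappa\wh D)<-\chi$; this is possible by Definition \ref{UEA2} together with Proposition \ref{CDDV}. Denote $K(x,y)=d(x,y)^{-\kappa}$.

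\textbf{Step 1: rewriting the norm.} By the duality $\int \Ll g\, h\, dm=\int g\,\Uu h\, dm$ and the definition of $\Ll_n$, the measure $(\Ll_{\omega^+,n}g)\,dm$ is the push-forward of $g\,dm$ under $f^n_{\omega^+}$, with the weights $e^{\psi^{(n)}}$ coming from the Gibbs process. Since $\langle\cdot,\cdot\rangle_{-s}$ is a positive form, Minkowski's inequality applied to $\Ll_n g=\int\Ll_{\omega^+,n}g\,d\mu$ reduces the problem to controlling the single-path quadratic forms
$$\|\Ll_{\omega^+,n}g\|_{-s}^2=\iint g(x)\overline{g(y)}\,K\big(f^n_{\omega}x,f^n_{\omega}y\big)\,dm(x)\,dm(y),$$
averaged over words with Gibbs weights. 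I would then average the squares via Cauchy--Schwarz.

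\textbf{Step 2: splitting the kernel.} Fix a scale $r>0$ and split $K=K\chi_{\{d<r\}}+K\chi_{\{d\ge r\}}$, using a smooth cutoff. The far part is a smooth kernel whose $C^k$ norms, after composing with $f^n_\omega\times f^n_\omega$, grow at most like $M_f^{kn}$. A smooth kernel defines a form bounded by $\|g\|_{-s-\epsilon}^2$ times its smoothness constant, so this part produces the term $C_\epsilon D_f^n\|g\|_{-s-\epsilon}$. The same mechanism absorbs every error term that is more regular than $K$ by a fractional power $d(x,y)^{\theta}$, such as the linearization errors coming from $f\in C^{1+\theta}$.

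\textbf{Step 3: the near-diagonal part for a fixed block.} Take a partition of unity at scale $r\ll M_f^{-n_0}$ and work in charts, replacing $f^{n_0}_\omega$ near each point by its derivative $A=d_{x_0}f^{n_0}_\omega$. On $\mathbb{R}^d$, the Fourier transform of $|v|^{-\kappa}$ is $c_\kappa|\xi|^{\kappa-d}$ with $c_\kappa>0$ (because $0<\kappa<d$), and a change of variables gives
$$\iint u(x)\overline{u(y)}\,|A(x-y)|^{-\kappa}\,dx\,dy=c_\kappa\,|\det A|^{-1}\int|\hat u(\xi)|^2\,|A^{-T}\xi|^{\kappa-d}\,d\xi .$$
Combining this with the Jacobian weight $e^{\wh\varphi^{(n_0)}}$ in the push-forward reduces the comparison to a direction-wise multiplier. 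The averaged multiplier is governed by $\int|d_xf^{n_0}_\omega\zeta|^{-\kappa}d\mu$, uniformly in $(x,\zeta)\in T^1M$, and by \eqref{forConv} this is at most $e^{n_0(\wh P(\psi-\kappa\wh D)+o(1))}\le e^{-\chi n_0}$ for $n_0$ large. Because the comparison is made at the level of Fourier multipliers, it is a genuine inequality between positive quadratic forms and not merely between kernels. This yields
$$\|\Ll_{n_0}g\|_{-s}^2\le C e^{-\chi n_0}\|g\|_{-s}^2+C_{\epsilon,n_0}\|g\|_{-s-\epsilon}^2.$$

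\textbf{Step 4: iteration.} Iterate the block inequality $n=kn_0+j$ times. For Bernoulli $\mu$, $\Ll_n=\Ll^n$ by Lemma \ref{AvgPwr}. In the Gibbs case I would use the quasi-Bernoulli property of $\mu$ to split $\Ll_{kn_0}$ into block products, at the cost of constants. The weak-norm terms accumulate geometrically, with rate at most $D_f^n$. Summing gives the stated inequality, and taking square roots turns $e^{-\chi n}$ into $e^{-\chi n/2}$.

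\textbf{Main obstacle.} I expect Step 3 to be the main obstacle. Pointwise kernel bounds cannot be used, since $g(x)\overline{g(y)}$ has no sign. Moreover, the chart localization and the linearization errors must be shown to contribute only $\|g\|_{-s-\epsilon}$ terms, uniformly in the chart. My first attempt would be the Fourier-multiplier argument above, with commutator estimates for the cutoffs. The danger is that the multiplier involves $A^{-T}$, so the quantity that actually appears is the dual direction $|A^{-T}\xi|$. This has to be matched with the potential $\wh D$ through the identity $|A^{-T}\xi|=|\det A|^{-1}\cdot$(a $(d-1)$-volume factor), or else by recasting the estimate via the Koopman side. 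I would check this matching first.
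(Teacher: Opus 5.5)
Your Steps 1--2 match the paper's reduction. The paper reaches the same form $\iint g(x)\overline{g(y)}\,Q_n(x,y)\,d(x,y)^{-(d-2s)}\,dm\,dm$ via the Deny inequality and $L_\psi^*\mu=\mu$. Your Minkowski/Cauchy--Schwarz route also works once you write $\Ll_n g=\int (f^n_{\sigma^+})_*(g\,dm)\,d\mu(\sigma^+)$. Note that $\Ll_{\omega^+,n}g$ is a weighted sum over the $T_L^n$-preimages of $\omega^+$, not a single push-forward as your display suggests.

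The genuine gap is in Step 3, exactly where you suspect it. Your own identity shows that, relative to $\|u\|_{-s}^2\asymp\int|\hat u|^2|\xi|^{-2s}d\xi$, the frozen form has multiplier $|\det A|^{-1}|A^{-T}\hat\xi|^{-2s}$. There is no further Jacobian weight to combine, since the change of variables in Step 1 already absorbed it. So you need smallness, uniformly in $(x,\hat\xi)$, of $\int|\det d_xf^{n_0}_\omega|^{-1}|(d_xf^{n_0}_\omega)^{-T}\hat\xi|^{-2s}d\mu$, which is the paper's $\wh G_n$ up to a constant. Effective expansion and \eqref{forConv} only bound $\int|d_xf^{n_0}_\omega\zeta|^{-(d-2s)}d\mu$, the paper's $G_n$.

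These are different objects. Since $|A^{-T}\hat\xi|=|\det A|^{-1}J_{d-1}(A|_{\hat\xi^\perp})$, where $J_{d-1}$ is the $(d-1)$-dimensional Jacobian, the first is a hyperplane-volume quantity with exponent $2s$ and weight $|\det A|^{2s-1}$. The second is a single-vector quantity with exponent $d-2s$. They are fiberwise Fourier transforms of one another (cf.\ Lemma \ref{BeautifulFourier}), but their suprema over directions are not comparable for free. For $d=2$, $\lambda>1$ and $A=\mathrm{diag}(\lambda^{-1},\lambda)$:
\begin{itemize}
\item $\sup_{|\zeta|=1}|A\zeta|^{-(2-2s)}=\lambda^{2-2s}$;
\item $\sup_{|\hat\xi|=1}|A^{-T}\hat\xi|^{-2s}=\lambda^{2s}$, larger by the unbounded factor $\lambda^{4s-2}$ when $s>\frac12$.
\end{itemize}
So the sentence ``the averaged multiplier is governed by $\int|d_xf^{n_0}_\omega\zeta|^{-\kappa}d\mu$'' carries the entire content of the theorem, and it is unproved. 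Without it Step 3 yields no contraction.

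There is also a smaller gap in Step 3. Freezing $d_xf$ at a patch centre creates errors of size $O(r^\theta)$ with the \emph{same} singularity as $K$. These cannot be pushed into $\|g\|_{-s-\epsilon}$ by your Step 2 mechanism; they need a genuine variable-coefficient estimate.

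The paper handles both points by treating $T_n^{\mathrm{near}}$ as a pseudodifferential operator of order $-2s$ with $C^\gamma$ symbol.
\begin{itemize}
\item Proposition \ref{FourierBounds} is meant to transfer the bound on $G_n$ to $\wh G_n$.
\item Lemma \ref{symbol_bound} turns this into $\sigma_0(T_n^{\mathrm{near}})(x,\eta)\le\wh C_{d,s}C_0e^{-\chi n}|\eta|^{-2s}$.
\item The sharp G\r{a}rding inequality for $C^\gamma$ symbols (Theorem \ref{garding_nonsmooth}) converts the symbol inequality into a quadratic-form inequality with a $\|g\|_{-s-\epsilon}^2$ error.
\end{itemize}
Scrutinize that transfer before relying on it. The Jensen step in the proof of Proposition \ref{FourierBounds} uses convexity of $\eta\mapsto|B\eta|^{-2s}$ ($B$ invertible) near a unit vector $\eta_0$. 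This fails: along $\eta_0+tv$ with $Bv\perp B\eta_0$, the function has a strict local maximum at $t=0$. For the matrix $A$ above with $\eta_0=e_2$, the average of $|A^{-T}\eta|^{-2s}$ over the ball of radius $\frac12$ about $\eta_0$ is smaller than its value $\lambda^{2s}$ at $\eta_0$ by a factor tending to infinity with $\lambda$. So your plan to check the matching first is right, but neither your proposal nor the paper's argument as written closes it.

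Finally, your Step 4 is unnecessary and, for Gibbs $\mu$, not justified. When $f_\omega$ depends on all of $(\omega_i)_{i\geq0}$, $\Ll_n$ is not a product of block operators. Quasi-Bernoulli comparisons cost a constant per block, and you would also need bounds for $\Ll_j$ on $\Hh_{-s-\epsilon}(m)$. The paper does not iterate. It splits at scale $D_f^{-n}$ for each $n$ and lets the weak-norm constants grow like $A_f^{n+1}$ and $\widetilde C^{n+1}$; this is exactly what the factor $C_\epsilon D_f^n$ in the statement absorbs.
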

\begin{proof}
Let $n\in\mathbb{N}$, and let $h\in \Hh_s(m)$ with $\|h\|_s=1$. Then by the Deny inequality (recall \eqref{Deny}),
\begin{align}\label{eqReducToNegS}
	&\Big|\int \Ll_n g \overline{h} dm\Big|\nonumber\\
	=&\Big|\int \sum_{T_L^n\sigma^+=\omega^+}e^{\psi^{(n)}(\sigma^+)}\int e^{\wh \varphi(\sigma^+, (f_{\sigma^+}^n)^{-1}(x))}g(( f_{\sigma^+}^n)^{-1}(x)) \overline{h}(x) dmd\mu\Big|\nonumber\\
	\leq& \int \sum_{T_L^n\sigma^+=\omega^+}e^{\psi^{(n)}(\sigma^+)} \Big|\int e^{\wh \varphi(\sigma^+, (f_{\sigma^+}^n)^{-1}(x))}g(( f_{\sigma^+}^n)^{-1}(x)) \overline{h}(x) dm \Big| d\mu\nonumber\\
	\leq& C_\mathrm{Deny}\int \sum_{T_L^n\sigma^+=\omega^+}e^{\psi^{(n)}(\sigma^+)}\|h\|_s\cdot \|e^{\wh \varphi(\sigma^+, (f_{\sigma^+}^n)^{-1}(x))}g(( f_{\sigma^+}^n)^{-1}(x))\|_{-s}d\mu\nonumber\\
	=& C_\mathrm{Deny}\int \|e^{\wh \varphi(\omega^+, (f_{\omega^+}^n)^{-1}(x))}g(( f_{\omega^+}^n)^{-1}(x))\|_{-s}d\mu\nonumber\\
	\leq& C_\mathrm{Deny} \sqrt{\int \|e^{\wh \varphi(\omega^+, (f_{\omega^+}^n)^{-1}(x))}g(( f_{\omega^+}^n)^{-1}(x))\|_{-s}^2d\mu}.
\end{align}

We let $\epsilon\in (0,1)$ sufficiently small so $d-2s-2\epsilon>d-2$, where $s\in (0,1)$ is chosen so $\wh P(\psi-(d-2s)\wh D)<-\chi$ (by the effective expansion on average). Then we continue to bound the $\|e^{\wh \varphi(\omega^+, (f_{\omega^+}^n)^{-1}(x))}g(( f_{\omega^+}^n)^{-1}(x))\|_{-s}^2 $ term in the integrand of \eqref{eqReducToNegS} for all $\omega^+$ by a change of variables:
\begin{align}\label{Sat28}
	&\|e^{\wh \varphi(\omega^+, (f_{\omega^+}^n)^{-1}(x))}g(( f_{\omega^+}^n)^{-1}(x))\|_{-s}^2\nonumber\\
	 =&\int \int \frac{e^{\wh \varphi(\omega^+, (f_{\omega^+}^n)^{-1}(x))}g(( f_{\omega^+}^n)^{-1}(x)) \overline{e^{\wh \varphi(\omega^+, (f_{\omega^+}^n)^{-1}(y))}g(( f_{\omega^+}^n)^{-1}(y))}}{d(x,y)^{d-2s}}dmdm\nonumber\\
	 =& \int \int \frac{g(x) \overline{g( y)}}{d(f_{\omega^+}^n (x), f_{\omega^+}^n (y))^{d-2s}}dmdm\nonumber\\
	 =& \int \int \frac{g(x) \overline{g( y)}}{d(x,y)^{d-2s}}\Big(\frac{d(x,y)}{d(f_{\omega^+}^n (x), f_{\omega^+}^n (y))}\Big) ^{d-2s} dmdm.
\end{align}

We now integrate \eqref{Sat28}, to plug back in \eqref{eqReducToNegS}, and get 
\begin{equation}\label{Sat29}
\Big|\int \Ll_n g \overline{h} dm\Big|\leq C_\mathrm{Deny}\sqrt{ \int \int g(x) \overline{g( y)}\frac{Q_n(x,y)}{d(x,y)^{d-2s}}dm dm},
\end{equation}
where $$Q_n(x,y):=\int \Big(\frac{d(x,y)}{d(f_{\omega^+}^n (x), f_{\omega^+}^n (y))}\Big) ^{d-2s} d\mu.$$

In order to estimate the r.h.s. of \eqref{Sat29}, we take two different approaches in two different case. For a general function $g\in L^2(m)$, the integrand may not be positive, and so a naive bound via point-wise bounds for $Q_n(x,y)$ fails; In that case we have to take the more subtle approach of studying the (pseudodifferential) kernel operator
$$(T_n g)(x):=\int g(y) \frac{Q_n(x,y)}{d(x,y)^{d-2s}}dm(y),$$
where
$$\int \int g(x) \overline{g( y)}\frac{Q_n(x,y)}{d(x,y)^{d-2s}}dm dm=\langle g ,T_ng \rangle_{L^2(m)}.$$
The specific form of the operator $T_n$ allows us to prove estimates that are not true in general from a mere point-wise integrand bound.

\medskip
\noindent For simplicity, we present first a naive proof in the case where $g\geq 0$:

Assume that $g\geq 0$. We break the domain of integration in \eqref{Sat28} into $[d(x,y)\geq \epsilon^2 M_f^{-2n}]$ and its complement, denoting the two terms by I and II respectively, where $M_f:=\max_{\omega,x}\{\|d_xf_\omega\|, \|d_xf_\omega^{-1}\|\}$. To bound I:
\begin{align}\label{toBoundI}
	\mathrm{I}\leq& (M_f^n)^{d-2s}(\frac{1}{\epsilon^2}M_f^{3n})^{2\epsilon} \int \int \frac{g(x) \overline{g( y)}}{d(x,y)^{d-2s-2\epsilon}}dmdm\nonumber\\
	= &(M_f^n)^{d-2s}(\frac{1}{\epsilon^2}M_f^{3n})^{2\epsilon} \int \int \frac{g(x) \overline{g( y)}}{d(x,y)^{d-2s-2\epsilon}}dmdm\nonumber\\
	=& (M_f^n)^{d-2s}(\frac{1}{\epsilon^2}M_f^{3n})^{2\epsilon} \|g\|^2_{-s-\epsilon}.
\end{align}
We continue to bound II, by specifying further that we choose $\epsilon>0$ sufficiently small so if $0<d(x,y)<\epsilon^2 M_f^{-2n}$ then $|d_xf_{\omega^+}^n\frac{\xi_{xy}}{| \xi_{xy} |}|^{-1}=e^{\pm \epsilon}\frac{d(x,y)}{d(f_{\omega^+}^n (x), f_{\omega^+}^n (y))}$ for all $n\geq 0$, where $\xi_{xy}:=\exp_x^{-1}(y)$:
\begin{align}\label{toBoundII}
\mathrm{II}\leq& \int \int_{[0<d(x,y)<\epsilon^2 M_f^{-2n}]} \frac{g(x)\overline{g(y)}}{d(x,y)^{d-2s}}\Big(\frac{d(x,y)}{d(f_{\omega^+}^n(x), f_{\omega^+}^n(y))}\Big)^{d-2s}dm dm\nonumber\\
\leq& e^\epsilon\int \int \frac{g(x)\overline{g(y)}}{d(x,y)^{d-2s}}\frac{1}{|d_xf_{\omega^+}^n\frac{\xi_{xy}}{| \xi_{xy} |}|^{d-2s}}dm dm.
\end{align}

Plugging back the estimates of \eqref{toBoundI} and \eqref{toBoundII} into \eqref{eqReducToNegS}, we get
\begin{align}\label{Sat34}
	&\Big|\int \Ll_n g \overline{h} dm\Big|\nonumber\\
	\leq&C_\mathrm{Deny}\sqrt{\frac{1}{\epsilon^2}M_f^{4nd} \|g\|^2_{-s-\epsilon}+ e^\epsilon\int \int \int \frac{g(x)\overline{g(y)}}{d(x,y)^{d-2s}}\frac{1}{|d_xf_{\omega^+}^n\frac{\xi_{xy}}{| \xi_{xy} |}|^{d-2s}}dm dmd\mu }\nonumber\\
	=& C_\mathrm{Deny}\sqrt{\frac{1}{\epsilon^2}M_f^{4nd} \|g\|^2_{-s-\epsilon}+ e^\epsilon\int  \int \frac{g(x)\overline{g(y)}}{d(x,y)^{d-2s}}\int \frac{1}{|d_xf_{\omega^+}^n\frac{\xi_{xy}}{| \xi_{xy} |}|^{d-2s}}d\mu dm dm }\nonumber\\
	\leq& C_\mathrm{Deny}\sqrt{\frac{1}{\epsilon^2}M_f^{4nd} \|g\|^2_{-s-\epsilon}+ e^\epsilon\|g\|_{-s}^2Ce^{-\chi n} },
\end{align}
where $C$ is given by the effective expansion on average convergence rate (for $s$). Since \eqref{Sat34} holds for all $h\in \Hh_s(m)$ with $\|h\|_s=1$, we get that 
\begin{align*}
	\|\Ll_n g\|_{-s}\leq& C_\mathrm{Deny}\sqrt{\frac{1}{\epsilon^2}M_f^{4nd} \|g\|^2_{-s-\epsilon}+ e^\epsilon\|g\|_{-s}^2Ce^{-\chi n} }\\
	\leq& C_\mathrm{Deny}\frac{1}{\epsilon}M_f^{2nd} \|g\|^2_{-s-\epsilon}+ e^\epsilon\|g\|_{-s}\sqrt Ce^{-\frac{\chi}{2} n} .
\end{align*}

This concludes the proof in the case where $g\geq 0$. We continue to treat the general case where $g\in L^2(m)$ is not necessarily non-negative (nor real). This part of the proof is disjoint from the rest of the objects in this paper, and appears in \textsection \ref{appPseudoDiffKerOps}.
\end{proof}

\begin{cor}[Averaged semi-Ruelle is quasi-compact]\label{QCasr}
For all $n\in\mathbb{N}$ large enough, 
$$\mathcal L_n=K_n^*+R_n^*,$$
where the essential radium of $R_n^*:\Hh_{-s}(m)\to\Hh_{-s}(m)$ is bounded by $e^{-\frac{\chi}{2}+\frac{\epsilon}{n}}$, $K_n^*$ is compact (finite-rank), and $K_n^*R_n^*=R_n^* K_n^*=0$.
\end{cor}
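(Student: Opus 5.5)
I would obtain Corollary \ref{QCasr} from the Lasota--Yorke inequality of Theorem \ref{FinalLYthm} through the Hennion (Ionescu-Tulcea--Marinescu) argument, with $\Hh_{-s}(m)$ as the strong space and $\|\cdot\|_{-s-\epsilon}$ as the weak norm.

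\textbf{Compact embedding.} I first check that the unit ball of $\Hh_{-s}(m)$ is relatively compact in the $\|\cdot\|_{-s-\epsilon}$ topology. The form $\langle\cdot,\cdot\rangle_{-s}$ is represented by the kernel operator $A_s$ with kernel $d(x,y)^{-(d-2s)}$. Its principal symbol is that of $(I-\Delta_{\mathfrak g})^{-s}$, so $A_s$ is a positive compact self-adjoint operator with eigenvalues of order $\lambda_j^{-s}$, where $\lambda_j$ are the Laplace eigenvalues. Likewise $A_{s+\epsilon}$ behaves like $(I-\Delta)^{-s-\epsilon}$. In a common spectral basis, up to the compact smoothing error from lower-order terms of the kernels, the inclusion is a diagonal operator with entries $\sim\lambda_j^{-\epsilon/2}\to0$, hence compact. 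This is the classical Rellich-type fact for fractional Sobolev scales, and I would take it as known.

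\textbf{Essential spectral radius.} Fix $n$ large. Theorem \ref{FinalLYthm} gives
$$\|\Ll_n g\|_{-s}\le C_\epsilon D_f^n\|g\|_{-s-\epsilon}+Ce^{-\frac{\chi}{2}n}\|g\|_{-s}.$$
$\Ll_n$ is bounded on $\Hh_{-s}(m)$ by the same inequality, since $\|\cdot\|_{-s-\epsilon}\lesssim\|\cdot\|_{-s}$. Hennion's theorem, applied to the single operator $P:=\Ll_n$, then bounds its essential spectral radius by the contraction factor.

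Hennion's theorem really needs the inequality for all powers $P^k$. In the i.i.d. setting this is automatic: $\Ll_n^k=\Ll_{nk}$ by Remark \ref{RmkAvgPwr}, and Theorem \ref{FinalLYthm} applies to $\Ll_{nk}$ with contraction factor $Ce^{-\frac{\chi}{2}nk}$. This yields
$$r_{ess}(\Ll_n)\le\lim_k\bigl(Ce^{-\frac{\chi}{2}nk}\bigr)^{1/k}=e^{-\frac{\chi}{2}n}.$$
Per unit time this is $e^{-\chi/2}$, and the constants $C$ and $e^{\epsilon}$ contribute the slack $e^{\epsilon/n}$ appearing in the statement.

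\textbf{Spectral decomposition.} Since $r_{ess}(\Ll_n)<r$ for any $r$ slightly larger than $e^{(-\frac{\chi}{2}+\frac{\epsilon}{n})n}$, the spectrum of $\Ll_n$ outside the disc of radius $r$ consists of finitely many eigenvalues of finite algebraic multiplicity. Let $\Pi$ be the Riesz projection onto their generalized eigenspaces, given by the contour integral over a circle of radius $r$. Set $K_n^*:=\Ll_n\Pi$ and $R_n^*:=\Ll_n(I-\Pi)$. Then $K_n^*$ has finite rank and $\Ll_n=K_n^*+R_n^*$. Because $\Pi$ commutes with $\Ll_n$ and $\Pi(I-\Pi)=0$, we get $K_n^*R_n^*=R_n^*K_n^*=0$. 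The spectrum of $R_n^*$ lies in the disc of radius $r$, which gives the stated bound.

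\textbf{Main obstacle.} The delicate step is the compact embedding $\Hh_{-s}\hookrightarrow\Hh_{-s-\epsilon}$ for these kernel-defined norms. It requires comparing the Riesz-type kernels with genuine fractional Laplacian powers on $M$. I would handle it with the pseudodifferential description of $A_s$ mentioned in the remarks, which is the same kind of analysis as in \textsection\ref{appPseudoDiffKerOps}.

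Outside the i.i.d. case, iterating the inequality for powers of $\Ll_n$ is a second difficulty. There I would apply Hennion only to the fixed operator $\Ll_n$, using its one-step inequality together with the compactness above, which is why the statement is phrased for $n$ large.
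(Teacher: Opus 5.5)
Your proposal is correct and follows the paper's route: the Lasota--Yorke inequality \eqref{forQC} from Theorem \ref{FinalLYthm}, the compact (Rellich--Kondrachov) embedding $\Hh_{-s}(m)\hookrightarrow\Hh_{-s-\epsilon}(m)$, and Hennion's theorem. The finite-rank part and the remainder then come from the Riesz spectral decomposition. You also point out that Hennion formally needs the inequality for every power of the fixed operator $\Ll_n$, which is automatic in the i.i.d. case via $\Ll_n^k=\Ll_{nk}$; the paper's proof passes over this. The one-step fallback you propose is valid and is exactly what the paper does in Corollary \ref{SeqQuasCom}: finite-rank orthogonal projections give a finite-rank $K$ with $\|\Ll_n-K\|_{\Hh_{-s}(m)\to\Hh_{-s}(m)}\le Ce^{-\frac{\chi}{2}n}+\eta$ for any $\eta>0$. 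This yields the essential-radius bound for $n$ large once the contour radius is chosen between $Ce^{-\frac{\chi}{2}n}$ and the stated bound.
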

\begin{proof}
By Theorem \ref{FinalLYthm}, there exists $B>1$ and $s>0$ s.t. for all $\epsilon>0$ small enough (with $d-2s-2\epsilon >d-2$), for all $g\in L^2(m)$, for all $n\geq 0$,
\begin{equation}\label{forQC}
    \|\mathcal{L}_ng\|_{-s}\leq B^{n+1}\|g\|_{-s-\epsilon}+e^\epsilon e^{-\frac{\chi}{2}n}\|g\|_{-s}.
\end{equation}

By the Rellich-Kondrachov Theorem, as $s,s+\epsilon\in (0,1)$, the embedding $i: \Hh_{-s}(m) \hookrightarrow \Hh_{-s-\epsilon}(m) $ is compact. By \eqref{forQC}, Hennion's theorem implies that $\mathcal L_n:\Hh_{-s}(m)\to \Hh_{-s}(m) $ is quasi-compact and can be written as 
$$\mathcal L_n=K_n^*+R_n^*,$$
where the essential radium of $R_n^*:\Hh_{-s}(m)\to\Hh_{-s}(m)$ is bounded by $e^{-\frac{\chi}{2}+\frac{\epsilon}{n}}$, $K_n^*$ is compact (finite-rank), and $K_n^*R_n^*=R_n^* K_n^*=0$.
\end{proof}

\begin{lemma}[Norm bound] For all $n\geq 0$,
	$$\|\mathcal L_n\|_{\Hh_{-s}(m)\to \Hh_{-s}(m)}\geq\frac{1}{\|1\|_{-s}}=\frac{1}{\sqrt{\int \int \frac{1}{d(x,y)^{d-2s}}dm(x)dm(y)}}.$$
	Therefore, in particular, the spectral radius of the sequence $\L_n$ is at least $1$. 
\end{lemma}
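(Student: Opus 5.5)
The plan is to test $\mathcal L_n$ against the constant function $1\in L^2(m)\subseteq \Hh_{-s}(m)$ and use duality with $\Hh_s(m)$. The key identity is that $m$ is preserved in average by $\mathcal L_n$: by Corollary \ref{smoothMes} (in the forward form of Remark \ref{forwardsDyns}) and Fubini, $\int \mathcal L_n g\, dm = \int\int \L_{\wh\phi}^n \wh g\, d\mu\, dm = \int \wh g\, d\wh m = \int g\, dm$ for every $g\in L^2(m)$. Alternatively, this follows from iterating the duality $\int \mathcal L g\, h\, dm=\int g\, \Uu h\, dm$ with $h=1$, since $\Uu_n 1=1$. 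In particular $\int \mathcal L_n 1\, dm = 1$.

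Next I pair $\mathcal L_n 1$ against the test function $1\in \Hh_s(m)$. Note that $\|1\|_s=1$, because the difference term vanishes and $m$ is a probability measure. The Deny inequality \eqref{Deny} then gives
\[
1=\Big|\int \mathcal L_n 1\cdot 1\, dm\Big|\le C_{\mathrm{Deny}}\|1\|_s\|\mathcal L_n 1\|_{-s}= C_{\mathrm{Deny}}\|\mathcal L_n 1\|_{-s}.
\]
It follows that
\[
\|\mathcal L_n\|_{\Hh_{-s}\to\Hh_{-s}}\ge \frac{\|\mathcal L_n 1\|_{-s}}{\|1\|_{-s}}\ge \frac{1}{C_{\mathrm{Deny}}\|1\|_{-s}},
\]
and $\|1\|_{-s}^2=\int\int d(x,y)^{-(d-2s)}dm\,dm$ directly from \eqref{eqNice}. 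The main obstacle is the constant $C_{\mathrm{Deny}}$, since the statement has no such factor. To remove it, I would normalize the pairing so that $C_{\mathrm{Deny}}=1$ for the pair $(1,\cdot)$. Concretely, I would use Cauchy--Schwarz for the positive form $\langle\cdot,\cdot\rangle_{-s}$ directly: $\langle \mathcal L_n 1, 1\rangle_{-s}\le \|\mathcal L_n 1\|_{-s}\|1\|_{-s}$. I would also note that $\langle g,1\rangle_{-s}=\int g(x)\,A1(x)\,dm$, where $A1(x)=\int d(x,y)^{-(d-2s)}dm(y)$. If $A1$ is constant (homogeneous $M$), this equals $\|1\|_{-s}^2\int g\, dm$. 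Plugging in $g=\mathcal L_n1$ gives $\|1\|_{-s}^2\le\|\mathcal L_n1\|_{-s}\|1\|_{-s}$, and hence $\|\mathcal L_n\|\ge \|\mathcal L_n 1\|_{-s}/\|1\|_{-s}\ge 1\cdot\|1\|_{-s}/\|1\|_{-s}$. In that case the bound is in fact $\ge 1$, which in particular implies the stated bound whenever $\|1\|_{-s}\ge1$. In general I would take the Deny form with the constant absorbed as the paper's convention.

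For the spectral radius claim, I apply the same argument to every $n$. Using Remark \ref{RmkAvgPwr} in the i.i.d. case, $\mathcal L_n=\mathcal L^n$, so $\|\mathcal L^n\|^{1/n}\ge (c)^{1/n}\to1$ for a fixed constant $c>0$ independent of $n$. Gelfand's formula then gives spectral radius at least $1$. In the non-i.i.d. case, the uniform-in-$n$ lower bound on $\|\mathcal L_n\|$ gives $\liminf_n\|\mathcal L_n\|^{1/n}\ge1$, which is the intended meaning of the spectral radius of the sequence.
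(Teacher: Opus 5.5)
Your lower bound is correct as far as it goes. Mass conservation $\int\mathcal L_n1\,dm=1$ plus the Deny inequality gives the $n$-uniform bound $\|\mathcal L_n\|_{\mathfrak H_{-s}\to\mathfrak H_{-s}}\ge (C_{\mathrm{Deny}}\|1\|_{-s})^{-1}$. That is all the spectral-radius conclusion needs, and your Gelfand/$\liminf$ argument there is fine.

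One small correction: your first justification of mass conservation (Corollary \ref{smoothMes} plus Fubini) is fine. The alternative, ``iterating'' $\int\mathcal L g\,h\,dm=\int g\,\mathcal U h\,dm$, is not valid outside the i.i.d.\ case, since there $\mathcal L_n\neq\mathcal L^n$. The $n$-step duality with $\mathcal U_n$ does hold by the same computation.

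What your proposal does not prove is the displayed constant. The closing sentence about absorbing $C_{\mathrm{Deny}}$ ``as the paper's convention'' is not an argument. The missing idea is to use positivity instead of duality. Since $\widehat{\mathcal L}_{\widehat\phi}$ is a positive operator, $\mathcal L_n1\ge 0$, and the kernel satisfies $d(x,y)^{-(d-2s)}\ge\mathrm{diam}(M)^{-(d-2s)}$. Hence
\[
\|\mathcal L_n1\|_{-s}^2=\int\!\!\int\frac{\mathcal L_n1(x)\,\mathcal L_n1(y)}{d(x,y)^{d-2s}}\,dm(x)\,dm(y)\ \ge\ \mathrm{diam}(M)^{-(d-2s)}\Big(\int\mathcal L_n1\,dm\Big)^2=\mathrm{diam}(M)^{-(d-2s)}.
\]
If the metric is normalized so that $\mathrm{diam}(M)\le 1$, this gives $\|\mathcal L_n1\|_{-s}\ge1$. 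It follows that $\|\mathcal L_n\|\ge\|\mathcal L_n1\|_{-s}/\|1\|_{-s}\ge 1/\|1\|_{-s}$ exactly, with no Deny constant and no homogeneity assumption.

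Without some such normalization the displayed constant is actually false, so no argument can remove it. At $n=0$ we have $\mathcal L_0=\mathrm{Id}$, and the claim reduces to $\|1\|_{-s}\ge 1$. This fails on a flat torus of side length $L$ for $L$ large, where $\|1\|_{-s}^2$ scales like $L^{-(d-2s)}$. Your caveat ``whenever $\|1\|_{-s}\ge1$'' is picking up exactly this.

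Your route also differs from the paper's. The paper uses the same fiberwise identity $\int(\widehat{\mathcal L}^n_{\widehat\phi}1)(\omega^+,x)\,dm(x)=(L_\psi^n1)(\omega^+)=1$. It then bounds $\|\mathcal L_n1\|_{-s}^2$ from below by $\int|\mathcal L_n1|^2dm$ and applies Jensen. That first step is not a valid inequality. The $\mathfrak H_{-s}$-norm is dominated by the $L^2$-norm (first item of the remark after \eqref{eqNice}), not the other way around: mean-zero, highly oscillating $g$ with $\|g\|_{L^2}=1$ have $\|g\|_{-s}\to 0$. The paper gives no special reason for the reverse inequality to hold for $\mathcal L_n1$. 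The written chain also puts $\|1\|_{-s}$ rather than $\|1\|_{-s}^2$ in the denominator, and asserts $\|1\|_{-s}=1$, contradicting the displayed formula.

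Your duality argument, or the pointwise kernel bound above, is a correct replacement for that step. In the homogeneous case your Cauchy--Schwarz variant gives $\|\mathcal L_n\|\ge 1$, which is sharp at $n=0$. It implies the displayed bound whenever the latter is true at $n=0$.
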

\begin{proof} For $\mu$-a.e. $\omega^+$, 
\begin{align}\label{forNormBoundBelow}
\int (\L_{\wh\phi}^n 1)_{\omega^+}dm=&L_\psi^n\int e^{\wh\varphi^{(n)}(\omega^+,x)}dm(x)= (L_\psi^n1)(\omega^+) =1.
\end{align}
Then, since $\| 1\|_{\Hh_{-s}(m)}=1$, by Jensen's inequality, and by \eqref{forNormBoundBelow},
	\begin{align*}
		\|\mathcal L_n\|_{\Hh_{-s}(m)\to \Hh_{-s}(m)}^2\geq& \frac{\|\mathcal L_n 1\|_{-s}^2}{\|1\|_{-s}}\geq \frac{1}{\|1\|_{-s}}\int\Big|\int (\L_{\wh\phi}^n 1)_{\omega^+}d\mu \Big|^2dm\\
		\geq & \frac{1}{\|1\|_{-s}}\Big(\int\int (\L_{\wh\phi}^n 1)_{\omega^+}d\mu dm \Big)^2\\
		=& \frac{1}{\|1\|_{-s}}\Big(\int\int (\L_{\wh\phi}^n 1)_{\omega^+}dm d\mu \Big)^2= \frac{1}{\|1\|_{-s}}.
	\end{align*}
\end{proof}

\begin{cor}[Sequential quasi-compactness]\label{SeqQuasCom}
For all $n\geq0 $, there exists a decomposition $\mathcal{L}_n = K_n^* + R_n^*$ where $K_n^* : \Hh_{-s}(m) \to \Hh_{-s}(m)$ is a finite-rank (and thus compact) operator, and $R_n^* :\Hh_{-s}(m) \to\Hh_{-s}(m)$ satisfies 
$$\|R_n^*\|_{\Hh_{-s}(m) \to\Hh_{-s}(m)} \leq 2B e^{-\frac{\chi}{2} n}.$$
\end{cor}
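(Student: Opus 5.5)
The plan is to approximate the weak-norm term in the Lasota--Yorke inequality of Theorem \ref{FinalLYthm} by a finite-rank operator, and to absorb the approximation error into the strong-norm term. Concretely, \eqref{forQC} gives for all $g\in\Hh_{-s}(m)$ and $n\geq0$
$$\|\mathcal L_n g\|_{-s}\leq B^{n+1}\|g\|_{-s-\epsilon}+e^{\epsilon}e^{-\frac{\chi}{2}n}\|g\|_{-s},$$
with $B$ independent of $n$ and $\epsilon$ fixed. The idea is to replace the compact embedding $i:\Hh_{-s}(m)\hookrightarrow\Hh_{-s-\epsilon}(m)$ (Rellich--Kondrachov) by a finite-rank projection. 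This should make the weak term smaller than $B e^{-\frac{\chi}{2}n}\|g\|_{-s}$.

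\textbf{Step 1.} Since $\Hh_{-s}(m)$ is a Hilbert space, I would take an orthonormal basis of eigenfunctions of the compact self-adjoint operator $i^*i$. Equivalently, one can use the spectral decomposition of the kernel operator $A$ associated with $\langle\cdot,\cdot\rangle_{-s}$, whose eigenfunctions are the Laplace--Beltrami-type eigenfunctions. For each $N$, let $\Pi_N$ be the orthogonal projection onto the first $N$ of these eigenfunctions. Compactness of $i$ gives $\|i(I-\Pi_N)\|_{\Hh_{-s}\to\Hh_{-s-\epsilon}}\to0$ as $N\to\infty$. For each $n$, I would choose $N=N(n)$ with $\|i(I-\Pi_N)\|\leq B^{-n}e^{-\frac{\chi}{2}n}$.

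\textbf{Step 2.} Define $K_n^*:=\mathcal L_n\Pi_{N(n)}$ and $R_n^*:=\mathcal L_n(I-\Pi_{N(n)})$. Then $K_n^*$ has finite rank. Applying \eqref{forQC} to $(I-\Pi_N)g$ gives
$$\|R_n^*g\|_{-s}\leq B^{n+1}B^{-n}e^{-\frac{\chi}{2}n}\|g\|_{-s}+e^{\epsilon}e^{-\frac{\chi}{2}n}\|g\|_{-s}\leq 2Be^{-\frac{\chi}{2}n}\|g\|_{-s},$$
using $\|(I-\Pi_N)g\|_{-s}\leq\|g\|_{-s}$ and $e^{\epsilon}\leq B$, which holds after enlarging $B$ if needed. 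The case $n=0$ is trivial: take $K_0^*=0$ and $R_0^*=I$, so $\|R_0^*\|=1\leq 2B$.

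\textbf{Main obstacle.} The hard part is ensuring that \eqref{forQC} is available on all of $\Hh_{-s}(m)$ with constants uniform in $n$, including the general non-positive $g$ case. The theorem defers that case to \textsection\ref{appPseudoDiffKerOps}, and I would simply invoke it. A secondary point is that $\mathcal L_n$ extends boundedly from $L^2(m)$ to $\Hh_{-s}(m)$. This follows from \eqref{forQC} together with $\|\cdot\|_{-s-\epsilon}\lesssim\|\cdot\|_{-s}$, so density of $L^2$ allows the extension.

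Note that the statement does not require $K_n^*R_n^*=0$. So, unlike Corollary \ref{QCasr}, no Riesz-projection argument is needed, and the truncation above suffices.
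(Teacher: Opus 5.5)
Your proposal is correct and is essentially the paper's own proof. The paper also approximates the compact embedding $\Hh_{-s}(m)\hookrightarrow\Hh_{-s-\epsilon}(m)$ by finite-rank orthogonal projections $P_{N_n}$ with error at most $B^{-n}e^{-\frac{\chi}{2}n}$, sets $K_n^*:=\mathcal L_nP_{N_n}$ and $R_n^*:=\mathcal L_n(I-P_{N_n})$, and applies \eqref{forQC} to $(I-P_{N_n})g$. Your explicit choice of spectral projections of $i^*i$, and the absorption $e^{\epsilon}\leq B$, make precise two points the paper leaves implicit.
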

\begin{proof}
By the Rellich-Kondrachov Theorem, as $s,s+\epsilon \in (0,1)$, the embedding $i: \Hh_{-s}(m) \hookrightarrow \Hh_{-s-\epsilon}(m) $ is compact.

Since $\Hh_{-s}(m)$ is a Hilbert space, the compact embedding $i$ can be approximated in the operator norm by finite-rank operators. Specifically, we can choose a sequence of orthogonal projections $\{P_N\}_{N=1}^\infty$ onto $N$-dimensional subspaces of $\Hh_{-s}(m)$ such that the approximation error decays to zero:

\begin{equation}\label{approxAbove}
    \lim_{N \to \infty} \|I - P_N\|_{\Hh_{-s}(m) \to \Hh_{-s-\epsilon}(m)} = 0.
\end{equation}
For each fixed $n \geq 0$, let $\epsilon_n = B^{-n} e^{-\frac{\chi}{2} n}$. By \eqref{approxAbove} there exists a sufficiently large integer $N_n$ s.t. for all $g \in \Hh_{-s}(m)$,
$$\|(I - P_{N_n})g\|_{-s-\epsilon} \leq \epsilon_n \|g\|_{-s} = B^{-n} e^{-\frac{\chi}{2} n} \|g\|_{-s}.$$
We now define the components of our operator:
\begin{align*}
    K_n^* &:= \mathcal{L}_n P_{N_n}, \\
    R_n^* &:= \mathcal{L}_n (I - P_{N_n}).
\end{align*}
Because $P_{N_n}$ projects onto a finite-dimensional subspace of rank $N_n$, $K_n^*$ is a finite-rank operator.

We now bound the remainder $R_n$. Let $g \in\Hh_{-s}(m)$. By \eqref{forQC},
\begin{align}\label{intoTheIneq}
    \|R_n^* g\|_{-s}= &\|\mathcal{L}_n (I - P_{N_n})g\|_{-s} \\
    \leq& B^{n+1} \|(I - P_{N_n})g\|_{-s-\epsilon} + Be^{-\frac{\chi \kappa }{2}n} \|(I - P_{N_n})g\|_{-s}.\nonumber
\end{align}

Using our choice of $N_n$ to bound the $\Hh_{-s-\epsilon}$-norm, we have:
$$B^{n+1} \|(I - P_{N_n})g\|_{-s-\epsilon}\leq B^{n+1} \left( B^{-n} e^{-\frac{\chi}{2} n} \|g\|_{-s} \right) = Be^{-\frac{\chi }{2} n} \|g\|_{-s}.$$
Furthermore, because $I - P_{N_n}$ is an orthogonal projection on the Hilbert space $\Hh_{-s}(m)$, its operator norm is bounded by $1$, meaning $\|(I - P_{N_n})g\|_{-s} \leq \|g\|_{-s}$. Substituting these into \eqref{intoTheIneq} yields
$$\|R_n g\|_{-s} \leq Be^{-\frac{\chi}{2} n} \|g\|_{-s} + Be^{-\frac{\chi}{2} n} \|g\|_{-s} = 2  Be^{-\frac{\chi}{2} n} \|g\|_{-s}.$$
Taking the supremum over all $g \in \Hh_{-s}(m)$ with $\|g\|_{-s} = 1$ yields the operator norm bound:
$$\|R_n\|_{\Hh_{-s}(m) \to \Hh_{-s}(m)} \leq 2B e^{-\frac{\chi}{2} n}.$$
\end{proof}

\begin{remark}
	It is clear that the sequential quasi-compactness of Corollary \ref{SeqQuasCom} becomes much more useful when the rank of the compact operators $K_n$ is uniformly bounded. Indeed, this property is most useful when for all $n$, the rank of $K_n$ equals $1$% (see for example Theorem \ref{ExpConv} below)
	. To obtain this property, one has to establish {\em ergodicity} of the random dynamics, which is an independent property, which often requires separate assumptions, and is proved geometrically rather than via spectral methods. Combining ergodicity together with the sequential quasi-compactness of Corollary \ref{SeqQuasCom} yields many powerful statistical properties for the random dynamics.
\end{remark}

\section{Applications}\label{appsSect}
In \cite{TDFOE_I} we prove a POE variational principle, under the assumption of $\wh\phi$-holonomies. In \textsection \ref{VarPrinceSubSect} we provide another proof for the POE variational principle, using conformal measures and harmonic functions%, under the stronger assumption of mild unstable holonomies (see \textsection \ref{HolsHierarcs})
. The advantage of the proof of the variational principle in this paper is that using the methods of this paper and the semi-Ruelle operator, we are able to show additional properties for the measures which obtain the maximum in the POE variational principle. For example, positive entropy out of equilibrium, as in \textsection \ref{appliA} below, and the semi-Gibbs estimates as in Proposition \ref{newSemiGibbsProp}%, and the equi-conformality property as in Theorem \ref{equiConf}
. We also characterize cases when our system admits a smooth random measure in \textsection \ref{appliB}. In addition, we study more subtle behavior (such as a spectral gap for the averaged semi-Ruelle operators) in \textsection \ref{appliC2}.

\subsection{Positive Entropy Out of Equilibrium for POE Maximizers}\label{appliA}

\begin{theorem}[Positive entropy out of equilibrium]\label{posEOE}
Assume that $(\Sigma\times X,\wh F)$ admits $\wh\phi$-holonomies, and that $F_\omega=F_{(\omega_i)_{i\leq0}}$%, and that $\wh\phi\in \mathrm{H\ddot{o}l}(\Sigma^-\times X)$
. Let $\wh\nu$ be an measure as in Lemma \ref{InvExists}. Then, if $(\Sigma,T)$ admits positive topological entropy, then
$\wh\nu$ admits positive entropy out of equilibrium.
%Then, POE maximizers admit positive entropy out of equilibrium.
\end{theorem}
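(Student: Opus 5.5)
Since $\wh\nu=\wh\rho\cdot\wh p$ with $\log\wh\rho\in L^\infty(\wh p)$ and $\wh p$ $\wh\phi$-conformal, the natural route is through the semi-Gibbs estimates. I would show that the conditional measures of $\wh\nu$ on $V^u_{\leq-1}(\omega^-)\times\{t\}$ decay exponentially on cylinders at a rate bounded below by a positive constant. After passing to an ergodic component, Lemma \ref{strongUEnt} then identifies this rate with $\wh h_{\wh\nu}(\wh F)$. The decay rate should be compared against the periodic-orbit growth rate $h_\mathrm{top}(T)>0$.

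\textbf{Step 1: conditional measures.} By Lemma \ref{ConfOfErg} and the remark in the proof of Theorem \ref{POEupperBound}, I may assume $\wh\nu$ is ergodic. The conditional measures of $\wh\nu$ agree with those of $\wh p^\pm$ up to the factor $e^{\pm C_{\wh\rho}}$, as in \eqref{newEntLocForm}. Proposition \ref{newSemiGibbsProp} then gives
$$\wh\nu_{(\omega^-,t)}([\ul w]\times X)=e^{\pm C_{\wh\rho}}\,\frac{e^{\wt\phi^{(-k)}(\omega^-\cdot\ul w,\cdot)}}{(\L^k_{\wt\phi}1)(\omega^-,t)},\qquad |\ul w|=k,$$
with $\wt\phi=\wh\phi-\wh P(\wh\phi)$.

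\textbf{Step 2: lower bound on the denominator.} By Theorem \ref{harmExist}, $(\L^k_{\wt\phi}1)(\omega^-,t)=e^{\pm 2C'}$ $\wh p$-a.e. So each cylinder of length $k$ has conditional measure
$$\leq e^{C''}\exp\Big(\sup_{|\ul w|=k}\wt\phi^{(-k)}\Big).$$
By Lemma \ref{strongUEnt}, $\wh h_{\wh\nu}(\wh F)\geq \wh P(\wh\phi)-\limsup_k\frac1k\sup\wh\phi^{(k)}$.

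\textbf{Step 3: the pressure gap (main obstacle).} It remains to show $\wh P(\wh\phi)>\limsup_k\frac1k\sup\wh\phi^{(k)}$, which is the step I expect to be hardest. I would use $\wh\phi$-holonomies to compare sums over cylinders with a fixed fiber point $t$ (Lemma \ref{specRad}). Topological mixing then lets me concatenate: for a word $\ul u$ almost realizing the supremum of $\wh\phi^{(n)}$ on a fiber orbit, choose a length $N$ and two distinct admissible loops of length $N$ at its terminal symbol. This is possible because $h_\mathrm{top}(T)>0$ and the shift is mixing, so the number of loops grows exponentially. Following the fiber orbit, I would build $2^j$ words of length $j(n+N)$ whose Birkhoff sums are each at least $j(\sup\wh\phi^{(n)}-N\|\wh\phi\|_\infty-C)$. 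The difficulty is that the fiber point moves, so the near-maximizer must be chosen uniformly in $t$. I would handle this by taking $\sup_t$ in Definition \ref{DefOfPOE} at every stage and re-maximizing after each block, with the holonomy constant $C$ absorbing the cylinder dependence.

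\textbf{Conclusion.} These $2^j$ words give
$$\wh P(\wh\phi)\geq \frac{n\cdot s_n+\log2-N\|\wh\phi\|_\infty-C}{n+N},\qquad s_n:=\tfrac1n\sup\wh\phi^{(n)}.$$
This alone is not obviously strictly larger than $s_n$, so a cruder estimate is needed: use $r$ loops of length $N$ with $\frac1N\log r\to h_\mathrm{top}(T)$. This gives $\wh P\geq \lim s_n+\delta$ for some $\delta>0$ depending on $h_\mathrm{top}(T)$ and the oscillation of $\wh\phi$. If that fails, the fallback is to argue by contradiction: zero entropy would force $\wh\nu$-typical conditionals to concentrate on single words, making $\wh P(\wh\phi)=\int\wh\phi\,d\wh\nu$ by Theorem \ref{POEupperBound}. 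That would contradict Proposition \ref{POElowerBound} applied to a measure built from $\wh\nu$ by mixing in a positive-entropy measure of $T$.
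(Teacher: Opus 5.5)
\textbf{The gap is Step 3.} Steps 1 and 2 are sound. They combine Proposition \ref{newSemiGibbsProp} with the bound $\L^k_{\wt\phi}1=e^{\pm 2C'}$ from Theorem \ref{harmExist}, and Lemma \ref{strongUEnt} turns them into $\wh h_{\wh\nu}(\wh F)\geq \wh P(\wh\phi)-\beta$, where $\beta:=\lim_k\frac1k\sup\wh\phi^{(k)}=\max_{\wh\eta}\int\wh\phi\,d\wh\eta$.

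By replacing the $\wh\nu$-typical cylinder with the global supremum, you reduce the theorem to $\wh P(\wh\phi)>\beta$. That inequality is strictly stronger than the claim: it would give positive entropy out of equilibrium to \emph{every} POE maximizer. Moreover, your argument for it never uses $\wh\rho$, which is the hypothesis the theorem actually rests on.

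Your concatenation does not prove it. After a connecting loop the fiber point is some $t''$ that depends on the loop. The quantity $\sup_{\ul w}\wh\phi^{(n)}(\ul w,t'')$ can be far below $\sup_{(\omega,s)}\wh\phi^{(n)}(\omega,s)$. The $\wh\phi$-holonomies only control dependence on the symbols at a \emph{fixed} fiber point, so they cannot absorb this loss, and re-maximizing at $t''$ only yields the fiber-local supremum. In the purely symbolic case the gap comes from bounded distortion: a bounded detour can be inserted at any of $\sim n$ positions at bounded cost. Along the fiber there is no such control. For instance, a fiber map with a neutral fixed point at the maximum of $\wh\phi$ makes the cost of one deviation grow without bound, which is Hofbauer's phase-transition mechanism. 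So there is no reason for the gap to hold.

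Separately from the fiber issue, your count with $r\approx e^{Nh_\mathrm{top}(T)}$ loops weighs a gain $\log r$ against a loss of order $N(\|\wh\phi\|_\infty+s_n)$. It gives nothing unless $h_\mathrm{top}(T)$ dominates the size of $\wh\phi$.

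The fallback fails too. Zero entropy gives $\wh P(\wh\phi)=\int\wh\phi\,d\wh\nu$ (not concentration on single words), and this is not contradictory by itself. Since metric pressure out of equilibrium is affine, any convex combination of $\wh\nu$ with another invariant measure has metric POE at most $\wh P(\wh\phi)$. So Proposition \ref{POElowerBound} cannot be violated that way.

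\textbf{The missing idea.} You need to use the semi-Gibbs estimate along $\wh\nu$-typical orbits, and compare $\wh\phi$ with the zero potential rather than with its maximum. This is the paper's route:
\begin{enumerate}
\item Normalize $\wh P(\wh\phi)=0$ and take $\wh\nu$ ergodic (Lemma \ref{ConfOfErg}).
\item If $\wh h_{\wh\nu}(\wh F)=0$, then $\int\wh\phi\,d\wh\nu=0$, and for $\wh\nu$-a.e. $(\omega,t)$ one has $\wh\nu_{(\omega^-,t)}([\omega]_n\times X)=e^{\pm C}e^{\wh\phi^{(n)}(\omega,t)}$.
\item The left side is at most $1$, so the Birkhoff sums $\wh\phi^{(n)}$ are bounded above almost everywhere. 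By the measurable Gottschalk--Hedlund dichotomy, $\wh\phi$ is then cohomologous to $0$ via a bounded coboundary.
\item Hence $0=\wh P(\wh\phi)=\wh P(0)=\wh h(\wh F)=h_\mathrm{top}(T)$ by Corollary \ref{rmkEOE}, contradicting $h_\mathrm{top}(T)>0$.
\end{enumerate}
Positive topological entropy enters only through $\wh P(0)=h_\mathrm{top}(T)$. No gap between $\wh P(\wh\phi)$ and $\max\int\wh\phi$ is ever needed.
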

In the proof of Theorem \ref{POEupperBound} we showed that $\wh\nu$ is indeed a maximizer of the POE variational principle.
\begin{proof}
Assume w.l.o.g. that $\wh P(\wh\phi)=0$. Then 
    $$\wh h_{\wh\nu}(\wh F)+\int \wh \phi d\wh \nu=0.$$

By Lemma \ref{ConfOfErg} we may assume further w.l.o.g. that $\wh \nu$ is ergodic ($\wh p$ being ergodic implies that $\wh \rho\cdot \wh p$ is ergodic), as the metric entropy out of equilibrium is affine w.r.t. the ergodic decomposition.

If $\wh h_{\wh\nu}(\wh F)=0$, then we get that 
$$\int \wh \phi d\wh \nu= \int -\wh \phi d\wh \nu =0.$$
By the semi-Gibbs estimates (Proposition \ref{newSemiGibbsProp}), we have that for $\widehat\nu$-a.e. $(\omega,t)$,
\begin{equation}\label{forPosEOEeq}
	\nu_{(\omega^-,t)}([\omega]_n\times X)=e^{\pm C_{\wh\rho}} e^{\wh \phi^{(n)}(\omega,t)}.
\end{equation}

However, by the measurable version of the Gottschalk-Hedlund theorem (see \cite[Corollary~2]{MeasurableGH}), either (a) $\wh\phi$ is cohomologous to $0$ via  a bounded coboundary function, or (b) for on a positive $\wh\nu$-measure,  $$\liminf_{n\to\infty} -\wh\phi^{(n)}=-\infty,\text{ that is, } \limsup_{n\to\infty} \wh\phi^{(n)}=\infty.$$
In case (a), we get that $0=\wh P(\wh \phi)=\wh h (\wh F)=h_\mathrm{top}(T)$ by Corollary \ref{rmkEOE}, which is a contradiction. In case (b), we get that from \eqref{forPosEOEeq} that 
for a positive-measure set of points, 
\begin{align*}
	1\geq& \limsup\nu_{(\omega^-,t)}([\omega]_n\times X) \\
	\geq &  \limsup e^{- C_{\wh\rho}} e^{\wh \phi^{(n)}(\omega,t)}= e^{- C_{\wh\rho}}  e^{\limsup\wh \phi^{(n)}(\omega,t)} =\infty,
\end{align*}
a contradiction! Thus $\wh h_{\wh\nu}(\wh F)>0$.
\end{proof}

\subsection{Smooth Random Measure}\label{appliB}

We work in the setting where $F_\omega=F_{(\omega_i)_{i\leq0}}$, $\psi$ is a potential in a proper form (Definition \ref{properFormPot}), where $\mu$ be its associated Gibbs state, and $m$ is a smooth measure on $X$ (recall Definition \ref{defLogJac}), with its associated log-Jacobian function $\wh\varphi\in \mathrm{H\ddot{o}l}(\Sigma^-\times X)$.

\begin{definition}[Smooth random measure]
	We say that $(\Sigma^-\times X,\wh F_R)$ and $\mu$ admit a {\em smooth random measure} if there exists an $\wh F_R$-invariant Borel probability measure $\wh \mu$, and $C>1$, such that
	$$C^{-1}\wh m\leq \wh \mu\leq C \wh m,$$
	where $\wh m:=\mu\times m$.
\end{definition}

%Recall Definition \ref{mildHOls} of mild unstable holonomies.
\begin{theorem}[Existence of smooth random measures]\label{smoothIFF}
	Assume that $m$ is fully-supported on $X$. Let $\wh\phi:=\psi+\wh\varphi$. Assume that $(\Sigma,\wh F)$ admits %mild unstable 
	$\wh\phi$-holonomies. Then,
	\begin{align*}
		&(\Sigma^-\times X,\wh F_R)\text{ and }\mu\text{ admit a smooth random measure} \\
		\Leftrightarrow &\wh P(\wh\phi)=0\text{ and }\sup_k\|\log \L_{\wh\phi}^k1\|_{L^\infty(\wh m)}<\infty.
	\end{align*}
\end{theorem}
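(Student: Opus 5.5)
The plan is to reduce both directions to the conformal/harmonic machinery, using the fact that $\wh m=\mu\times m$ is an eigen-measure of $\L_{\wh\phi}^*$ with eigenvalue $1$ (Corollary \ref{smoothMes}). The first observation is that once $\wh P(\wh\phi)=0$, $\wh m$ is a $\wh\phi$-conformal measure in the sense of the definition. In that case $\wt\phi=\wh\phi$, and Theorem \ref{harmExist} (with $\wh p=\wh m$) applies directly.

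\textbf{Direction $\Leftarrow$.} Assume $\wh P(\wh\phi)=0$ and $\sup_k\|\log\L_{\wh\phi}^k1\|_{L^\infty(\wh m)}<\infty$. Then $\wh m$ is $\wh\phi$-conformal. Theorem \ref{harmExist} (topological mixing of $\Sigma$ is assumed as in the standing setting) yields a harmonic $\wh\rho$ with $\|\log\wh\rho\|_{L^\infty(\wh m)}<\infty$. After normalizing so that $\int\wh\rho\, d\wh m=1$, Lemma \ref{InvExists} shows that $\wh\mu:=\wh\rho\cdot\wh m$ is an $\wh F_R$-invariant probability. Moreover $e^{-C}\wh m\le\wh\mu\le e^{C}\wh m$ with $C=\|\log\wh\rho\|_{L^\infty(\wh m)}$, so $\wh\mu$ is a smooth random measure.

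\textbf{Direction $\Rightarrow$.} Let $\wh\mu$ be invariant with $C^{-1}\wh m\le\wh\mu\le C\wh m$, and set $\wh\rho:=d\wh\mu/d\wh m\in[C^{-1},C]$. The key step is to show that $\wh\rho$ is $\L_{\wh\phi}$-harmonic with eigenvalue $1$. By Theorem \ref{coolThm2}, for every $\wh h\in L^2(\wh m)$ we have $\int\wh h\,\L_{\wh\phi}\wh\rho\, d\wh m=\int\wh h\circ\wh F_R\,\wh\rho\, d\wh m=\int\wh h\, d\wh\mu=\int\wh h\,\wh\rho\, d\wh m$, so $\L_{\wh\phi}\wh\rho=\wh\rho$ $\wh m$-a.e. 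It follows that $\L_{\wh\phi}^k1\le C\L_{\wh\phi}^k\wh\rho=C\wh\rho\le C^2$, and symmetrically $\L_{\wh\phi}^k1\ge C^{-2}$, using positivity of $\L_{\wh\phi}$; here one must check that positivity and the a.e. identities are compatible, which holds because $\L_{\wh\phi}$ maps $\wh m$-null sets to null sets by Lemma \ref{bddOnL2m}. This gives the uniform bound on $\log\L_{\wh\phi}^k1$.

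It remains to show $\wh P(\wh\phi)=0$. Corollary \ref{PBound} gives $\wh P(\wh\phi)\ge0$. For the reverse inequality we need $\|\L_{\wh\phi}^k1\|_\infty$, not just the $L^\infty(\wh m)$-norm, to grow subexponentially. I expect this to be the main obstacle. My approach is to use that $m$ is fully supported and that $\mu$ is a Gibbs state, hence also fully supported, so $\wh m$ has full support on $\Sigma^-\times X$; since $\L_{\wh\phi}^k1$ is continuous, its sup over a fully supported measure equals its true sup, so $\|\L_{\wh\phi}^k1\|_\infty\le C^2$. Lemma \ref{specRad} (which uses the $\wh\phi$-holonomies) then gives $\wh P(\wh\phi)\le0$, and hence $\wh P(\wh\phi)=0$, which completes the equivalence.
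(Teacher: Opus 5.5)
Your proposal is correct and follows the paper's proof essentially step for step. For one direction you use the conformality of $\wh m$ when $\wh P(\wh\phi)=0$, then Theorem \ref{harmExist} and Lemma \ref{InvExists}; for the other you use the Koopman-adjoint identity of Theorem \ref{coolThm2}, full support of $\wh m$ with continuity of $\L_{\wh\phi}^k1$, Lemma \ref{specRad} and Corollary \ref{PBound}.

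The only deviation is how you get the two-sided bound on $\log\L_{\wh\phi}^k1$. You obtain it directly from $C^{-1}\le\wh\rho\le C$, $\L_{\wh\phi}^k\wh\rho=\wh\rho$ and positivity, whereas the paper invokes the forward direction of Theorem \ref{harmExist} after establishing $\wh P(\wh\phi)=0$; your route is slightly more direct. You also make explicit two points the paper leaves implicit: the hypothesis $\sup_k\|\log\L_{\wh\phi}^k1\|_{L^\infty(\wh m)}<\infty$, and the mixing assumption needed to apply Theorem \ref{harmExist}.
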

\begin{proof}\text{ }

\noindent\underline{$\Rightarrow$:} Write $\wh m:=\mu\times m$. Assume that $\wh P(\wh \phi)=0$. By Theorem \ref{coolThm2} $\wh m$ is a $\wh\phi$-conformal measure. Since we assume %mild unstable 
$\wh\phi$-holonomies, by Theorem \ref{harmExist}, there exists a harmonic function $\wh \rho$ with $\log\wh \rho\in %\B_{m}^{\theta,\infty}
L^\infty(\wh m)$% for some $\theta\in (0,1]$
. By Lemma \ref{InvExists}, $\wh\mu:=\wh \rho\cdot \wh m$ is $\wh F_R$-invariant. Since $\log \wh\rho$ is essentially bounded, we get that $\wh \mu$ is a smooth random measure. 

\medskip
\noindent\underline{$\Leftarrow$:} Assume that we have a smooth measure $\wh\mu$, and write 
$$\wh\rho :=\frac{d\wh \mu}{d\wh m},$$ 
which is well-defined $\wh m$-a.e. $\mu$ is a Gibbs state, and so fully-supported on $\Sigma^-$. $m$ is fully-supported on $X$ by assumption, and so $\wh m$ is fully-supported on $\Sigma^-\times X$. We have that $\wh m$-a.e. 
$$C^{-1}\leq \wh \rho\leq C.$$ 
In addition, for all $\wh g\in C(\Sigma^-\times X)$,
\begin{align*}
	\int \wh g\L_{\wh\phi}\wh\rho d\wh m=& \int \L_{\wh\phi}(\wh g\circ \wh F_R\cdot \wh\rho) d\wh m = \int \wh g\circ \wh F_R\cdot \wh\rho d\wh m \\
	=&\int \wh g\circ \wh F_R d\wh \mu= \int \wh g d\wh \mu =\int \wh g \wh \rho d\wh m.
\end{align*}
Thus, 
$$\L_{\wh\phi}\wh\rho=\wh\rho \ \text{ }\wh m\text{-a.e.}$$
Then, for all $n\geq0$, and since $\wh m$ is fully-supported and $\L_{\wh\phi^n}1$ is continuous,
\begin{align*}
	\|\L_{\wh\phi}^n1\|_\infty= \|\L_{\wh\phi}^n1\|_{L^\infty(\wh m)}\leq \|\L_{\wh\phi}^n C\wh \rho\|_{L^\infty(\wh m)}= C\cdot  \|\wh \rho\|_{L^\infty(\wh m)}\leq C^2.
\end{align*}
Thus, by Lemma \ref{specRad}, $\wh P(\wh\phi)\leq 0$. Together with Corollary \ref{PBound}, we conclude $\wh P(\wh \phi)=0$. Finally, it follows that $\|\wh\rho\|_{L^\infty(\wh m)}<\infty$ where $\wh m$ is indeed $\wh \phi$-harmonic, and by Theorem \ref{harmExist}, $\sup_k\|\log \L_{\wh\phi}^k1\|_{L^\infty(\wh m)}<\infty $. 
\end{proof}

\begin{remark}\label{noContHarm}
	Note, in addition to Theorem \ref{smoothIFF}, if $\wh P(\wh\phi)>0$, there cannot be a $\wh \phi$-harmonic function in $L^1(\wh m)$. If there were such $\wh\rho$, then for all $n\geq0$,
	$$\int \wh \rho d\wh m=\int \L^n_{\wh\phi}\wh\rho d\wh m=e^{-n \wh P(\wh\phi)} \int \wh \rho d\wh m \to 0.$$
\end{remark}

\subsection{Volume Decay of Correlations and the Spectral Gap}\label{appliC2}
 Recall the setting of \textsection \ref{FinalLY}: Let $m$ be the normalized Riemannian volume on $M$- a closed Riemannian manifold of dimension $d\geq2$ (i.e. a smooth measure), with its associated log-Jacobian function $\wh\varphi\in \mathrm{H\ddot{o}l}(\Sigma^+\times M)$.  Assume that $f_\omega=f_{(\omega_i)_{i\geq0}}$, and that $\{f_\omega\}_{\omega\in \Sigma}\subseteq \mathrm{Diff}^{1+\gamma}(M)$, $\gamma>0$, where $\omega\mapsto f_\omega$ is a H\"older continuous map. Let $\psi$ be a potential in a proper form, and let $\mu$ be its associated Gibbs state. Let $\wh\phi:=\psi+\wh\varphi$. %Finally, denote $M_f:=\sup_{x\in M,\omega\in \Sigma}\{\|d_xf_\omega\|,\|d_xf_\omega^{-1}\|\}$.

In Theorem \ref{FinalLYthm} we prove that under the assumption of effective expansion on average %, unstable holonomies and $\wh\phi$-holonomies 
(recall Definition \ref{UEA2}% and Definition \ref{StableHolonomies}, and see Remark \ref{HolsAreOpen}
), % or alternatively if $f_\omega=f_{\omega_0}$, then 
$\mathcal{L}_{n}$ admits a Lasota-Yorke inequality when acting on the Sobolev space $\Hh_{\kappa}(m)$, when $\kappa-d>0$ is small enough. In Corollary \ref{QCasr} we conclude quasi-compactness for the averaged semi-Ruelle operators (recall Definition \ref{ASO}).

\begin{definition}[Volume Decay of Correlations for H\"older functions]\label{HolExpConv}
		We say that $\wh m$ admits {\em volume decay of correlation  on H\"older functions} with $\wh \mu\in \mathbb{P}(\Sigma^+\times M)$ if for all $\theta>0$ small enough there exist $C,\tau>0$ s.t. 
 for all $g,h\in \mathrm{H\ddot{o}l}_\theta(M)$, for all $n\geq0$,
\begin{equation}\label{forSobLater}
    \Big|\int g\circ \wh F^n hd\wh m-\int g d\wh \mu \cdot\int h dm\Big|\leq %Ce^{-\tau n}
    a_n\|g\|_{\mathrm{H\ddot{o}l}_\theta} \|h\|_{\mathrm{H\ddot{o}l}_\theta},
\end{equation}
%	$$\int g\circ \wh F^n hd\wh m-\int g d\wh \mu \cdot\int h dm\xrightarrow[]{n\to\infty}0,$$
	where we view $g$ and $h$ as functions on $\Sigma^+\times M$ by the natural extension and $a_n\to 0$.
\end{definition}

\begin{remark}
    Note, assuming that there exist $C,\tau>0$ s.t. for all $g,h\in \mathrm{H\ddot{o}l}_\theta(M)$ with $\int h dm=0$, for all $n\geq0$,
\begin{equation*}
    \Big|\int g\circ \wh F^n hd\wh m\Big|\leq Ce^{-\tau n}
    \|g\|_{\mathrm{H\ddot{o}l}_\theta} \|h\|_{\mathrm{H\ddot{o}l}_\theta},
\end{equation*}
Implies volume decay of correlations with some $\wh \mu\in \mathbb{P}(\Sigma^+\times M)$. See \cite{ExpFastVolLimits}.
\end{remark}

Recall Lemma \ref{AvgPwr} and Remark \ref{RmkAvgPwr} for the notation $\mathcal L$.

\begin{theorem}[i.i.d. spectral gap]\label{SpecGapIID}
		In the setting of Theorem \ref{FinalLYthm}, assume that $\wh m$ admits volume decay of correlations with a measure $\wh \mu$, and assume further that that $\mu$ is a Bernoulli measure with $f_\omega=f_{\omega_0}$. Then,
			$$\Ll= K^*+R^*,$$
	where $K^* h=\int h dm\cdot \wh \mu$ with  $\wh\mu \in \Hh_{-s}(m)$,  and the spectral radius of $R^*: \Hh_{-s}(m)\to \Hh_{-s}(m) $ is less than $1$.
    In particular, $\wh \mu \in \Hh_{-s}(m)$, and 
    	$$\Uu= K+R,$$
        where $K g=1\cdot \int g d\wh \mu$,  and the spectral radius of $R: \Hh_{s}(m)\to \Hh_{s}(m) $ is less than $1$.
\end{theorem}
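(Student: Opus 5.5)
We combine the quasi-compactness of Corollary \ref{QCasr} (in the i.i.d. setting, where by Lemma \ref{AvgPwr} and Remark \ref{RmkAvgPwr} we have $\Ll^n=\Ll_n$, so the Lasota--Yorke inequality of Theorem \ref{FinalLYthm} holds for the genuine powers of one operator) with the volume decay of correlations. The decay of correlations will identify the peripheral spectrum as the simple eigenvalue $1$.

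\textbf{Step 1 (quasi-compactness of $\Ll$).} Theorem \ref{FinalLYthm} gives $\|\Ll^n g\|_{-s}\leq C_\epsilon D_f^n\|g\|_{-s-\epsilon}+Ce^{-\frac{\chi}{2}n}\|g\|_{-s}$. By Rellich--Kondrachov the embedding $\Hh_{-s}(m)\hookrightarrow\Hh_{-s-\epsilon}(m)$ is compact. Hennion's theorem therefore shows that $\Ll$ has essential spectral radius at most $e^{-\chi/2}$ on $\Hh_{-s}(m)$. The spectral radius is bounded: powers $\|\Ll^n\|$ grow at most like $C_\epsilon D_f^n\|g\|_{-s-\epsilon}$, and $\|g\|_{-s-\epsilon}\lesssim\|g\|_{-s}$. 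To see that the spectral radius is at most $1$, I would use duality. For $g\in L^2(m)$ and $h$ H\"older, $\int \Ll^n g\, h\,dm=\int g\,\Uu^n h\,dm$, and $\|\Uu^n h\|_\infty\leq\|h\|_\infty$. Hence $\Ll^n g$ stays bounded as a functional on H\"older functions. This implies that any eigenvalue $\lambda$ with $|\lambda|\geq 1$ and eigenvector in $\Hh_{-s}$ (which pairs with H\"older test functions via the Deny inequality \eqref{Deny}) satisfies $|\lambda|=1$.

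\textbf{Step 2 (peripheral spectrum).} Let $\Ll g=\lambda g$ with $|\lambda|=1$ and $g\in\Hh_{-s}(m)$. Approximate $g$ in $\Hh_{-s}$ by H\"older functions $g_k$. By Lemma \ref{AvgPwr}, $\int \Ll^n g_k\, h\,dm=\int h\circ\wh F^n g_k\,d\wh m$, and \eqref{forSobLater} shows this tends to $\int h\,d\wh\mu\int g_k\,dm$. Passing from $g_k$ to $g$ uses the bound $|\int\Ll^n(g-g_k)h\,dm|\leq C_{\mathrm{Deny}}\|\Ll^n\|\,\|g-g_k\|_{-s}\|h\|_s$ together with power-boundedness on the peripheral part. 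The conclusion is $\lambda^n\int g h\,dm\to \int h\,d\wh\mu\cdot g(1)$. If $\lambda\neq1$, or if $g(1)=0$, this forces $g=0$ on H\"older test functions, hence $g=0$ by density. If $\lambda=1$, then $g=g(1)\wh\mu$. So $\wh\mu$ (as a functional) lies in $\Hh_{-s}(m)$ and spans the only peripheral eigenspace, provided $1$ is actually in the spectrum.

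To show it is, I would use the norm bound lemma: $\|\Ll^n\|\geq c>0$ for all $n$. Combined with quasi-compactness, this means the spectral radius is at least $1$, and hence, by Step 1, equal to $1$. The same limit argument also excludes Jordan blocks: a generalized eigenvector would give $\Ll^n g$ growing linearly, contradicting the convergence of $\int\Ll^n g\,h\,dm$.

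\textbf{Step 3 (decomposition and duality).} Let $K^*$ be the spectral (Riesz) projection onto the eigenvalue $1$. Since $\Ll\mathbf 1$ integrates correctly ($\int\Ll g\,dm=\int g\,dm$ by Corollary \ref{smoothMes}), the projection is $K^*h=\int h\,dm\cdot\wh\mu$. Put $R^*=\Ll-K^*$. Then $R^*K^*=K^*R^*=0$, and $R^*$ has spectral radius $<1$, because its spectrum is the remainder of a quasi-compact spectrum with no other peripheral points.

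The statement for $\Uu$ follows by taking adjoints under the pairing of $\Hh_s(m)$ and $\Hh_{-s}(m)$ given by \eqref{Deny}, using the lemma that $\Uu$ is the $L^2(m)$-dual of $\Ll$. The adjoint of $K^*$ is $g\mapsto 1\cdot\int g\,d\wh\mu$, and the adjoint $R$ of $R^*$ has the same spectral radius.

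\textbf{Main obstacle.} The hardest step is Step 2: transferring the decay of correlations, which is stated only for H\"older observables, to the abstract eigenvectors in $\Hh_{-s}$. This needs density of H\"older functions in both $\Hh_{s}$ and $\Hh_{-s}$, and uniform control of $\Ll^n$ on the peripheral spectral subspace. I would first restrict to that finite-dimensional subspace, where all norms are equivalent and the approximation is harmless.
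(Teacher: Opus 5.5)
Your proposal follows essentially the same route as the paper. In the i.i.d. case $\mathcal L^n=\mathcal L_n$, so the Lasota--Yorke inequality and Hennion's theorem give quasi-compactness on $\mathfrak H_{-s}(m)$. Comparing $\int \mathcal L^n g\,h\,dm$ with the volume decay of correlations on H\"older test functions then rules out Jordan blocks, peripheral eigenvalues other than $1$, and rank larger than one. This identifies the peripheral part as $h\mapsto\int h\,dm\cdot\wh\mu$, and the statement for $\mathcal U$ follows by duality.

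One small correction: your Step~1 bound $|\lambda|\le 1$, as written, only controls eigenvectors lying in $L^2(m)$, so it does not cover general eigenvectors in $\mathfrak H_{-s}(m)$. You do not actually need it. The finite-dimensional argument from your ``main obstacle'' paragraph is enough: run it on the range $V$ of the projection onto all eigenvalues of modulus $\ge 1$. Convergence of $(\mathcal L|_V)^n$, tested on H\"older functions, already excludes eigenvalues of modulus $>1$, exactly as the paper's comparison of $\int g(K^*)^n h\,dm$ with the correlation limit does.
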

\begin{proof}
Given $h,g\in \mathrm{H\ddot{o}l}_\theta(m)$, with $\theta\in (0,1)$ given by the volume decay of correlations assumption, for all $n\geq0$, 
\begin{align}\label{toRefEq30}
\int g \mathcal L_{n} h d m= &\int g\int \L_{\wh\phi}^{n}hd\mu d m=\int g \L_{\wh\phi}^{n}h d\wh m= \int g\circ \wh F_L^{n} h d\wh m\nonumber\\
=& \int %P_{\wh\mu }g\cdot 
gd\wh \mu\cdot\int h  dm%\pm Ce^{-\tau nN}\|g\|_{\Hh_\kappa(m)} \|h\|_{\Hh_\kappa(m)}
\pm a_n \|g\|_{\mathrm{H\ddot{o}l}_\theta} \|h\|_{\mathrm{H\ddot{o}l}_\theta}. 
\end{align}

In addition, by Corollary \ref{QCasr} and Lemma \ref{AvgPwr}, for all $n\geq 0$
\begin{align}\label{toRefEq31}
	\int g \mathcal L_{n} h d  m=&\int g (K^*)^n h  dm\pm \|(R^*)^n\|_{\Hh_{-s}(m)\to\Hh_{-s}(m)}\cdot \|g\|_{-s} \|h\|_{-s},
\end{align}
where $\Ll=K^*+R^*$. We continue to compare the l.h.s.'s of \eqref{toRefEq30} and \eqref{toRefEq31}:
\begin{equation}\label{eq35}
	\int %P_{\wh\mu }
	gd\wh\mu\cdot\int   h  dm = \int g (K^*)^n h  dm+o(1).
\end{equation}

We recall that $K^*$ is obtained in Corollary \ref{QCasr} via the spectral decomposition of the Hennion theorem, and so it is of the form $K^*=\sum_{j=1}^L(\lambda_j \Pi_j+N_j)$ where $\Pi_j$'s are finite-rank projections, $\lambda_j$'s are scalars, and $N_j$'s are nilpotent operators, and moreover $(\lambda_j \Pi_j+N_j) (\lambda_i \Pi_i+N_i)=0 $ for all $i\neq j$; And $\Pi_j N_j= N_j \Pi_j=N_j$ for all $j\leq L$. In addition, the images of the $N_j$'s are contained in Riesz subspaces which are mutually orthogonal. Assume that the $\lambda_j$'s are given by an increasing order of their moduli. We can also assume w.l.o.g. that the moduli are greater or equal to one, as the ones smaller than one can be absorbed into $R$.

We claim first by \eqref{eq35} %and \eqref{eq36} 
that $K_1= \lambda_1 \Pi_1$. Indeed, if $N_1\neq 0$, then $K^n=\sum_{j\leq L} (\lambda_j \Pi_j+N_j)^n=\sum_{j\leq L}\sum_{i=0}^{r_j}\binom{n}{i}\lambda_j^{n-i} N_j^i$, where $r_j$ is the nilpotency rank of $N_j$ and we set $N_j^0=\Pi_j$. Therefore, if $N_1\neq 0$, we would have a block with a polynomial growth in $n$ for $\int g K_1^n h dm$, which contradicts our estimate of the r.h.s. of \eqref{eq35} %and of \eqref{eq36} 
being bounded up to an exponential error term. While we do not know a-priori that $\mathrm{Im}(N_1)$ contains H\"older functions, we can find a sufficiently close H\"older function, and since $|\lambda_1|$ is maximal, it is enough to have a mere non-trivial component of the H\"older function in $\mathrm{Im}(N_1)$. While we assume that the $\lambda_j$'s admit no modulus smaller than $1$, in fact they cannot be greater than $1$ either, as we also would have growth. 
%Once concluding that $K_1= P_{\wh \mu}^*$ (and in fact for all $j$'s of maximal modulus for $\lambda_j$), we are done as all other $\lambda_j$'s must have modulus less than one. 
Therefore $K=\sum_{j\leq L}\lambda_j\Pi_j$ where the $P_j$'s are finite-rank projections and the $\lambda_j$'s have modulus 1; And finally 
\begin{equation}\label{eq37}
	(K^*) ^n=\sum_{j\leq L}\lambda_j^n\Pi_j.
\end{equation}

Putting \eqref{eq37} back with \eqref{eq35}, we get that for all $g,h\in \mathrm{H\ddot{o}l}_\theta(M)$,
$$\int %P_{\wh\mu }
gd\wh\mu\cdot \int   h  dm = \int g \sum_{j\leq L}\lambda_j^n\Pi_j h  dm+o(1).$$

It follows that $\lambda_j=1$ for all $j\leq L$, and so $K^*=\Pi$ for some finite-rank projection $\Pi$, and in particular, for all $g,h\in \mathrm{H\ddot{o}l}_\theta(M)$,
\begin{equation}\label{eq38}\int %P_{\wh\mu }
gd\wh \mu\cdot\int  h  dm = \int g \Pi h  dm.
\end{equation}	
Then, by fixing any $h_0$ with $\int h_0 dm=1$, we get for all $g\in \mathrm{H\ddot{o}l_\theta}(M)$,
\begin{equation}\label{eq39}\int %P_{\wh\mu }
gd\wh \mu = \int g \Pi h_0  dm.
\end{equation}
From \eqref{eq39} it follows that $\wh \mu= \Pi h_0 \in \Hh_{-s}(m)$ when tested on functions on $M$. Since this is independent of the choice of $h_0$, we get that $\Pi$ is of rank one, and $\Pi h=\int h dm \cdot \wh\mu$.

Note, $\int h dm$ is well-defined for elements of $\Hh_{-s}(m)$ as it can be expressed as $h(1)$, $1\in \Hh_s(m)$.

Similarly, we study $\Uu=\Ll^*=K+R$, and conclude
$$\int Kg h dm=\int g d\wh  \mu \int hdm,$$
and deduce $Kg=1\cdot \int gd\wh\mu$.
\end{proof}

\begin{cor}[Dimension bounds for the stationary measure]\label{CorHausDim}
$$\mathrm{dim}_\mathrm{Hausdorff}(\wh \mu)\geq d-2s.$$
\end{cor}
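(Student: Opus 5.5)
The approach is to use the conclusion of Theorem \ref{SpecGapIID}, namely $\wh\mu\in\Hh_{-s}(m)$, together with the classical energy/Frostman characterization of Hausdorff dimension. Here $\wh\mu$ denotes the stationary measure viewed as a measure on $M$, i.e.\ tested against functions of $x$ only. Since $\Hh_{-s}(m)$ is the completion of $L^2(m)$ under the Riesz-type kernel $d(x,y)^{-(d-2s)}$, membership of a positive measure in $\Hh_{-s}(m)$ should mean that its $(d-2s)$-energy is finite:
$$I_{d-2s}(\wh\mu):=\int\int \frac{d\wh\mu(x)\,d\wh\mu(y)}{d(x,y)^{d-2s}}<\infty.$$
Frostman's lemma (in its energy form) then gives that every set of positive $\wh\mu$-measure has Hausdorff dimension at least $d-2s$. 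This yields $\mathrm{dim}_\mathrm{Hausdorff}(\wh\mu)\geq d-2s$ in the lower-Hausdorff-dimension sense. If desired, one can also note that $\wh\mu$-a.e.\ $x$ has $\int d(x,y)^{-(d-2s)}d\wh\mu(y)<\infty$, so $\liminf_{r\to0}\log\wh\mu(B(x,r))/\log r\geq d-2s$ a.e.\ after a standard Borel--Cantelli/Chebyshev argument.

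The steps, in order, are as follows.

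\emph{Step 1.} Mollify. Let $\wh\mu_\delta:=\wh\mu * \eta_\delta$ be smooth approximations (via convolution in charts, or via the heat kernel), which are nonnegative $L^2(m)$ densities. Because $\wh\mu=\Pi h_0$ lies in $\Hh_{-s}(m)$, I will show that $\|\wh\mu_\delta\|_{-s}$ stays bounded as $\delta\to0$. Take $h_0\ge0$ Hölder with $\int h_0\,dm=1$. Then $\wh\mu$ is the $\Hh_{-s}$-limit of $\Ll^n h_0$ by Theorem \ref{SpecGapIID}, since $(R^*)^n h_0\to0$ in $\Hh_{-s}(m)$. The functions $\Ll^n h_0$ are nonnegative $L^2$ functions whose $\|\cdot\|_{-s}$ norms are uniformly bounded, and they converge weakly to $\wh\mu$ by \eqref{eq39}. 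This is better than mollifying: I will use $g_n:=\Ll^n h_0$ directly as the approximants.

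\emph{Step 2.} Pass to the limit. Since $g_n\ge0$, we have
$$\|g_n\|_{-s}^2=\int\int g_n(x)g_n(y)\,d(x,y)^{-(d-2s)}\,dm\,dm=I_{d-2s}(g_n m).$$
Truncate the kernel: $K_R:=\min(d(x,y)^{-(d-2s)},R)$ is continuous. Weak-* convergence $g_n m\to\wh\mu$ gives $\int\int K_R\,d\wh\mu\,d\wh\mu=\lim_n\int\int K_R\, g_n g_n\,dm\,dm\le\sup_n\|g_n\|_{-s}^2$. Monotone convergence in $R$ then gives $I_{d-2s}(\wh\mu)<\infty$. Positivity of $g_n$ is essential here, and it holds because $\Ll$ is a positive operator.

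\emph{Step 3.} Apply the energy criterion. Finite $t$-energy with $t=d-2s$ implies that $\wh\mu(E)>0$ forces $\mathcal{H}^{t}(E)=\infty$, hence $\dim E\ge t$. This is the classical mass-distribution/Frostman argument, applied to the restriction of $\wh\mu$ to the set of points with bounded potential, which is valid on a Riemannian manifold locally via charts.

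I expect the main obstacle to be Step 1/2: justifying that the abstract element $\wh\mu\in\Hh_{-s}(m)$ coincides with the measure and has finite energy. Completion elements need not be measures, and the $\|\cdot\|_{-s}$ norm controls energy only for nonnegative approximants. The remedy is to use the explicit positive sequence $\Ll^nh_0$ and the uniform bound $\|\Ll^n\|_{\Hh_{-s}\to\Hh_{-s}}\le C$, which follows from the spectral decomposition of Theorem \ref{SpecGapIID}, together with the weak identification from \eqref{eq35}.
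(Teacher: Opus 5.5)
Your proposal is correct and follows the same route as the paper, which simply asserts that $\wh\mu\in\Hh_{-s}(m)$ (from Theorem \ref{SpecGapIID}) gives $\mathrm{dim}_\mathrm{Hausdorff}(\wh\mu)\geq d-2s$ via the energy/Frostman criterion. Your use of the nonnegative approximants $\Ll^n h_0$ and the truncated kernels $\min(d(x,y)^{-(d-2s)},R)$ fills in a step the paper leaves implicit, namely that the abstract element of the completion is a measure with finite $(d-2s)$-energy.
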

\begin{proof}
 By Theorem \ref{SpecGapIID}, $\wh \mu\in \Hh_{-s}(m)$ is a bounded linear functional on $\Hh_s(m)$, and so $\mathrm{dim}_\mathrm{Hausdorff}(\wh \mu)\geq d-2s$.
\end{proof}

\appendix

\section{Example with no integrable harmonic function}\label{AppCount}

\begin{lemma}
	There exists $\psi\in \mathrm{H\ddot{o}l}(\mathbb{S}^1)$ which satisfies $\int \psi d\lambda=0$, where $\lambda$ is the normalized Lebesgue measure on $\mathbb{S}^1$, and s.t. $\psi$ is not a measurable coboundary for $R_\alpha^{-1}$, where $R_\alpha$ is the rotation by $\alpha\in (0,1)\setminus\mathbb{Q}$ on $\mathbb{S}^1$.
\end{lemma}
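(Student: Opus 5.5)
The plan is to build $\psi$ as a sparse (lacunary) cosine series tuned to the continued-fraction denominators of the given $\alpha$. The obstruction to being a coboundary will be that the Birkhoff sums along these denominators are not tight.

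\textbf{Tightness criterion.} If $\psi=u-u\circ R_\alpha^{-1}$ for some measurable $u$, then the Birkhoff sums $S_N\psi$ (for $R_\alpha^{-1}$, equivalently up to sign for $R_\alpha$) telescope to $u-u\circ R_\alpha^{-N}$. Let $q_n$ be the continued-fraction denominators of $\alpha$, so that $\|q_n\alpha\|<1/q_{n+1}$. Then $R_\alpha^{\pm q_n}\to\mathrm{Id}$ uniformly. Composition with rotations is continuous in measure for measurable $u$ (approximate $u$ in measure by a continuous function), so $u\circ R_\alpha^{-q_n}\to u$ in measure. Hence $S_{q_n}\psi\to0$ in $\lambda$-measure. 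The whole task is therefore to produce a zero-mean Hölder $\psi$ for which $\lambda(|S_{q_{n_k}}\psi|\geq 1)\geq c>0$ along some subsequence.

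\textbf{Construction.} Fix $\beta\in(0,1)$ and set
\[
\psi(x)=\sum_{k\geq1} Q_k^{-\beta}\cos(2\pi Q_k x),\qquad Q_k:=q_{n_k},
\]
where $n_k$ is chosen inductively so that $Q_k$ is astronomically larger than everything defined at earlier stages. Then $\int\psi\,d\lambda=0$. Since $Q_{k+1}\geq 2Q_k$, the standard lacunary estimate shows $\psi$ is $\beta$-Hölder: split the sum at $Q_k\approx 1/|x-y|$, use the Lipschitz bound on low frequencies and the sup bound on high ones, and the geometric sums give $O(|x-y|^\beta)$.

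\textbf{Birkhoff sums of a single mode.} For one frequency $m$ one has
\[
S_N\cos(2\pi m\,\cdot)=\mathrm{Re}\Big(e^{2\pi i m x}\frac{e^{2\pi i mN\alpha}-1}{e^{2\pi i m\alpha}-1}\Big),
\]
which is a pure cosine of amplitude $A_{m,N}=|\sin(\pi mN\alpha)|/|\sin(\pi m\alpha)|\leq\min(N,\,1/(2\|m\alpha\|))$. Take $N=m=Q_k$. Then $\|Q_k^2\alpha\|=Q_k\|Q_k\alpha\|\cdot$ (up to the integer part) with $Q_k\|Q_k\alpha\|<Q_k/q_{n_k+1}\leq 1$. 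Using $|\sin \pi t|\asymp\|t\|$, I expect $A_{Q_k,Q_k}\geq cQ_k$, so the main term has amplitude at least $cQ_k^{1-\beta}\to\infty$. This step needs a little care when $Q_k\|Q_k\alpha\|$ is close to $1$. If that happens, I would instead take $N=\lfloor Q_k/2\rfloor$, or restrict to those $n$ with $q_{n+1}\geq 2q_n$ (or pass to a nearby multiple), so that $N\|Q_k\alpha\|\leq 1/2$ and $A\geq 2N/\pi$.

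\textbf{Controlling the other modes.} The remainder $\rho_k=S_{Q_k}\psi-(\text{main term})$ is estimated in $L^2(\lambda)$ using orthogonality of the distinct frequencies.
\begin{enumerate}
\item For $j>k$: the amplitude is at most $Q_k Q_j^{-\beta}$, giving a total of at most $Q_k\big(\sum_{j>k}Q_j^{-2\beta}\big)^{1/2}\ll Q_k^{1-\beta}$ by sparseness.
\item For $j<k$: the amplitude is at most $Q_j^{-\beta}/(2\|Q_j\alpha\|)\leq q_{n_j+1}$. This is a quantity fixed before $Q_k$ is chosen, so choosing $Q_k^{1-\beta}\gg k\max_{j<k}q_{n_j+1}$ makes it negligible.
\end{enumerate}
Hence $\|\rho_k\|_2\leq \tfrac{c}{10}Q_k^{1-\beta}$. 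The main term $A\cos(2\pi Q_kx+\varphi)$ satisfies $|\cdot|\geq A/2$ on a set of measure $1/3$. By Chebyshev, $|\rho_k|\leq A/4$ off a set of measure at most $16/100$. So $|S_{Q_k}\psi|\geq A/4\to\infty$ on a set of measure at least $1/6$, which contradicts convergence to $0$ in measure.

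\textbf{Main obstacle.} The delicate point is the uniform lower bound for the resonant amplitude $A_{Q_k,N}$ for an arbitrary irrational $\alpha$, together with the inductive choice of $n_k$ that dominates the small-denominator blow-up of earlier modes. Both use only $\|q_n\alpha\|<1/q_{n+1}\leq 1/q_n$, which holds for every irrational $\alpha$, so no Diophantine or Liouville assumption is needed.
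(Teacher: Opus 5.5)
Your proof is correct in substance, but it takes a genuinely different route from the paper. The paper does not construct $\psi$. It cites \cite{Hillel} for a zero-mean H\"older $\psi$ whose skew product $T_\psi(t,s)=(t-\alpha,s-\psi(t))$ on $\mathbb{T}^2$ is strictly ergodic w.r.t.\ $\lambda\times\lambda$. It then notes that a measurable $u$ with $\psi=u-u\circ R_\alpha^{-1}$ would make $H(t,s)=e^{2\pi i(s-u(t))}$ a non-constant $T_\psi$-invariant function. Once the black box is granted this takes two lines, and it proves more than needed ($\psi$ is not even a coboundary modulo $\mathbb{Z}$). However, all the content sits in the cited existence theorem.

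Your argument is self-contained and explicit. You build a lacunary series tuned to the denominators $q_n$, and use the failure of tightness of the Birkhoff sums as the obstruction. It works for every irrational $\alpha$ and every H\"older exponent $\beta\in(0,1)$. It also delivers exactly the real-valued statement used later in \textsection\ref{AppCount}, where the coboundary is $\psi=\log h-\log h\circ R_\alpha^{-1}$ with $\log h$ real.

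The one step to tidy is the resonant amplitude. Write $t=q_n\|q_n\alpha\|$, so that $A_{q_n,q_n}=|\sin\pi t|/|\sin(\pi t/q_n)|$. Here $t$ can be close to $1$ (e.g.\ $a_{n+1}=1$ with $a_n,a_{n+2}$ large), so $A_{Q_k,Q_k}\geq cQ_k$ is not automatic. Restricting to $q_{n+1}\geq 2q_n$ does not fix this, since it may leave only finitely many $n$ (e.g.\ for the golden mean). There are two clean repairs.

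\textbf{(a)} Keep $N=q_n$ and choose the $n_k$ among the infinitely many $n$ with $q_n\|q_n\alpha\|<\tfrac12$. Such $n$ exist by Vahlen's theorem: one of any two consecutive convergents qualifies. For these $n$, $|\sin\pi t|\geq 2t$ gives $A\geq 2q_n/\pi$.

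\textbf{(b)} Take $N=\lfloor Q_k/2\rfloor$ as you suggest. Then $R_\alpha^{-N}$ is not near the identity, so you cannot contradict convergence to $0$ in measure. Contradict tightness instead, which holds uniformly in $N$ by invariance of $\lambda$: $\lambda(|u-u\circ R_\alpha^{-N}|>2K)\leq 2\lambda(|u|>K)$.

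Your remainder estimates are unaffected by either change.
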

\begin{proof}
	By \cite{Hillel}, there exists $\psi\in \mathrm{H\ddot{o}l}(\mathbb{S}^1)$ which satisfies $\int \psi d\lambda=0$ s.t. 
	$$T_\psi(t,s):=(t-\alpha, s-\psi(t)) \ \pmod 1, \ T_\psi:\mathbb{T}^2\to \mathbb{T}^2,$$
	is strictly ergodic w.r.t. $\lambda\times \lambda$.
	
	Assume for contradiction that there exists $u:\mathbb{S}^1\to\mathbb{R}$ measurable s.t. $$\psi=u-u\circ R_\alpha^{-1}.$$
	Set $$H(t,s):=e^{2\pi i (s-u(t))}.$$
$H$ is a measurable non-constant function on $\mathbb{T}^2$. Note,
\begin{align*}
	H\circ T_\psi(t,s)=& e^{2\pi i (s-\psi(t)-u(t-\alpha))}= e^{2\pi i (s-\psi(t)+\psi(t)-u(t))}\\
	=& e^{2\pi i (s-u(t))}=H(t,s).
\end{align*}	
	Then $H$ is $T_\psi$-invariant. This is a contradiction to $T_\psi$ being strictly ergodic and $H$ being non-constant. Therefore $\psi$ is not a measurable coboundary.
\end{proof}

\begin{prop}
Let $\Sigma=\{0,1\}^{\mathbb{Z}}$ and $X:=\mathbb{S}^1$. Let $\alpha\in (0,1)\setminus \mathbb{Q}$, and set $F_\omega(t):=t+\alpha \ \mathrm{mod} \ 1$ (i.e. product dynamics). Set $\wh \phi(\omega,t):=\psi(t)$, where $\psi\in \mathrm{H\ddot{o}l}(\mathbb{S}^1)$ satisfies $\int \psi d\lambda=0$ where $\lambda$ is the normalized Lebesgue measure on $\mathbb{S}^1$. Assume further that $\psi$ is not a measurable coboundary for $R_\alpha^{-1}$, where $R_\alpha$ is the rotation by $\alpha$ on $\mathbb{S}^1$. Then for any $\wh \phi$-conformal measure $\wh p$, there exists no $\wh\phi$-harmonic function $\wh\rho\in L^1(\wh p)$.
\end{prop}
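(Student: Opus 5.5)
The plan is to compute everything explicitly in this product example and to reduce the existence of an integrable harmonic function to a measurable solution of the cohomological equation for $\psi$ over $R_\alpha^{-1}$. That contradicts the hypothesis on $\psi$.

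\textbf{Step 1: the operator and the pressure.} Since $F_\omega(t)=t+\alpha$ for all $\omega$, we have $\wh F_R(\omega^-,t)=(T_R\omega^-,t-\alpha)$. The $\wh F_R$-preimages of $(\omega^-,t)$ are therefore $(\omega^-a,t+\alpha)$ for $a\in\{0,1\}$, so
\[
(\L_{\wh\phi}g)(\omega^-,t)=e^{\psi(t+\alpha)}\sum_{a\in\{0,1\}}g(\omega^-a,t+\alpha).
\]
Iterating gives $(\L^n_{\wh\phi}1)(\omega^-,t)=2^ne^{\sum_{k=1}^n\psi(t+k\alpha)}$. The rotation is uniquely ergodic and $\int\psi\,d\lambda=0$, so the Birkhoff sums are $o(n)$ uniformly. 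By Lemma \ref{specRad} this gives $\wh P(\wh\phi)=\log 2$; the $\wh\phi$-holonomies hold trivially because $\wh\phi$ does not depend on $\omega$.

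\textbf{Step 2: the fiber projection of a conformal measure.} Let $\wh p$ be $\wh\phi$-conformal and let $\tau$ be its projection to $X$. Test conformality against functions $g(t)$ of the fiber coordinate only. This gives
\[
\int g\,d\tau=\int e^{\psi(t+\alpha)}g(t+\alpha)\,d\tau(t)\qquad\text{for all } g\in C(X).
\]
Hence $\tau$ is quasi-invariant under the rotation, with Radon–Nikodym cocycle given by $e^{\psi}$ along orbits:
\[
\frac{d\,(R_\alpha^{-1})_*\tau}{d\tau}(s)=e^{\psi(s)},
\]
with the sign conventions to be checked.

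\textbf{Step 3: from a harmonic function to a transfer function.} Suppose $\wh\rho\in L^1(\wh p)$ is a nonnegative, non-trivial $\wh\phi$-harmonic function, normalized so that $\int\wh\rho\,d\wh p=1$. By Lemma \ref{InvExists}, $\wh\nu=\wh\rho\cdot\wh p$ is $\wh F_R$-invariant. Its projection $\nu_X$ to $X$ is therefore $R_\alpha^{-1}$-invariant, and by unique ergodicity $\nu_X=\lambda$.

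Now set $r(t):=\mathbb E_{\wh p}(\wh\rho\mid t)$, the conditional expectation onto the fiber $\sigma$-algebra, so that $\lambda=r\cdot\tau$. Then $r\in(0,\infty)$ holds $\lambda$-a.e., and $\tau$ restricted to $\{r>0\}$ is equivalent to $\lambda$. Combine the invariance of $\lambda=r\tau$ under the rotation with the cocycle formula of Step 2. This should give, $\lambda$-a.e.,
\[
r(t+\alpha)\,e^{\psi(t+\alpha)}=r(t).
\]
Setting $v:=-\log r$, which is measurable and finite $\lambda$-a.e., this rearranges to $\psi=v-v\circ R_\alpha^{-1}$ $\lambda$-a.e. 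This contradicts the assumption that $\psi$ is not a measurable coboundary for $R_\alpha^{-1}$.

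\textbf{Main obstacle.} I expect the main difficulty to be Step 3. The conformal measure $\tau$ may be singular with respect to $\lambda$ on part of $X$, and the cocycle identity must be transferred to the set where $r>0$, which has full $\lambda$-measure. The care needed is in passing from the weak identities, tested against $g(t)$, to pointwise a.e. identities of densities. This is done by applying both identities to indicators of Borel sets and using uniqueness of Radon–Nikodym derivatives. It is also necessary to check that $\{r>0\}$ is rotation-invariant, so that the cocycle relation can be read on a full-measure invariant set.
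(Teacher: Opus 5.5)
Your argument is correct and uses the same ingredients as the paper's proof: the pressure $\wh P(\wh\phi)=\log 2$, the quasi-invariance $\tau=e^{\psi}\cdot\tau\circ R_\alpha^{-1}$ of the fiber marginal, unique ergodicity forcing the fiber marginal of $\wh\rho\cdot\wh p$ to be $\lambda$, and a transfer function built from a density; the only difference is that you take $v=-\log\frac{d\lambda}{d\tau}$ directly, whereas the paper first proves via the Lebesgue decomposition that $\tau\perp\lambda$ for every conformal $\wh p$ and then contradicts $\lambda\ll\tau$. Your Step 2 density should read $\frac{d\tau}{d(\tau\circ R_\alpha^{-1})}=e^{\psi}$ (equivalently $\frac{d\,(R_\alpha^{-1})_*\tau}{d\tau}(s)=e^{\psi(s+\alpha)}$), and comparing $r\tau=\lambda=\lambda\circ R_\alpha^{-1}=(r\circ R_\alpha^{-1})e^{-\psi}\tau$ then gives your Step 3 relation $\tau$-a.e. (hence $\lambda$-a.e.) by uniqueness of Radon--Nikodym derivatives, so the invariance of $\{r>0\}$ you flag as the main obstacle comes for free.
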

\begin{proof}
	First, we note that $\wh\phi$-holonomies are satisfied trivially. We then continue to compute that POE:
\begin{align*}
	\wh P(\wh\phi)=&\limsup\frac{1}{n}\log \|\L_{\wh\phi}^n1\|_\infty=\limsup\frac{1}{n}\sup_t \sum_{|\ul w|=n}e^{\psi^{(n)}(t)}\\
	=&\log2+ \limsup \sup_t \frac{1}{n}\psi^{(n)}(t).
\end{align*}
Since $(\mathbb{S}^1,\lambda,R_\alpha)$ is uniquely ergodic, $\frac{1}{n}\psi^{(n)}(t) \to\int \psi d\lambda=0$ uniformly in $t$.

Let $\wh p$ be an $\wh \phi$-conformal probability measure, i.e. $\L_{\wh\phi}^*\wh p=2\wh p$. Project $\wh p$ to $\tau\in \mathbb{P}(\mathbb{S}^1)$. Given any $g\in C(\mathbb{S}^1)$, write $\wh g(\omega,t):=g(t)$, and so
\begin{align}\label{justForhPos}
	\int g d\tau=&\int \wh g d\wh p= \int \frac{1}{2}\L_{\wh\phi}\wh g d\wh p=\int \frac{1}{2}\sum_{a\in\{0,1\}}e^{\psi\circ R_\alpha(t)}g(R_\alpha (t))d\wh p\nonumber\\
	=&	\int e^{\psi\circ R_\alpha}g \circ R_\alpha d\tau.
\end{align}
Then $\tau=e^\psi\cdot \tau\circ R_\alpha^{-1}$. If $\tau$ were not singular to $\lambda$, then by the Lebesgue decomposition theorem there would exist a density $0\leq h\in L^1(\lambda)$ s.t. $\tau\gg h\cdot\lambda$. Then, $h\cdot\lambda=(e^\psi \cdot R_\alpha^{-1} )\cdot \lambda\circ R_\alpha^{-1}= (e^\psi \cdot R_\alpha^{-1} )\cdot \lambda$. Hence, $\lambda$-a.e.
$$\psi=\log h-\log h\circ R_\alpha^{-1}.$$
We note that $[h>0]$ is an $R_\alpha$-invariant set by \eqref{justForhPos}, and so by the ergodicity of $\lambda$, indeed $\log h$ is well-defined $\lambda$-a.e. This is a contradiction to our assumption, and so $\tau$ is singular to $\lambda$.

Indeed, if there existed $\wh\rho\in L^1(\wh p)$ which is $\wh\phi$-harmonic, then $\wh \nu:=\wh\rho\cdot \wh p$ would be $\wh F_R$-invariant (recall Lemma \ref{InvExists}). In this case, $\wh \nu$ projects to an $R_\alpha$-invariant probability on $\mathbb{S}^1$, but unique ergodicity guaranties that it must be $\lambda$. Thus, for $\pi_X:\Sigma^-\times X\to X$ being the projection,
$$\tau=\wh p\circ \wh \pi_X^{-1}\gg(\wh\rho\cdot \wh p)\circ \wh \pi_X^{-1}=\lambda,$$
a contradiction!
\end{proof}

\section{Co-expansion on average implies effective expansion on average with $d^*=d$}\label{coExImpEffEx}

\begin{lemma}[Integral duality]\label{BeautifulFourier}
	For all $\kappa\in (0,d)$ there exists $C_{d,\kappa}>0$ s.t. for all $A\in\mathrm{GL}_d(\mathbb{Z})$, 
	$$\int |A\xi|^{-(d-\kappa)}d\sigma_{d-1}(\xi)= C_{d,\kappa}\int |\mathrm{det}A|^{-1}\cdot|A^{-t}\eta|^{-\kappa}d\sigma_{d-1}(\eta),$$
where $\sigma_{d-1}$ is the normalized Lebesgue measure on $\mathbb{S}^{d-1}$, $A^{-t}$ is the inverse transposed matrix.
\end{lemma}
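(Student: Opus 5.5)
The plan is to prove the identity via the Fourier transform of homogeneous (Riesz-type) kernels, tested against a Gaussian. We will in fact prove it for all $A\in \mathrm{GL}_d(\mathbb{R})$, which is stronger than needed, with a constant that is independent of $A$. The idea is to turn each spherical integral into an integral over $\mathbb{R}^d$ against a Gaussian, and then use that the Fourier transform exchanges homogeneity $-(d-\kappa)$ with homogeneity $-\kappa$.

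\textbf{Step 1 (spherical to Gaussian integral).} For a function $G$ on $\mathbb{R}^d\setminus\{0\}$ homogeneous of degree $-\beta$ with $0<\beta<d$, polar coordinates give
$$\int_{\mathbb{R}^d} G(x)e^{-|x|^2/2}dx=\omega_{d}\Big(\int_0^\infty r^{d-1-\beta}e^{-r^2/2}dr\Big)\int G(\xi)\,d\sigma_{d-1}(\xi)=:c_{d,\beta}\int G\,d\sigma_{d-1},$$
where $\omega_d$ is the surface area of $\mathbb{S}^{d-1}$. The radial integral is finite and positive exactly because $0<\beta<d$. We apply this twice:
\begin{itemize}
\item to $G(x)=|Ax|^{-(d-\kappa)}$, with $\beta=d-\kappa\in(0,d)$;
\item to $G(y)=|A^{-t}y|^{-\kappa}$, with $\beta=\kappa\in(0,d)$.
\end{itemize}
Both functions are locally integrable, since $|Ax|\geq \|A^{-1}\|^{-1}|x|$. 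So both spherical integrals in the statement equal, up to constants depending only on $(d,\kappa)$, the corresponding Gaussian-weighted integrals over $\mathbb{R}^d$.

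\textbf{Step 2 (Fourier transform of the Riesz kernel).} We invoke the classical identity in the sense of tempered distributions (e.g.\ Stein, \emph{Singular Integrals}, Ch.~V): for $0<\kappa<d$,
$$\widehat{|x|^{-(d-\kappa)}}=\gamma_{d,\kappa}|y|^{-\kappa},\qquad \gamma_{d,\kappa}>0.$$
The linear change of variables $x\mapsto Ax$ then gives
$$\widehat{|A\,\cdot|^{-(d-\kappa)}}(y)=\gamma_{d,\kappa}|\det A|^{-1}|A^{-t}y|^{-\kappa}.$$
This follows by pairing with a Schwartz test function $\varphi$ and substituting, using $\widehat{\varphi\circ A^{-1}}=|\det A|\,\hat\varphi\circ A^{t}$.

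\textbf{Step 3 (pairing with the Gaussian).} The Gaussian $e^{-|x|^2/2}$ is Schwartz and, up to a normalization constant $c_d$, is its own Fourier transform. By the definition of the distributional Fourier transform, $\langle \hat u,\varphi\rangle=\langle u,\hat\varphi\rangle$, which is legitimate here since $u=|Ax|^{-(d-\kappa)}$ is a locally integrable tempered function. Therefore
$$\int |Ax|^{-(d-\kappa)}e^{-|x|^2/2}dx=c_d\,\gamma_{d,\kappa}|\det A|^{-1}\int |A^{-t}y|^{-\kappa}e^{-|y|^2/2}dy.$$
Combining this with Step 1 on both sides yields the claim, with
$$C_{d,\kappa}=\frac{c_d\,\gamma_{d,\kappa}\,c_{d,\kappa}}{c_{d,d-\kappa}},$$
which visibly does not depend on $A$.

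\textbf{Main obstacle.} The only delicate point is Step 2 together with its use in Step 3: identifying the Fourier transform of the homogeneous kernel and justifying the Parseval pairing, since neither kernel lies in $L^1$ or $L^2$. I would cite the classical Riesz potential computation rather than reprove it. If a self-contained argument is wanted, one can write
$$|x|^{-\beta}=\Gamma(\beta/2)^{-1}\int_0^\infty t^{\beta/2-1}e^{-t|x|^2}dt,$$
which reduces everything to Gaussians. Tonelli's theorem then justifies each interchange of integrals, because all integrands are positive. This route also directly produces the positivity of $\gamma_{d,\kappa}$.
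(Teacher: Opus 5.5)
Your proof is correct and follows the same route as the paper: the Fourier transform of the Riesz kernel $|\xi|^{-(d-\kappa)}$, the linear change of variables producing $|\det A|^{-1}|A^{-t}\eta|^{-\kappa}$, Parseval against a Gaussian, and then polar coordinates. Your version also supplies details the paper leaves implicit. You justify the distributional pairing for these kernels, which lie in neither $L^1$ nor $L^2$. You track the two different radial constants $c_{d,\kappa}$ and $c_{d,d-\kappa}$ and check that they do not depend on $A$. Finally, your argument works verbatim for all $A\in\mathrm{GL}_d(\mathbb{R})$, which is what the later application to $d_xf_\omega^n$ actually requires.
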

\begin{proof}
Write $g(\xi):=|\xi|^{-(d-\kappa)}$. Then the Fourier transform is $\wh g (\eta)= C'_{d,\kappa}|\eta|^{-\kappa}$. $g_A(\xi):=g\circ A(\xi)=|A\xi|^{-(d-\kappa)}$, and so
\begin{equation}\label{theFourier}
\wh g_A (\eta)= C'_{d,\kappa} |\mathrm{det}A|^{-1} \cdot|A^{-t}\eta|^{-\kappa}.
\end{equation}
 By the Parseval identity, with $h(\xi)=e^{-\frac{|\xi|^2}{2}}$ which satisfies $\wh h(\eta)=e^{-\frac{|\eta|^2}{2}}$, we have
$$\int g_A(\xi)  \wh h(\xi) d \xi= \int \wh g_A(\eta)  h(\eta) d \eta,$$
which yields,
$$\int |A\xi|^{-(d-\kappa)} e^{-\frac{|\xi|^2}{2}}  d \xi= \int C'_{d,\kappa} |\mathrm{det}A|^{-1} \cdot |A^{-t}\eta|^{-\kappa}  e^{-\frac{|\eta|^2}{2}} d \eta.$$
Passing to polar coordinates, we get 
$$\int |A\xi|^{-(d-\kappa)} d\sigma_{d-1}(\xi)= C_{d,\kappa}\int |\mathrm{det}A|^{-1} \cdot |A^{-t}\eta|^{-\kappa}  d\sigma_{d-1}(\eta).$$
\end{proof}

\begin{prop}\label{FourierBounds}
If $\{f_\omega\}_{\omega\in \Sigma}$ are %volume preserving 
diffeomorphisms on $M$ where $\mathrm{dim}M=d\geq 2$, and $\kappa\in (0,d)$, then $\exists \wh C_{\kappa,d}>1$ s.t. for all $x\in M$ and for all $n\geq 0$,
if 
$$G_n(x,\xi):=\int  |d_xf_\omega\xi|^{-(d-\kappa)}d\mu,$$
then 
$$\max_{\xi\in T_xM,|\xi|=1} G_n(x,\xi)\leq C_0 e^{-\chi n}\Rightarrow \max_{\eta\in T_x^*M,|\eta|=1} \wh G_n(x,\eta)\leq\wh C_{d,\kappa} C_0 e^{-\chi n}.$$
\end{prop}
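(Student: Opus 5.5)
The plan is to deduce the bound for $\wh G_n$ from the integral duality of Lemma \ref{BeautifulFourier}, applied pointwise in $\omega$ and then averaged over $\mu$. Throughout, $G_n$ is read with $d_xf^n_\omega$ in place of $d_xf_\omega$, and the dual quantity is
$$\wh G_n(x,\eta):=\int |\mathrm{det}\, d_xf^n_\omega|^{-1}\,|(d_xf^n_\omega)^{-t}\eta|^{-\kappa}\,d\mu,\qquad \eta\in T^*_xM .$$
Fix $x$ and $n$, and identify $T_xM$ and $T_{f^n_\omega x}M$ with $\mathbb{R}^d$ by orthonormal frames. Then $A_\omega:=d_xf^n_\omega$ becomes an element of $\mathrm{GL}_d(\mathbb{R})$, and the norms $|A_\omega\xi|$, $|A_\omega^{-t}\eta|$ and $|\mathrm{det}A_\omega|$ do not depend on the choice of frames.

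\textbf{Step 1 (extending the lemma).} Lemma \ref{BeautifulFourier} is stated for $\mathrm{GL}_d(\mathbb{Z})$, but its proof (Fourier transform of $|\xi|^{-(d-\kappa)}$, change of variables, Parseval against a Gaussian, polar coordinates) uses nothing about integrality. So it holds for all $A\in\mathrm{GL}_d(\mathbb{R})$ with the same constant $C_{d,\kappa}$.

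\textbf{Step 2 (sphere averages).} Apply the extended lemma to each $A_\omega$ and integrate in $\mu$. By Fubini (the integrands are positive and, for fixed $n$, uniformly bounded in terms of $M_f^n$),
$$\int_{\mathbb{S}^{d-1}}\wh G_n(x,\eta)\,d\sigma_{d-1}(\eta)=C_{d,\kappa}^{-1}\int_{\mathbb{S}^{d-1}} G_n(x,\xi)\,d\sigma_{d-1}(\xi)\leq C_{d,\kappa}^{-1}C_0e^{-\chi n}.$$
This is the averaged form of the claim.

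\textbf{Step 3 (from the average to the maximum; main obstacle).} The proposition asks for $\max_\eta \wh G_n$, while Step 2 only controls its sphere average. The first thing I would try is to localize the Parseval identity. Fix a unit covector $\eta_0$. In the proof of Lemma \ref{BeautifulFourier}, replace the Gaussian by a positive test function $h_{\eta_0}$ that is concentrated in a cone of fixed aperture around $\eta_0$ and whose Fourier transform is nonnegative. A suitable choice is $h_{\eta_0}=k*\tilde k$ with $k$ a modulated Gaussian, so that $\wh{h_{\eta_0}}=|\wh k|^2\geq0$.

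With this choice the identity reads
$$\int \wh G_n(x,\eta)\,h_{\eta_0}(\eta)\,d\eta=c\int G_n(x,\xi)\,\wh{h_{\eta_0}}(\xi)\,d\xi .$$
The right-hand side is at most $\max_\xi G_n(x,\xi)\cdot\|\wh{h_{\eta_0}}\|_{L^1}$, uniformly in $\eta_0$. By rotation invariance, both the normalization of $h_{\eta_0}$ and $\|\wh{h_{\eta_0}}\|_{L^1}$ are independent of $\eta_0$.

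It then remains to bound $\wh G_n(x,\eta_0)$ by a constant times this cone average. I would prove this matrix by matrix. For $B=A_\omega^{-t}$, the function $\eta\mapsto|B\eta|^{-\kappa}$ is $(-\kappa)$-homogeneous, and its average over a fixed cone around $\eta_0$ dominates $c_{d,\kappa}|B\eta_0|^{-\kappa}$. The reason is that $|B\eta|\leq|B\eta_0|+\|B\|\,|\eta-\eta_0|$, and the set where $|B\eta|\leq 2|B\eta_0|$ occupies a definite proportion of the cone after an affine change adapted to the singular values of $B$.

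This last geometric estimate, uniform over all of $\mathrm{GL}_d$, is the delicate point. If it fails for very anisotropic $B$, I would fall back on comparing the two sides singular value by singular value: write $A_\omega=U\Lambda V$, which reduces everything to diagonal $\Lambda$, and there one can compare $\max_\xi$ and $\max_\eta$ of the two integrands explicitly. Combining Steps 2 and 3 gives the bound with $\wh C_{d,\kappa}$ equal to the product of the Parseval constant and the localization constant.
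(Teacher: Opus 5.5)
Your Steps 1--2 are correct: the sphere-averaged identity does follow from Lemma \ref{BeautifulFourier} for all of $\mathrm{GL}_d(\mathbb{R})$.

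Step 3 is essentially the paper's own route. You localize the Parseval identity with a bump $h$ centred at $\eta_0$. You bound the $\xi$-side, using $G_n\ge 0$ and homogeneity, by $\max_{|\xi|=1}G_n\cdot\int|\xi|^{-(d-\kappa)}|\widehat h(\xi)|\,d\xi$. Note that this weight is needed, not $\|\widehat h\|_{L^1}$. Note also that $\widehat h\ge 0$ is unnecessary, which is just as well: $k*\tilde k$ is positive definite, hence peaked at the origin rather than at $\eta_0$. Finally you dominate $\widehat G_n(x,\eta_0)$ by a local average of $\widehat G_n$.

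That last step is the gap, and it is false rather than merely delicate. Take $d=2$, $B=\mathrm{diag}(\epsilon,1)$ (i.e. $A=\mathrm{diag}(\epsilon^{-1},1)$) and $\eta_0=e_1$, which is exactly where $\max_\eta$ is attained. Then $|B\eta_0|^{-\kappa}=\epsilon^{-\kappa}$. On any fixed cone or ball around $e_1$, however, the average of $(\epsilon^2\eta_1^2+\eta_2^2)^{-\kappa/2}$ is $\asymp\epsilon^{1-\kappa}$, $\asymp\log(1/\epsilon)$, or $O(1)$ according as $\kappa>1$, $\kappa=1$, or $\kappa<1$. The reason is that $\{|B\eta|\le 2|B\eta_0|\}$ is a slab of width $\asymp\epsilon$; the affine change adapted to the singular values does not preserve proportions of the cone.

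The paper closes the same step by claiming that $\eta\mapsto|B\eta|^{-\kappa}$ is convex and applying Jensen against the symmetric bump. That claim is also false: already for $B=I$, the function $t\mapsto(1+t^2)^{-\kappa/2}$ on the line $\{(1,t)\}$ has a strict maximum at $t=0$. The example above shows that $\widehat G_n(\eta_0)\le\int\widehat G_n\,h_{\eta_0,R}$ can fail by an unbounded factor, so the paper gives you no way to finish either.

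Your fallback does not help. The SVD $A_\omega=U_\omega\Lambda_\omega V_\omega$ varies with $\omega$. Moreover, $\max_\xi\int|A_\omega\xi|^{-(d-\kappa)}d\mu$ can be far smaller than $\int\max_\xi|A_\omega\xi|^{-(d-\kappa)}d\mu$, so matrix-by-matrix comparisons do not transfer to the averaged quantities.

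More seriously, the explicit diagonal comparison is itself unbounded. Suppose $d_xf^n_\omega=\mathrm{diag}(\sigma_1,\dots,\sigma_d)$ for $\mu$-a.e. $\omega$, with $\sigma_1\ge\sigma_2=\dots=\sigma_d$. Then $\max_{|\xi|=1}G_n=\sigma_d^{-(d-\kappa)}$ and $\max_{|\eta|=1}\widehat G_n=C_{d,\kappa}\sigma_1^{\kappa}/\prod_i\sigma_i$, so their ratio is $C_{d,\kappa}(\sigma_1/\sigma_d)^{\kappa-1}$.

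Now take all $f_\omega$ equal to one diffeomorphism with a source $x$ where $d_xf=\mathrm{diag}(\lambda_1,\lambda_2,\dots,\lambda_2)$ and $\lambda_1>\lambda_2>1$. The hypothesis holds for every $n$ with $C_0=1$ and $\chi=(d-\kappa)\log\lambda_2>0$. Yet $\max\widehat G_n/\max G_n\asymp(\lambda_1/\lambda_2)^{n(\kappa-1)}\to\infty$ whenever $\kappa\in(1,d)$. So in that range the implication itself fails, and no form of your Step 3 can be completed; only the averaged statement of Step 2 survives.

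For $\kappa\le 1$ this particular obstruction disappears, since then $(\sigma_1/\sigma_d)^{\kappa-1}\le 1$. But the local-average domination fails there as well, so a proof in that range would need a genuinely different mechanism.
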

\begin{proof}
Fix $x\in M$. Then for all $\eta\in \mathbb{S}^{d-1}$ (recall \eqref{theFourier} in the proof of Lemma \ref{BeautifulFourier}, together with the affinity of the Fourier transform operator),
\begin{equation}\label{formOfFourier}
	\wh G_n(\eta)= C_{d,\kappa} \int \Jac_x(f_\omega^n)^{-1}|(d_xf_\omega^n)^{-t}\eta|^{-\kappa}d\mu,
\end{equation}
which is positive, and so this proposition is proper. Set,
$$\epsilon_n= \max_{\xi\in T_xM,|\xi|=1} G_n(x,\xi). $$

We expand on the proof of Lemma \ref{BeautifulFourier} by choosing $h_{\xi_0,R}$, an approximate unit centered at $\eta_0$, for $\eta_0\in \mathbb{S}^{d-1}\simeq T_x^*M$ with $|\eta|=1$.

Choose a smooth, non-negative bump function $h \in C_c^\infty(\mathbb{R}^d)$ that is radially symmetric, supported entirely within the unit ball $B(0, 1)$, and normalized such that $\int h(\eta) d\eta = 1$. Given $\eta_0\in \mathbb{S}^{d-1}$ set for $R=\frac{1}{2}$,
$$h_{\eta_0,\epsilon}(\eta) = \frac{1}{R^d} h\Big(\frac{\eta - \eta_0}{R}\Big)$$
By construction, $h_{\eta_0,R}$ is supported in $B(\eta_0, R)$ and integrates to 1.

Note, $t \mapsto |t|^{-\kappa}$, and so, since linear composition preserves convexity, the map $\eta \mapsto \Jac_x(f_\omega^n)|(d_xf_\omega^n)^{-t} \eta|^{-\kappa}$ is convex (in $\eta$). Since $\wh G_n(\eta)$ is an integral of convex functions, $\wh G_n(\eta)$ is also convex.

Therefore, as $h_{\eta_0,R}$ is symmetric around $\xi_0$ and integrates to 1, we can apply Jensen's inequality: 
\begin{align}\label{CoolJenConvPrime}
\wh G_n(\eta_0)=\wh G_n\Big(\int \eta \cdot h_{\eta_0,R}(\eta) d\eta\Big)\leq\int \wh G_n(\eta) h_{\eta_0,R}(\eta) d\eta.
\end{align}

Next, we compute $\wh h_{\eta_0,R}(\xi)$:
\begin{align*}
 \wh h _{\eta_0,R}(\xi) =& \int\frac{1}{R^d} h\left(\frac{\eta - \eta_0}{R}\right) e^{-2\pi i \xi \cdot \eta}  d\eta=\Big[\zeta=\frac{\eta-\eta_0}{R}\Big]\\ 	=&\int h(\zeta) e^{-2\pi i  \xi\cdot (\eta_0 + R \zeta) } d\zeta = e^{-2\pi i \xi\eta_0} \widehat{h}(R \xi), 
 \end{align*}
and so,
\begin{equation}\label{CoolJenConv2Prime}
	|\wh h_{\eta_0,R}(\xi)| = |\wh h(R \xi)|.
\end{equation}

Hence by \eqref{CoolJenConvPrime}, Parseval's identity, and \eqref{CoolJenConv2Prime},
\begin{align}\label{whyIFF}
0\leq \wh G_n(\eta_0)\leq &\int \wh G_n(\eta) h(\eta)  d\eta = \int G_n(\xi) \wh h_{\eta_0,R}(\xi)  d\xi\\
\leq &\max_{\xi'\in \mathbb{S}^{d-1}} G_n(\xi')\cdot  \int|\xi|^{-(d-\kappa)} |\widehat{h}(R \xi)| d\xi=\Big[\zeta=R\eta\Big]\nonumber\\
\leq &	\max_{\xi'\in \mathbb{S}^{d-1}} G_n(\xi')\cdot  \int R^{d-\kappa}|\zeta|^{-(d-\kappa)} |\widehat{h}(\zeta)| \frac{d\zeta}{R^d}\nonumber\\
=&R^{-\kappa} \max_{\xi'\in \mathbb{S}^{d-1}}\wh G_n(\xi')\cdot \int |\zeta|^{-(d-\kappa)} |\widehat{h}(\zeta)| d\zeta.\nonumber
\end{align}
Since we assumed $h\in C^\infty_c(\mathbb{R})$, $\widehat{h}(\zeta) $ is bounded near $0$, where $|\zeta|^{-(d-\kappa)}$ is integrable; And in addition $\widehat{h}(\zeta) $ decays super-polynomially at infinity, and so 
$$C_h:=\int |\zeta|^{-(d-\kappa)} |\widehat{h}(\zeta)| d\zeta \in (0,\infty).$$

Thus in total, for all $x\in M$, for all $\xi_0\in \mathbb{S}^{d-1}$, for all $n\geq0 $,
$$\int \Jac_x(f_\omega^n)^{-1}|(d_xf_\omega^n)^{-t}\eta_0|^{-\kappa}d\mu=\wh G_n(\eta_0)\leq 2^{\kappa}C_h C_{d,\kappa} \cdot \epsilon_n.$$
Set $\wh C_{d,\kappa}:= 2^{d-\kappa}C_h C_{d,\kappa} $.
\end{proof}

\begin{remark}\label{forExpLater}
	In the statement of Proposition \ref{FourierBounds}, the ``$\Rightarrow$" can in fact be improved to ``$\Leftrightarrow$", as the roles of $G_n$ and $\wh G_n$ can be replaced in \eqref{whyIFF}.
\end{remark}

\begin{cor}[Co-expansion on average implies effective expansion on average with $d^*=d$]\label{ddstarcor}
	If a Bernoulli measure $\mu$ is co-expanding on average:
	$$\min_{(x,\xi)\in T^1M}\int\log |(d_xf_\omega)^{-t}\xi|d\mu>0,$$
	then $$d^*=d.$$
\end{cor}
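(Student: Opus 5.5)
Since $d^*\le d$ by Lemma \ref{dStarDBound}, it suffices to show $\Xi(\kappa)=\wh P(\psi-\kappa\wh D)<0$ for every $\kappa\in(0,d)$. By \eqref{forConv} this means finding, for each such $\kappa$, constants $C_0,\chi>0$ with
$$G_n(x,\xi)=\int |d_xf^n_\omega\xi|^{-\kappa}d\mu\le C_0e^{-\chi n}$$
uniformly in $(x,\xi)\in T^1M$. The plan is to obtain this from the dual (co-tangent) side through Proposition \ref{FourierBounds} and Remark \ref{forExpLater}, which exchange bounds on $G_n$ for bounds on $\wh G_n$.

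\textbf{Step 1: dual exponential decay.} From co-expansion on average we get a uniform decay bound on the dual side. Since $\mu$ is Bernoulli, the cocycle $(d_xf^n_\omega)^{-t}$ is an i.i.d. product. Set $\Phi_\beta(x,\eta):=\int |(d_xf_\omega)^{-t}\eta|^{-\beta}d\mu$ for $\eta$ a unit covector. Then $\partial_\beta\Phi_\beta|_{\beta=0}=-\int\log|(d_xf_\omega)^{-t}\eta|d\mu\le -c<0$ uniformly, by compactness of $T^1M$ and $C^1$ bounds via $M_f$. Hence for some small $\beta_0>0$ we have $\sup\Phi_{\beta_0}\le e^{-\chi_0}<1$. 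Iterating with the Markov/i.i.d. structure (conditioning on the first $n-1$ steps and normalizing the covector) gives $\int|(d_xf^n_\omega)^{-t}\eta|^{-\beta_0}d\mu\le e^{-\chi_0 n}$. This is the standard Le Page / expansion-on-average argument, which is also \cite[Theorem~8.12]{TDFOE_I} applied to the transposed cocycle.

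\textbf{Step 2: moving from $\beta_0$ to the exponent needed.} Conservativity makes $\Jac_x(f^n_\omega)=1$, so by \eqref{formOfFourier}, $\wh G_n(x,\eta)=C_{d,\kappa}\int|(d_xf^n_\omega)^{-t}\eta|^{-(d-\kappa)}d\mu$. To match Step 1 we need $d-\kappa$ close to $\beta_0$, i.e. $\kappa$ near $d$. For such $\kappa$, Remark \ref{forExpLater} (the reverse implication of Proposition \ref{FourierBounds}) transfers the exponential decay of $\wh G_n$ to $G_n$, so $\Xi(\kappa)<0$ for all $\kappa\in[d-\beta_0,d)$. Then $\Xi(0)=0$ together with convexity (Proposition \ref{CDDV}) gives $\Xi<0$ on all of $(0,d)$. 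Therefore $d^*\ge d$, and combined with Lemma \ref{dStarDBound}, $d^*=d$.

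\textbf{Main obstacle.} Step 1 has to be uniform in both $x$ and $\eta$, and Proposition \ref{FourierBounds} is pointwise in $x$ while the cocycle moves $x$. The quantity $\sup_{x,\eta}$ handles the submultiplicativity, but that needs care. If it fails, the fallback is to work directly with the pressure formulation, using the variational principle of \cite{TDFOE_I} on the transposed cocycle.
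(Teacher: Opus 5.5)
Your proposal is correct and follows essentially the paper's own one-line argument. Co-expansion on average gives uniform exponential decay of $\int|(d_xf^n_\omega)^{-t}\eta|^{-\beta}d\mu$ for small $\beta>0$; with $\Jac_x(f_\omega)=1$ substituted in \eqref{formOfFourier}, Remark \ref{forExpLater} converts this into uniform decay of $\int|d_xf^n_\omega\xi|^{-(d-\beta)}d\mu$, so $\wh P(\psi-(d-\beta)\wh D)<0$ for all small $\beta$ and $d^*=d$. The obstacle you flag is not real: Proposition \ref{FourierBounds} is a fixed-$x$ statement about the random matrix $d_xf^n_\omega$, and your Step 1 bound is already uniform in $(x,\eta)$. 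Your closing convexity step is also unnecessary, since $\Xi<0$ at exponents arbitrarily close to $d$ already gives $d^*\ge d$.
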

\begin{proof}
	Follows from Remark \ref{forExpLater}, and substituting $\Jac_x(f_\omega)=1$ in \eqref{formOfFourier}, for all $\kappa>0$ small enough.
\end{proof}

\begin{remark}
	If $\mu$ is a Gibbs measure, by \cite[Theorem~8.12]{TDFOE_I}, 
	$$\min_{\omega^-\in\Sigma^-,(x,\xi)\in T^1M}\int\log |(d_xf_\omega)^{-t}\xi|d\mu_{\omega^-}>0$$
also implies $d^*=d$.
\end{remark}

\section{Proof completion of Theorem \ref{FinalLYthm} via pseudodifferential operator theory}\label{appPseudoDiffKerOps}

We assume that there exist constants $\chi,C_0> 0$ and $D_f > 1$ such that for all $n \geq 1$ and all $x \neq y$ satisfying $d(x,y) \leq D^{-n}_f$,
$$Q_n(x,y) \leq C_ 0e^{-\chi n},$$
where 
\begin{equation}\label{eq:Q_def}
Q_n(x,y):=\int\Big(\frac{d(x,y)}{d(f_\omega^n(x),f_\omega^n(y))}\Big)^{d-2s}d\mu=M_f^{\pm n(d-2s)},	
\end{equation}
with $s\in (0,1)$.

We consider the operator $T_n$ defined on $C^\infty(M)$:
\begin{equation*}
( T_n g )(x) = \int_{M\setminus \Delta} g(y) \frac{Q_n(x,y)}{d(x,y)^{d-2s}} dm(y)
\end{equation*}
where $\Delta = \{(x,x) : x \in M\}$ is the diagonal set of measure zero.

we wish to prove that for all $\epsilon>0$ sufficiently small, for all $g\in \Hh_{-s}(m)$,
\begin{equation}\label{main_ly}
    \langle g, T_n g \rangle_{L^2(m)} \leq  B^{n+1} \|g\|_{-s-\epsilon}^2+ B e^{-\chi n} \|g\|_{-s}^2, 
\end{equation}
 where $\wh C_{d,s}$ is a global dimensional constant, and $B> 0$ is a global constant. 

\begin{proof}

Let $\psi \in C^\infty([0, \infty))$ be a smooth bump function which satisfies $\psi|_{[0,1]}= 1$, $\psi|_{[2,\infty)} = 0$ , and $0\leq \psi\leq 1$. Set $\Psi_n(x,y) = \psi(D_f^n d(x,y))$. We split $T_n = T_n^{\mathrm{near}} + T_n^{\mathrm{far}}$ using kernels:

$$K_n^{\mathrm{near}}(x,y) = \frac{\Psi_n(x,y) Q_n(x,y)}{d(x,y)^{d-2s}} \mathbb{1}_{M \times M \setminus \Delta},$$
    and 
    $$ \quad K_n^{\mathrm{far}}(x,y) = \frac{(1-\Psi_n(x,y)) Q_n(x,y)}{d(x,y)^{d-2s}} \mathbb{1}_{M \times M \setminus \Delta}.$$

Fix $x \in M$. Given $y$ with $0<d(x,y)<r_0:=$injective radius of $M$, set  $\xi_{xy}= \exp_x^{-1}(y)\in T_x M \setminus \{0\}$ with $|\xi|=d(x,y)$. 

Note, since $\{f_\omega\}_{\omega\in \Sigma}$ is a pre-compact subset of $\mathrm{Diff}^{1+\gamma}(M)$, $\gamma>0$, we have for $d(x,y)\leq r_0 M_f^{-n}$,
$$
    \exp_{f_\omega^n(x)}^{-1} \left( f_\omega^n(\exp_x(\xi_{xy})) \right) = d_xf_\omega^n \xi + R_{\omega, n}(x, \xi_{xy}),$$
where the remainder satisfies $|R_{\omega, n}(x, \xi_{xy})|\leq C_{2,n} |\xi_{xy}|^{1+\gamma}$ uniformly in $\omega \in \Sigma$. Then,
$$d(f_\omega^n(x), f_\omega^n(\exp_x(\xi_{xy}))) = |d_xf_\omega^n\xi_{xy}| \left( 1 + E_{\omega, n}(x, \xi_{xy}) \right),$$
where $|E_{\omega, n}(x, z)| \leq C_{3,n} |z|^\gamma$. Substituting this into \eqref{eq:Q_def}:
$$Q_n(x, \exp_x(\xi_{xy})) = \int \frac{1}{|d_xf_\omega^n\frac{\xi_{xy}}{| \xi_{xy} |}|^{d-2s}} d\mu + O\left(|\xi_{xy}|^\gamma\right).$$
Note, the big O notation absorbs a coefficient that may grow exponentially in $n$,

Next, for all $x\in M$, for all $\xi\in T_xM\setminus \{0\}$,
$$G_n(x, \xi) := \int \frac{1}{|d_xf_\omega^n\xi|^{d-2s}} d\mu.$$
Our effective expansion assumption implies that 
$$G_n(x, \xi) \leq C_0 e^{-\chi n} |\xi|^{-(d-2s)}.$$

\begin{lemma}[Principal symbol bound]\label{symbol_bound}
$T_n^{\mathrm{near}}$ is a pseudodifferential operator with $C^\gamma$ spatial symbol (i.e. Fourier transform\footnote{of $(x,\xi)\mapsto \Psi_n(x,\exp_x\xi)\frac{Q_n(x,\exp_x \xi)}{|\xi|^{d-2s}}\Jac_\xi(\exp_x)$.} which depends on the reference point $x$) $p_n(x, \eta) \in C^\gamma S^{-2s}(T^* M)$ (i.e. $C^\gamma$ in $x$, and decays like $|\eta|^{-2s}$ in $\eta$, defined on the cotangent space $T^*M$). Its principal symbol (that is, the first term in the expansion by terms ordered by homogeneity degree) given by $\sigma_0(T_n^{\mathrm{near}})(x, \eta) =\wh G_n(x, \eta)$ satisfies $\forall (x, \eta) \in T^* M \setminus \{0\} $,
\begin{equation*}
    0 \leq \sigma_0(T_n^{\mathrm{near}})(x, \eta) \leq \wh C_{d,s} C_0 e^{-\chi n} |\eta|^{-2s},
\end{equation*}
where $\wh C_{d,s} $ is a constant depending only $d$ and $s$.
\end{lemma}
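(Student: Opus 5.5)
We will prove Lemma \ref{symbol_bound} by working in normal coordinates at each base point $x$. There the kernel of $T_n^{\mathrm{near}}$ becomes a function of $(x,\xi)$, and we compute its Fourier transform in $\xi$ as a distribution on $T_x^*M$. Concretely, set
$$k_n(x,\xi):=\Psi_n(x,\exp_x\xi)\frac{Q_n(x,\exp_x\xi)}{|\xi|^{d-2s}}\Jac_\xi(\exp_x),$$
which is supported in $|\xi|\leq 2D_f^{-n}$; we may assume $D_f\geq M_f/r_0$, so the linearization above applies. The $C^\gamma$ dependence on $x$ comes from the $C^{1+\gamma}$ regularity of the family $\{f_\omega\}$ and the smoothness of $\exp$, with Hölder constants allowed to grow exponentially in $n$. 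The symbol $p_n(x,\eta)$ is then $\wh k_n(x,\eta)$.

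Using the expansion of $Q_n$ preceding the lemma, we decompose
$$k_n(x,\xi)=\Psi_n\cdot G_n(x,\xi)+\Psi_n\cdot O(|\xi|^{\gamma-(d-2s)})+\Psi_n\cdot G_n(x,\xi)\,(\Jac_\xi(\exp_x)-1).$$
Here $G_n(x,\cdot)$ is homogeneous of degree $-(d-2s)$ in $\xi$. The second and third pieces are homogeneous-type singularities of order strictly better than $-(d-2s)$, since $\Jac_\xi(\exp_x)-1=O(|\xi|^2)$. Their Fourier transforms therefore decay like $|\eta|^{-2s-\gamma}$ and are of lower order. The cutoff $\Psi_n$ is smooth and equals $1$ near $0$, so multiplying by it changes $\wh k_n$ only by a convolution with a Schwartz function, which again affects only lower-order, smoothing terms. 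Consequently the principal (degree $-2s$ homogeneous) part of $p_n(x,\cdot)$ is the Fourier transform of $G_n(x,\cdot)$ on $T_xM$.

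By linearity of the Fourier transform and the formula \eqref{theFourier} from Lemma \ref{BeautifulFourier} (with $\kappa=2s$, applied to $A=d_xf^n_\omega$ for each $\omega$ and then integrated in $\mu$), we obtain
$$\wh G_n(x,\eta)=C_{d,2s}\int \Jac_x(f_\omega^n)^{-1}|(d_xf_\omega^n)^{-t}\eta|^{-2s}\,d\mu,$$
which is exactly \eqref{formOfFourier}. This expression is manifestly nonnegative, giving the lower bound $0\leq\sigma_0$, and it is homogeneous of degree $-2s$ in $\eta$. For the upper bound we reduce to $|\eta|=1$ by homogeneity and apply Proposition \ref{FourierBounds} with $\kappa=2s$. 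Its hypothesis $\max_{|\xi|=1}G_n(x,\xi)\leq C_0e^{-\chi n}$ is precisely the effective expansion bound stated just above, so we get $\wh G_n(x,\eta)\leq \wh C_{d,2s}C_0e^{-\chi n}|\eta|^{-2s}$.

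The main obstacle is the justification that $\wh G_n$ is the principal symbol, namely that the cutoff and the $O(|\xi|^\gamma)$ remainder do not contribute at order $-2s$. The subtlety is that the remainder constants and the cutoff scale $D_f^{-n}$ both grow exponentially in $n$. My approach is to allow all lower-order terms to carry constants like $B^{n}$, since in \eqref{main_ly} they are absorbed into the $B^{n+1}\|g\|_{-s-\epsilon}^2$ term via the $\epsilon$-smoothing gain. Only the principal term needs to carry the clean uniform constant $\wh C_{d,2s}C_0e^{-\chi n}$.
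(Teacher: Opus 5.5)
Your proposal follows the paper's proof essentially step for step: the paper likewise identifies $\sigma_0(T_n^{\mathrm{near}})(x,\cdot)$ with the fiberwise Fourier transform $\wh G_n(x,\cdot)$, gets the upper bound from Proposition \ref{FourierBounds} with $\kappa=2s$, and gets the $C^\gamma$ dependence from the H\"older continuity of $x\mapsto d_xf^n_\omega$; your nonnegativity argument via \eqref{formOfFourier} and your treatment of the cutoff and of the $O(|\xi|^{\gamma-(d-2s)})$ remainder only spell out what the paper asserts. Be aware that, in your version as in the paper's, the upper bound is exactly as strong as Proposition \ref{FourierBounds}: its convexity step does not hold as written ($\eta\mapsto|B\eta|^{-2s}$ is not convex), and its conclusion, although true for $2s<1$ (e.g.\ from $\wh G_n(x,\eta)=\Gamma(2s)\cos(\pi s)\int_{S^{d-1}}G_n(x,\theta)\,|\langle\theta,\eta\rangle|^{-2s}\,d\theta$), cannot hold with a constant depending only on $d,s$ when $2s>1$, because $A=\mathrm{diag}(L,1)$ in $d=2$ gives $\max_{|\xi|=1}|A\xi|^{-(2-2s)}=1$ while $\max_{|\eta|=1}|\det A|^{-1}|A^{-t}\eta|^{-2s}=L^{2s-1}$.
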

\begin{proof}
The principal symbol of an operator defined by a homogeneous kernel on tangent space is given by its fiberwise Fourier transform:
\begin{equation*}
    \sigma_0(T_n^{\mathrm{near}})(x, \eta) = \int_{T_x M} e^{-i \langle \xi, \eta \rangle_{T_xM}} G_n(x, \xi) d\xi.
\end{equation*}
By Proposition \ref{FourierBounds}, there exists a global constant $\wh C_{d,s}>0$ s.t.
%
%We can bound $0 \leq G_n(x, \xi) \leq  C_ 0e^{-\chi n} |\xi|^{-(d-2s)}$ as positive distributions. The Fourier transform of the homogeneous radial distribution $|\xi|^{-(d-2s)}$ on $\mathbb{R}^d$ equals $\wh C_{d,s} |\eta|^{-2s}$ for some constant $\wh C_{d,s}$. Because the Fourier transform preserves order for positive-definite kernels, we obtain 
$$\sigma_0(T_n^{\mathrm{near}})(x, \eta) \leq \wh C_{d,s} C_0 e^{-\chi n} |\eta|^{-2s}.$$

Finally, since $x \mapsto d_xf_\omega^n\xi$ is $\gamma$-H\"older, we get $x \mapsto G_n(x, \xi)$ and hence $x \mapsto \sigma_0(T_n^{\mathrm{near}})(x, \eta)$ are $C^\gamma$ in $x$. Thus, $T_n^\mathrm{near}$ is a pseudodifferential operator with a $C^\gamma$ spatial symbol.
\end{proof}

\begin{theorem}[Sharp G\r{a}rding inequality for $C^\gamma$ symbols {\cite[Chapter~VII]{SharpGarding}}]\label{garding_nonsmooth}
Let $P$ be a pseudodifferential operator whose spatial symbol is $C^\gamma S^{-2s}(T^*M)$ (that is, of order $-2s$ with symbol having $C^\gamma$ spatial regularity). If its principal symbol satisfies $\mathrm{Re} \sigma_0(P)(x, \eta) \geq 0$ for all $(x, \eta) \in T^* M$, then there exists a constant $C_P > 0$ such that for all $g \in C^\infty(M)$:
\begin{equation*}
    \mathrm{Re} \langle g, P g \rangle_{L^2(m)} \geq - C_P \|g\|_{-s-\frac{\gamma}{2}}^2.
\end{equation*}
\end{theorem}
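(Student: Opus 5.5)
The plan is the classical \emph{positive square root} route to sharp G\aa rding, adapted to symbols that are only $C^\gamma$ in $x$, with the $C^\gamma$ symbol calculus of \cite[Chapter~VII]{SharpGarding} as the main tool. Work in local charts via a partition of unity. Cross terms between charts are smoothing, and operators of order $-\infty$ are trivially dominated by $\|g\|_{-s-\frac{\gamma}{2}}^2$, so it suffices to prove the estimate for a symbol $p\in C^\gamma S^{-2s}(\mathbb{R}^d\times\mathbb{R}^d)$ with compact $x$-support. Write $p=\sigma_0+r$ with $r\in C^\gamma S^{-2s-1}$; the term $\mathrm{Op}(r)$ is handled like the other lower-order errors below. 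Replacing $P$ by $\frac12(P+P^*)$ changes $\mathrm{Re}\langle g,Pg\rangle$ by nothing, so I may also assume $\sigma_0$ is real and nonnegative.

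\textbf{Step 1 (regularized square root).} With $\langle\eta\rangle:=(1+|\eta|^2)^{1/2}$, set
$$q(x,\eta):=\Big(\sigma_0(x,\eta)+\langle\eta\rangle^{-2s-\gamma}\Big)^{1/2}.$$
The added term is what stops $q$ from degenerating to mere $C^{\gamma/2}$ regularity on the zero set of $\sigma_0$. Indeed,
$$|q(x,\eta)-q(y,\eta)|\leq\frac{|\sigma_0(x,\eta)-\sigma_0(y,\eta)|}{q(x,\eta)+q(y,\eta)}\lesssim \frac{\langle\eta\rangle^{-2s}|x-y|^\gamma}{\langle\eta\rangle^{-s-\gamma/2}} .$$
I would then show that $q$ belongs to the symbol class $C^\gamma S^{-s}_{1,\frac12}$ up to a piece of order $-s-\frac{\gamma}{2}$, using the quantitative Hölder bound just displayed together with the $\eta$-derivative bounds, which follow from $\eta$-homogeneity of $\sigma_0$ and the lower bound $q^2\geq\langle\eta\rangle^{-2s-\gamma}$. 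Concretely, I would split $q$ by Taylor's symbol smoothing (as in \cite[Chapter~I]{SharpGarding}) into $q=q^\#+q^b$ at frequency scale $\langle\eta\rangle^{\delta}$, choosing $\delta$ so that the rough part satisfies $q^b\in C^\gamma S^{-s-\frac{\gamma}{2}}$.

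\textbf{Step 2 (composition).} Compare $\mathrm{Op}(q)^*\mathrm{Op}(q)$ with $\mathrm{Op}(q^2)=\mathrm{Op}(\sigma_0)+\mathrm{Op}(\langle\eta\rangle^{-2s-\gamma})$. Whenever one factor is $C^\gamma$ in $x$, the $C^\gamma$ composition calculus gives $\mathrm{Op}(a)\mathrm{Op}(b)-\mathrm{Op}(ab)$ of order $m_a+m_b-\gamma$ on a suitable Sobolev range. Applied here, this should give
$$P=\mathrm{Op}(q)^*\mathrm{Op}(q)-\mathrm{Op}(\langle\eta\rangle^{-2s-\gamma})+E,\qquad E \text{ of order } -2s-\gamma .$$
Pairing with $g$, the first term is $\|\mathrm{Op}(q)g\|_{L^2}^2\geq0$. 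The second is bounded by $C\|g\|_{-s-\frac{\gamma}{2}}^2$. For the third, write $E=\Lambda^{-s-\frac{\gamma}{2}}\widetilde E\,\Lambda^{-s-\frac{\gamma}{2}}$, where $\Lambda=(1-\Delta)^{1/2}$ and $\widetilde E$ is bounded on $L^2$; this gives $|\langle g,Eg\rangle|\leq C\|g\|_{-s-\frac{\gamma}{2}}^2$. Combining the three yields $\mathrm{Re}\langle g,Pg\rangle\geq -C_P\|g\|_{-s-\frac{\gamma}{2}}^2$. Since $C^\infty(M)$ is dense, it is enough to verify this for smooth $g$, as in the statement.

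\textbf{Main obstacle.} The hardest step is Step 2: a remainder of full order $-\gamma$, with the right Sobolev mapping at \emph{negative} indices, when $q$ is merely $C^\gamma$ in $x$ and of order $-s<0$. Rough-symbol operators are only bounded on $H^t$ for $t$ in a window $(-(1-\delta)\gamma,\gamma)$. To stay inside that window I would conjugate by $\Lambda^{s+\frac{\gamma}{2}}$, so that the estimate is reduced to an $L^2$ statement about order-$0$ operators with $C^\gamma$ symbols. I would then place the rough factor on the side acting on $L^2$, and use the adjoint/composition formulas there. If the $\frac12$-class smoothing loses too much (i.e.\ only yields $\gamma/(2(1+\gamma))$), the fallback is to use directly the sharp G\aa rding statement for $C^\gamma S^m_{1,0}$ symbols proved in \cite[Chapter~VII]{SharpGarding}, which is the cited source, and only verify its hypotheses here.
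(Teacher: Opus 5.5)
The paper gives no argument for this statement; it is quoted from \cite[Chapter~VII]{SharpGarding}. So your fallback is exactly the paper's treatment, and your only new content is the square-root route, which has two genuine gaps.

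The first gap is your claim that order-$-\infty$ pieces, such as the cross terms between charts, are ``trivially dominated by $\|g\|_{-s-\frac{\gamma}{2}}^2$''. The same goes for your claim that conjugating by $\Lambda^{s+\frac{\gamma}{2}}$ reduces everything to $L^2$-bounded order-$0$ operators. With only $C^\gamma$ regularity in $x$, both fail once $s\geq\frac{\gamma}{2}$, and in that range the inequality itself is false in the stated generality. Here is a counterexample.
\begin{itemize}
\item Take a real-valued $c\in C^\infty(M)$ and a real-valued $a\in C^\gamma(M)\setminus H^{s+\frac{\gamma}{2}}(M)$, for instance a lacunary Weierstrass-type function times a cutoff. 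Such an $a$ exists precisely because $s+\frac{\gamma}{2}\geq\gamma$. Put both supports in one small ball.
\item Let $Pg:=-a\int_M cg\,dm$. Its symbol is $-a(x)$ times a Schwartz function of $\eta$ depending smoothly on $x$. So it lies in $C^\gamma S^{-\infty}\subset C^\gamma S^{-2s}$, with principal symbol $0$.
\item For real smooth $g$ one has $\mathrm{Re}\langle g,Pg\rangle=-\int ag\,dm\cdot\int cg\,dm$.
\item Since $a\notin H^{s+\frac{\gamma}{2}}$ while $c$ is smooth, you can keep $\int cg\,dm=1$ and $\|g\|_{-s-\frac{\gamma}{2}}$ bounded while $\int ag\,dm\to\infty$.
\item Correspondingly, $\Lambda^{s+\frac{\gamma}{2}}P\Lambda^{s+\frac{\gamma}{2}}=-(\Lambda^{s+\frac{\gamma}{2}}a)\int(\Lambda^{s+\frac{\gamma}{2}}c)\,\cdot\,dm$ is not bounded on $L^2$.
\end{itemize}
So in this regime no argument yields the statement without an extra hypothesis, and that includes quoting Chapter~VII. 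Possible extra hypotheses are $s<\frac{\gamma}{2}$, or structure specific to the operator it is applied to. You should check what the cited result actually assumes.

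The second gap persists even when $s<\frac{\gamma}{2}$: Steps 1--2 do not produce a remainder $E$ of order $-2s-\gamma$.
\begin{itemize}
\item Your own display gives $[q(\cdot,\eta)]_{C^\gamma}\lesssim\langle\eta\rangle^{-s+\frac{\gamma}{2}}$. This is sharp, as $\sigma_0=|x_1|^\gamma|\eta|^{-2s}$ near $x_1=0$ shows. So $q$ is a $\delta=\frac12$ symbol in $x$.
\item The $\eta$-bounds you assert also fail. Suppose $\sigma_0=|\eta|^{-2s}\theta(\eta/|\eta|)^2$ with $\theta$ vanishing simply. Then $q\approx|\eta|^{-s}(\theta^2+|\eta|^{-\gamma})^{1/2}$, and $\partial_\eta^2 q\sim\langle\eta\rangle^{-s-2+\frac{\gamma}{2}}$ at the zero direction. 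Every $\eta$-derivative beyond the first loses $\langle\eta\rangle^{\frac{\gamma}{2}}$.
\item This loss occurs already for $x$-independent $\sigma_0$, so no smoothing in $x$ removes it.
\item The composition rule ``remainder of order $m_a+m_b-\gamma$'' holds for $S_{1,0}$-type factors ($\rho=1$, $\delta=0$). For your $q$, the remainder in $\mathrm{Op}(q)^*\mathrm{Op}(q)-\mathrm{Op}(q^2)$ is of size roughly $[q]_{C^\gamma}\langle\eta\rangle^{-\gamma}|q|$.
\item That is of order at best $-2s-\frac{\gamma}{2}$, and worse once the $\eta$-loss is counted. It yields at most a lower bound $-C\|g\|_{-s-\frac{\gamma}{4}}^2$.
\end{itemize}
This is why sharp G\r{a}rding proofs avoid square roots. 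The standard positivity-preserving alternative is smoothing in $x$ plus sharp G\r{a}rding (or Fefferman--Phong) for $S_{1,\delta}$. It reaches only $\frac{\gamma}{2(1+\gamma)}$, respectively $\frac{\gamma}{2+\gamma}$, and not $\frac{\gamma}{2}$.
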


Set $\Lambda=(I-\Delta_M)^\frac{1}{2}$  (the Bessel potential operator), with $\Delta_M$ being the Laplace-Beltrami operator.

\begin{lemma}[Bound for near terms]\label{near_est}
For all $g \in \Hh_{-s}(m)$, for all $\epsilon\in(0,\frac{\gamma}{2}]$,
$$\langle g, T_n^{\mathrm{near}} g \rangle_{L^2(m)} \leq C_0\wh C_{d,s} e^{-\chi n} \|g\|_{-s}^2 + A_f^{n+1} \|g\|_{-s-\epsilon}^2,$$
for a constant $A_f>0$ depending on the family $\{f_\omega\}_{\omega\in \Sigma}$.
\end{lemma}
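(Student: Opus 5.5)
The plan is to combine the principal symbol bound of Lemma \ref{symbol_bound} with the sharp G\r{a}rding inequality of Theorem \ref{garding_nonsmooth}. We apply the latter to the auxiliary operator
$$P_n:=C_0\wh C_{d,s}e^{-\chi n}\Lambda^{-2s}-T_n^{\mathrm{near}}.$$
Since $\Lambda^{-2s}=(I-\Delta_M)^{-s}$ is a classical pseudodifferential operator of order $-2s$, its principal symbol is $|\eta|^{-2s}$ (with the metric on $T^*M$). By Lemma \ref{symbol_bound}, $T_n^{\mathrm{near}}$ has a $C^\gamma S^{-2s}$ symbol. Hence $P_n$ lies in $C^\gamma S^{-2s}(T^*M)$, and its principal symbol satisfies
$$\sigma_0(P_n)(x,\eta)=C_0\wh C_{d,s}e^{-\chi n}|\eta|^{-2s}-\wh G_n(x,\eta)\geq 0$$
by the upper bound in Lemma \ref{symbol_bound}. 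Theorem \ref{garding_nonsmooth} then gives
$$\langle g,T_n^{\mathrm{near}}g\rangle\leq C_0\wh C_{d,s}e^{-\chi n}\langle g,\Lambda^{-2s}g\rangle+C_{P_n}\|g\|_{-s-\gamma/2}^2.$$
The kernel $T_n^{\mathrm{near}}$ is real and symmetric in $(x,y)$, because $Q_n$ and $\Psi_n$ are symmetric. So for real $g$ the real part is the full pairing; for complex $g$ we split into real and imaginary parts.

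Next we identify the quantities on the right with the norms in the statement. The pairing $\langle g,\Lambda^{-2s}g\rangle$ is comparable to $\|g\|_{-s}^2$ as defined in \eqref{eqNice}, because the Riesz-type kernel $d(x,y)^{-(d-2s)}$ is the principal part of the kernel of $(I-\Delta_M)^{-s}$. The difference between the two operators is a smoothing term of order at most $-2s-1$, and this difference is bounded by $\|g\|_{-s-1/2}^2\lesssim\|g\|_{-s-\epsilon}^2$. Monotonicity of the dual Sobolev norms in the index gives $\|g\|_{-s-\gamma/2}\lesssim\|g\|_{-s-\epsilon}$ for $\epsilon\leq\gamma/2$. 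All these lower-order contributions are absorbed into the $A_f^{n+1}\|g\|_{-s-\epsilon}^2$ term. To be safe, the constant in front of $e^{-\chi n}$ may need adjusting by a dimensional factor, which is harmless.

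The main obstacle is bounding $C_{P_n}$ by $A_f^{n+1}$. The sharp G\r{a}rding constant depends on finitely many seminorms of the symbol: the $C^\gamma$ norm in $x$, and the lower-order (non-principal) parts of the symbol of $T_n^{\mathrm{near}}$. I would track these through the proof of Lemma \ref{symbol_bound}.

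First, the remainder $O(|\xi_{xy}|^\gamma)$ in the expansion of $Q_n$ has coefficient $C_{3,n}$. This coefficient grows at most like $M_f^{cn}$, by the chain rule for $C^{1+\gamma}$ compositions. This remainder produces a kernel of order $-2s-\gamma$.

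Second, the cutoff $\Psi_n$ lives at scale $D_f^{-n}$, so its derivatives contribute factors $D_f^{kn}$.

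Third, the H\"older constant of $x\mapsto d_xf_\omega^n$ is at most $M_f^{c'n}$.

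Each of these is exponential in $n$ with a base depending only on $\{f_\omega\}$. Collecting them yields $C_{P_n}\leq A_f^{n+1}$. I would first try to make this quantitative by rescaling: conjugate by the dilation $x\mapsto D_f^n x$ in exponential charts, so that the rescaled symbols lie in a bounded set of $C^\gamma S^{-2s}$. Then the G\r{a}rding constant is uniform, and undoing the scaling costs only a factor $D_f^{O(n)}$.
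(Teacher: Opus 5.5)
Your proposal is correct and follows the paper's proof. The steps match: the same comparison operator $P_n=C_0\wh C_{d,s}e^{-\chi n}\Lambda^{-2s}-T_n^{\mathrm{near}}$, nonnegativity of its principal symbol from Lemma~\ref{symbol_bound}, and the $C^\gamma$ sharp G\r{a}rding inequality. You then weaken $\|g\|_{-s-\gamma/2}$ to $\|g\|_{-s-\epsilon}$ by monotonicity, where the paper instead applies G\r{a}rding with $2\epsilon$-H\"older regularity, and you obtain $C_{P_n}\leq A_f^{n+1}$ from the exponential growth in $n$ of the symbol seminorms, as the paper does. Your extra bookkeeping is added care rather than a different route: you use self-adjointness to drop the real part, and you compare $\langle g,\Lambda^{-2s}g\rangle$ with the Riesz-kernel norm \eqref{eqNice}, which the paper simply asserts to be an equality.
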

\begin{proof}
Consider the difference operator $P_n = C_0\wh C_{d,s} e^{-\chi n} \Lambda^{-2s} - T_n^{\mathrm{near}}$. The operator $\Lambda^{-2s}$ has order $-2s$ and a smooth symbol $|\xi|^{-2s}$. 
By Lemma \ref{symbol_bound}, the principal symbol of $P_n$ satisfies
$$\sigma_0(P_n)(x, \xi) = C_0\wh C_{d,s} e^{-\chi n} |\xi|^{-2s} - \sigma_0(T_n^{\mathrm{near}})(x, \xi) \geq 0.$$
Applying Theorem \ref{garding_nonsmooth} to $P_n$ (being $\gamma$-H\"older implies being $2\epsilon$-H\"older for all $\epsilon\in (0,\frac{\gamma}{2}]$) we get,
$$\langle g, \left( \wh C_{d,s} e^{-\chi n} \Lambda^{-2s} - T_n^{\mathrm{near}} \right) g \rangle_{L^2(m)} \geq - C_{P_n} \|g\|_{-s-\epsilon}^2.$$
Since $\langle g, \Lambda^{-2s} g \rangle_{L^2(m)} = \|\Lambda^{-s} g\|_{L^2(m)}^2 = \|g\|_{-s}^2$, rearranging terms yields,
$$\langle g, T_n^{\mathrm{near}} g \rangle_{L^2(m)} \leq C_0\wh C_{d,s} e^{-\chi n} \|g\|_{-s}^2 + C_n \|g\|_{-s-\epsilon}^2.$$
We are left to show that there exists a global constant $A$ s.t. $C_n\leq A^{n+1}$. Indeed, the proof in \cite[Chapter~VII]{SharpGarding} shows that $C_{P_n}\leq \wt C_{d,s}\|p_n\|_{C^\gamma S^{-2s}} $, where $p_n\in C^\gamma S^{-2s}(T^* M) $ is its spatial symbol (recall Lemma \ref{symbol_bound}). Then $C_{P_n}\leq A_f^{n+1}$ for some $A_f$ which depends on $\{f_\omega\}_{\omega\in \Sigma}$.
\end{proof}

\begin{lemma}[Bound for far terms]\label{far_est}
For all $\epsilon \in( 0,\frac{\gamma}{2}]$, there exists a constant $\widetilde{C} > 0$ such that for all $g \in \Hh_{-s-\epsilon}(m)$,
$$\langle g, T_n^{\mathrm{far}} g \rangle_{L^2(m)} \leq \widetilde{C}^{n+1} \|g\|_{-s-\epsilon}^2.
$$
\end{lemma}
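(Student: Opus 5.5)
The far kernel is supported where $d(x,y)\geq D_f^{-n}$, so its singularity is cut off at scale $D_f^{-n}$. My plan is to write $K_n^{\mathrm{far}}$ as a bounded multiple of the kernel of $\|\cdot\|_{-s-\epsilon}$, with the excess regularity of the multiplier controlled exponentially in $n$. Concretely, I would factor
$$K_n^{\mathrm{far}}(x,y)=\frac{1}{d(x,y)^{d-2s-2\epsilon}}\cdot W_n(x,y),\qquad W_n(x,y):=(1-\Psi_n(x,y))\,Q_n(x,y)\,d(x,y)^{-2\epsilon}.$$
On the support of $1-\Psi_n$ we have $d(x,y)\geq D_f^{-n}$. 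Together with \eqref{eq:Q_def}, which gives $Q_n\leq M_f^{n(d-2s)}$, this yields the pointwise bound $0\leq W_n\leq M_f^{n(d-2s)}D_f^{2\epsilon n}$. So the sup-norm of $W_n$ is at most exponential in $n$. The Lipschitz/H\"older norm of $W_n$ (in $x$ and $y$) is also exponential in $n$, because $\Psi_n$ has derivative of size $D_f^n$, and $Q_n$ is Lipschitz off the diagonal with constant $M_f^{O(n)}$ times $d(x,y)^{-1}\leq D_f^n$.

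The naive pointwise bound cannot be used directly, since $g$ is not positive; this is the same issue the paper raised for $T_n$. So I would not bound $\langle g,T_n^{\mathrm{far}}g\rangle$ by $\sup W_n$ times $\langle |g|,|g|\rangle_{-s-\epsilon}$. Instead I would view $T_n^{\mathrm{far}}$ as the composition-type operator whose kernel is the Riesz kernel $d(x,y)^{-(d-2s-2\epsilon)}$ multiplied by a smooth-off-diagonal bounded function. It should then be bounded $\Hh_{-s-\epsilon}(m)\to\Hh_{s+\epsilon}(m)$, which by the Deny inequality \eqref{Deny} (at index $s+\epsilon$) gives
$$\langle g,T_n^{\mathrm{far}}g\rangle\leq C_{\mathrm{Deny}}\|g\|_{-s-\epsilon}\|T_n^{\mathrm{far}}g\|_{s+\epsilon}.$$
To get this mapping property, I would expand $W_n$ in a basis of products $u_j(x)v_j(y)$, for instance via a partition of unity at scale $D_f^{-n}$ followed by Taylor/Fourier expansion. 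This expresses $T_n^{\mathrm{far}}$ as a sum of operators $g\mapsto u_j\,A_{s+\epsilon}(v_jg)$, where $A_{s+\epsilon}$ is the Riesz kernel operator inducing $\langle\cdot,\cdot\rangle_{-s-\epsilon}$. Multiplication by a $C^1$ function is bounded on $\Hh_{\pm(s+\epsilon)}$ with norm controlled by its $C^1$ norm. The number of terms and their $C^1$ norms are each at most exponential in $n$, so the sum is bounded by $\widetilde C^{\,n+1}$.

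Alternatively, and more in line with Lemma \ref{near_est}, one can note that $T_n^{\mathrm{far}}$ is a pseudodifferential operator of order $-2s-2\epsilon$. Its kernel is $d(x,y)^{-(d-2s-2\epsilon)}$ times a symbol whose $C^\gamma S^0$ norms are exponential in $n$, so its $\Hh_{-s-\epsilon}\to\Hh_{s+\epsilon}$ norm is bounded by its symbol seminorm, as in the estimate $C_{P_n}\leq \widetilde C_{d,s}\|p_n\|$ quoted from \cite{SharpGarding}.

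The main obstacle is this last step: showing rigorously that multiplying the Riesz kernel by the nonsmooth (only $C^\gamma$, since $f_\omega\in C^{1+\gamma}$) multiplier $W_n$ preserves the $\Hh_{-s-\epsilon}\to\Hh_{s+\epsilon}$ boundedness with only exponential loss in $n$. I would first try the symbol-class route, which is exactly why $\epsilon\leq\gamma/2$ is needed: $C^\gamma$ spatial regularity suffices to absorb the $2\epsilon$ loss. If that route fails, I would fall back on a Schur-test argument on dyadic annuli $d(x,y)\sim 2^{-k}$ for $2^{-k}\geq D_f^{-n}$, since there are only $O(n)$ such annuli and each contributes at most $M_f^{O(n)}$.
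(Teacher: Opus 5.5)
Your reduction is the same as the paper's: it suffices to show that $T_n^{\mathrm{far}}$ maps $\Hh_{-s-\epsilon}(m)$ to $\Hh_{s+\epsilon}(m)$ with norm at most $\widetilde C^{\,n+1}$, and then pair with $g$. The gap is that you never establish this mapping property, and none of your three mechanisms yields it.

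\textbf{Separated expansion.} An expansion $W_n=\sum_j u_j(x)v_j(y)$ with exponentially many terms would make $W_n$ a finite-rank kernel, which it is not in general. A partition of unity at scale $D_f^{-n}$ followed by Taylor expansion leaves a non-separable remainder $E_n(x,y)\,d(x,y)^{-(d-2s-2\epsilon)}$. Bounding that remainder from $\Hh_{-s-\epsilon}$ to $\Hh_{s+\epsilon}$ is the original problem. Smallness of $\sup|E_n|$ does not help for sign-changing $g$, as you noted yourself.

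\textbf{Symbol route.} Symbol estimates with only $C^\gamma$ spatial regularity cannot give boundedness into $\Hh_{s+\epsilon}(m)$ once $s+\epsilon>\gamma$, because the output is in general only $C^\gamma$ in $x$. Moreover, the Riesz kernel times $W_n$ is not a symbol of order $-2s-2\epsilon$ at all, since that would require smoothness to all orders in $x-y$. The restriction $\epsilon\le\gamma/2$ comes from the near-term estimate and plays no such role here.

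\textbf{Schur test.} A Schur test on annuli bounds $|K_n^{\mathrm{far}}|$ on $L^2(m)$. It therefore controls the form by $\|g\|_{L^2(m)}^2$ or by $\langle |g|,|K_n^{\mathrm{far}}|\,|g|\rangle$, never by $\|g\|_{-s-\epsilon}^2$.

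\textbf{Regularity input.} More basically, the regularity you record for $W_n$ (sup bounds and joint Lipschitz/H\"older bounds) cannot suffice when $s+\epsilon>1/2$. Already on $\mathbb{T}^d$, the convolution kernel $k(z)=\sum_j 2^{-j}j^{-2}\cos(2^jz_1)$ is Lipschitz, yet $|\xi|^{2(s+\epsilon)}|\hat k(\xi)|$ is unbounded along $\xi=2^je_1$.

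The missing idea is that there is nothing to factor out. Since $1-\Psi_n$ vanishes where $d(x,y)\le D_f^{-n}$, $K_n^{\mathrm{far}}$ is a bounded, non-singular kernel, and it should be estimated directly after applying fractional derivatives. The paper conjugates by the Bessel potential, $\widetilde T_n^{\mathrm{far}}=\Lambda^{s+\epsilon}T_n^{\mathrm{far}}\Lambda^{s+\epsilon}$. Its kernel $\Lambda_x^{s+\epsilon}\Lambda_y^{s+\epsilon}K_n^{\mathrm{far}}$ is bounded in $L^\infty(m\times m)$ by an exponential in $n$ (the paper records $C_1D_f^{n(d+2\epsilon)}M_f^{n(d-2s)}$). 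This uses the off-diagonal regularity of $K_n^{\mathrm{far}}$, the bounds $D_f^{n\ell}$ on $\ell$-th derivatives of the cutoff, and $Q_n\le M_f^{n(d-2s)}$. The Hilbert--Schmidt bound then controls $\widetilde T_n^{\mathrm{far}}$ on $L^2(m)$. Writing $g=\Lambda^{s+\epsilon}h$ with $\|h\|_{L^2(m)}=\|g\|_{-s-\epsilon}$ gives the lemma.

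What this step really consumes is regularity of order $s+\epsilon$ in $x$ and in $y$ separately, with exponential constants. That is available because, off the diagonal, the mixed derivative $\partial_x\partial_y$ of $d(f^n_\omega x,f^n_\omega y)$ involves only $df^n_\omega$. This is the structural input your proposal lacks.
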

\begin{proof}
The kernel $K_n^{\mathrm{far}}(x,y)$ vanishes identically on $\{(x,y) \in M \times M : d(x,y) \leq D_f^{-n}\}$. 
We bound $T_n^{\mathrm{far}}: \Hh_{-s-\epsilon}(m) \to \Hh_{s+\epsilon}(m)$ by computing the $L^2(M) \to L^2(M)$ operator norm of the conjugated operator:
$$\widetilde{T}_n^{\mathrm{far}} = \Lambda^{s+\epsilon} T_n^{\mathrm{far}} \Lambda^{s+\epsilon}.$$
The integral kernel of $\widetilde{T}_n^{\mathrm{far}}$ is $\widetilde{K}_n^{\mathrm{far}}(x,y) = (\Lambda_x^{s+\epsilon} \Lambda_y^{s+\epsilon} K_n^{\mathrm{far}})(x,y)$.

Since $\mathrm{Supp}(K_n^{\mathrm{far}}) \subseteq \{(x,y) : d(x,y) \geq D_f^{-n}\}$, $K_n^{\mathrm{far}}$ is $C^{1+\gamma}$ off the diagonal. The operator $\Lambda_x^{s+\epsilon} \Lambda_y^{s+\epsilon}$ acts as a fractional derivative of order $2(s+\epsilon)$.

Recall, $K_n^\mathrm{far}(x,y)=\frac{(1-\Psi_n(x,y))Q_n(x,y)}{d(x,y)^{d-2s}}\mathbb{1}_{[x\neq y]}$ (and $(m\times m)([x\neq y])=1$). Bounding the derivatives of the bump function term: $|\nabla_{x,y}^k (1 - \Psi_n(x, y))| \leq \check{C}_{\psi,k}(D_f^n)^k$.

Applying $2s+2\epsilon$ derivatives to $d(x,y)^{-(d-2s)}$ increases the singular order of the kernel from $d-2s$ to $(d-2s) + (2s+2\epsilon) = d+2\epsilon$.

Then, since the minimum distance on $\mathrm{Supp}(K_n^{\mathrm{far}})$ is bounded below by $D_f^{-n}$, the kernel $\widetilde{K}_n^{\mathrm{far}}$ is everywhere bounded in $L^\infty(m \times m)$,
\begin{align*}
	\|\widetilde{K}_n^{\mathrm{far}}\|_{L^\infty(m\times m)} \leq& C_1 D_f^{n(d+2\epsilon)} \sup_{x \neq y} Q_n(x,y) \\
	\leq &C_1 D_f^{n(d+2\epsilon )} M_f^{n(d-2s)}  =: \widetilde{C}^{n+1},
\end{align*}
where $C_1$ is a constant depending on the derivatives of $\psi$, $d$, $s$, and $\epsilon$. For any Hilbert-Schmidt kernel operator,
$$\|\widetilde{T}_n^{\mathrm{far}}\|_{L^2(m) \to L^2(m)} \leq \|\widetilde{K}_n^{\mathrm{far}}\|_{L^2(m\times m)} \leq \|\widetilde{K}_n^{\mathrm{far}}\|_{L^\infty(m\times m)} \leq \widetilde{C}^{n+1}.$$
Setting $g = \Lambda^{s+\epsilon} h$ for $h \in L^2(M)$, we have $\|h\|_{L^2(m)}=\|g\|_{-s-\epsilon}$, and so
\begin{align*}
	\langle g, T_n^{\mathrm{far}} g \rangle_{L^2(m)} =& \langle h, \widetilde{T}_n^{\mathrm{far}} h \rangle_{L^2(m)} \leq \|\widetilde{T}_n^{\mathrm{far}}\|_{L^2(m) \to L^2(m)} \|h\|_{L^2(m)}^2\\
	=& \|\widetilde{T}_n^{\mathrm{far}}\|_{L^2(m) \to L^2(m)} \|g\|_{-s-\epsilon}^2\leq \widetilde{C}^{n+1} \|g\|_{-s-\epsilon}^2.
\end{align*}
\end{proof}

Combining Lemma \ref{near_est} and Lemma \ref{far_est}:
\begin{align*}
	\langle g, T_n g \rangle_{L^2(m)} = &\langle g, T_n^{\mathrm{near}} g \rangle _{L^2(m)} + \langle g, T_n^{\mathrm{far}} g \rangle _{L^2(m)}  \\
	\leq& C_0\wh C_{d,s} e^{-\chi n} \|g\|_{-s}^2 + (A_f^{n+1} + \widetilde{C}^{n+1}) \|g\|_{-s-\frac{\gamma}{2}}^2.
\end{align*}
Setting $B= A_f + \widetilde{C}+C_0\wh C_{d,s}$ yields \eqref{main_ly}.

\end{proof}

\bibliographystyle{alpha}
\tocless\bibliography{Elphi}

\Addresses

\end{document}